\setlist[enumerate]{itemsep=10pt,topsep=10pt}
\setlist[itemize]{itemsep=5pt,topsep=5pt}
\newtheorem{definition}{Definition}
\newtheorem{lemma}[definition]{Lemma}
\newtheorem{theorem}[definition]{Theorem}
\newtheorem{proposition}[definition]{Proposition}
\theoremstyle{remark}
\newtheorem*{example}{Example}
\newtheorem*{remark}{Remark}
\newcommand{\N}{\mathbb{N}}
\newcommand{\Z}{\mathbb{Z}}
\newcommand{\R}{\mathbb{R}}
\def\C{\mathbb{C}}
\def\GG{\bm{G}}
\newcommand{\E}{\mathrm{e}}
\newcommand{\I}{\mathrm{i}}
\newcommand{\esper}{\mathbb{E}}
\newcommand{\proba}{\mathbb{P}}
\newcommand{\gauss}{\mathcal{N}_{\R}(0,1)}
\newcommand{\var}{\mathrm{var}}
\newcommand{\cov}{\mathrm{cov}}
\newcommand{\sym}{\mathfrak{S}}
\newcommand{\pym}{\mathfrak{P}}
\newcommand{\leb}{\mathrm{L}}
\newcommand{\dkol}{d_{\mathrm{Kol}}}
\newcommand{\eps}{\varepsilon}
\newcommand{\lle}{\left[\!\left[} 
\newcommand{\rre}{\right]\!\right]} 
\newcommand{\join}{\bowtie}
\renewcommand{\Re}{\mathrm{Re}}
\newcommand{\Fcal}{\mathscr{F}}
\newcommand{\Gcal}{\mathscr{G}}
\newcommand{\Ccal}{\mathscr{C}}
\newcommand{\Scal}{\mathscr{S}}
\newcommand{\Pcal}{\mathscr{P}}
\newcommand{\Mcal}{\mathscr{M}}
\newcommand{\obs}{\mathscr{O}}
\newcommand{\obsG}{\mathscr{O}_{\GG}}
\newcommand{\obsS}{\mathscr{O}_{\mathfrak{S}}}
\newcommand{\obsP}{\mathscr{O}_{\mathfrak{P}}}
\newcommand{\conf}{\mathrm{conf}}
\newcommand{\DD}[1]{\,d\hspace*{-0.3mm}{#1}}
\newcommand{\comment}[1]{}
\author{V. F\'eray \and P.-L. M\'eliot \and A. Nikeghbali}
\title[Graphons, permutons and the Thoma simplex: three mod-Gaussian moduli spaces]{Graphons, permutons and the Thoma simplex:\\ three mod-Gaussian moduli spaces}
\begin{document}
\begin{abstract}
In this paper, we show how to use the framework of mod-Gaussian convergence in order to study the fluctuations of certain models of random graphs, of random permutations and of random integer partitions. We prove that,
in these three frameworks, a generic homogeneous observable of a generic random model is mod-Gaussian under an appropriate renormalisation. This implies a central limit theorem with an extended zone of normality, a moderate deviation principle, an estimate of the speed of convergence, a local limit theorem and a concentration inequality. The universal asymptotic behavior of the observables of these models gives rise to a notion of mod-Gaussian moduli space.
\end{abstract}
\maketitle

\tableofcontents

\section{Introduction}
The purpose of this paper is to show that many random models stemming from combinatorics exhibit asymptotic fluctuations which one can study in the framework of \emph{mod-Gaussian convergence}. We start our introduction by a discussion of this notion and of the probabilistic estimates which follow from it.\medskip

\subsection{Mod-Gaussian convergence, cumulants and dependency graphs}
Let $(X_n)_{n \in \N}$ be a sequence of real random variables. The notion of mod-Gaussian convergence was introduced in \cite{JKN11},
and later developed in \cite{DKN15,MN15,FMN16,FMN17,DBMN17}. 
Mod-Gaussian convergence yields precise quantitative information about 
the convergence in distribution of an appropriate renormalisation of $X_n$ towards the standard Gaussian law:
we get normality zone estimates, precise moderate deviations, bounds on the speed of convergence, or local limit theorems. 
\medskip

Though the original definition only involved the Fourier transforms $\esper[\E^{\I\xi X_n}]$, in this paper,
we shall work with complex exponential generating functions (Laplace transforms): 

\begin{definition}[Mod-Gaussian convergence]\label{def:modgauss}
The sequence of random variables $(X_n)_{n \in \N}$ \emph{converges in the mod-Gaussian sense} with parameters $t_n \to +\infty$ and limit $\psi(z)$ if, locally uniformly on the complex plane,
$$ \psi_n(z) \coloneqq \esper[\E^{zX_n}]\,\E^{-\frac{t_n\,z^2}{2}} \to_{n \to \infty} \psi(z),$$
where $\psi(z)$ is a continuous function with $\psi(0)=1$.
\end{definition}

In \cite{FMN16}, the local uniform convergence $\psi_n \to \psi$ was only asked to occur on a band $\mathcal{S}_{(c,d)}=\{z\in \C\,|\,c < \Re(z)<d\}$), but in all instances of mod-Gaussian convergence in this paper, the convergence holds on the whole complex plane,
so we restrict to this case here.
In particular, we only consider random variables $X_n$ with an {\em entire} exponential generating function. 
\medskip

One of the ways to prove mod-Gaussian convergence
consists in estimating the coefficients of the Taylor expansion of $\log(\esper[\E^{zX_n}])$, which are called the \emph{cumulants} of $X_n$.
Formally, if  
$$\log \esper[\E^{zS_n}] = \sum_{r=1}^\infty \frac{\kappa^{(r)}(S_n)}{r!}\,z^r$$
for $|z|$ sufficiently small, then the coefficient $\kappa^{(r)}(S_n)$
is called {\em cumulant of order $r$ of $S_n$}.
In \cite{FMN16,FMN17}, we explain how estimates on cumulants can be used to 
prove mod-Gaussian convergence:
\begin{definition}[Method of cumulants]\label{def:cumulantmethod}
Let $(S_n)_{n \in \N}$ be a sequence of real random variables. 
 Let $A$ be a positive constant, and $(D_n)_{n \in \N}$ and $(N_n)_{n \in \N}$ be two positive sequences such that $\lim_{n \to \infty} \frac{D_n}{N_n}=0$. We say that the sequence $(S_n)_{n \in \N}$ satisfies the hypotheses of the method of cumulants with parameters $(D_n,N_n,A)$ and limits $(\sigma^2,L)$ if:
\begin{itemize}
	\item For any $r \geq 1$, we have the uniform bounds on cumulants (see \cite[Definition 28]{FMN17}):
\begin{equation}
|\kappa^{(r)}(S_n)|\leq N_n\,(2D_n)^{r-1}\,r^{r-2}\,A^r.\tag{MC1}\label{eq:cumulant1}
\end{equation} 
\item There exists two real numbers $\sigma^2\geq 0$ and $L$ such that:
\begin{align}
\frac{\kappa^{(2)}(S_n)}{N_n\,D_n} &= (\sigma_n)^2 = \sigma^2\left(1+o\!\left(\left(\frac{D_n}{N_n}\right)^{\!1/3}\right)\right) ;\tag{MC2}\label{eq:cumulant2}\\
\frac{\kappa^{(3)}(S_n)}{N_n\,(D_n)^2} &= L_n = L\,(1+o(1)).\tag{MC3}\label{eq:cumulant3}
\end{align}
In particular, $\sigma_n \to_{n \to \infty} \sigma$ and $L_n \to_{n \to \infty} L$.
\end{itemize}
\end{definition}

\noindent In the method of cumulants, the hypotheses \eqref{eq:cumulant2} and \eqref{eq:cumulant3} are usually proved by an \emph{ad~hoc} computation of the first moments of $S_n$, and by using the identities
\begin{align*}
\kappa^{(2)}(X) &= \var(X) = \esper[X^2] - \esper[X]^2 = \esper\!\left[\overline{X}^2\right] ;\\
\kappa^{(3)}(X) &= \esper[X^3] - 3\,\esper[X^2]\,\esper[X] + 2 (\esper[X])^3 = \esper\!\left[\overline{X}^3\right], 
\end{align*}
where $\overline{X} = X - \esper[X]$. 
On the other hand, we shall explain in a moment how to obtain uniform bounds on cumulants
from {\em sparse dependency graphs}. Let us first detail the probabilistic consequences of the method of cumulants.
Recall that the Kolmogorov distance between two random variables (or their distribution)
is $\dkol(X,Y)=\sup_{t \in \R} |F_X(t)-F_Y(t)|$,
where $F_X$ and $F_Y$ are the cumulative distribution functions of $X$ and $Y$.
\begin{theorem}[Estimates from the method of cumulants]\label{thm:cumulantestimates}
Let $(S_n)_{n \in \N}$ be a sequence of random variables that satisfies the hypotheses of the method of cumulants, with parameters $(D_n,N_n,A)$ and limits $(\sigma^2,L)$. We suppose that $\sigma^2>0$, and we set:
\begin{align*}
X_n &= \frac{S_n-\esper[S_n]}{(N_n)^{1/3}\,(D_n)^{2/3}} \qquad;\qquad
Y_n = \frac{S_n-\esper[S_n]}{\sigma_n\,\sqrt{N_n\,D_n}} = \frac{S_n-\esper[S_n]}{\sqrt{\var(S_n)}}.
\end{align*}
\begin{enumerate}[start = 0]
	\item The random variables $(X_n)_{n \in \N}$ converge in the mod-Gaussian sense, with parameters $t_n = (\sigma_n)^2(\frac{N_n}{D_n})^{1/3}$ and limit $\psi(z) = \exp(\frac{Lz^3}{6})$.

	\item Central limit theorem: we have the convergence in distribution $Y_n \rightharpoonup_{n \to \infty} \gauss$.
	\item Extended zone of normality: if $y_n= o((\frac{N_n}{D_n})^{1/6})$, then
	$$\proba[Y_n \geq y_n] = \proba[\gauss \geq y_n]\,(1+o(1)).$$
	\item Moderate deviations (see \cite[Equation (9.10)]{FMN16}): for any sequence $(y_n)_{n \in \N}$ that tends to $+\infty$ but is a $o((\frac{N_n}{D_n})^{1/4})$,
	$$ \proba[Y_n \geq y_n] = \frac{\E^{-\frac{(y_n)^2}{2}}}{y_n\sqrt{2\pi}}\,\,\exp\!\left(\frac{L}{6\sigma^3}\sqrt{\frac{D_n}{N_n}}\,(y_n)^3\right)\,(1+o(1)).$$
	By symmetry, we have similar results for the negative deviations.
	\item The Kolmogorov distance between $Y_n$ and the standard Gaussian law is bounded by
	$$\dkol(Y_n,\gauss) \leq \frac{76.36\,A^3}{(\sigma_n)^3}\,\sqrt{\frac{D_n}{N_n}},$$
	see \cite[Corollary 30]{FMN17}.

	\item We have the following local limit theorem: for any exponent $\delta \in (0,\frac{1}{2})$, any $y \in \R$ and any Jordan measurable set $B$ with Lebesgue measure $m(B)>0$,
	$$\lim_{n \to \infty}  \left(\frac{N_n}{D_n}\right)^{\!\delta}\,\proba\!\left[Y_n - y \in \left(\frac{D_n}{N_n}\right)^{\!\delta}B\right] = \frac{1}{\sqrt{2\pi}}\,\E^{-\frac{y^2}{2}}\,m(B),$$
	see \cite[Theorem 9 and Proposition 20]{DBMN17}.
\end{enumerate}
\end{theorem}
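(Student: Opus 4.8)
The plan is to reduce the whole statement to item~(0). Once one knows that $(X_n)_{n\in\N}$ converges in the mod-Gaussian sense with parameters $t_n$ and limit $\psi(z)=\exp(\tfrac{Lz^3}{6})$, items~(1)--(6) are obtained by quoting the general theory: the central limit theorem, the normality zone and the precise moderate deviations come from \cite{FMN16} (in particular \cite[Equation~(9.10)]{FMN16} for item~(3)), the Berry--Esseen bound is \cite[Corollary~30]{FMN17}, and the local limit theorem is \cite[Theorem~9 and Proposition~20]{DBMN17}. So the genuine content is twofold: \emph{(a)} deduce the mod-Gaussian convergence from the cumulant bounds \eqref{eq:cumulant1}--\eqref{eq:cumulant3}, and \emph{(b)} check that the renormalisations used here ($t_n$, the scaling defining $X_n$, the scaling defining $Y_n$) are exactly the ones those statements expect.

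For step \emph{(a)} I would write $\overline{S_n}=S_n-\esper[S_n]$ and $c_n=(N_n)^{1/3}(D_n)^{2/3}$, so that $X_n=\overline{S_n}/c_n$, hence $\kappa^{(1)}(X_n)=0$ and $\kappa^{(r)}(X_n)=\kappa^{(r)}(S_n)/(c_n)^r$ for $r\ge2$. Since \eqref{eq:cumulant1} forces the cumulant generating function of $X_n$ to be entire, $\log\psi_n$ is entire with Taylor expansion
\[
\log\psi_n(z)\;=\;\bigl(\kappa^{(2)}(X_n)-t_n\bigr)\frac{z^2}{2}\;+\;\kappa^{(3)}(X_n)\frac{z^3}{6}\;+\;\sum_{r\ge4}\kappa^{(r)}(X_n)\frac{z^r}{r!}.
\]
Plugging in the definitions of $\sigma_n^2$, $L_n$ and $t_n=(\sigma_n)^2(N_n/D_n)^{1/3}$ shows that $\kappa^{(2)}(X_n)=t_n$ exactly (the quadratic term vanishes) and that $\kappa^{(3)}(X_n)=L_n\to L$ by \eqref{eq:cumulant3}, so the cubic term converges locally uniformly to $\tfrac{Lz^3}{6}$. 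For the tail I would set $\eps_n=(D_n/N_n)^{1/3}\to0$ and combine \eqref{eq:cumulant1} with the elementary bound $r^{r-2}\le\E^r\,r!/r^2$ to get, for $r\ge4$,
\[
\frac{|\kappa^{(r)}(X_n)|\,|z|^r}{r!}\;\le\;\frac{(2\E A|z|)^3}{2\,r^2}\,(2\E A\,\eps_n|z|)^{r-3},
\]
whose sum over $r\ge4$ is a geometric series that tends to $0$ on every disc $|z|\le R$. Hence $\log\psi_n(z)\to\tfrac{Lz^3}{6}$ and $\psi_n(z)\to\exp(\tfrac{Lz^3}{6})$ locally uniformly on $\C$, with a continuous limit equal to $1$ at $z=0$; this is item~(0), and it is precisely the ``uniform control of cumulants $\Rightarrow$ mod-Gaussian convergence'' mechanism underlying \cite{FMN16,FMN17}.

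For step \emph{(b)} I would record that $Y_n=X_n/\sqrt{t_n}$: indeed $c_n\sqrt{t_n}=(N_n)^{1/3}(D_n)^{2/3}\cdot\sigma_n(N_n/D_n)^{1/6}=\sigma_n\sqrt{N_nD_n}$, and $\var(Y_n)=\kappa^{(2)}(X_n)/t_n=1$, which also identifies $Y_n$ with $\overline{S_n}/\sqrt{\var(S_n)}$. Thus $(Y_n)$ is the variance-one companion of the mod-Gaussian sequence $(X_n)$, and: (1) holds because $X_n/\sqrt{t_n}\rightharpoonup\gauss$ whenever $t_n\to+\infty$; (2) and (3) follow from the normality-zone and moderate-deviation estimates of \cite{FMN16}, into which one substitutes $t_n=(\sigma_n)^2(N_n/D_n)^{1/3}$, $\psi(z)=\exp(\tfrac{Lz^3}{6})$, $\sigma_n\to\sigma$ and $L_n\to L$, recovering the thresholds $o((N_n/D_n)^{1/6})$ and $o((N_n/D_n)^{1/4})$ and the correction factor $\exp(\tfrac{L}{6\sigma^3}\sqrt{D_n/N_n}\,(y_n)^3)$; (4) is \cite[Corollary~30]{FMN17}, whose hypotheses are precisely \eqref{eq:cumulant1}--\eqref{eq:cumulant3}; and (5) is \cite[Theorem~9 and Proposition~20]{DBMN17} applied to $(X_n)$ with the scaling $(D_n/N_n)^\delta$.

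I expect the main (and essentially only) difficulty to be bookkeeping rather than conceptual. In step \emph{(a)} the delicate point is making the tail estimate \emph{locally uniform on the whole plane}: the decay $(\eps_n)^{r-3}$ must dominate the factorial-type growth $r^{r-2}$, which is exactly what the sharp shape of \eqref{eq:cumulant1} provides. In step \emph{(b)} one must be careful that $t_n$ is literally $\kappa^{(2)}(X_n)$, so that the Gaussian part cancels perfectly in $\psi_n$ and the limit is $\exp(\tfrac{Lz^3}{6})$ with no residual quadratic term; everything downstream is then a direct quotation from \cite{FMN16,FMN17,DBMN17}.
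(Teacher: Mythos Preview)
Your proposal is correct and matches the paper's approach: the paper does not give a standalone proof of this theorem but treats it as a compilation of results from \cite{FMN16,FMN17,DBMN17}, with the inline citations in the statement serving as the proof. Your write-up of item~(0) is the standard cumulant-expansion argument underlying those references, and your reduction of items~(1)--(5) to the cited statements via the identification $Y_n=X_n/\sqrt{t_n}$ is exactly what is intended (note the minor typo: you write ``items~(1)--(6)'' where ``(1)--(5)'' is meant).
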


\noindent It is important for the sequel to note that we did not assume $\sigma^2>0$ in Definition \ref{def:cumulantmethod}, but that the positivity is needed for the probabilistic estimates. On the other hand, all the results but the moderate deviation estimates stay true if one assumes only the convergences $\sigma_n \to \sigma$ and $L_n \to L$ (instead of \eqref{eq:cumulant2}). 
The stronger hypothesis 
$$ (\sigma_n)^2 = \sigma^2\left(1+o\!\left(\left(\frac{D_n}{N_n}\right)^{\!1/3}\right)\right) $$
is only needed in order to get the moderate deviation estimates when $y_n = o((\frac{D_n}{N_n})^{\frac{1}{6}+\eps})$ with $\eps \in (0,\frac{1}{12})$ --- when $\eps=0$, the convergence $\sigma_n \to \sigma$ is sufficient.
\bigskip

Theorem \ref{thm:cumulantestimates} shows that the method of cumulants yields much more information than a classical central limit theorem.
In \cite[Chapters 9-11]{FMN16} and \cite[Sections 4-5]{FMN17}, 
we developed tools to prove the uniform bounds on cumulants given by Equation \eqref{eq:cumulant1}.
In this paper, we shall use the following criterion, see \cite[Theorems 9.1.7 and 9.1.8]{FMN16}.
\begin{definition}[Dependency graphs]
Let $S=\sum_{v \in V} A_v$ be a finite sum of real random variables. We say that a (undirected, simple) graph $G=(V,E)$ is a dependency graph for the family of random variables $(A_v)_{v \in V}$ if the following condition holds: given two disjoint subsets $V_1,V_2 \subset V$, if there is no edge $e = (v,w) \in E$ such that $v \in V_1$ and $w \in V_2$, then $(A_v)_{v \in V_1}$ and $(A_w)_{w \in V_2}$ are independent vectors.
\end{definition}
By abuse of language, we often say that $G$ is a dependency graph for the sum \hbox{$S=\sum_{v \in V} A_v$}
(instead of a dependency graph for the family $(A_v)_{v \in V}$).
\begin{theorem}[Uniform bounds on cumulants from a dependency graph]
Let $S=\sum_{v \in V} A_v$ be a sum of random variables with $$|A_v| \leq A\quad \text{almost surely}$$
for any $v \in V$. We assume that $S$ admits a dependency graph, and we denote $N = |V|$ and $D = 1+\max_{v\in V} (\deg v)$. Then, we have the following bound on the cumulants of $S$:
$$\forall r \geq 1,\quad |\kappa^{(r)}(S)| \leq N\,(2D)^{r-1}\,r^{r-2}\,A^r.$$
Moreover, this upper bound actually holds with a constant $N = \frac{1}{A} \sum_{v \in V}\esper[|A_v|]$, which is smaller than $|V|$.
\end{theorem}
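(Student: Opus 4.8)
The plan is to expand $\kappa^{(r)}(S)$ by multilinearity, throw away the terms that vanish by independence, and estimate the survivors by a spanning-tree enumeration governed by Cayley's formula.

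\emph{Step 1 (multilinearity and the index graph).} Since the joint cumulant $\kappa(\,\cdot\,,\dots,\,\cdot\,)$ is multilinear in its arguments,
\[
\kappa^{(r)}(S)=\sum_{(v_1,\dots,v_r)\in V^r}\kappa(A_{v_1},\dots,A_{v_r}).
\]
To a tuple $\mathbf v=(v_1,\dots,v_r)$ attach the graph $H(\mathbf v)$ on the label set $\{1,\dots,r\}$, with an edge between $i$ and $j$ whenever $v_i=v_j$ or $(v_i,v_j)\in E$. Then $H(\mathbf v)$ is a dependency graph for $(A_{v_1},\dots,A_{v_r})$: if $C\sqcup C'=\{1,\dots,r\}$ carries no $H(\mathbf v)$-edge between $C$ and $C'$, then $V_1=\{v_i:i\in C\}$ and $V_2=\{v_j:j\in C'\}$ are disjoint (no ``$v_i=v_j$'' edge) and there is no edge of $G$ between them (no ``$(v_i,v_j)\in E$'' edge), so the hypothesis gives $(A_v)_{v\in V_1}\perp(A_w)_{w\in V_2}$, hence $(A_{v_i})_{i\in C}\perp(A_{v_j})_{j\in C'}$.

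\emph{Step 2 (a tree-graph bound for joint cumulants — the crux).} The heart of the proof is the inequality
\[
\bigl|\kappa(Y_1,\dots,Y_r)\bigr|\le 2^{r-1}\,\tau(H)\prod_{i=1}^{r}\|Y_i\|_{\infty},
\]
valid whenever $H$ is a dependency graph on $\{1,\dots,r\}$ for $(Y_1,\dots,Y_r)$, where $\tau(H)$ is the number of spanning trees of $H$; in particular $\tau(H)=0$ when $H$ is disconnected, which recovers the classical vanishing already used in Step 1. I would prove this by induction on $r$ (equivalently by the telescoping/interpolation device familiar from cluster expansions): peel off one vertex of $H$ via a recursion that expresses $\kappa(Y_1,\dots,Y_r)$ through lower joint cumulants and mixed moments, and regroup the signed sum so that the spanning-tree count appears — this is exactly where the cancellations of the moment--cumulant (Möbius) inversion must be exploited, and where the constant $2$ is pinned down. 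The case $r=3$ with $H$ the path $1$–$2$–$3$ already displays the mechanism: there $\kappa(Y_1,Y_2,Y_3)=\esper[Y_2\,\overline{Y_1}\,\overline{Y_3}]$, so $|\kappa|\le\esper[|Y_2|]\,\|\overline{Y_1}\|_\infty\|\overline{Y_3}\|_\infty\le 4\prod_i\|Y_i\|_\infty$, using $\|\overline Y\|_\infty\le 2\|Y\|_\infty$. This step — which is the only genuinely nontrivial point, and the main obstacle — is carried out in \cite[Chapter 9]{FMN16}, whose Theorems 9.1.7 and 9.1.8 I would invoke.

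\emph{Step 3 (summation via Cayley's formula — bookkeeping).} Granting Step 2 and using $\|A_{v_i}\|_\infty\le A$,
\[
\bigl|\kappa^{(r)}(S)\bigr|\le 2^{r-1}A^{r}\sum_{\mathbf v\in V^r}\tau\bigl(H(\mathbf v)\bigr)=2^{r-1}A^{r}\sum_{T}\#\bigl\{\mathbf v\in V^r:\ T\subseteq H(\mathbf v)\bigr\},
\]
where $T$ runs over the $r^{r-2}$ labelled trees on $\{1,\dots,r\}$ by Cayley's formula. For a fixed rooted $T$ one reconstructs such a tuple vertex by vertex: the root takes any of $N=|V|$ values, and once $v_{p(i)}$ is fixed, $v_i$ must either equal $v_{p(i)}$ or be a $G$-neighbour of it, i.e.\ at most $1+(D-1)=D$ choices; hence $\#\{\mathbf v:\ T\subseteq H(\mathbf v)\}\le N\,D^{r-1}$, and
\[
\bigl|\kappa^{(r)}(S)\bigr|\le 2^{r-1}A^{r}\cdot r^{r-2}\cdot N\,D^{r-1}=N\,(2D)^{r-1}\,r^{r-2}\,A^{r}.
\]
Finally, for the sharper constant one keeps, in the bound of Step 2, the genuine first absolute moment $\esper[|Y_i|]$ at one designated argument instead of $\|Y_i\|_\infty$ (as in the $r=3$ example above, where $\esper[|Y_2|]$ was retained); choosing that argument to be the root of each $T$ turns the factor $N=|V|$ into $\frac1A\sum_{v\in V}\esper[|A_v|]$.
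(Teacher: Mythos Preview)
The paper does not give its own proof of this theorem: it is stated as a quotation of \cite[Theorems 9.1.7 and 9.1.8]{FMN16}, so there is nothing in the paper to compare your argument against line by line. That said, your sketch is exactly the proof strategy of that reference --- multilinear expansion, the spanning-tree bound for joint cumulants with a dependency graph, and Cayley's formula to count the surviving tuples --- and you even cite the same theorems. Your Step~2 is, as you note, the only substantive step, and your $r=3$ computation is correct; invoking \cite[Chapter 9]{FMN16} for the general case is what the paper itself does. So: correct, and essentially the same approach as the source the paper relies on.
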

\noindent As a consequence, if $(S_n)_{n \in \N}$ is a sequence of sums of bounded random variables, and if each sum $S_n$ admits a dependency graph $G_n$ with parameters $D_n$, $N_n$ and $A$, then the uniform bound \eqref{eq:cumulant1} holds, and the condition $\lim_{n \to \infty} \frac{D_n}{N_n}=0$ amounts to the fact that the graphs $G_n$ are sparse. 
We shall see in this article that certain natural observables of models of random graphs/permutations/partitions can be related to sums of dependent random variables with sparse dependency graphs, and therefore that these models typically exhibit mod-Gaussian convergence. This implies new asymptotic results for these models, for instance speed of convergence estimates and moderate deviations;  to the best of our knowledge, all these results are new.

\begin{remark}
  Moderate deviations and Kolmogorov distance estimates
 under bounds of cumulants were first obtained
 by Saulis and Statulevi\v{c}ius in \cite[Section 2]{LivreOrange:Cumulants},
 though under a less explicit form.
 Bounds on the Kolmogorov distance in the context of dependency graphs
 can alternatively be obtained through Stein's method,
 see \emph{e.g.}~\cite{Rin94}.
\end{remark}

\begin{remark}[Weighted dependency graphs]
In \cite[Section 5]{FMN17}, we developed a more general method in order to prove uniform bounds on cumulants, which relied on the notion of weighted dependency graphs. Here, we shall not need to use these more complex arguments.
\end{remark}
\medskip

\subsection{Random graphs, random permutations and random partitions}\label{subsec:informalpresentationmodels}
Let us now present informally the combinatorial random models that we shall study in this paper. We start with random graphs and the theory of graphons. In the following, unless stated otherwise, a \emph{graph} will always be a finite undirected simple graph, that is to say a pair $G=(V_G,E_G)$ with $V_G$ finite set of \emph{vertices}, and $E_G$ subset of the set $\mathfrak{P}_2(V_G)$ of pairs of vertices. Thus, $E_G$ is a finite set of pairs $\{v_1,v_2\}$ with $v_1,v_2 \in V_G$ and $v_1 \neq v_2$. These pairs are the \emph{edges} of the graph.

\begin{figure}[ht]
  \[
  \begin{array}{c}
\begin{tikzpicture}[scale=1]
\draw [thick,NavyBlue] (0:2cm) -- (120:2cm) -- (60:2cm);
\draw [line width=3pt,white] (60:2cm) -- (240:2cm);
\draw [thick,NavyBlue] (60:2cm) -- (240:2cm);
\draw [thick,NavyBlue] (240:2cm) -- (120:2cm);
\draw [line width=3pt,white] (180:2cm) -- (300:2cm);
\draw [thick,NavyBlue] (180:2cm) -- (300:2cm);
\foreach \x in {0,60,120,180,240,300}
\fill (\x:2cm) circle (2pt);
\draw (180:2.3cm) node {$1$};
\draw (120:2.3cm) node {$2$};
\draw (60:2.3cm) node {$3$};
\draw (0:2.3cm) node {$4$};
\draw (300:2.3cm) node {$5$};
\draw (240:2.3cm) node {$6$};
\end{tikzpicture}
\end{array}
\qquad
  \begin{array}{c}
\includegraphics[scale=0.5]{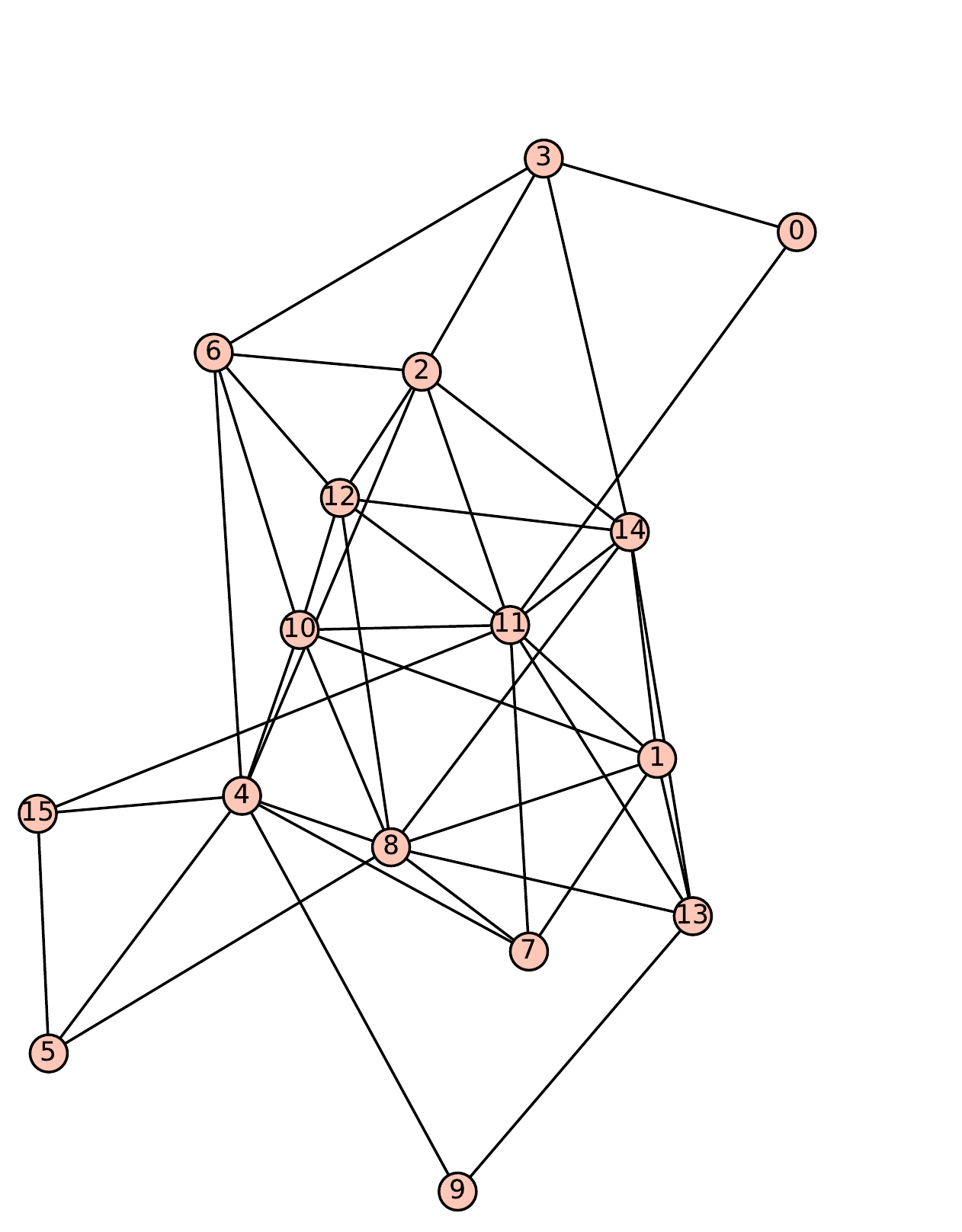}
\end{array}\]
\caption{Two graphs $G$ having each a density $\frac{1}{20}$ of triangles.\label{fig:densitytriangle}}
\end{figure}

When looking at a large (random) graph $G$, a way to speak of convergence is to look at the numbers of subgraphs that appear in $G$.
In the left picture of Figure~\ref{fig:densitytriangle},
there is only one triangle, formed by the vertices $\{2,3,6\}$.
As the number of possible triangles in a graph with $6$ vertices is $\binom{6}{3} = 20$,
the {\em density of triangles} $t_0(K_3,G)$ in $G$ is $\frac{1}{20}$.
The right picture of Figure~\ref{fig:densitytriangle}
shows a larger graph, with $16$ vertices and again a density of triangles equal to $\frac{1}{20}$
(there are 28 triangles, in this graph, among the $\binom{16}{3}=560$ possible ones).
\medskip

We will be interested in sequences $(G_n)_{n \in \N}$ of graphs,
for which the density of triangles $(t_0(K_3,G_n))_{n \in \N}$ has a limit in $[0,1]$.
 More generally, for any finite graph $F$ of size $k$, we can look at the density $t(F,G)$ of the subgraph $F$ in $G$, which is defined by the number of occurrences of $F$ in $G$, divided by the total number of possibilities for $F$ to appear as a subgraph of $G$. The precise definition of this density involves the notion of morphism of graphs, and it will be given in Section \ref{sec:observables}; one can actually define two slightly different densities $t_0(F,G)$ and $t(F,G)$, but for the moment let us forget this subtlety. A convergent sequence of graphs $(G_n)_{n \in \N}$ will then be a sequence of graphs such that for any $F$ finite graph, $(t(F,G_n))_{n\in \N}$ has a limit in $[0,1]$.
\medskip

This notion of convergence can also be {\em realized} as a convergence in some metric space.
This construction has been performed by Lov\'asz and Szegedy in \cite{LS06}, and it leads to the theory of graphons.
Namely, there exists a compact metrisable space $\Gcal$ called the \emph{space of graphons}, in which one can embed any finite graph $G$, and such that a sequence of finite graphs $(G_n)_{n \in \N}$ has convergent densities $t(F,G_n)$ for any finite graph $F$ if and only if there exists some parameter $\gamma \in \Gcal$ such that $G_n \to \gamma$ in the space of graphons. 
Moreover, graphs are dense in the space $\Gcal$; this is shown by constructing, for each graphon $\gamma$,
a model of random graphs $(G_n(\gamma))_{n \in \N}$ that converges almost surely to $\gamma$
\cite[Section 2.6]{LS06}.
\medskip

In the present paper, we prove that, for any graphon $\gamma$ and fixed graph $F$,
the observable $t(F,G_n(\gamma))$ are mod-Gaussian convergent after an appropriate renormalisation.
In the previous sentence, the word "generically" means that there exists a universal renormalisation $\alpha_{n,F}\,t(F,G_n(\gamma))$ of the density $t(F,G_n(\gamma))$ which depends only on $F$, and such that for any graphon $\gamma \in \Gcal$, we have a convergence in law 
$$ \alpha_{n,F}\,t(F,G_n(\gamma)) \rightharpoonup_{n \to \infty} \mathcal{N}_{\R}(0,\sigma^2(F,\gamma)).$$ 
When $\sigma^2(F,\gamma)>0$, we have mod-Gaussian convergence  through uniform bounds of cumulants
and all the precise probabilistic estimates listed in Theorem \ref{thm:cumulantestimates}.
Then, the only pairs of parameters $(F,\gamma)$ 
for which this asymptotic normality does not sit in the framework of mod-Gaussian convergence are those for which $\sigma^2(F,\gamma)=0$.
Notice that this singular case does not prohibit the existence of another renormalisation 
$\alpha_{n,F}'\,t(F,G_n(\gamma))$ which falls in the framework of Theorem \ref{thm:cumulantestimates}.
The theory that we shall develop will allow us:
\begin{itemize}
 	\item to describe the asymptotic fluctuations of random models $(G_n(\gamma))_{n \in \N}$ when these fluctuations have a typical size;
 	\item to identify the parameters $\gamma$ such that $\sigma^2(F,\gamma)=0$ for all graphs $F$.
      These parameters $\gamma$ will be called {\em globally singular} and they seem to exhibit some structure.
       \end{itemize} 
For instance, we shall see that the Erdös--Rényi random graphs are singular models with respect to any density of a subgraph; see the remark on singular points on page \pageref{rem:singularity}. Our theory is concerned with models of random graphs which are less symmetric and therefore more generic. It enables then a fine understanding of their asymptotic behavior.

\begin{center}
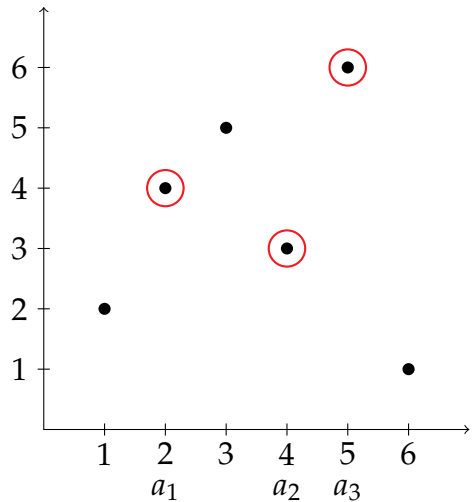
\begin{figure}[ht]
\begin{tikzpicture}[scale=0.8]
\foreach \x in {(1,2),(2,4),(3,5),(4,3),(5,6),(6,1)}
\fill \x circle (1mm);
\foreach \x in {(2,4),(4,3),(5,6)}
\draw [Red,thick] \x circle (3mm);
\draw [<->] (0,7) -- (0,0) -- (7,0);
\foreach \x in {1,2,3,4,5,6}
{\draw (\x,-0.1) -- (\x,0.1) ; \draw (-0.1,\x) -- (0.1,\x) ;};
\draw (1,-0.4) node {$1$};
\draw (2,-0.4) node {$2$};
\draw (2,-1) node {$a_1$};
\draw (3,-0.4) node {$3$};
\draw (4,-0.4) node {$4$};
\draw (4,-1) node {$a_2$};
\draw (5,-0.4) node {$5$};
\draw (5,-1) node {$a_3$};
\draw (6,-0.4) node {$6$};
\draw (-0.4,1) node {$1$};
\draw (-0.4,2) node {$2$};
\draw (-0.4,3) node {$3$};
\draw (-0.4,4) node {$4$};
\draw (-0.4,5) node {$5$};
\draw (-0.4,6) node {$6$};
\end{tikzpicture}
\caption{The permutation $213$ is a pattern in $\sigma=245361$.\label{fig:pattern}}
\end{figure}
\end{center}


An approach analogous to the theory of graphons can be used in order to deal with models of random permutations. Recall that a \emph{permutation} of size $n$ is a bijection $\sigma : \lle 1,n\rre \to \lle 1,n\rre$. The set of all permutations of size $n$ is the \emph{symmetric group} of order $n$, denoted $\sym(n)$, and with cardinality $n!$. If $\tau \in \sym(k)$ and $\sigma \in \sym(n)$ with $k \leq n$, we say that $\tau$ is a \emph{pattern} in $\sigma$ if there exists a subset $\{a_1 < a_2<\cdots <a_k\}\subset \lle 1,n\rre$ such that $\sigma(a_i)<\sigma(a_j)$ if and only if $\tau(i)<\tau(j)$. This definition is better understood on a picture: if one draws the diagram of $\sigma$ (graph of $\sigma$ viewed as a function), then one can isolate points with abscissa $a_1<a_2<\cdots<a_k$ such that the restriction of the diagram of $\sigma$ to these points is the diagram of the permutation $\tau$; see Figure \ref{fig:pattern}. In the previous example, there are $2$ sets $\{a_1<a_2<a_3\}$ that make appear $\tau=213$ as a pattern in $\sigma=245361$, namely, $\{2,4,5\}$ and $\{3,4,5\}$. Therefore, it is natural to say that $\tau = 213$ has a pattern density $t(\tau,\sigma)$ equal to $\frac{2}{\binom{6}{3}}=\frac{1}{10}$ in the permutation $\sigma$. \medskip

We can extend the definition to any pattern $\tau$, and exactly as for graphs, we then say that a sequence $(\sigma_n)_{n\in \N}$ of permutations converges if, for any finite permutation $\tau$, the sequence of densities $(t(\tau,\sigma_n))_{n \in \N}$ has a limit in $[0,1]$. Again, this notion of convergence leads to the construction of a compact metrisable set $\Scal$ called the \emph{space of permutons}, in which one can embed any finite permutation $\sigma$, and such that a sequence of permutations $(\sigma_n)_{n\in \N}$ has convergent densities $t(\tau,\sigma_n) \to t(\tau)$ for any pattern $\tau$ if and only if there exists some parameter $\pi \in \Scal$ such that $\sigma_n \to \pi$ in the space of permutons. The space of permutons was introduced by Hoppen \emph{et al.} in \cite{HKMS11,HKMRS13}.
Again, these authors prove that permutations are dense in $\Scal$ by contructing,
for each permuton $\pi$, a model of random permutations $(\sigma_n(\pi))_{n \in \N}$ such that
$\sigma_n(\pi)$ tends to $\pi$ almost surely
\cite[Section 4]{HKMRS13}.
\medskip

\noindent In this paper, we shall prove that  for any pattern $\tau$ and permuton $\pi \in \Scal$, the observable $t(\tau,\sigma_n(\pi))$ are generically mod-Gaussian convergent after an appropriate renormalisation.
Again, this statement implies a central limit theorem for the densities of patterns $t(\tau,\sigma_n(\pi))$, and additional results such as a bound on the speed of convergence or a concentration inequality (see next section).
\bigskip

A third class of models consists in random integer partitions which stem from random permutations obtained by shuffle of cards, and which are related to the representation theory of the infinite symmetric group $\sym(\infty)$ (see \cite{Mel12} and \cite[Chapter 12]{Mel17}).
The mod-Gaussian convergence of some observables of these models was established in \cite[Chapter 11]{FMN16},
but we take here a different approach to emphasize similarities with graphons and permutons.
\medskip

\noindent Recall that an \emph{integer partition} of size $n$ is a non-increasing sequence $\lambda = (\lambda_1 \geq \cdots \geq \lambda_r)$ of positive integers with $\lambda_1+\lambda_2+\cdots + \lambda_r=n$. The set of all integer partitions of size $n$ is denoted $\pym(n)$, and an integer partition $\lambda$ is usually represented by its Young diagram, which is the array of boxes with $\lambda_1$ cells on its first bottom row, $\lambda_2$ cells on its second row, \emph{etc.} 
\begin{center}		
\begin{figure}[ht]
\begin{tikzpicture}[scale=1]
\draw (0,0) -- (5,0) -- (5,1) -- (4,1) -- (4,2) -- (2,2) -- (2,3) -- (0,3) -- (0,0);
\draw (1,0) -- (1,3);
\draw (0,2) -- (2,2) -- (2,1) -- (4,1) -- (4,0);
\draw (0,1) -- (2,1) -- (2,0);
\draw (3,0) -- (3,2);
\draw [dashed] (0,0) -- (2,2);
\draw [very thick, NavyBlue, <->] (0.6,0.5) -- (5,0.5);
\draw [very thick, NavyBlue, <->] (1.6,1.5) -- (4,1.5);
\draw [very thick, Red, <->] (0.5,0.6) -- (0.5,3);
\draw [very thick, Red, <->] (1.5,1.6) -- (1.5,3);
\draw [NavyBlue] (2.8,0.2) node {$a_1$};
\draw [NavyBlue] (2.8,1.2) node {$a_2$};
\draw [Red] (0.25,2.5) node {$b_1$};
\draw [Red] (1.25,2.5) node {$b_2$};
\end{tikzpicture}
\caption{The Young diagram and the Frobenius coordinates of the integer partition $(5,4,2)$ of size $n=11$.\label{fig:partition}}
\end{figure}
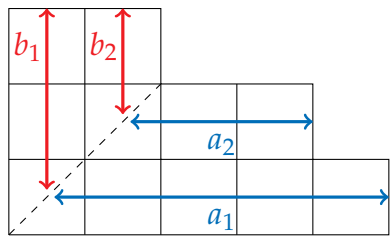
\end{center}

The \emph{Frobenius coordinates} of $\lambda$ are the half-integers $(a_1,a_2,\ldots ,a_d;b_1,b_2,\ldots,b_d)$ defined as follows: $d$ is the size of the diagonal of the Young diagram of $\lambda$, and
$$a_i = \lambda_i - i +\frac{1}{2} \qquad;\qquad b_i = \lambda_i'-i+\frac{1}{2},$$
where $\lambda'$ is the partition obtained from $\lambda$ by symetrising the Young diagram with respect to its diagonal (see \emph{e.g.}~\cite[Section 7.2]{Mel17}). For instance, when $\lambda=(5,4,2)$, one obtains as Frobenius coordinates
$$a_1 = \frac{9}{2},\,\,a_2 = \frac{5}{2}\qquad;\qquad b_1 = \frac{5}{2},\,\,b_2 = \frac{3}{2}.$$
We refer to Figure \ref{fig:partition} for a geometric interpretation of the Frobenius coordinates. They allow one to see any integer partition as an element of the \emph{Thoma simplex}, which is the bi-infinite dimensional simplex
$$\Pcal = \left\{(\alpha,\beta) = ((\alpha_1\geq \alpha_2 \geq \cdots \geq 0),(\beta_1\geq \beta_2 \geq \cdots \geq 0))\,\,\big|\,\,\sum_{i=1}^\infty (\alpha_i+\beta_i) \leq 1\right\}.$$
Indeed, one can associate with $\lambda \in \pym(n)$ the two sequences 
\begin{align*}
\alpha(\lambda) &= \left(\frac{a_1(\lambda)}{n},\frac{a_2(\lambda)}{n},\ldots,\frac{a_d(\lambda)}{n},0,0,\ldots\right);\\
\beta(\lambda) &= \left(\frac{b_1(\lambda)}{n},\frac{b_2(\lambda)}{n},\ldots,\frac{b_d(\lambda)}{n},0,0,\ldots\right),
\end{align*}
and the pair $\omega(\lambda) = (\alpha(\lambda),\beta(\lambda))$ belongs to $\Pcal$. 
We equip the Thoma simplex $\Pcal$ with the pointwise convergence topology,
{\em \, i.e.} a sequence $(\alpha^{(n)},\beta^{(n)})$ converges if for each $i \ge 1$, 
the coordinates $\alpha^{(n)}_i$ and $\beta^{(n)}_i$ converge.
As we will see later in Proposition~\ref{prop:topology_obs_partitions}, this is equivalent to the convergence of the following observables,
to which we will refer as {\em Frobenius moments}: for $k \ge 2$, let
$$t(k,\omega=(\alpha,\beta)) \coloneqq \sum_{i=1}^\infty (\alpha_i)^k + (-1)^{k-1}\sum_{i=1}^\infty (\beta_i)^k.$$
As for graphons and permutons,
any parameter $\omega \in \Pcal$ defines, for each $n\ge 1$,
a model of random integer partitions of size $n$, denoted $(\lambda_n(\omega))_{n \in \N}$.
Namely, the distribution of $\lambda_n(\omega)$ is given by
$$\proba[\lambda_n(\omega) = \lambda] = (\dim \lambda)\,s_\lambda(\omega),$$
where $\dim \lambda$ is the number of standard tableaux with shape $\lambda$, and $s_\lambda(\omega)$ is some specialisation of the Schur function $s_\lambda$ associated with the parameter $\omega$; see Section \ref{subsec:thoma} for details on the construction of the random partitions $\lambda_n(\omega)$ and for their combinatorial interpretation.
Kerov and Vershik \cite{KV81} showed that for any $\omega = (\alpha,\beta)$ in $\Pcal$, one has convergence in probability $\lambda_n(\omega) \to \omega$ in the Thoma simplex.
We shall prove that the observables $t(k,\lambda_n(\omega))$ are generically mod-Gaussian convergent after an appropriate renormalisation. This completes a previous result of \cite[Chapter 11]{FMN16}, where the same result was proven for the random character values $\chi^{\lambda_n(\omega)}(c_k)$.
\medskip

\subsection{Concentration inequalities}
\label{ssec:concentration_intro}
The theory of graphons and permutons developed in the papers \cite{LS06,LS07,BCLSV08,HKMS11,HKMRS13} relies on certain concentration inequalities for the observables of the random graphs or permutations; see for instance \cite[Lemma 4.4]{BCLSV08} and \cite[Theorem 4.2]{HKMS11}. One of the objective of this paper is to show that these inequalities are in fact easy consequences of the mod-Gaussian structure. Indeed, under the hypotheses of the method of cumulants, we can state several concentration inequalities which show that the random variables considered are uniformly sub-Gaussian. We start with an inequality which only involves the parameters $(D_n,N_n,A)$ of the method of cumulants:
\begin{proposition}[Concentration inequality]\label{prop:concentration1}
  Let $(S_n)_{n \in N}$ be a sequence of random variables such that \eqref{eq:cumulant1} holds
  for some parameters $(D_n,N_n,A)$.
  We also assume that $|S_n| \leq N_n A$ almost surely (this is for instance the case if $S_n$ is a sum of bounded random variables with a dependency graph with parameters $(D_n,N_n,A)$). For any $x>0$,
$$\proba[|S_n - \esper[S_n]| \geq x] \leq 2\,\exp\left(-\frac{x^2}{9\,D_nN_nA^2}\right).$$
\end{proposition}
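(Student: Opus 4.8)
The plan is to deduce the concentration inequality from the uniform bounds on cumulants \eqref{eq:cumulant1} by a standard Bernstein-type argument, i.e.\ bounding the centered Laplace transform and optimizing over the auxiliary parameter. Write $\overline{S_n} = S_n - \esper[S_n]$, so that $\kappa^{(1)}(\overline{S_n})=0$ and $\kappa^{(r)}(\overline{S_n}) = \kappa^{(r)}(S_n)$ for $r\ge 2$ (cumulants of order $\ge 2$ are translation invariant), with $|\kappa^{(r)}(S_n)| \le N_n (2D_n)^{r-1} r^{r-2} A^r$ for all $r$. For real $z$ with $|z|$ small enough that the cumulant series converges, I would estimate
$$\log \esper\!\left[\E^{z\overline{S_n}}\right] = \sum_{r=2}^\infty \frac{\kappa^{(r)}(S_n)}{r!}\,z^r,$$
so that
$$\left|\log \esper\!\left[\E^{z\overline{S_n}}\right]\right| \le N_n \sum_{r=2}^\infty \frac{(2D_n)^{r-1} r^{r-2} A^r}{r!}\,|z|^r = \frac{N_n}{2D_n}\sum_{r=2}^\infty \frac{r^{r-2}}{r!}\,(2D_nA|z|)^r.$$

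The key step is to control the series $\sum_{r\ge 2} \frac{r^{r-2}}{r!} u^r$ for $u = 2D_nA|z|$ in a suitable range. Using $r! \ge (r/\E)^r$, one gets $\frac{r^{r-2}}{r!} \le \frac{\E^r}{r^2} \le \E^r$, so the series is bounded by $\sum_{r\ge 2}(\E u)^r = \frac{(\E u)^2}{1-\E u}$ whenever $\E u < 1$; restricting, say, to $\E u \le 1/2$ (equivalently $|z| \le \frac{1}{4\E D_n A}$) gives the clean bound $\sum_{r\ge 2}\frac{r^{r-2}}{r!}u^r \le 2\E^2 u^2 \le C (D_nA|z|)^2$ for an absolute constant $C$. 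Hence $\log\esper[\E^{z\overline{S_n}}] \le C' D_n N_n A^2 z^2$ on this interval for an absolute constant $C'$. By Markov's inequality, for $0 < z \le \frac{1}{4\E D_n A}$,
$$\proba[\overline{S_n} \ge x] \le \E^{-zx}\,\esper\!\left[\E^{z\overline{S_n}}\right] \le \exp\!\left(-zx + C' D_n N_n A^2 z^2\right).$$
Now I optimize in $z$. The unconstrained optimum is $z^* = \frac{x}{2C'D_nN_nA^2}$, giving exponent $-\frac{x^2}{4C'D_nN_nA^2}$; when $z^*$ exceeds $\frac{1}{4\E D_n A}$ — which happens exactly when $x > \frac{C'N_nA}{2\E}$, a regime of very large deviations — I would instead take $z = \frac{1}{4\E D_n A}$ and use the \emph{a priori} bound $|\overline{S_n}| \le 2N_nA$ (from $|S_n|\le N_nA$), or note that in that regime $x \gtrsim N_n A$ and the linear term $-zx$ already dominates, making the bound trivially true once constants are tuned. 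Tracking the constants carefully through both cases, and doubling for the two-sided estimate $\proba[|\overline{S_n}|\ge x] \le 2\proba[\overline{S_n}\ge x] + \text{(symmetric)}$, should yield the constant $9$ in the denominator as stated.

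I expect the main obstacle to be purely bookkeeping: getting the absolute constants to collapse to exactly $1/9$ requires being somewhat sharper than the crude estimates above — in particular using $r^{r-2}/r! \le \E^r/r^2$ rather than $\E^r$, summing $\sum_{r\ge2}\E^r u^r/r^2$ more carefully, and choosing the cutoff on $|z|$ and the split point in $x$ so that the two cases dovetail with matching constants. There is no conceptual difficulty — the inequality is a direct consequence of \eqref{eq:cumulant1} via the Bernstein/Chernoff recipe — but the numerical optimization is where the care lies. (One could alternatively cite the Saulis–Statulevi\v{c}ius machinery referenced in the remark above, which produces exactly such sub-Gaussian tail bounds from cumulant bounds of this form, but giving the short self-contained argument above is cleaner.)
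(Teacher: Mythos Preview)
Your approach is the same as the paper's --- a Chernoff bound combined with the cumulant estimate \eqref{eq:cumulant1} --- and is correct in outline. The difference lies entirely in how the series $\sum_{r\ge 2}\frac{r^{r-2}}{r!}u^r$ is controlled, and this is exactly where the constant $\tfrac{1}{9}$ comes from.

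With your crude bound $\frac{r^{r-2}}{r!}\le \E^r$ (or even $\E^r/r^2$) and the restriction $\E u\le\tfrac{1}{2}$, the resulting quadratic coefficient $C'$ in $\log\esper[\E^{z\overline{S_n}}]\le C'D_nN_nA^2z^2$ is roughly $4\E^2$ (or $\E^2$), leading after optimisation to an exponent of order $-x^2/(30\,D_nN_nA^2)$ or worse --- not $-x^2/(9\,D_nN_nA^2)$. So the claim that ``tracking the constants carefully'' through your estimates yields $\tfrac{1}{9}$ is too optimistic: the refinements needed are not just bookkeeping.

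The paper's proof sharpens in two places. First, it keeps the full Stirling bound $r!\ge (r/\E)^r\sqrt{2\pi r}$, gaining an extra $r^{1/2}\sqrt{2\pi}$; then it replaces $r^{5/2}$ by $\tfrac{\sqrt{27}}{2}r(r-1)$ so that the series sums \emph{exactly} to $(1-\theta)\log(1-\theta)+\theta$ with $\theta=2\E D_nAt$. Second, rather than splitting into cases on the size of $x$, it optimises in $\theta$ analytically to get $\theta=1-\E^{-cu}$ for an explicit $c$ and $u=x/(N_nA)$, and then uses that $f(u)=\frac{1-\E^{-u}-u}{u^2}$ is negative and increasing together with the a priori bound $x\le N_nA$ (from $|S_n|\le N_nA$) to replace $f(cu)$ by the single numerical value $f(c)$. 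Evaluating this gives $C\le -0.114<-\tfrac{1}{9}$.

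So your strategy is right, but to actually land on $\tfrac{1}{9}$ you need (i) the $\sqrt{2\pi r}$ factor, (ii) the closed-form summation, and (iii) the monotonicity-of-$f$ trick in place of the case split.
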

We can also write a concentration inequality which involves explicitly the variance of $S_n$, and which is more precise for small fluctuations:
\begin{proposition}[Concentration inequality involving the variance]\label{prop:concentration2}
With the exact same assumptions as in Proposition \ref{prop:concentration1}, we have for any $x>0$
$$\proba[|S_n - \esper[S_n]| \geq x] \leq 2\,\exp\left(-\frac{2x^2}{3\left(\var(S_n) + 2\E\,D_nN_nA^2 \sqrt{\frac{x}{N_nA}}\right)}\right).$$
\end{proposition}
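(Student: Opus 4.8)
The plan is to run a Chernoff‑type argument on the centered variable $\overline{S_n} := S_n - \esper[S_n]$, using the cumulant estimates \eqref{eq:cumulant1} to control its Laplace transform. Both hypotheses are invariant under $S_n \mapsto -S_n$ (since $|\kappa^{(r)}(-S_n)| = |\kappa^{(r)}(S_n)|$ and $|-S_n| = |S_n|$), so it is enough to bound $\proba[\overline{S_n} \ge x]$ and double the result. First I would estimate the cumulant generating function: for real $z \ge 0$ inside the domain of convergence of the series I isolate the second cumulant, which is exactly $\var(S_n)$, and bound the remaining terms using \eqref{eq:cumulant1} together with the elementary inequality $r^{r-2} \le \E^r\,r!/r^2 \le \E^r\,r!/9$ for $r \ge 3$ (a consequence of $r! \ge (r/\E)^r$). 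With the notation $u := 2\E\,D_nA\,z$ this yields, as long as $0 \le u < 1$,
$$\log \esper\!\left[\E^{z\overline{S_n}}\right] = \frac{\var(S_n)}{2}\,z^2 + \sum_{r \ge 3}\frac{\kappa^{(r)}(S_n)}{r!}\,z^r \le \frac{\var(S_n)}{2}\,z^2 + \frac{N_n}{18\,D_n}\cdot\frac{u^3}{1-u},$$
and therefore, by Markov's inequality, $\proba[\overline{S_n} \ge x] \le \exp\!\left(-zx + \tfrac{\var(S_n)}{2}z^2 + \tfrac{N_n}{18D_n}\tfrac{u^3}{1-u}\right)$ for every such $z$.

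The crux is the choice of $z$, obtained by (approximately) minimising the right‑hand exponent. When $x$ is small the cubic remainder is negligible against the quadratic term, and a plain quadratic optimisation produces a Gaussian‑type exponent involving $\var(S_n)$. When $x$ is larger the remainder dominates, and the optimal $z$ is the one balancing the linear term $zx$ against $\tfrac{N_n}{18D_n}u^3$: this forces $u$ to be of order $\sqrt{x/(N_nA)}$, which is precisely where the factor $\sqrt{x/(N_nA)}$ in the denominator of the statement originates, and at that balance point the cubic remainder is a fixed fraction (one third) of the linear term $zx$, so that $-zx$ effectively improves to $-\tfrac23\,zx$ — this is the source of the ratio $\tfrac23$ (a plain quadratic Laplace bound would only give $\tfrac12$), while the constant $\E$ is inherited from the Stirling estimate of the first step. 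Interpolating between the two regimes (equivalently, taking $z$ of the order of $x/(\var(S_n)+W)$ with $W = 2\E\,D_nN_nA^2\sqrt{x/(N_nA)}$, keeping $u<1$, and absorbing the cubic remainder into a controlled multiple of $\tfrac{W}{2}z^2$) produces the claimed exponent $-2x^2/(3(\var(S_n)+W))$.

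The Laplace‑transform estimate above is valid only while $u < 1$, i.e.\ while $x$ stays below a fixed multiple of $N_nA$; for larger $x$ one uses that $|\overline{S_n}| \le 2N_nA$ almost surely — so the probability vanishes once $x > 2N_nA$ — and, in the intermediate band $x \asymp N_nA$, combines this boundedness with Proposition~\ref{prop:concentration1} and the crude bound $\var(S_n) \le 2N_nD_nA^2$ to check that the stated inequality still holds. I expect the principal obstacle to be exactly this bookkeeping: tracking the numerical constants through the two‑regime optimisation so that the single expression $-2x^2/(3(\var(S_n)+W))$ dominates uniformly in $x > 0$, and verifying that the coarse control of the cubic (and higher) remainder stays compatible with the near‑optimal Chernoff parameter. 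The hypothesis $|S_n| \le N_nA$ is essential throughout: it both guarantees convergence of the cumulant series over the range of $z$ needed to reach a prescribed deviation $x$ and disposes of the regime of very large $x$.
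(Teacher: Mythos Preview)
Your overall strategy is the same as the paper's: apply Chernoff to $\overline{S_n}$, isolate the $r=2$ cumulant as $\var(S_n)$, bound the tail $\sum_{r\ge 3}$ via Stirling, then optimise over the exponential parameter; the symmetry $S_n\mapsto -S_n$ and the boundedness $|S_n|\le N_nA$ are used exactly as you describe.

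Where you diverge is in the treatment of the remainder and, consequently, in the optimisation. You bound the tail by the geometric expression $\tfrac{N_n}{18D_n}\,\tfrac{u^3}{1-u}$ and then argue heuristically (``interpolating between the two regimes'') that a suitable $z$ yields the claimed exponent. The paper instead pushes one step further and makes the remainder \emph{polynomial}: after reaching the analogue of your bound (with $\theta=2\E D_nA t$ playing the role of your $u$), it uses $(1-\theta)\log(1-\theta)+\theta-\tfrac{\theta^2}{2}\le \tfrac{\theta^3}{2}$ to obtain an upper bound for $\log\esper[\E^{t\overline{S_n}}]-tx$ that is a cubic polynomial in $\theta$. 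The stationary equation is then a \emph{quadratic} in $\theta$, whose positive root
\[
\theta=\sqrt{\tfrac{1}{16\E^2}\big(\tfrac{\sigma_n}{A}\big)^4+\tfrac{x}{N_nA}}-\tfrac{1}{4\E}\big(\tfrac{\sigma_n}{A}\big)^2
\]
is written down explicitly, shown to satisfy $\theta\le\sqrt{x/(N_nA)}\le 1$ (here the boundedness is invoked to assume $x\le N_nA$), and then substituted back; a short chain of algebraic inequalities produces the exact constant $\tfrac{2}{3}$ and the denominator $\var(S_n)+2\E D_nN_nA^2\sqrt{x/(N_nA)}$.

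So the gap in your write-up is not conceptual but computational: you correctly anticipate that ``tracking the numerical constants through the two-regime optimisation'' is the obstacle, and the paper's device for overcoming it is precisely to replace your $u^3/(1-u)$ by a pure cubic so that the optimiser is algebraic and the bookkeeping becomes a closed-form manipulation rather than an interpolation argument.
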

Both inequalities are proved in Section~\ref{ssec:proofs_concentration} by using a Chernoff bound.
We shall see in Theorems~\ref{thm:modgraphon} and \ref{thm:modpermuton} below that the aforementioned concentration inequalities for graphon and permuton models follow immediately from Proposition \ref{prop:concentration1}, though with different constants than in their original form. Besides, we shall obtain new concentration inequalities for the observables of random integer partitions (Theorem \ref{thm:concentrationpartitions}).
\medskip

In the case where the bounds on cumulants are obtained through a (sparse) dependency graph,
a better bound than Proposition~\ref{prop:concentration1} has been given by Janson in \cite{Jan04},
with instead of $D_n$ the (fractional) chromatic number $\chi^*(G_n)$ of the dependency graph $G_n$ of $S_n$. 
Proposition~\ref{prop:concentration1} is therefore particularly interesting in the cases
where there is no underlying sparse dependency graph.
An example of this is Theorem~\ref{thm:concentrationpartitions} below for random partitions under central measures:
indeed, there is no standard dependency graph in this setting, but one in a noncommutative probability space,
to which Janson's result does not apply (see \cite[Chapter11]{FMN16} for the construction of this graph).
Another example without underlying (sparse) dependency graph is presented in Section~\ref{ssec:Ising}:
we give a concentration inequality
for the magnetization in the Ising model,
based on the bounds on cumulants given in \cite{Duneau2} and \cite[Section 5]{FMN17}.

\subsection{Main results and outline of the paper}
Let us now detail the content of this article. We start by stating some new results on the models that were informally introduced in Section \ref{subsec:informalpresentationmodels}. These theorems can serve as a guideline for the reader, and they give a good idea of the kind of results that one can obtain in the framework of mod-Gaussian convergence. Moreover, these asymptotic results seem to be somehow universal for random combinatorial models with a concentration property. We start with the results on random graphs:

\begin{theorem}[Speed of convergence for graphon models]\label{thm:speedofconvergencegraphon}
Let $\gamma \in \Gcal$ be a graphon, and $G_n(\gamma)$ be the random graph of size $n$ associated with it (see Section \ref{subsec:graphonspace} for details). If $F$ is a finite graph with size $k$ such that
$$\lim_{n \to \infty} n\,\var(t(F,G_n(\gamma))) > 0,$$
then the rescaled subgraph density 
$$Y_n(F,\gamma) = \frac{t(F,G_n(\gamma)) -\esper[t(F,G_n(\gamma))] }{\sqrt{\var(t(F,G_n(\gamma)))}}$$
satisfies
$$\dkol(Y_n(F,\gamma),\gauss) = O\!\left(\frac{1}{(n\,\var(t(F,G_n(\gamma))))^{3/2}}\,\frac{k^4}{\sqrt{n}}\right),$$
with a constant in the $O(\cdot)$ that is universal (it does not depend on $\gamma$ or on $F$ as long as the scaled variance $n\,\var(t(F,G_n(\gamma)))$ does not tend to $0$).
\end{theorem}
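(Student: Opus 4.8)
The plan is to realize $t(F,G_n(\gamma))$ as (essentially) a sum of bounded random variables admitting a sparse dependency graph, and then to invoke Theorem~\ref{thm:cumulantestimates}(4). First I would recall the construction of $G_n(\gamma)$: one samples i.i.d.\ latent variables $U_1,\dots,U_n$ (uniform on $[0,1]$, or valued in the relevant probability space underlying the graphon $\gamma$), and then, conditionally, includes each edge $\{i,j\}$ independently with probability $\gamma(U_i,U_j)$. The subgraph count $\sum_{\phi} \mathbbm{1}[\phi \text{ is an occurrence of } F]$, where $\phi$ ranges over injections $\lle 1,k\rre \to \lle 1,n\rre$, is then a sum indexed by such injections $\phi$; two such indicator variables are independent as soon as the images of $\phi$ and $\phi'$ are disjoint (the edges they test are disjoint, and the latent variables they depend on are disjoint). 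Hence the graph on the index set $V = \{\phi\}$ whose edges join $\phi,\phi'$ with overlapping image is a dependency graph. Here $N_n = |V| \asymp n^k$ and the maximal degree is $D_n = O(n^{k-1})$, so $D_n/N_n = O(1/n) \to 0$; the bound $A$ on each summand is $1$. After dividing by the normalizing count $n^{\underline{k}} = n(n-1)\cdots(n-k+1)$ to pass from the count to the density $t(F,G_n(\gamma))$, the cumulants scale accordingly.

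Next I would assemble the pieces. By the dependency-graph bound on cumulants, the rescaled count $S_n$ (before dividing by $n^{\underline k}$) satisfies (MC1) with parameters $(D_n,N_n,A) = (c_1 n^{k-1}, c_2 n^k, 1)$ for explicit combinatorial constants $c_1,c_2$ depending on $k$ only (one should track that these grow at most polynomially, indeed like a small power of $k$, to get the $k^4$ in the final bound — more on this below). Theorem~\ref{thm:cumulantestimates}(4) then gives
$$\dkol(Y_n,\gauss) \leq \frac{76.36\,A^3}{(\sigma_n)^3}\sqrt{\frac{D_n}{N_n}},$$
where $Y_n$ is the standardized variable, which is invariant under the rescaling by $n^{\underline k}$, hence equals $Y_n(F,\gamma)$. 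Now $(\sigma_n)^2 = \kappa^{(2)}(S_n)/(N_nD_n)$, and one checks that $\var(t(F,G_n(\gamma))) = \kappa^{(2)}(S_n)/(n^{\underline k})^2$, so that $(\sigma_n)^2 \asymp n^{2k}\,\var(t(F,G_n(\gamma)))/(n^k \cdot n^{k-1}) = n\,\var(t(F,G_n(\gamma)))$ up to the constants $c_1,c_2$. Plugging in $\sqrt{D_n/N_n} = O(1/\sqrt n)$ and $(\sigma_n)^{-3} = O\big((n\,\var(t(F,G_n(\gamma))))^{-3/2}\big)$, and absorbing the $k$-dependence of $c_1,c_2,A$ into a factor $O(k^4)$, yields exactly the claimed
$$\dkol(Y_n(F,\gamma),\gauss) = O\!\left(\frac{1}{(n\,\var(t(F,G_n(\gamma))))^{3/2}}\,\frac{k^4}{\sqrt n}\right).$$
The hypothesis $\liminf_n n\,\var(t(F,G_n(\gamma))) > 0$ guarantees that $\sigma_n$ is bounded away from $0$, so the bound is meaningful and the constant is universal.

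The main obstacle is twofold, and both parts are bookkeeping rather than conceptual. First, one must be careful about \emph{which} density is used: the count of occurrences of $F$ involves a sum over injective morphisms (or over subsets, depending on whether $F$ has symmetries), and one should either work with the ``injective'' density $t_0(F,G)$ or argue that $t(F,G)$ and $t_0(F,G)$ differ by a lower-order term (of relative size $O(1/n)$) that does not affect the standardization. Second, and this is where the $k^4$ must be extracted: the number of injections $\phi$ with a prescribed nonempty overlap pattern, the constant relating the count to $t(F,G_n(\gamma))$, and above all the effective bound $A$ on the summands after any preliminary rescaling, all carry factors that one must bound explicitly by polynomials in $k$; comparing with \cite[Corollary 30]{FMN17} (the source of the $76.36\,A^3$) shows the cube of such a bound is what produces the power $k^4$. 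I would therefore devote the bulk of the proof to a clean choice of the decomposition $S_n = \sum_{v} A_v$ — most naturally indexing $v$ by $k$-subsets of $\lle 1,n\rre$ and letting $A_v$ be the (centered, or not) count of occurrences of $F$ spanned by $v$, which is bounded by $k!$ — so that $N_n = \binom{n}{k}$, $D_n = O(k\binom{n}{k-1}) $ wait, more precisely $D_n \le k\binom{n-1}{k-1}\cdot(\text{small})$, and $A = k!$; then $A^3 \asymp (k!)^3$, which one replaces by the cruder but sufficient $k^{O(k)}$ only if needed — but in fact a sharper accounting, using that $A$ can be taken to be the $L^1$-normalized constant $\frac1A\sum_v \esper|A_v| \le N_n$ as noted after the dependency-graph theorem, lets one trade the $(k!)^3$ for the stated $k^4$ by a more careful variance-based estimate. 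Verifying this last reduction is the genuinely delicate point, and I would cross-check the exponent against the analogous permuton statement (Theorem~\ref{thm:speedofconvergencepermuton}, if present) to be sure the $k^4$ is correct.
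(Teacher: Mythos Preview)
Your overall strategy is exactly the paper's: write the subgraph count as a sum with a sparse dependency graph, feed the resulting parameters into Theorem~\ref{thm:cumulantestimates}(4), and read off the Kolmogorov bound. Where you diverge is in the choice of decomposition, and this is precisely what makes your extraction of the $k^4$ factor ``genuinely delicate'' rather than immediate.

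The paper indexes by \emph{all} maps $\psi:\lle 1,k\rre \to \lle 1,n\rre$, not by injections or by $k$-subsets. This is the natural choice because $t(F,G)$ is defined in Section~\ref{subsec:graphdensity} via $|\hom(F,G)|/n^k$, counting all graph morphisms. With $A_{F,\psi}=1_{\psi\text{ is a morphism}}$ one has $S_n(F,\gamma)=n^k\,t(F,G_n(\gamma))=\sum_\psi A_{F,\psi}$, and each summand is a $\{0,1\}$-indicator, so $A=1$ exactly. The dependency graph (Lemma~\ref{lem:graphondependencygraph}) then has $N_n=n^k$ and $D_n=k^2 n^{k-1}$ (the number of maps $\phi$ sharing at least one value with a given $\psi$ is at most $k^2 n^{k-1}$). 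Now the $k^4$ is a one-line computation: $(\sigma_n)^2=\var(S_n)/(N_nD_n)=n^{2k}\var(t(F,G_n(\gamma)))/(k^2 n^{2k-1})=n\,\var(t(F,G_n(\gamma)))/k^2$, so $(\sigma_n)^{-3}=k^3/(n\,\var)^{3/2}$, while $\sqrt{D_n/N_n}=k/\sqrt{n}$; the product is $k^4/((n\,\var)^{3/2}\sqrt{n})$, with universal constant $76.36$.

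Your alternative of indexing by $k$-subsets forces $A=k!$ (or worse), and then $A^3$ swamps the $k^4$ you want; the ``sharper accounting'' you gesture at to repair this is neither in the paper nor easy to carry out. Likewise, the $t$ versus $t_0$ worry evaporates once you use all maps: there is no injectivity to enforce and no $O(1/n)$ correction to control. So the gap is not conceptual but a suboptimal choice of summands; switch to all maps and the proof is two paragraphs.
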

\begin{theorem}[Moderate deviations for graphon models]\label{thm:moderatedeviationgraphon}
We use the same notation as in Theorem \ref{thm:speedofconvergencegraphon}
and also assume
$$\lim_{n \to \infty} n\,\var(t(F,G_n(\gamma))) > 0. $$
Then, then there is a real number $l(F,\gamma)$ such that,
if $y_n = o(n^{1/4})$ and $y_n \to +\infty$, we have
$$\proba[Y_n(F,\gamma) \geq y_n] = \frac{\E^{-\frac{(y_n)^2}{2}}}{y_n\,\sqrt{2\pi}}\,\exp\left(l(F,\gamma)\,\frac{(y_n)^3}{\sqrt{n}}\right)\,(1+o(1)).$$
In particular, if $y_n = o(n^{1/6})$, we have $\proba[Y_n(F,\gamma) \geq y_n] = \proba[Z\ge y_n] (1+o(1))$,
where $Z$ is a standard Gaussian random variable, and $1/6$ is the maximal exponent such that this happens.
We say that
the {\em zone of normality} of $Y_n(F,\gamma)$ is $o(n^{1/6})$.
\end{theorem}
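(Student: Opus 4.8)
The plan is to deduce Theorem~\ref{thm:moderatedeviationgraphon} from the method of cumulants (Theorem~\ref{thm:cumulantestimates}, item~3), applied not to the density itself but to the \emph{unnormalised} subgraph count
$$S_n = S_n(F,\gamma) \coloneqq M_n\,t(F,G_n(\gamma)),\qquad M_n\coloneqq n(n-1)\cdots(n-k+1),$$
which one writes as a sum $S_n = \sum_\phi A_\phi$ of $\{0,1\}$-valued random variables indexed by the injections $\phi\colon V_F\to\lle 1,n\rre$, with $A_\phi = 1$ if and only if $\phi$ realises $F$ as a subgraph of $G_n(\gamma)$. The standardised variable appearing in Theorem~\ref{thm:cumulantestimates} is unchanged under the deterministic affine rescaling $S_n\mapsto S_n/M_n$, hence equals $Y_n(F,\gamma)$; so it is enough to check that $(S_n)_n$ satisfies the hypotheses of the method of cumulants with $\sigma^2>0$ and then to translate the conclusion. (This is a particular case of the general mod-Gaussian statement for graphon observables, Theorem~\ref{thm:modgraphon}; I spell out the ingredients.)

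First I would produce a sparse dependency graph for $S_n$. In the construction of $G_n(\gamma)$ (Section~\ref{subsec:graphonspace}) one draws i.i.d.\ latent variables $U_1,\dots,U_n$ and, independently, uniform thresholds $(\theta_{ij})_{i<j}$, and $A_\phi$ is a deterministic function of $(U_i)_{i\in\mathrm{Im}\,\phi}$ and $(\theta_{ij})_{i,j\in\mathrm{Im}\,\phi}$ only; hence two injections with disjoint images give independent indicators. Therefore the graph on the set of injections, with an edge between $\phi$ and $\psi$ whenever $\mathrm{Im}\,\phi\cap\mathrm{Im}\,\psi\neq\emptyset$, is a dependency graph, with parameters
$$N_n = M_n = n^k\bigl(1+O(n^{-1})\bigr),\qquad D_n = M_n - (n-k)(n-k-1)\cdots(n-2k+1) = k^2 n^{k-1}\bigl(1+O(n^{-1})\bigr),$$
and $A = 1$. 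By the theorem on uniform bounds on cumulants from a dependency graph, \eqref{eq:cumulant1} holds, and $D_n/N_n = \Theta(n^{-1})\to 0$.

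The substantial step is the asymptotic moment computation for \eqref{eq:cumulant2} and \eqref{eq:cumulant3}. Exchangeability of the latent variables shows that $\esper[A_{\phi_1}\cdots A_{\phi_m}]$ depends only on the combinatorial type of the overlap pattern of $\phi_1,\dots,\phi_m$, so that $\kappa^{(2)}(S_n)=\var(S_n)$ and $\kappa^{(3)}(S_n)$ are \emph{polynomials} in $n$, of degree at most $2k-1$ and $3k-2$ respectively; the top-degree contributions come from pairs, and from connected triples in the dependency graph, of injections overlapping in a single vertex, and a careful count of these (analogous to the classical variance computation for $W$-random graphs) identifies the coefficients of $n^{2k-1}$ and $n^{3k-2}$, which we denote $k^2\sigma^2(F,\gamma)$ and $k^4 L(F,\gamma)$, with $\sigma^2(F,\gamma)\geq 0$ and $L(F,\gamma)\in\R$ depending only on $(F,\gamma)$. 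As ratios of polynomials of equal degree, $\sigma_n^2 = \kappa^{(2)}(S_n)/(N_nD_n) = \sigma^2(F,\gamma)+O(n^{-1})$ and $L_n = \kappa^{(3)}(S_n)/(N_nD_n^2) = L(F,\gamma)+O(n^{-1})$, and since $O(n^{-1}) = o((D_n/N_n)^{1/3})$, both \eqref{eq:cumulant2} (in its strong form) and \eqref{eq:cumulant3} hold. Finally $n\,\var(t(F,G_n(\gamma))) = \sigma_n^2\,(nD_n/M_n)\to k^2\sigma^2(F,\gamma)$, so the hypothesis $\lim_n n\,\var(t(F,G_n(\gamma)))>0$ is exactly $\sigma^2(F,\gamma)>0$; the degenerate case $\sigma^2(F,\gamma)=0$ --- the globally singular one, occurring e.g.\ for Erd\H{o}s--R\'enyi graphs --- is precisely what is excluded. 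Organising this overlap combinatorics, and checking that the putative constants are well defined, is the main obstacle; the remaining steps reduce to already-established results and elementary rescaling.

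With the hypotheses of the method of cumulants in force and $\sigma^2(F,\gamma)>0$, Theorem~\ref{thm:cumulantestimates}(3) gives, for $y_n\to+\infty$ with $y_n = o((N_n/D_n)^{1/4}) = o(n^{1/4})$,
$$\proba[Y_n(F,\gamma)\geq y_n] = \frac{\E^{-(y_n)^2/2}}{y_n\sqrt{2\pi}}\,\exp\!\left(\frac{L(F,\gamma)}{6\,\sigma(F,\gamma)^3}\,\sqrt{D_n/N_n}\,(y_n)^3\right)(1+o(1)).$$
Since $\sqrt{D_n/N_n} = k\,n^{-1/2}\bigl(1+O(n^{-1})\bigr)$ and $(y_n)^3 n^{-3/2} = o(1)$, the exponent equals $l(F,\gamma)\,(y_n)^3/\sqrt n + o(1)$ with $l(F,\gamma)\coloneqq \frac{k\,L(F,\gamma)}{6\,\sigma(F,\gamma)^3}$, which is the announced identity. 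The zone of normality statement follows because $l(F,\gamma)(y_n)^3/\sqrt n\to 0$ as soon as $y_n = o(n^{1/6})$, so the exponential tends to $1$ and $\proba[Y_n(F,\gamma)\geq y_n]\sim\proba[Z\geq y_n]$ by the usual Gaussian tail asymptotic; conversely, when $y_n$ is of exact order $n^{1/6}$ and $l(F,\gamma)\neq 0$, the ratio $\proba[Y_n(F,\gamma)\geq y_n]/\proba[Z\geq y_n]$ tends to a constant different from $1$, so the exponent $1/6$ cannot be improved.
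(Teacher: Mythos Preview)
Your overall strategy is the paper's strategy: exhibit a sparse dependency graph for a rescaled subgraph count, verify \eqref{eq:cumulant1}--\eqref{eq:cumulant3}, and read off the moderate deviation estimate from Theorem~\ref{thm:cumulantestimates}(3). The identification $l(F,\gamma)=\tfrac{k\,L(F,\gamma)}{6\,\sigma(F,\gamma)^3}$ and the translation $\sqrt{D_n/N_n}\sim k\,n^{-1/2}$ are exactly what the paper does after Theorem~\ref{thm:modgraphon}.

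There is, however, a genuine slip in your opening identity. You set $S_n=M_n\,t(F,G_n(\gamma))$ with $M_n=n^{\downarrow k}$ and then claim $S_n=\sum_{\phi\text{ injective}}A_\phi$. This is false: by definition $t(F,G)=|\hom(F,G)|/n^k$, so $n^k\,t(F,G_n(\gamma))=\sum_{\psi:\lle 1,k\rre\to\lle 1,n\rre}A_{F,\psi}$ is a sum over \emph{all} maps, not over injections, while the sum over injections equals $M_n\,t_0(F,G_n(\gamma))$ for the embedding density $t_0$. Your $S_n$ is therefore neither of these sums; it is $\tfrac{M_n}{n^k}|\hom(F,G_n(\gamma))|$, which is not a sum of $\{0,1\}$-indicators, and the dependency-graph bound does not apply to it as written. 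The paper avoids this by taking $S_n(F,\gamma)=n^k\,t(F,G_n(\gamma))$, summing over all maps $\psi$ (Equation~\eqref{eq:tG_Sum}), and using $N_n=n^k$, $D_n=k^2 n^{k-1}$ exactly (Lemma~\ref{lem:graphondependencygraph}). With that correction the rest of your argument goes through unchanged, since the standardisation of $n^k\,t(F,G_n(\gamma))$ is still $Y_n(F,\gamma)$ and $D_n/N_n=k^2/n$ gives the same thresholds $o(n^{1/4})$ and $o(n^{1/6})$.

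A minor remark: your claim that $1/6$ is the \emph{maximal} exponent requires $l(F,\gamma)\neq 0$, i.e.\ $\kappa_3(F,F,F)(\gamma)\neq 0$; you note this caveat yourself, and the paper's statement should be read with the same proviso.
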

\noindent In these theorems, the condition $\lim_{n \to \infty} n\,\var(t(F,G_n(\gamma))) > 0$ will be easy to check, by evaluating a certain observable $\kappa_2(F,F)$ on the graphon $\gamma$. 
\bigskip

We have exact analogues of Theorems \ref{thm:speedofconvergencegraphon} and \ref{thm:moderatedeviationgraphon} for the models of random permutations:
\begin{theorem}[Speed of convergence for random permutations]\label{thm:speedofconvergencepermuton}
Let $\pi \in \Scal$ be a permuton, and $\sigma_n(\pi)$ be the random permutation of size $n$ associated with it (see Section \ref{subsec:permutonspace} for details). If $\tau$ is a finite permutation with size $k$ such that
$$\lim_{n \to \infty} n\,\var(t(\tau,\sigma_n(\pi))) > 0,$$
then the rescaled pattern density
$$Y_n(\tau,\pi) = \frac{t(\tau,\sigma_n(\pi)) -\esper[t(\tau,\sigma_n(\pi))] }{\sqrt{\var(t(\tau,\sigma_n(\pi)))}}$$
satisfies
$$\dkol(Y_n(\tau,\pi),\gauss) = O\!\left(\frac{1}{(n\,\var(t(\tau,\sigma_n(\pi))))^{3/2}}\,\frac{k^4}{\sqrt{n}}\right),$$
with a constant in the $O(\cdot)$ that is universal.
\end{theorem}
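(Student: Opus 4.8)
The plan is to realise the unnormalised pattern count $\binom{n}{k}\,t(\tau,\sigma_n(\pi))$ as a sum of bounded random variables carrying a sparse dependency graph, to deduce from it the uniform cumulant bounds \eqref{eq:cumulant1}, and then to feed the result into the Kolmogorov-distance estimate of Theorem~\ref{thm:cumulantestimates}(4); the argument is the exact analogue, with $k$-subsets of sampled points in place of $k$-subsets of vertices, of the one behind Theorem~\ref{thm:speedofconvergencegraphon}. First I would use the construction of $\sigma_n(\pi)$ recalled in Section~\ref{subsec:permutonspace}: one draws $n$ i.i.d.\ points $(X_1,Y_1),\dots,(X_n,Y_n)$ from the permuton $\pi$ (a probability measure on $[0,1]^2$ with uniform marginals), so that almost surely the abscissas are pairwise distinct and the ordinates are pairwise distinct, and $\sigma_n(\pi)\in\sym(n)$ is the permutation encoding the relative order of the $Y_i$ once the points have been sorted by their $X$-coordinate. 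For a $k$-element subset $I\subseteq\lle 1,n\rre$, let $A_I\in\{0,1\}$ be the indicator that the pattern read off the points $\{(X_i,Y_i):i\in I\}$ equals $\tau$; since a pattern occurrence records only the relative order of the points involved, $A_I$ depends only on $(X_i,Y_i)_{i\in I}$, and one has $\binom{n}{k}\,t(\tau,\sigma_n(\pi))=S_n:=\sum_{I}A_I$, a sum of $\binom{n}{k}$ variables each bounded by $A=1$.

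Next I would take for $G_n$ the intersection graph on the vertex set $\binom{\lle 1,n\rre}{k}$, with an edge between $I$ and $J$ exactly when $I\cap J\neq\emptyset$: this is a dependency graph for $(A_I)_I$, because collections of $k$-sets with no intersecting pair involve disjoint, hence independent, subfamilies of the i.i.d.\ points. Its parameters are $N_n=\binom{n}{k}$ and $D_n=1+\max_I\deg_{G_n}(I)=\binom{n}{k}-\binom{n-k}{k}$. Writing $\binom{n-k}{k}/\binom{n}{k}=\prod_{j=0}^{k-1}(1-\tfrac{k}{n-j})\ge(1-\tfrac{k}{n-k+1})^k$ and using $1-(1-x)^k\le kx$ gives $D_n/N_n\le \tfrac{k^2}{n-k+1}\le \tfrac{2k^2}{n}$ when $n\ge 2k$, while $D_n=N_n$ and $1\le\tfrac{2k^2}{n}$ otherwise, so in all cases $D_n/N_n\le 2k^2/n$ and the graphs $G_n$ are sparse. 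The theorem on uniform bounds on cumulants from a dependency graph then yields \eqref{eq:cumulant1} for $S_n$ with parameters $(D_n,N_n,A=1)$; a routine computation of the first and second moments of $S_n$ supplies \eqref{eq:cumulant2} and \eqref{eq:cumulant3}, and since $(\sigma_n)^2=\var(S_n)/(N_nD_n)=V_n/(D_n/N_n)\sim nV_n/k^2$ with $V_n=\var(t(\tau,\sigma_n(\pi)))$, the limiting constant $\sigma^2$ is positive precisely under the hypothesis $\lim_n n\,V_n>0$. Thus $S_n$ sits in the setting of the method of cumulants.

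It then remains to apply Theorem~\ref{thm:cumulantestimates}(4). The standardised variable $(S_n-\esper[S_n])/\sqrt{\var(S_n)}$ appearing there coincides with $Y_n(\tau,\pi)$, since dividing $S_n$ by the constant $\binom{n}{k}$ leaves the standardisation unchanged. Substituting $(\sigma_n)^2=\var(S_n)/(N_nD_n)$ and $\var(S_n)=\binom{n}{k}^2\,V_n$ into the bound $\dkol\le 76.36\,A^3(\sigma_n)^{-3}\sqrt{D_n/N_n}$ turns it into
$$\dkol(Y_n(\tau,\pi),\gauss)\ \le\ 76.36\,\frac{D_n^2}{N_n^2\,V_n^{3/2}}\ \le\ \frac{305.44\,k^4}{n^2\,V_n^{3/2}},$$
and the last quantity equals $305.44\,(n\,V_n)^{-3/2}\,k^4\,n^{-1/2}$, which is the asserted estimate with a constant depending neither on $\pi$ nor on $\tau$. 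The hypothesis $\lim_n n\,\var(t(\tau,\sigma_n(\pi)))>0$ enters only to guarantee that this bound is $o(1)$ (equivalently, that $(\sigma_n)^2$ stays bounded away from $0$, so that Theorem~\ref{thm:cumulantestimates}(4) is genuinely informative).

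The one genuinely substantive step is the construction of the \emph{sparse} dependency graph, namely the observation that $A_I$ depends on nothing but the $k$ sampled points indexed by $I$. If one tried to work directly on the permutation $\sigma_n(\pi)$ with the events ``the positions $a_1<\dots<a_k$ realise the pattern $\tau$'', these events would \emph{not} be independent over disjoint position-sets, a permutation being a global object; it is the i.i.d.\ point representation of Hoppen \emph{et al.}~\cite{HKMRS13} that makes the dependency-graph machinery available. Everything else — the binomial estimate for $D_n/N_n$, the moment computation behind \eqref{eq:cumulant2}--\eqref{eq:cumulant3}, and the algebra reducing Theorem~\ref{thm:cumulantestimates}(4) to the stated form with a universal constant — is routine bookkeeping.
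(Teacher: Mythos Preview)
Your proof is correct and follows essentially the same route as the paper: write $\binom{n}{k}\,t(\tau,\sigma_n(\pi))$ as $\sum_{L}A_{\tau,L}$ indexed by $k$-subsets of the i.i.d.\ sample, use the intersection graph on $k$-subsets as a dependency graph (the paper's Lemma~\ref{lem:permutondependencygraph}), and plug the resulting cumulant bounds into Theorem~\ref{thm:cumulantestimates}(4). The only cosmetic difference is that the paper takes $D_n=k\binom{n-1}{k-1}$ (a convenient upper bound) rather than your exact $D_n=\binom{n}{k}-\binom{n-k}{k}$, and it spells out the computation of the limiting variance and third cumulant via the maps $\kappa_2,\kappa_3$ on $\obsS$ where you invoke a ``routine'' moment computation; neither affects the argument for the Kolmogorov bound.
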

\begin{theorem}[Moderate deviations for random permutations]\label{thm:moderatedeviationpermuton}
We use the same notation as in Theorem \ref{thm:speedofconvergencepermuton}
and also assume
 $$\lim_{n \to \infty} n\,\var(t(\tau,\sigma_n(\pi))) > 0. $$
 Then there is a real number $l(\tau,\pi)$ which one can compute exactly, such that,
 if $y_n = o(n^{1/4})$ and $y_n \to +\infty$, we have
$$\proba[Y_n(\tau,\pi) \geq y_n] = \frac{\E^{-\frac{(y_n)^2}{2}}}{y_n\,\sqrt{2\pi}}\,\exp\left(l(\tau,\pi)\,\frac{(y_n)^3}{\sqrt{n}}\right)\,(1+o(1)).$$
In particular, if $y_n = o(n^{1/6})$, we have $\proba[Y_n(\tau,\pi) \geq y_n] = \proba[Z\ge y_n] (1+o(1))$,
where $Z$ is a standard Gaussian random variable, and $1/6$ is the maximal exponent such that this happens.
We say that
the {\em zone of normality} of $Y_n(\tau,\pi)$ is $o(n^{1/6})$.
\end{theorem}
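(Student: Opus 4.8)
The plan is to realize $t(\tau,\sigma_n(\pi))$, up to a deterministic scalar, as a sum of $\{0,1\}$-valued random variables carrying a \emph{sparse} dependency graph, so that the method of cumulants (Definition~\ref{def:cumulantmethod}) applies and the statement becomes item~(3) of Theorem~\ref{thm:cumulantestimates}. Recall that $\sigma_n(\pi)$ is produced by sampling $n$ i.i.d.\ points $(X_1,Y_1),\dots,(X_n,Y_n)$ from the permuton $\pi$ and recording the permutation determined by the relative orders of the $X_i$'s and of the $Y_i$'s. Setting $k=|\tau|$ and $S_n\coloneqq\binom{n}{k}\,t(\tau,\sigma_n(\pi))$, one has
$$ S_n=\sum_{I}A_I,\qquad A_I=\mathbf{1}\big[\text{the points indexed by }I\text{ induce the pattern }\tau\big], $$
the sum running over all $k$-element subsets $I$ of $\lle 1,n\rre$; here $A_I\in\{0,1\}$, and since $Y_n(\tau,\pi)=(S_n-\esper[S_n])/\sqrt{\var(S_n)}$ is invariant under rescaling, we may work with $S_n$.

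First I would produce the dependency graph. The pattern induced on a subset is a function of the configuration of the points of that subset alone, so any two disjoint families of $k$-subsets are measurable functions of disjoint, hence independent, blocks of the i.i.d.\ sample; thus the graph $G_n$ on the $k$-subsets of $\lle 1,n\rre$, with an edge between $I$ and $J$ whenever $I\cap J\neq\emptyset$, is a dependency graph for $(A_I)$. Its parameters are $A=1$, $N_n=\binom{n}{k}$ and $D_n=\binom{n}{k}-\binom{n-k}{k}=\Theta(n^{k-1})$, so $D_n/N_n=\Theta(1/n)\to 0$; the uniform bounds on cumulants from a dependency graph then give \eqref{eq:cumulant1}.

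Next I would extract the leading behaviour of $\kappa^{(2)}(S_n)$ and $\kappa^{(3)}(S_n)$. By exchangeability of the i.i.d.\ sample, $\cov(A_I,A_J)$ depends only on $m=|I\cap J|$, and the number of ordered pairs $(I,J)$ with $|I\cap J|=m$ is a polynomial in $n$ of degree $2k-m$; the terms $m\geq 2$ and $m=0$ being negligible, one gets
$$ \kappa^{(2)}(S_n)=\sum_{I,J}\cov(A_I,A_J)=\sigma^2\,N_nD_n\,(1+O(1/n)), $$
with $\sigma^2\geq 0$ the explicit coefficient collecting the single-point overlaps; since $O(1/n)=o((D_n/N_n)^{1/3})$, this is exactly \eqref{eq:cumulant2} in its strong form. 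Similarly $\kappa^{(3)}(S_n)=\sum_{I,J,K}\esper[\overline{A_I}\,\overline{A_J}\,\overline{A_K}]$, and as a mixed cumulant vanishes whenever its index set splits into independent parts, only triples $\{I,J,K\}$ that are connected in $G_n$ contribute; the dominant such configurations use exactly two point-identifications and number $\Theta(n^{3k-2})=\Theta(N_nD_n^2)$, whence $\kappa^{(3)}(S_n)=L\,N_nD_n^2\,(1+o(1))$ with $L$ explicit, i.e.\ \eqref{eq:cumulant3}. Finally $\var(t(\tau,\sigma_n(\pi)))=\kappa^{(2)}(S_n)/\binom{n}{k}^2\sim \sigma^2 k^2/n$, so the hypothesis $\lim_n n\,\var(t(\tau,\sigma_n(\pi)))>0$ is equivalent to $\sigma^2>0$, the positivity required by Theorem~\ref{thm:cumulantestimates}.

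It then remains to apply item~(3) of Theorem~\ref{thm:cumulantestimates}: for $y_n\to+\infty$ with $y_n=o((N_n/D_n)^{1/4})=o(n^{1/4})$,
$$ \proba[Y_n(\tau,\pi)\geq y_n]=\frac{\E^{-(y_n)^2/2}}{y_n\sqrt{2\pi}}\,\exp\!\Big(\tfrac{L}{6\sigma^3}\sqrt{\tfrac{D_n}{N_n}}\,(y_n)^3\Big)(1+o(1)). $$
Writing $D_n/N_n=(k^2+O(1/n))/n$ and noting that the resulting discrepancy inside the exponential is $O((y_n)^3n^{-3/2})\to 0$ on the range $y_n=o(n^{1/4})$, this is the asserted formula with $l(\tau,\pi)=\tfrac{Lk}{6\sigma^3}$; specializing to $y_n=o(n^{1/6})$ sends the exponential to $1$, and the optimality of $1/6$ is immediate since at $y_n\asymp n^{1/6}$ the correction factor is a nontrivial constant whenever $l(\tau,\pi)\neq0$. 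The main obstacle is the leading-order cumulant computation of the third paragraph: pinning down which gluings of two (respectively three) copies of $\tau$ along common points dominate, obtaining $\sigma^2$ and $L$ in closed form in terms of $\pi$, and checking that $\sigma^2$ vanishes precisely when $\lim_n n\,\var(t(\tau,\sigma_n(\pi)))=0$; beyond that, everything is bookkeeping on top of Theorem~\ref{thm:cumulantestimates}.
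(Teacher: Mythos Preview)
Your proposal is correct and follows essentially the same route as the paper: the paper likewise writes $S_n=\binom{n}{k}\,t(\tau,\sigma_n(\pi))$ as a sum of indicators over $k$-subsets, uses the intersection graph on $k$-subsets as a dependency graph (Lemma~\ref{lem:permutondependencygraph}), extracts the leading $\kappa^{(2)}$ and $\kappa^{(3)}$ from single- and double-overlap configurations (Proposition preceding Theorem~\ref{thm:modpermuton}), and then reads off the moderate deviations from Theorem~\ref{thm:cumulantestimates}(3), arriving at $l(\tau,\pi)=\tfrac{k}{6}\,\kappa_3(\tau,\tau,\tau)(\pi)/(\kappa_2(\tau,\tau)(\pi))^{3/2}$. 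The only cosmetic difference is that the paper takes $D_n=k\binom{n-1}{k-1}$ rather than your exact $D_n=\binom{n}{k}-\binom{n-k}{k}$; both are $\sim k^2 n^{k-1}/k!$, so the conclusions are identical.
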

\medskip

Last, let us state some new concentration results for the central measures on integer partitions:
\begin{theorem}[Concentration inequalities for random integer partitions]\label{thm:concentrationpartitions}
Let $\lambda_n(\omega)$ be the random integer partition with size $n$ chosen under the central measure associated with a parameter $\omega=(\alpha,\beta)$ of the Thoma simplex (see Section \ref{subsec:thoma} for details on the definition of these models). Given an integer $k$, we consider the $k$-th Frobenius moments
\begin{align*}
t(k,\lambda_n(\omega)) &= \frac{1}{n^k}\left(\sum_{i=1}^d (a_i(\lambda_n(\omega)))^k+(-1)^{k-1}\sum_{i=1}^d (b_i(\lambda_n(\omega)))^k\right) ;\\
t(k,\omega)&= \sum_{i=1}^\infty (\alpha_i)^k +(-1)^{k-1} \sum_{i=1}^\infty (\beta_i)^k.
\end{align*}
We have
$$\proba[|t(k,\lambda_n(\omega))-t(k,\omega)|\geq x ]\leq 4\,\exp\left(-\frac{nx^2}{9k^2}\right).$$
\end{theorem}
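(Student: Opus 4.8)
The plan is to reduce Theorem~\ref{thm:concentrationpartitions} to Proposition~\ref{prop:concentration1} applied to an appropriate sum of bounded random variables whose cumulants satisfy the uniform bound~\eqref{eq:cumulant1}. The statement already suggests the shape of the argument: the factor $4$ (rather than $2$) and the exponent $nx^2/(9k^2)$ point to applying the proposition \emph{twice}, once to an upper-bounded random variable and once to a lower-bounded one, and to parameters roughly of the form $N_n = n$, $D_n = O(1)$, $A = O(k)$, so that $D_n N_n A^2 = O(n k^2)$ and the constant $9$ in the denominator of Proposition~\ref{prop:concentration1} is inherited directly. Concretely, I would write $n^k\, t(k,\lambda_n(\omega))$ as a signed sum indexed by the cells (or the diagonal content/hook data) of the Young diagram of $\lambda_n(\omega)$, or better, invoke the combinatorial realisation of $\lambda_n(\omega)$ under the central measure as coming from a shuffle-type / Chinese-restaurant-type construction, which is exactly the setting of \cite[Chapter~11]{FMN16}. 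There, the relevant observables (power sums of contents, or equivalently the renormalised character values $\chi^{\lambda_n(\omega)}(c_k)$) are shown to be sums of bounded random variables admitting a dependency graph — albeit in a \emph{noncommutative} probability space — with the parameters $N_n \asymp n$, $D_n \asymp 1$, $A \asymp k$.

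The key steps, in order, are as follows. First, recall from Section~\ref{subsec:thoma} (and \cite[Chapter~11]{FMN16}) the precise construction of $\lambda_n(\omega)$ and the expression of $n^k t(k,\lambda_n(\omega))$ — a Frobenius power sum — in terms of the underlying i.i.d.-like random data of the model; here one should pass through the identity relating Frobenius moments $p_k^\#(\lambda) = \sum a_i^k + (-1)^{k-1}\sum b_i^k$ to power sums of contents of $\lambda$, which are the quantities for which the cumulant bounds were established. Second, quote or re-derive the uniform cumulant bound $|\kappa^{(r)}(n^k t(k,\lambda_n(\omega)))| \le N_n (2D_n)^{r-1} r^{r-2} A^r$ with explicit $N_n = n$, $D_n$ a constant (I expect $D_n = 1$ or a small absolute constant, possibly after the sum is split into finitely many pieces each with a genuine — possibly noncommutative — dependency graph), and $A = Ck$ for an absolute constant $C$; I would aim for the normalisation that makes $D_n N_n A^2 = n k^2$ exactly, so that Proposition~\ref{prop:concentration1}'s denominator becomes $9 n k^2$. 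Third, check the almost-sure boundedness hypothesis of Proposition~\ref{prop:concentration1}: since $|t(k,\lambda_n(\omega))| \le \sum(\alpha_i^{(n)}) + \sum(\beta_i^{(n)}) \le 1$ by the Thoma-simplex constraint, we have $|n^k t(k,\lambda_n(\omega))| \le n^k \le N_n A$ provided $A \ge n^{k-1}/1$ — this is too crude, so instead I would use that the individual summands are each bounded by $A = Ck$ and their number is $\le N_n$, giving $|S_n| \le N_n A$ directly from the decomposition rather than from the trivial bound. Fourth, apply Proposition~\ref{prop:concentration1} to $S_n = n^k t(k,\lambda_n(\omega))$, obtaining $\proba[|n^k t(k,\lambda_n(\omega)) - \esper[n^k t(k,\lambda_n(\omega))]| \ge n^k x] \le 2\exp(-n^{2k}x^2/(9 n k^2)) = 2\exp(-n^{2k-1}x^2/(9k^2))$. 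Fifth, replace the centering $\esper[t(k,\lambda_n(\omega))]$ by the target $t(k,\omega)$: by the Kerov–Vershik law of large numbers $t(k,\lambda_n(\omega)) \to t(k,\omega)$ in probability, and one needs the quantitative statement $|\esper[t(k,\lambda_n(\omega))] - t(k,\omega)| \le C'k/n$ or similar, which itself follows from a first-moment computation in the model; then a triangle-inequality splitting of the event $\{|t(k,\lambda_n)-t(k,\omega)|\ge x\}$ into $\{|t(k,\lambda_n)-\esper[t(k,\lambda_n)]|\ge x/2\}$ together with controlling the bias absorbs the loss into the constants and the factor $2$ becomes $4$.

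The main obstacle I anticipate is Step~5, the transfer from the intrinsic centering $\esper[S_n]$ — which is what the cumulant/Chernoff machinery naturally controls — to the deterministic limit $t(k,\omega)$, while keeping the clean constants $4$ and $9k^2$ and the clean exponent $nx^2$ (rather than $n^{2k-1}x^2$, which would be far stronger for $k\ge 1$). Reconciling the exponent suggests that the intended normalisation is \emph{not} $S_n = n^k t(k,\lambda_n(\omega))$ but rather something like $S_n = n\, t(k,\lambda_n(\omega))$ with $N_n = n$, $D_n = 1$, $A = k$, so that $9 D_n N_n A^2 = 9nk^2$ and the fluctuation scale $x$ in $\proba[|t(k,\lambda_n)-t(k,\omega)|\ge x]$ corresponds to $n x$ in $\proba[|S_n - \esper[S_n]|\ge nx]$, yielding exactly $2\exp(-n^2x^2/(9nk^2)) = 2\exp(-nx^2/(9k^2))$; I would therefore set up the cumulant bound for $n\,t(k,\lambda_n(\omega))$ directly, which is consistent with the general pattern $D_n N_n = \var$-scale $\asymp n \cdot 1$ seen throughout the paper. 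A secondary technical point is that the dependency graph underlying the cumulant bound lives in a noncommutative probability space (as the paper itself notes just before Section~\ref{subsec:graphonspace}), so I must cite \cite[Chapter~11]{FMN16} for the fact that the uniform cumulant bound $|\kappa^{(r)}| \le N_n(2D_n)^{r-1}r^{r-2}A^r$ nonetheless holds — this is precisely why Proposition~\ref{prop:concentration1}, which only needs the cumulant bound and almost-sure boundedness and not an honest classical dependency graph, is the right tool, as emphasised in Section~\ref{ssec:concentration_intro}.
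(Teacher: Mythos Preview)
Your overall strategy matches the paper's: (i) establish the uniform cumulant bound \eqref{eq:cumulant1} for the Frobenius moment by transferring the bound from the renormalised character values $\varSigma_\rho$ (proved in \cite[Chapter~11]{FMN16} via noncommutative dependency graphs) to $p_\rho$; (ii) apply Proposition~\ref{prop:concentration1}; (iii) replace the centering $\esper[t(k,\lambda_n(\omega))]$ by $t(k,\omega)$ using a first-moment estimate, at the cost of the factor $4$.

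However, the ``main obstacle'' you anticipate is a miscalculation, not a genuine obstacle. The paper takes $S_n = n^k\,t(k,\Omega_n(\omega))$ with parameters $N_n = n^k$, $D_n = k^2 n^{k-1}$, $A = 1$ (the same parameters as for graphons and permutons). Then $D_n N_n A^2 = k^2 n^{2k-1}$, and setting $x' = n^k x$ one gets $(x')^2/(9 D_n N_n A^2) = n^{2k} x^2 / (9 k^2 n^{2k-1}) = n x^2/(9k^2)$ directly, with no need to renormalise. Your alternative $S_n = n\,t$, $N_n = n$, $D_n = 1$, $A = k$ actually fails \eqref{eq:cumulant1}: the true bound is $|\kappa^{(r)}(n\,t)| \leq n\,(2k^2)^{r-1} r^{r-2}$, which for $r \geq 3$ exceeds $n\cdot 2^{r-1} r^{r-2} k^r$. (Taking $D_n = k^2$, $A = 1$ instead would fix this, and is then just a rescaling of the paper's choice.)

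You also under-specify the transfer step (your Step~2). The bound in \cite[Chapter~11]{FMN16} is for the $\varSigma_\rho$, not directly for $p_\rho$. The paper bridges this in Proposition~\ref{prop:boundcumulantprho} via the positive expansion $p_\rho = \sum_{|\mu| \leq |\rho|} c_{\rho,\mu}\,\varSigma_\mu$ together with the identity $\sum_\mu c_{\rho,\mu}\,n^{\downarrow |\mu|} = p_\rho(1^n) \leq n^{|\rho|}$; expanding $\kappa^{(r)}(p_\rho(\lambda_n(\omega)))$ by multilinearity and bounding each joint cumulant of $\varSigma_{\mu^{(i)}}$'s then gives \eqref{eq:cumulant1} with the parameters above. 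For Step~5, the same expansion yields $|\esper[t(\rho,\Omega_n(\omega))] - t(\rho,\omega)| \leq 2k^2/n$, and the paper handles the shift not by an $x/2$-split but via $\max(0,x - 2k^2/n)^2 \geq x^2 - 4k^2 x/n$ (for $x \leq 1$), absorbing the resulting factor $\E^{4/9} < 2$ into the constant.
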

\bigskip

We do not give concentration results for graphons and permutons since those are already present in the literature.
Similarly, we chose not to state speed of convergence and moderate deviation results for random partitions;
see \cite[Section 11]{FMN16} for results of this kind, though the observables considered there are different.
\medskip

All the theorems stated previously fall in the framework of mod-Gaussian convergence, and as a consequence, their proofs are very similar. This paper is written in a way which emphasizes these similarities. In Section \ref{sec:models}, we present the three compact spaces of parameters which correspond to classes of models of random graphs, of random permutations and of random integer partitions. We explain in each case how to build a random combinatorial object $O_n(m)$ with size $n$ when $m$ is a parameter in the compact space $\Mcal$. We also explain how to see $O_n(m)$ as a random element $M_n(m)$ in $\Mcal$. The asymptotic concentration of the models $(O_n(m))_{n \in \N}$ amounts then to the convergence in probability $M_n(m) \to m$ for any $m \in \Mcal$.\medskip

In Section \ref{sec:observables}, we introduce for each space of parameters $\Mcal$ an algebra of observables $\obs_{\mathfrak{M}}$, which one can see as continuous functions on $\Mcal$, and which have the following properties:
\begin{itemize}
	\item The convergence in $\Mcal$ is equivalent to the convergence of all the observables in $\obs_{\mathfrak{M}}$.
	\item The fluctuations of the observables of the random models $O_n(m)$ are mod-Gaussian after an appropriate renormalisation, and the limiting parameters $\sigma^2$ and $L$ of these mod-Gaussian sequences can be computed by using operations in the algebra $\obs_\mathfrak{M}$.
\end{itemize}
It is quite remarkable that the topology of each class of objects can always be metrised by a nice algebraic structure, and that this structure also appears in the calculations of the fluctuations.\medskip

We prove the mod-Gaussian convergence of the observables in Section \ref{sec:dependency} (Theorems \ref{thm:modgraphon}, \ref{thm:modpermuton} and \ref{thm:modpartition}), most of the time by exhibiting a dependency graph for the random variables under study. Though the random combinatorial objects and their constructions are very different, the dependency structures as well as the formulas for the limiting variances and limiting third cumulants are almost the same for the three cases (graphs, permutations and partitions). For instance, the variance of a graph density $t(F,G_n(\gamma))$ will be estimated by using a graph observable
$$\kappa_2(F,F) = \frac{1}{|F|^2} \sum_{1\leq a,b \leq |F|} ((F\join F)(a,b) - F\sqcup F),$$
whereas the variance of a pattern density $t(\tau,\sigma_n(\pi))$ will be estimated by using a permutation observable
$$\kappa_2(\tau,\tau) = \frac{1}{|\tau|^2} \sum_{1\leq a,b \leq |\tau|} ((\tau\join \tau)(a,b) - \tau \times \tau).$$
The notations used throughout the paper will make it easy to see these analogies. In our last Section \ref{sec:moduli}, we discuss these similarities and we introduce a notion of \emph{mod-Gaussian moduli space} for a pair $(\Mcal,\obs_{\mathfrak{M}})$. Our definition yields a rigorous meaning to genericity when studying the fluctuations of random combinatorial objects in a certain class. In particular, the random models with additional symmetries are usually non-generic, and they appear as singular points of the mod-Gaussian moduli spaces. To the best of our knowledge, this geometric approach of the study of classes of random models is a new viewpoint in probability theory.
\bigskip

\section{Concentration inequalities}

This section focuses on the concentration inequalities discussed in Section~\ref{ssec:concentration_intro}.
We first prove the announced inequalities, and then give an application to the $d$-dimensional Ising model.

\subsection{Proofs of the concentration inequalities}
\label{ssec:proofs_concentration}
\begin{proof}[Proof of Proposition~\ref{prop:concentration1}]
Setting $X = S_n-\esper[S_n]$, we use Chernov's bound $\proba[X \geq x] \leq \frac{\esper[\E^{t X}]}{\E^{tx}}$ and we shall choose later the optimal parameter $t >0$. By assumption,
\begin{align*}
\log \esper[\E^{t X}] = \sum_{r=2}^\infty \frac{\kappa^{(r)}(S_n)}{r!}\,t^r 
&\leq \frac{N_n}{2D_n} \sum_{r=3}^\infty \frac{r^{r-2}}{r!} (2D_nA t)^{r}\\
&\leq\frac{N_n}{2\sqrt{2\pi}\,D_n} \sum_{r=2}^\infty \frac{1}{r^{5/2}} (2\E D_nA t)^{r}
\end{align*}
by using Stirling's bound $r! \geq (\frac{r}{\E})^r\,\sqrt{2\pi r}$. For $r \geq 2$, $r^{5/2} \geq \frac{\sqrt{27}}{2}\,r(r-1)$, so 
\begin{align*}
\log \esper[\E^{t X}] &\leq \frac{N_n}{\sqrt{54\pi}\,D_n} \sum_{r=2}^\infty \frac{\theta^r}{r(r-1)} = \frac{N_n}{\sqrt{54\pi}\,D_n}\left((1-\theta)\log(1-\theta)+\theta\right)
\end{align*}
with $\theta = 2\E D_n A t$, and assuming $\theta<1$. If one substracts $tx = \frac{\theta\,x}{2\E D_nA}$ from this upper bound and choose the optimal $\theta = 1-\E^{-\frac{\sqrt{54\pi}}{2\E} \frac{x}{N_n A}}$, one obtains:
\begin{align*}
\log \proba[X \geq x] \leq \frac{N_n}{\sqrt{54\pi}\, D_n} \left(\left(1-\E^{-\frac{\sqrt{54\pi}}{2\E}\,\frac{x}{N_nA}}\right) - \frac{\sqrt{54\pi}}{2\E}\,\frac{x}{N_nA} \right)
\end{align*}
for any positive $x$. Notice now that the function $f(u)=\frac{1-\E^{-u}-u}{u^2}$ is negative and increasing. Moreover, in the previous bound, we can assume without loss of generality that $x \leq N_n A$, because $|X| \leq N_nA$ almost surely by hypothesis. Therefore, we can replace the previous bound by
$$\log \proba[X \geq x] \leq \frac{N_n}{\sqrt{54\pi}\, D_n}\,\,f\!\left(\frac{\sqrt{54\pi}}{2\E}\right) \left(\frac{\sqrt{54\pi}}{2\E}\,\frac{x}{N_nA} \right)^{\!2} \leq \frac{C\,x^2}{D_nN_nA^2},$$
where 
$$C = \frac{\sqrt{54\pi}}{4\E^2}\,f\!\left(\frac{\sqrt{54\pi}}{2\E}\right) \leq -0.114,$$ 
which it is convenient to approximate from above by $-\frac{1}{9}$. Finally, since we can use the same bound for the variable $-S_n$ instead of $S_n$, we can bound $\proba[|X|\geq x]$ by twice the bound on $\proba[X \geq x]$.
\end{proof}

\begin{proof}[Proof of Proposition~\ref{prop:concentration2}]
With the same notations as in the proof of Proposition \ref{prop:concentration1}, we obtain by isolating the term of order $r=2$ in the log-generating function:
$$\log \esper[\E^{t X}] - tx \leq \frac{N_n}{D_n}\left(\frac{1}{\sqrt{54\pi}}\left((1-\theta)\log(1-\theta)+\theta-\frac{\theta^2}{2}\right) + \frac{\theta^2}{8\E^2}\left(\frac{\sigma_n}{A}\right)^2 - \frac{\theta}{2\E}\,\frac{x}{N_n A}\right)
$$
with $\theta = 2\E D_n A t$, and assuming $\theta\leq 1$. Then, $(1-\theta)\log(1-\theta)+\theta-\frac{\theta^2}{2} \leq \frac{\theta^3}{2}$, so we can conveniently replace this bound by the polynomial bound
\begin{align*}
\log \esper[\E^{t X}] - tx &\leq \frac{N_n}{D_n}\left(\frac{\theta^3}{6\sqrt{6\pi}} + \frac{\theta^2}{8\E^2}\left(\frac{\sigma_n}{A}\right)^2 - \frac{\theta}{2\E}\,\frac{x}{N_n A}\right)\\
&\leq \frac{N_n}{\E D_n}\left(\frac{\theta^3}{6} + \frac{\theta^2}{8\E}\left(\frac{\sigma_n}{A}\right)^2 - \frac{\theta}{2}\,\frac{x}{N_n A}\right).
\end{align*}
The optimal $\theta$ in the second line is the positive solution of 
$\theta^2+\frac{\theta}{2\E}\left(\frac{\sigma_n}{A}\right)^2 - \frac{x}{N_n A}=0,$
that is
$$\theta = \sqrt{\frac{1}{16\E^2}\left(\frac{\sigma_n}{A}\right)^4 + \frac{x}{N_n A}}-\frac{1}{4\E}\left(\frac{\sigma_n}{A}\right)^2\leq \sqrt{\frac{x}{N_nA}}\leq 1.$$
Thus, this value of $\theta$ is indeed smaller than $1$, and by using this value, we obtain:
\begin{align*}
\log \proba[X \geq x] &\leq -\frac{N_n}{\E D_n}\left(\frac{\theta^3}{3} + \frac{\theta^2}{4\E}\left(\frac{\sigma_n}{A}\right)^2 \right) = -\frac{x}{3 D_n A \left(\E \theta+\frac{1}{2}\left(\frac{\sigma_n}{A}\right)^2\right)}\left(\frac{x}{N_nA} + \frac{\theta}{4\E}\left(\frac{\sigma_n}{A}\right)^2 \right)\\
&\leq -\frac{x^2}{3 D_nN_n \left(\E A^2 \theta+\frac{1}{2}(\sigma_n)^2\right)} = -\frac{4x^2}{3 D_nN_n \left((\sigma_n)^2 + \sqrt{(\sigma_n)^4 + 16\E^2 A^4 \frac{x}{N_nA}}\right)} \\
&\leq  -\frac{2x^2}{3 D_nN_n \left((\sigma_n)^2 + 2\E A^2\sqrt{\frac{x}{N_nA}}\right)} =  -\frac{2x^2}{3  \left((\var(S_n) + 2\E D_nN_n A^2\sqrt{\frac{x}{N_nA}}\right)}. 
\end{align*}
As in the previous proof, the case of $\proba[X \leq -x] $ can be treated with the same tools.
\end{proof}
\smallskip

\subsection{Application to the \texorpdfstring{$d$}{d}-dimensional Ising model}
\label{ssec:Ising}
We consider the $d$-dimensional Ising model with inverse temperature $\beta$ and magnetic field $h$.
We place ourselves either at very high temperature (i.e. $h=0$, $\beta <\beta_0(d)$,
for some threshold $\beta_0(d)$ smaller than the critical inverse temperature $\beta_c(d)$),
or in presence of a magnetic field $h \ne 0$). 
A generic reference on the Ising model is, {\em e.g.}, \cite[Chapter 3]{Velenik}. We consider the model on the full lattice $\Z^d$ and
study the magnetization in a box $\Delta$ (which we will make grow to $\Z^d$), {\em i.e.}
$$M_\Delta \coloneqq \sum_{x \in \lle 1,n\rre^d} \sigma(x).$$ 
As explained in \cite[Section 5.3]{FMN17} (see also \cite{Duneau2}), 
$M_\Delta$ admits uniform bounds on cumulants with parameters $(C_1,C_2 \, |\Delta|, C_3)$, for some constant $C_1$,$C_2$, $C_3$
(these constants depend on the parameters $(\beta,h)$, but not on the box $|\Delta|$).
Applying Proposition \ref{prop:concentration1} gives the following concentration inequality:
for any $x > 0$, any $n \geq 1$, any $d \ge 2$ and any $\beta <\beta_0(d)$, we have
$$\proba\left[\frac{|M_{\Delta} - \esper[M_\Delta]|}{\sqrt{|\Delta|}} \geq x\right] \leq 2\,\exp(-C(d,\beta) \,x^2),$$
for some positive constant $C(d,\beta)>0$. We recover the concentration inequalities 
for Gibbs measures from \cite[Theorem 1]{Kul03} and \cite{CCKR07,CCR17}. 
Such inequalities hold in the Dobrushin uniqueness regime, for instance with $h \ne 0$ and at very high temperature. 
\medskip

\section{The three spaces of parameters}\label{sec:models}
In this section, we present three compact spaces of parameters which label models of random graphs, of random permutations and of random integer partitions. We try to use similar notations for each class of models, and we survey their theory which is relatively new for graphons and permutons.\medskip

\subsection{The space of graphons}\label{subsec:graphonspace}
In this paragraph, we follow the discussion of \cite{LS06,LS07,BCLSV08,BCLSV11}. We call \emph{graph function} a measurable function $g : [0,1]^2 \to [0,1]$ that is symmetric: $g(x,y)=g(y,x)$ for almost any $(x,y)$ (relatively to the Lebesgue measure on the square $[0,1]^2$). 
The set of graph functions will be denoted $\Fcal \subset \leb^\infty([0,1]^2,\DD{x}\DD{y})$.
Given a finite graph $G$ with vertex set $[n]$, we can associate with it a canonical graph function $g$.
Thus, $g=g(G)$ is the function on the square that takes its values in $\{0,1\}$, and is such that
$$g(x,y) = 1\, \text{ if }x \in \left(\frac{i-1}{n},\frac{i}{n}\right], \,\,y \in \left(\frac{j-1}{n},\frac{j}{n}\right]\text{ and }\{i,j\} \in E_G,$$
and $0$ otherwise. We refer to Figure \ref{fig:graphfunction} for an example.
Note that the graph function depends on the labeling of the vertices of $G$,
{\em i.e.} two isomorphic graphs $G$ and $G'$ may have different graph functions $g$ and $g'$.
The graph functions $g$ and $g'$ of isomorphic graphs are however related as follows:
there exists a measure-preserving bijection $\sigma:[0,1] \to [0,1]$ such that $g'=g^\sigma$,
where $g^\sigma(x,y)=g(\sigma(x),\sigma(y))$.
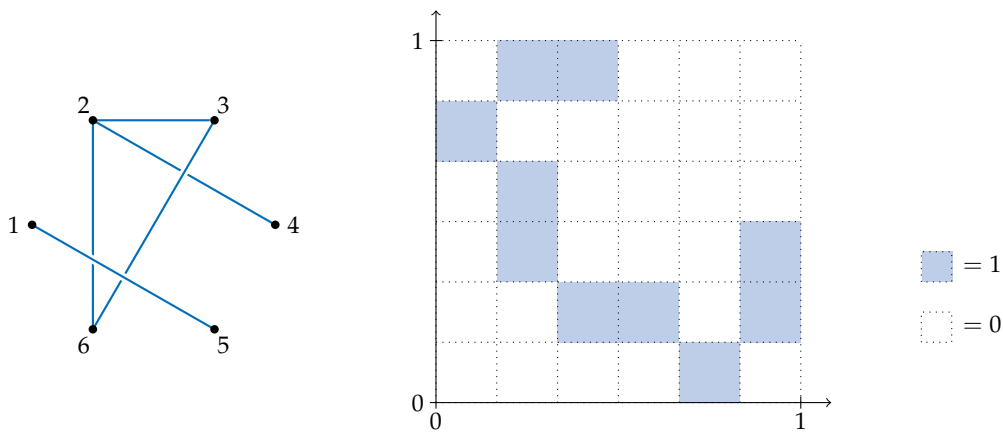
\begin{figure}[ht]
  \[
  \begin{array}{c}
\begin{tikzpicture}[scale=.8,font=\scriptsize]
\draw [thick,NavyBlue] (0:2cm) -- (120:2cm) -- (60:2cm);
\draw [line width=3pt,white] (60:2cm) -- (240:2cm);
\draw [thick,NavyBlue] (60:2cm) -- (240:2cm);
\draw [thick,NavyBlue] (240:2cm) -- (120:2cm);
\draw [line width=3pt,white] (180:2cm) -- (300:2cm);
\draw [thick,NavyBlue] (180:2cm) -- (300:2cm);
\foreach \x in {0,60,120,180,240,300}
\fill (\x:2cm) circle (2pt);
\draw (180:2.3cm) node {$1$};
\draw (120:2.3cm) node {$2$};
\draw (60:2.3cm) node {$3$};
\draw (0:2.3cm) node {$4$};
\draw (300:2.3cm) node {$5$};
\draw (240:2.3cm) node {$6$};
\end{tikzpicture}
\end{array}\qquad
\begin{array}{c}
\begin{tikzpicture}[scale=.8,font=\scriptsize]
\foreach \x in {(0.5,4.5),(4.5,0.5),(1.5,2.5),(1.5,3.5),(1.5,5.5),(2.5,1.5),(3.5,1.5),(5.5,1.5),(2.5,5.5),(5.5,2.5)}
\fill [shift=\x,NavyBlue!25!white] (-0.5,-0.5) rectangle (0.5,0.5);
\foreach \x in {0,1,2,3,4,5,6}
\draw [dotted] (\x,0) -- (\x,6);
\foreach \x in {0,1,2,3,4,5,6}
\draw [dotted] (0,\x) -- (6,\x);
\draw [->] (-0.1,0) -- (6.5,0);
\draw [->] (0,-0.1) -- (0,6.5);
\draw (6,-0.1) -- (6,0.1);
\draw (-0.1,6) -- (0.1,6);
\draw (0,-0.3) node {$0$};
\draw (6,-0.3) node {$1$};
\draw (-0.3,0) node {$0$};
\draw (-0.3,6) node {$1$};
\fill [NavyBlue!25!white] (8,2) rectangle (8.5,2.5);
\begin{scope}[xshift=-.5]
\draw [dotted] (8,2) rectangle (8.5,2.5);
\draw [dotted] (8,1) rectangle (8.5,1.5);
\draw (9,2.3) node {$=1$};
\draw (9,1.3) node {$=0$};
\end{scope}
\end{tikzpicture}
\end{array}\]
\caption{A graph and its associated graph function.\label{fig:graphfunction}}
\end{figure}\medskip

Given $g \in \leb^\infty([0,1]^2,\DD{x}\DD{y})$, we set
$$\|g\|_{\oblong} = \sup_{S,T \subset[0,1]} \left|\int_{S \times T} g(x,y)\DD{x}\DD{y}\right|,$$
where the supremum runs over pairs of measurable subsets $(S,T)$ of $[0,1]$. This is a norm on the space $\leb^\infty([0,1]^2)$ which is equivalent to the norm of operator $\|\cdot\|_{\leb^\infty([0,1]) \to \leb^1([0,1])}$. The \emph{cut-metric} on graph functions $g \in \Fcal$ (see \cite[Section 3.4]{BCLSV08}) is defined by
$$d_\oblong(g,g') = \inf_{\sigma} \|g^\sigma-g'\|_\oblong, $$
where the infimum runs over measure-preserving bijections $\sigma$ of the interval $[0,1]$.
Notice that $d_\oblong(g,g')$ is also the infimum $\inf_{\sigma,\tau} \|g^\sigma-(g')^\tau\|_\oblong$ over pairs of Lebesgue isomorphisms; as a consequence, $d_\oblong$ satisfies the triangular inequality. We define an equivalence relation on $\Fcal$ by 
$$g \sim g' \iff d_\oblong(g,g')=0.$$ 
If $\gamma$ and $\gamma'$ are the equivalence classes of the graph functions $g$ and $g'$, then the quotient space $\Gcal = \Fcal / \!\sim\,$  is endowed with the distance $\delta_\oblong(\gamma,\gamma')=d_\oblong(g,g')$. We call \emph{graphon} an equivalence class of graph functions in $\Gcal$, and the space of graphons $(\Gcal,\delta_\oblong)$ is a compact metric space: see \cite[Proposition 3.6]{BCLSV08}.\medskip

If $\gamma \in \Gcal$ and $g$ is a graph function in this equivalence class,
we can associate with $g$ a sequence of random graphs $(G_n(\gamma))_{n \in \N}$ with $|G_n(\gamma)|=n$.
The construction of $G_n(\gamma)$ ensures that its law is independent of the representative $g$ chosen in the equivalence class $\gamma$,
which justifies the notation $G_n(\gamma)$ instead of $G_n(g)$.
To construct $G_n(\gamma)$, we first draw $n$ independent random variables $X_1,\ldots,X_n$ uniformly in $[0,1]$, and then, $\binom{n}{2}$ Bernoulli random variables $B_{i,j}$ that are independent conditionally to $(X_1,\ldots,X_n)$, and such that
$$\proba[B_{i,j}=1|(X_1,\ldots,X_n)] = g(X_i,X_j)$$
for any $1\leq i<j\leq n$. The random graph $G_n(\gamma)$ is then the graph on $n$ vertices $1,2,\ldots, n$, with $i$ connected to $j$ if and only if $B_{i,j}=1$. We have drawn in Figure \ref{fig:graphon} two examples of such random graphs, when $\gamma$ is the representative of the function $g(x,y)=(x+y)/2$ or of the function $g(x,y)=xy$. 
\begin{center}
\begin{figure}[ht]
\includegraphics[scale=0.4]{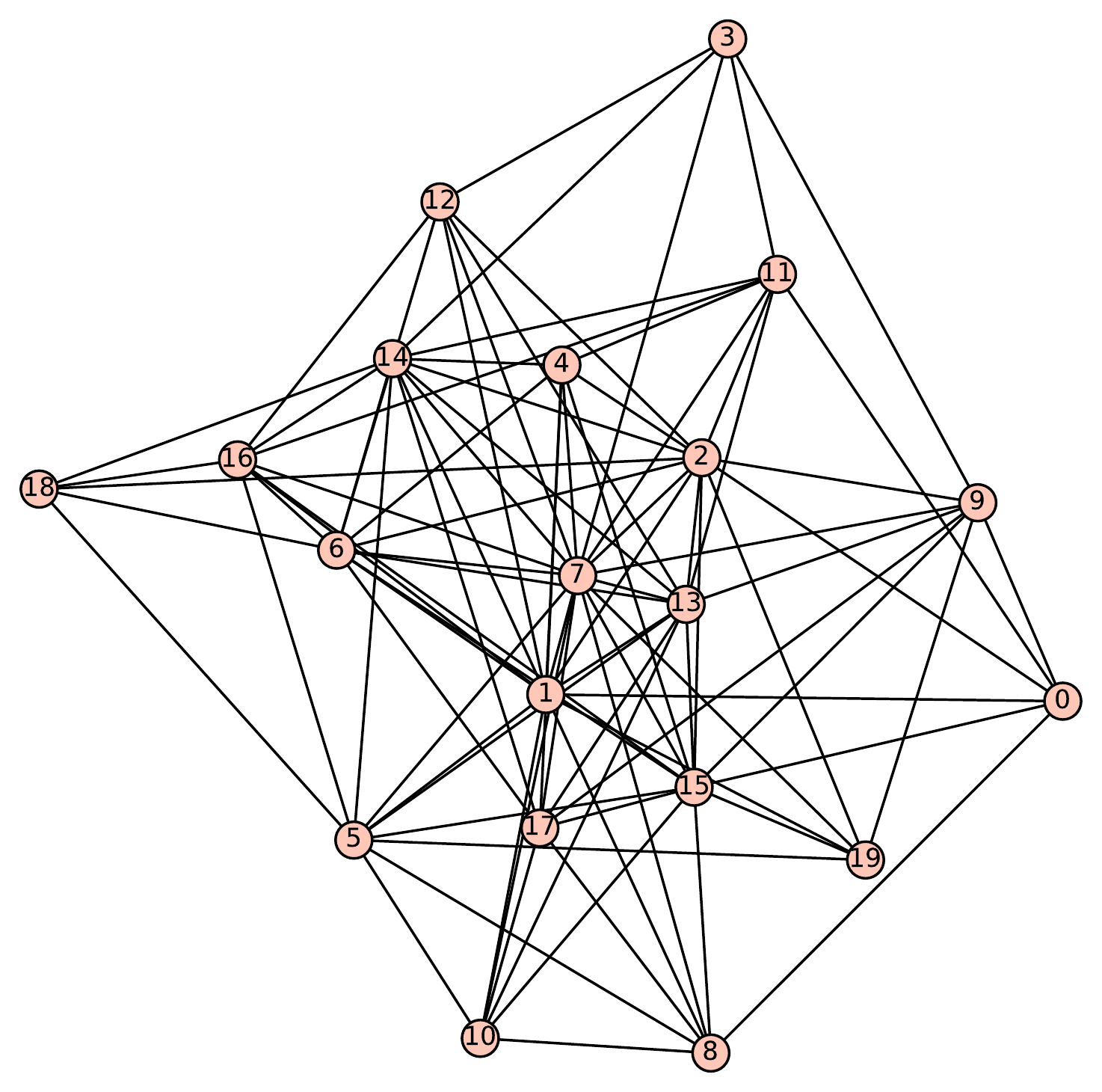} \hspace{1.5cm}
\includegraphics[scale=0.4]{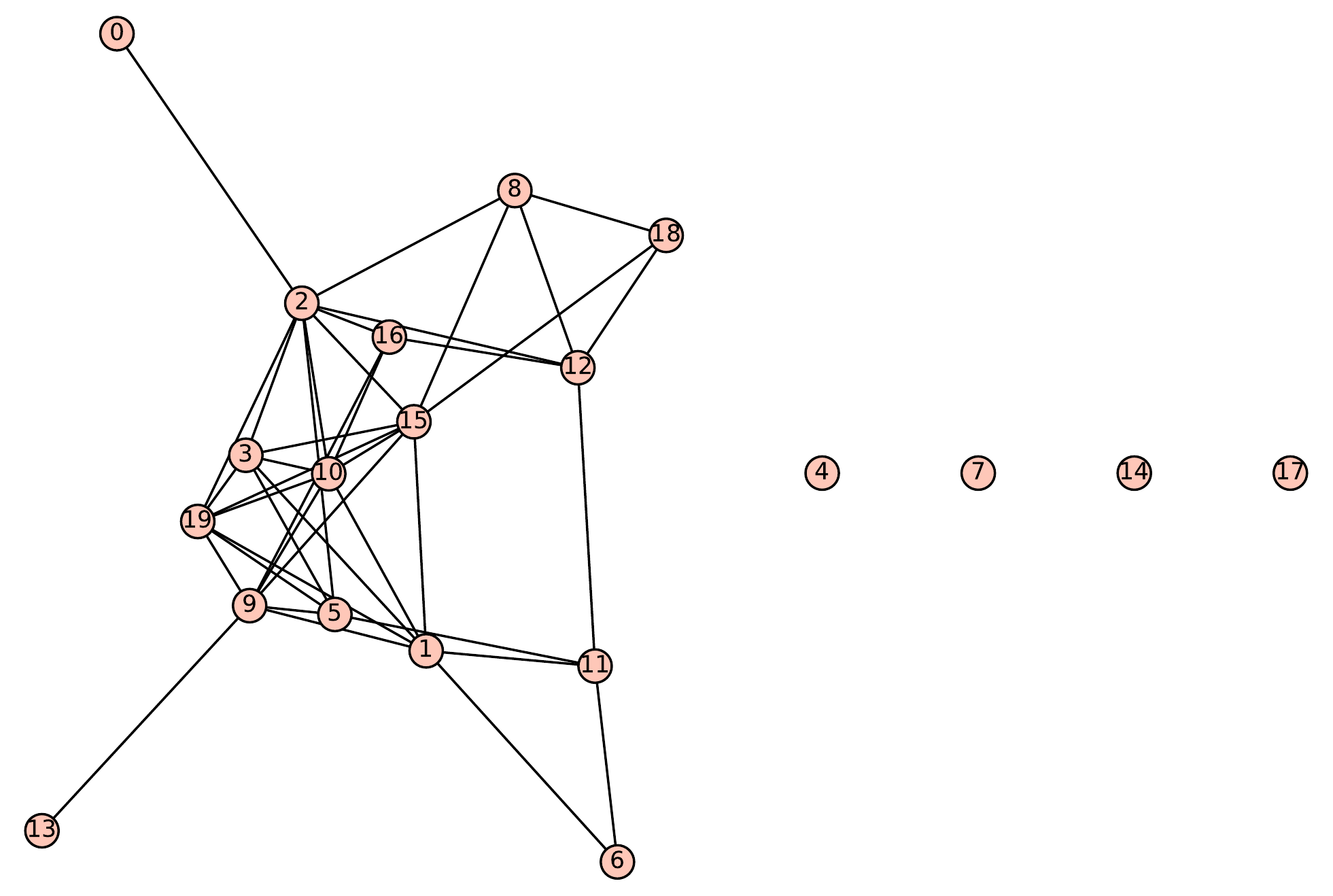}
\caption{Two random graphs of size $n=20$ associated with the graph functions $g(x,y)=\frac{x+y}{2}$ and $g'(x,y)=xy$.\label{fig:graphon}}
\end{figure}
\end{center}

We denote $\Gamma_n(\gamma)$ the equivalence class in $\Gcal$ of the graph function $g(G_n(\gamma))$ that is canonically associated with the random graph $G_n(\gamma)$. 
\begin{proposition}[Corollary 2.6 in \cite{LS06}]\label{prop:concentrationgraph}
For any graphon $\gamma \in \Gcal$, the sequence of random graphons $(\Gamma_n(\gamma))_{n \in \N}$ converges in probability towards $\gamma$.
\end{proposition}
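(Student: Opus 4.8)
The plan is to deduce convergence in $(\Gcal,\delta_\oblong)$ from the convergence of the subgraph densities $t(F,\cdot)$, and to prove the latter by an expectation computation combined with a concentration estimate; this is essentially the first sampling lemma of Lov\'asz and Szegedy. Recall from Section~\ref{subsec:graphonspace} that $(\Gcal,\delta_\oblong)$ is a compact metric space on which every density $t(F,\cdot)$ is continuous, and that a graphon is determined by the countable collection $(t(F,\cdot))_F$ of its densities; equivalently, convergence in $\Gcal$ is equivalent to convergence of all these densities. I would first argue that it therefore suffices to show, for each fixed finite graph $F$, that $t(F,G_n(\gamma)) \to t(F,\gamma)$ in probability (identifying $t(F,\Gamma_n(\gamma))$ with the graph density $t(F,G_n(\gamma))$). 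Indeed, from such density convergence one extracts, along any subsequence, a further subsequence $(m_j)$ along which $t(F,G_{m_j}(\gamma)) \to t(F,\gamma)$ almost surely for every $F$ simultaneously, by a diagonal extraction over the countably many graphs $F$. On the almost sure event where all these hold, compactness of $\Gcal$ forces every subsequential limit of $(\Gamma_{m_j}(\gamma))$ to have the same densities as $\gamma$, hence (by separation) to equal $\gamma$, so that $\delta_\oblong(\Gamma_{m_j}(\gamma),\gamma)\to 0$ almost surely. Since every subsequence admits such a sub-subsequence, $\delta_\oblong(\Gamma_n(\gamma),\gamma)\to 0$ in probability.

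It then remains to establish $t(F,G_n(\gamma)) \to t(F,\gamma)$ for a fixed finite graph $F$ on $k$ vertices. For the mean, unwinding the construction of $G_n(\gamma)$ from the uniform variables $X_1,\dots,X_n$ and the conditionally independent Bernoulli variables $B_{ij}$ of parameter $g(X_i,X_j)$, a short computation shows that the expected homomorphism density of $F$ into $G_n(\gamma)$ equals $\int_{[0,1]^k}\prod_{\{a,b\}\in E_F} g(x_a,x_b)\DD{x_1}\cdots\DD{x_k} = t(F,\gamma)$, while the various normalisations of the density ($t$ versus $t_0$, injective maps versus all maps) differ by $O(k^2/n)$; hence $\esper[t(F,G_n(\gamma))] = t(F,\gamma) + O_k(1/n)$. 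For the concentration, realise $G_n(\gamma)$ as a deterministic function of the independent family $(X_1,\dots,X_n,(U_{ij})_{i<j})$, where each $U_{ij}$ is uniform on $[0,1]$ and $B_{ij}=\mathbbm{1}[U_{ij}\le g(X_i,X_j)]$. Re-sampling the data attached to a single vertex $i$ (its value $X_i$ together with the variables $U_{ij}$) affects only the copies of $F$ through vertex $i$, a fraction $O(1/n)$ of the total, so $t(F,G_n(\gamma))$ has vertex-wise bounded differences of order $Ck/n$. The vertex-exposure martingale together with the Azuma--Hoeffding (equivalently, McDiarmid's) inequality then gives $\proba[\,|t(F,G_n(\gamma))-\esper[t(F,G_n(\gamma))]|\ge \eps\,] \le 2\exp(-c\,\eps^2 n/k^2)$ for absolute constants $c,C>0$. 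Combined with the mean estimate and Borel--Cantelli, this yields $t(F,G_n(\gamma))\to t(F,\gamma)$ almost surely, a fortiori in probability; feeding this into the previous paragraph completes the proof, and in fact even yields the almost sure convergence $\Gamma_n(\gamma)\to\gamma$ in $\Gcal$, which is the form stated in \cite{LS06}.

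The routine parts are the mean computation and the bounded-differences estimate. The substantive input is the structure theory recalled in Section~\ref{subsec:graphonspace} --- that $\delta_\oblong$-convergence is equivalent to convergence of all subgraph densities, together with the compactness of $\Gcal$ --- whose nontrivial half (recovering cut-distance from densities) is the content of the weak regularity and inverse counting lemmas of Lov\'asz and Szegedy; since we are entitled to assume it, the remaining obstacle is merely bookkeeping. One could alternatively replace the Azuma step by Proposition~\ref{prop:concentration1}, but the elementary bounded-differences argument keeps this proof logically independent of the dependency-graph machinery developed later in the paper.
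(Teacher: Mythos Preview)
Your argument is correct and follows the same overall strategy as the paper's sketch in Section~\ref{subsec:graphdensity}: reduce $\delta_\oblong$-convergence to convergence of all subgraph densities, then control each density by a mean estimate plus a concentration step. The difference lies only in the concentration tool. The paper invokes the elementary variance bound $\var(t(F,\Gamma_n(\gamma))) \le 3|V_F|^2/n$ from \cite[Lemma~2.4]{LS06} and concludes with Bienaym\'e--Chebyshev, which already suffices for convergence in probability. You instead use the vertex-exposure martingale and Azuma--Hoeffding/McDiarmid, obtaining an exponential tail and hence (via Borel--Cantelli) almost sure convergence, as in the original statement of \cite{LS06}. Your route is marginally heavier but yields the stronger conclusion; the paper's route is the minimal one for the claim as stated. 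One small point of phrasing: when you speak of ``re-sampling the data attached to a single vertex $i$'', note that the variables $U_{ij}$ are shared between vertices $i$ and $j$, so the clean way to run the argument is either the genuine vertex-exposure filtration (revealing $X_i$ and $B_{ij}$ for $j<i$ at step $i$) or McDiarmid over the full independent family $(X_1,\dots,X_n,(U_{ij})_{i<j})$; both give the same $\exp(-c\eps^2 n/k^2)$ bound you state.
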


Thus, the space of graphons $\Gcal$ parametrises certain models of random graphs which have a property of asymptotic concentration. We shall recall in Section \ref{subsec:graphdensity} that this framework allows one to deal with models that converge with respect to the notion of subgraph density, and we shall prove later that the models $(G_n(\gamma))_{n \in \N}$ have subgraph densities which are mod-Gaussian convergent.
\medskip

\subsection{The space of permutons}\label{subsec:permutonspace}
We now present an analogous construction with random permutations instead of random graphs, and which involves the notion of \emph{permutons}; we refer to \cite{HKMS11,HKMRS13,GGKK15} for the origins of this notion. 
We call {\em permuton} a Borel probability measure $\pi$ on the square $[0,1]^2$ whose marginal laws are uniform:
namely, if $p_1, p_2 : [0,1]^2 \to [0,1]$ are the two coordinate projections and if $p_{1,*},p_{2,*} : \Mcal^1([0,1]^2) \to \Mcal^1([0,1])$ are the two induced applications between the spaces of probability measures, then we require that
$$p_{1,*}(\pi) = p_{2,*}(\pi) = \text{Lebesgue measure on }[0,1].$$
We denote $\Scal$ the space of permutons. If the space of probability measures $\Mcal^1([0,1]^2)$ is endowed with the weak topology of convergence in law, then $\Mcal^{1}([0,1]^2)$ is a compact metrisable space (see for instance \cite[Chapter 1]{Bil69}), and $\Scal$ is a closed subset of it, hence a compact metrisable space itself. 
Given a finite permutation $\sigma \in \sym(n)$, one associate with it a canonical permuton $\pi=\pi(\sigma)$, which is the probability measure on $[0,1]^2$ with density
$$f(\sigma;x,y) = n\,1_{\sigma(\lceil nx\rceil)=\lceil ny \rceil}.$$
It is easily checked that this density yields uniform marginal laws;
we refer to Figure \ref{fig:permuton} for an example.\medskip

Conversely, given a permuton $\pi \in \Scal$, we associate with $\pi$ a sequence of random permutations $(\sigma_n(\pi))_{n \in \N}$ with $\sigma_n(\pi) \in \sym(n)$ for any $n$. If $(x_1,y_1),\ldots,(x_n,y_n)$ is a family of points in the square $[0,1]^2$, we say that these points are in a general configuration if all the $x_i$'s are distinct, and if all the $y_i$'s are also distinct. 
With a general family of $n$ points, we associate a unique permutation $\sigma \in \sym(n)$ with the following property: if $\psi_1 : \{x_1,\ldots,x_n\} \to \lle 1,n\rre$ and $\psi_2 : \{y_1,\ldots,y_n\} \to \lle 1,n\rre$ are increasing bijections, then,
for $i \le n$
$$\sigma(\psi_1(x_i))=\psi_2(y_i).$$
for any $i \in \lle 1,n\rre$. We then say that $\sigma$ is the \emph{configuration} of the set of points; and we denote $\sigma = \conf((x_1,y_1),\ldots,(x_n,y_n))$. Intuitively, this means that the family of points $\{(x_1,y_1),\ldots,(x_n,y_n)\}$ is the diagram of the permutation $\sigma$, up to an increasing reparametrisation of the two axes. Now, for $\pi \in \Scal$, a family of independent points $(X_1,Y_1),\ldots,(X_n,Y_n)$ under $\pi^{\otimes n}$ is in general configuration with probability $1$, since the marginal laws of the coordinates are uniform. 
We can therefore define a random permutation
$$\sigma_n(\pi) = \conf((X_1,Y_1),\ldots,(X_n,Y_n)).$$
We refer to Figure \ref{fig:randompermutation} for an example, with a random permutation associated with the permuton $\pi$ that is supported on the disc inscribed in the square $[0,1]^2$, and that has a density proportional to $\frac{1}{\sqrt{1-4r^2}}$, where $r$ is the distance to the center $(\frac{1}{2},\frac{1}{2})$ of this disc (this ensures that the marginal laws are uniform).

\begin{center}
\begin{figure}[ht]
\begin{tikzpicture}[scale=0.8]
\foreach \x in {(1,2),(2,4),(3,5),(4,3),(5,6),(6,1)}
\fill [shift={\x},NavyBlue!45!white] (0,0) rectangle (-1,-1);
\draw [<->] (0,7) -- (0,0) -- (7,0);
\foreach \x in {0,1,2,3,4,5,6}
\draw [dotted] (\x,0) -- (\x,6);
\foreach \x in {0,1,2,3,4,5,6}
\draw [dotted] (0,\x) -- (6,\x);
\foreach \x in {1,2,3,4,5,6}
{\draw (\x,-0.1) -- (\x,0.1) ; \draw (-0.1,\x) -- (0.1,\x) ;};
\draw (0,-0.4) node {$0$};
\draw (6,-0.4) node {$1$};
\draw (-0.4,0) node {$0$};
\draw (-0.4,6) node {$1$};
\fill [NavyBlue!45!white] (8,2) rectangle (8.5,2.5);
\draw [dotted] (8,2) rectangle (8.5,2.5);
\draw [dotted] (8,1) rectangle (8.5,1.5);
\draw (9.55,2.2) node {$=n=6$};
\draw (9,1.2) node {$=0$};
\end{tikzpicture}
\caption{The density of the permuton $\pi(\sigma)$ associated with the permutation $\sigma=245361$.\label{fig:permuton}}
\end{figure}
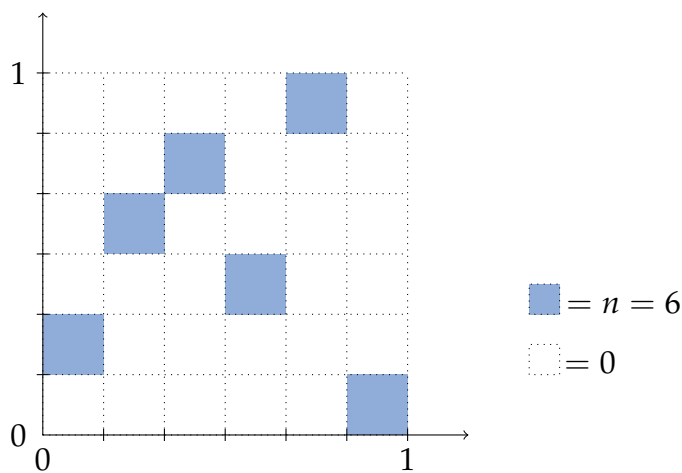
\end{center}

\begin{figure}[ht]
\begin{center}		
\begin{tikzpicture}[scale=0.05]
\foreach \x in {(1, 91), (2, 105), (3, 126), (4, 110), (5, 73), (6, 128), (7, 136), (8,
106), (9, 143), (10, 111), (11, 60), (12, 85), (13, 59), (14, 130), (15,
139), (16, 99), (17, 82), (18, 152), (19, 147), (20, 72), (21, 41), (22,
53), (23, 77), (24, 163), (25, 102), (26, 34), (27, 30), (28, 159), (29,
83), (30, 113), (31, 74), (32, 161), (33, 95), (34, 124), (35, 36), (36,
184), (37, 22), (38, 176), (39, 125), (40, 20), (41, 18), (42, 168),
(43, 165), (44, 93), (45, 178), (46, 24), (47, 49), (48, 162), (49,
177), (50, 149), (51, 11), (52, 101), (53, 80), (54, 55), (55, 194),
(56, 119), (57, 65), (58, 45), (59, 9), (60, 12), (61, 171), (62, 8),
(63, 67), (64, 181), (65, 26), (66, 28), (67, 68), (68, 5), (69, 172),
(70, 108), (71, 6), (72, 51), (73, 81), (74, 193), (75, 122), (76, 156),
(77, 154), (78, 19), (79, 42), (80, 27), (81, 4), (82, 141), (83, 103),
(84, 89), (85, 200), (86, 187), (87, 131), (88, 1), (89, 157), (90, 16),
(91, 189), (92, 44), (93, 71), (94, 3), (95, 86), (96, 66), (97, 52),
(98, 13), (99, 160), (100, 2), (101, 199), (102, 191), (103, 75), (104,
137), (105, 47), (106, 38), (107, 142), (108, 88), (109, 196), (110,
84), (111, 14), (112, 155), (113, 7), (114, 109), (115, 123), (116, 37),
(117, 197), (118, 10), (119, 198), (120, 150), (121, 76), (122, 188),
(123, 182), (124, 31), (125, 144), (126, 183), (127, 195), (128, 61),
(129, 46), (130, 35), (131, 185), (132, 135), (133, 63), (134, 118),
(135, 151), (136, 166), (137, 104), (138, 23), (139, 21), (140, 192),
(141, 29), (142, 15), (143, 33), (144, 179), (145, 190), (146, 170),
(147, 39), (148, 64), (149, 148), (150, 174), (151, 180), (152, 186),
(153, 169), (154, 17), (155, 54), (156, 96), (157, 164), (158, 145),
(159, 43), (160, 107), (161, 167), (162, 25), (163, 98), (164, 153),
(165, 70), (166, 173), (167, 175), (168, 87), (169, 140), (170, 116),
(171, 48), (172, 115), (173, 32), (174, 90), (175, 158), (176, 129),
(177, 133), (178, 40), (179, 112), (180, 134), (181, 121), (182, 127),
(183, 117), (184, 138), (185, 69), (186, 50), (187, 58), (188, 146),
(189, 132), (190, 62), (191, 92), (192, 79), (193, 56), (194, 57), (195,
97), (196, 94), (197, 114), (198, 120), (199, 100), (200, 78)}
{\fill[shift={\x},NavyBlue!75!white] (-1,-1) rectangle (0,0);}
\draw [<->] (210,0) -- (0,0) -- (0,210);
\draw [dotted] (200,0) -- (200,200) -- (0,200);
\draw [Orchid] (100,100) circle (100);
\draw (200,-2) -- (200,2);
\draw (-2,200) -- (2,200);
\draw (200,-5) node {$1$};
\draw (0,-5) node {$0$};
\draw (-5,200) node {$1$};
\draw (-5,0) node {$0$};
\end{tikzpicture}
\caption{Diagram of a random permutation with size $n=200$ associated with the permuton with density $1_{d((x,y),(\frac{1}{2},\frac{1}{2}))\leq \frac{1}{2}}\,\frac{2}{\pi\sqrt{1-4(x-\frac{1}{2})^2-4(y-\frac{1}{2})^2}}\DD{x}\DD{y}$.\label{fig:randompermutation}}
\end{center}
\end{figure}
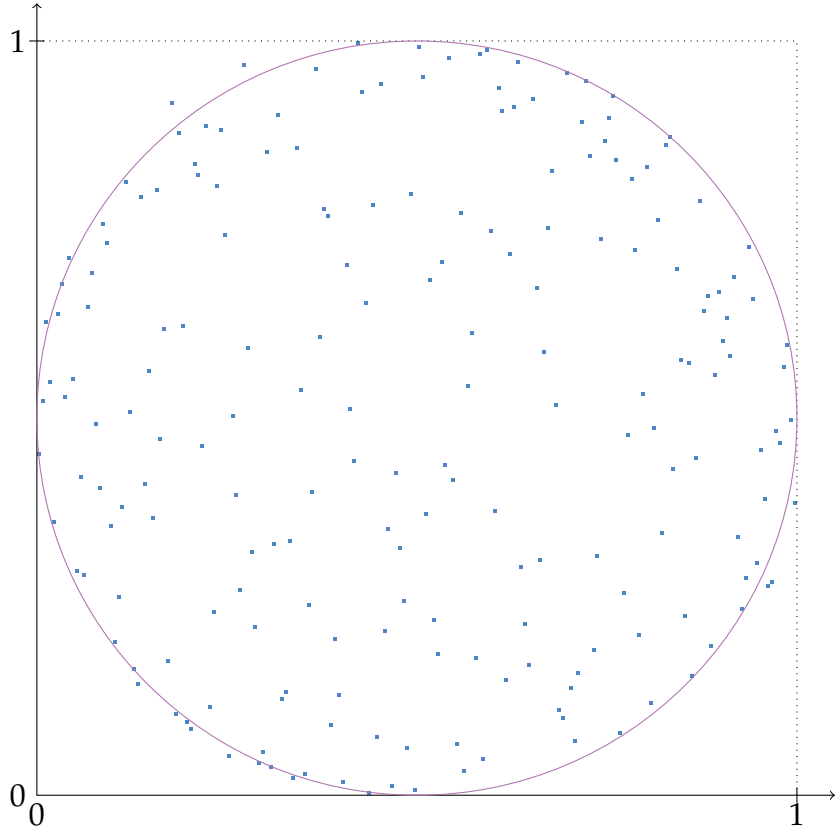

We denote $\Pi_n(\pi)$ the permuton in $\Scal$ associated with the random permutation $\sigma_n(\pi)$. In Figure \ref{fig:randompermutation}, it appears that for $n$ large, the random permuton $\Pi_n(\pi)$ looks a lot like the empirical measure of $n$ independent points under $\pi$. Indeed, the reparametrisation of $[0,1]^2$ associated with the order statistics $\psi_1$ and $\psi_2$ that were introduced in the definition of $\sigma_n(\pi) = \conf((X_1,Y_1),\ldots,(X_n,Y_n))$ can be shown to be very close to the identity map. This implies the following limiting result:
\begin{proposition}[Theorem 1.6.(ii) in \cite{HKMRS13}]\label{prop:concentrationpermutation}
For any permuton $\pi \in \Scal$, the sequence of random permutons $(\Pi_n(\pi))_{n \in \N}$ converges in probability towards $\pi$.
\end{proposition}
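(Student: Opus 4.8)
The plan is to prove the \emph{stronger} statement that $\Pi_n(\pi)\to\pi$ almost surely, which immediately yields convergence in probability. The guiding idea is that, up to a reparametrisation of the two coordinate axes that becomes negligible by the Glivenko--Cantelli theorem, the random permuton $\Pi_n(\pi)$ is nothing but the empirical measure $\mu_n=\frac1n\sum_{i=1}^n\delta_{(X_i,Y_i)}$ of the i.i.d.\ sample used to build $\sigma_n(\pi)$, and this empirical measure converges to $\pi$ by the law of large numbers.

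First I would record that $\mu_n\to\pi$ weakly almost surely: since the $(X_i,Y_i)$ are i.i.d.\ with law $\pi$, this follows from the strong law of large numbers applied to $\frac1n\sum_{i=1}^n f(X_i,Y_i)$ for $f$ ranging over a countable convergence-determining family of continuous functions on the compact square $[0,1]^2$.

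Next I would control the reparametrisation. The marginals of $\pi$ being uniform, $X_1,\dots,X_n$ (resp.\ $Y_1,\dots,Y_n$) are i.i.d.\ uniform on $[0,1]$ and almost surely pairwise distinct, so the increasing bijections $\psi_1,\psi_2$ from the definition of $\conf$ satisfy $\psi_1(X_i)/n=F_n^X(X_i)$ and $\psi_2(Y_i)/n=F_n^Y(Y_i)$, where $F_n^X,F_n^Y$ are the empirical cumulative distribution functions of the $X_i$'s and of the $Y_i$'s. By Glivenko--Cantelli, $\eps_n\coloneqq\max\bigl(\sup_t|F_n^X(t)-t|,\sup_t|F_n^Y(t)-t|\bigr)+\tfrac1n\to 0$ almost surely. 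Then I would couple $\Pi_n(\pi)$ with $\mu_n$: by construction $\Pi_n(\pi)=\pi(\sigma_n(\pi))$ spreads a mass $\tfrac1n$ uniformly over each square $Q_j=(\tfrac{j-1}{n},\tfrac jn]\times(\tfrac{\sigma(j)-1}{n},\tfrac{\sigma(j)}{n}]$ (with $\sigma=\sigma_n(\pi)$), and the square attached to the sample point $(X_i,Y_i)$ is $Q_{\psi_1(X_i)}$ since $\sigma(\psi_1(X_i))=\psi_2(Y_i)$; by the previous estimate every point of $Q_{\psi_1(X_i)}$ is within Euclidean distance $\sqrt2\,\eps_n$ of $(X_i,Y_i)$. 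As $i\mapsto\psi_1(X_i)$ is a bijection of $\lle 1,n\rre$, sending the mass of $\Pi_n(\pi)$ carried by $Q_{\psi_1(X_i)}$ to the atom $(X_i,Y_i)$ of $\mu_n$ is an admissible transport plan of total cost $\le\sqrt2\,\eps_n$, whence $\mathrm{W}_1(\Pi_n(\pi),\mu_n)\le\sqrt2\,\eps_n$ for the order-one Wasserstein distance. Combining with the first step and the triangle inequality, $\mathrm{W}_1(\Pi_n(\pi),\pi)\to 0$ almost surely, and since $\mathrm{W}_1$ metrises the weak topology on the compact space $\Mcal^1([0,1]^2)$ and $\Scal$ carries the induced topology, this gives $\Pi_n(\pi)\to\pi$ in $\Scal$ almost surely, a fortiori in probability.

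I expect the argument to be essentially bookkeeping, the single genuinely substantive ingredient being the Glivenko--Cantelli theorem, which is precisely what makes the passage from the empirical measure to the configuration permuton harmless; the mild care needed is in tracking which sample point feeds which little square. A more combinatorial alternative, closer in spirit to the rest of the paper, would be to write each pattern density $t(\tau,\sigma_n(\pi))$ as a $U$-statistic of degree $|\tau|$ in the i.i.d.\ points $(X_i,Y_i)$, with bounded kernel $\mathbf 1[\conf(\cdot)=\tau]$, apply Hoeffding's strong law for $U$-statistics to obtain $t(\tau,\sigma_n(\pi))\to t(\tau,\pi)$ almost surely for each fixed pattern $\tau$, and then invoke the fact that convergence in $\Scal$ is equivalent to the convergence of all pattern densities.
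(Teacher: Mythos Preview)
Your proof is correct. Note, however, that the paper does not actually give its own proof of this proposition: it is stated as a cited result (Theorem~1.6.(ii) in \cite{HKMRS13}), preceded only by the informal remark that ``the reparametrisation of $[0,1]^2$ associated with the order statistics $\psi_1$ and $\psi_2$ \ldots\ can be shown to be very close to the identity map.'' Your argument is precisely a rigorous implementation of that hint: Glivenko--Cantelli makes the reparametrisation close to the identity, and the Wasserstein coupling then transfers the almost-sure weak convergence of the empirical measure to $\Pi_n(\pi)$. So your first approach matches the paper's stated intuition and upgrades the conclusion to almost-sure convergence.

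Your second suggested route (writing $t(\tau,\sigma_n(\pi))$ as a $U$-statistic and invoking Hoeffding's strong law, then using that convergence of all pattern densities characterises convergence in $\Scal$) is also valid and is in fact closer to the way the paper later handles fluctuations, since the decomposition $S_n(\tau,\pi)=\sum_{|L|=k}A_{\tau,L}$ used in Section~\ref{subsec:permmodgauss} is exactly the $U$-statistic representation. Either argument is fine; the coupling argument is more geometric and gives a quantitative rate (in terms of $\eps_n$), while the $U$-statistic argument integrates seamlessly with the dependency-graph machinery used elsewhere in the paper.
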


Thus, the space of permutons $\Scal$ parametrises certain models of random permutations 
whose diagrams have a property of asymptotic concentration. This property of concentration can be shown to be equivalent to the convergence of the densities of patterns (see Section \ref{subsec:permutationpattern}), and we shall prove later that the pattern densities of the models $(\sigma_n(\pi))_{n \in \N}$ are mod-Gaussian convergent.
\medskip

\subsection{The Thoma simplex}\label{subsec:thoma}
An analogous construction exists in the setting of random integer partitions, and the underlying theory goes back to the works of Kerov and Vershik in the 80's, see in particular \cite{KV77,KV81}. The space of parameters corresponding to these models is related to the classification of the totally positive sequences and of the positive specialisations of the Schur functions; see \cite{AESW51,Tho64}. We recall from the introduction that the \emph{Thoma simplex} is the set of pairs of sequences 
$$\Pcal = \left\{\omega=(\alpha,\beta) = ((\alpha_1\geq \alpha_2 \geq \cdots \geq 0),(\beta_1\geq \beta_2 \geq \cdots \geq 0))\,\,\big|\,\,\sum_{i=1}^\infty (\alpha_i+\beta_i) \leq 1\right\}.$$
In the following we shall denote $\gamma = 1-\sum_{i=1}^\infty (\alpha_i+\beta_i)$. As explained in Section \ref{subsec:informalpresentationmodels}, the system of Frobenius coordinates allows one to see any integer partition $\lambda \in \pym(n)$ as an element $\omega(\lambda) \in \Pcal$, with $\gamma=0$ if $n \geq 1$.
Conversely, fix $\omega \in \Pcal$, and an integer $n \geq 1$. It is known from \cite{Tho64,KV81} that the parameter $\omega$ corresponds to a function $\chi^\omega : \sym(\infty) \to \C$ which is
 \begin{itemize}
	\item tracial: $\chi^\omega(\sigma_1\sigma_2)=\chi^\omega(\sigma_2\sigma_1)$;
	\item normalised: $\chi^\omega(\mathrm{id}_{\N^*})=1$;
	\item non-negative definite: the matrix $(\chi^\omega(\sigma_i(\sigma_j)^{-1}))_{1\leq i,j\leq N}$ is Hermitian and non-nega\-tive for any finite family of permutations in $\sym(\infty)$.
\end{itemize}  
The formula for $\chi^\omega(\sigma)$ is
$$\chi^\omega(\sigma) = \prod_{\substack{c\text{ cycle of }\sigma \\ \text{with length }k\geq 2}} t(k,\omega) = \prod_{\substack{c\text{ cycle of }\sigma \\ \text{with length }k\geq 2}} \left(\sum_{i=1}^\infty (\alpha_i)^k + (-1)^{k-1} \sum_{i=1}^\infty (\beta_i)^k\right).$$
It is convenient to set $t(1,\omega)=1$ for any $\omega \in \Pcal$, so that one can take into account the fixed points of a permutation $\sigma$ in the previous formula. Thoma's theorem shows that $\omega \mapsto \chi^\omega$ is a bijection from $\Pcal$ to the set of extremal points in the compact convex set of non-negative definite normalised tracial functions on $\sym(\infty)$. Consider now the restriction of $\chi^\omega$ to a finite symmetric group $\sym(n)$: it is still tracial normalised non-negative, but it is not extremal anymore, and we have a decomposition
$$(\chi^\omega)_{|\sym(n)}  = \sum_{\lambda \in \pym(n)} \proba_{n,\omega}[\lambda]\,\chi^\lambda(\sigma),$$
where the $\chi^\lambda$'s are the characters of the irreducible Specht representations of $\sym(n)$, and $\proba_{n,\omega}[\cdot]$ is a probability measure on $\pym(n)$.
 We denote in the sequel $\lambda_n(\omega) \in \pym(n)$ a random integer partition chosen according to this probability measure $\proba_{n,\omega}$, and $\Omega_n(\omega) \in \Pcal$ the corresponding random parameter of the Thoma simplex. We refer to Figure \ref{fig:randompartition} for an example of random partition associated with a parameter $\omega \in \Pcal$.
\begin{figure}[ht]
 \begin{center}		
 \begin{tikzpicture}[scale=0.075]
 \draw [gray,thick] (-77.5,0) -- (-66.66,-10);
 \draw [gray,thick] (51.5,0) -- (50,-10);
 \draw [gray,thick] (27.5,0) -- (33.33,-10);
 \fill [gray!50!white] (6.5,0) -- (0,-10) -- (-11.5,0);
 \draw [gray,thick] (6.5,0) -- (0,-10) -- (-11.5,0);
 \draw (-78,78) -- (0,0) -- (52,52);
 \foreach \x in {0,...,51}
 \draw[shift={(\x,\x)}] (1,1) -- (0,2) -- (-1,1);
\foreach \x in {0,...,27}
 \draw[shift={(\x,\x)}] (1,3) -- (0,4) -- (-1,3);
\foreach \x in {0,...,6}
 \draw[shift={(\x,\x)}] (1,5) -- (0,6) -- (-1,5);
\foreach \x in {0,...,5}
 \draw[shift={(\x,\x)}] (1,7) -- (0,8) -- (-1,7);
\foreach \x in {0,...,3}
 \draw[shift={(\x,\x)}] (1,9) -- (0,10) -- (-1,9);
\draw (1,11) -- (0,12) -- (-1,11);
 \foreach \x in {1,...,77}
 \draw[shift={(-\x,\x)}] (1,1) -- (0,2) -- (-1,1);
\foreach \x in {1,...,11}
 \draw[shift={(-\x,\x)}] (1,3) -- (0,4) -- (-1,3);
\foreach \x in {1,...,9}
 \draw[shift={(-\x,\x)}] (1,5) -- (0,6) -- (-1,5);
\foreach \x in {1,...,3}
 \draw[shift={(-\x,\x)}] (1,7) -- (0,8) -- (-1,7);
\foreach \x in {1,...,2}
 \draw[shift={(-\x,\x)}] (1,9) -- (0,10) -- (-1,9);
\draw [->] (-95,0) -- (95,0);
\foreach \x in {-77.5,-11.5,-9.5,-3.5,-2.5,-0.5}
\fill[Red] (\x,0) circle (0.75);
\foreach \x in {0.5,3.5,5.5,6.5,27.5,51.5}
\fill[Orchid] (\x,0) circle (0.75);
\draw [dashed,Red] (-77.5,0) -- (-77.5,78.5);
\draw [dashed,Red] (-11.5,0) -- (-11.5,14.5);
\draw [dashed,Red] (-9.5,0) -- (-9.5,14.5);
\draw [dashed,Red] (-3.5,0) -- (-3.5,10.5);
\draw [dashed,Red] (-2.5,0) -- (-2.5,11.5);
\draw [dashed,Red] (-0.5,0) -- (-0.5,11.5);
\draw [dashed,Orchid] (51.5,0) -- (51.5,52.5);
\draw [dashed,Orchid] (27.5,0) -- (27.5,30.5);
\draw [dashed,Orchid] (6.5,0) -- (6.5,11.5);
\draw [dashed,Orchid] (5.5,0) -- (5.5,12.5);
\draw [dashed,Orchid] (3.5,0) -- (3.5,12.5);
\draw [dashed,Orchid] (0.5,0) -- (0.5,11.5);
\draw (-95,-10) -- (95,-10);
\foreach \x in {-66.66,-50,-33.33,-16.66,0,16.66,33.33,50,66.66}
\draw (\x,-11.5) -- (\x,-8.5);
\draw (0,-15.5) node {$0$};
\draw (50,-15.5) node {$\frac{1}{4}$};
\draw (33.33,-15.5) node {$\frac{1}{6}$};
\draw (-66.66,-15.5) node {$-\frac{1}{3}$};
\draw (105,0) node {$\Omega_n(\omega)$};
\draw (101,-10) node {$\omega$};
 \end{tikzpicture}
 \caption{Thoma parameter $\Omega_n(\omega)$ of a random partition $\lambda_n(\omega)$ with $n=200$ and $\omega = ((\frac{1}{4},\frac{1}{6},0,\ldots),(\frac{1}{3},0,\ldots))$.\label{fig:randompartition}}
 \end{center}
 \end{figure}
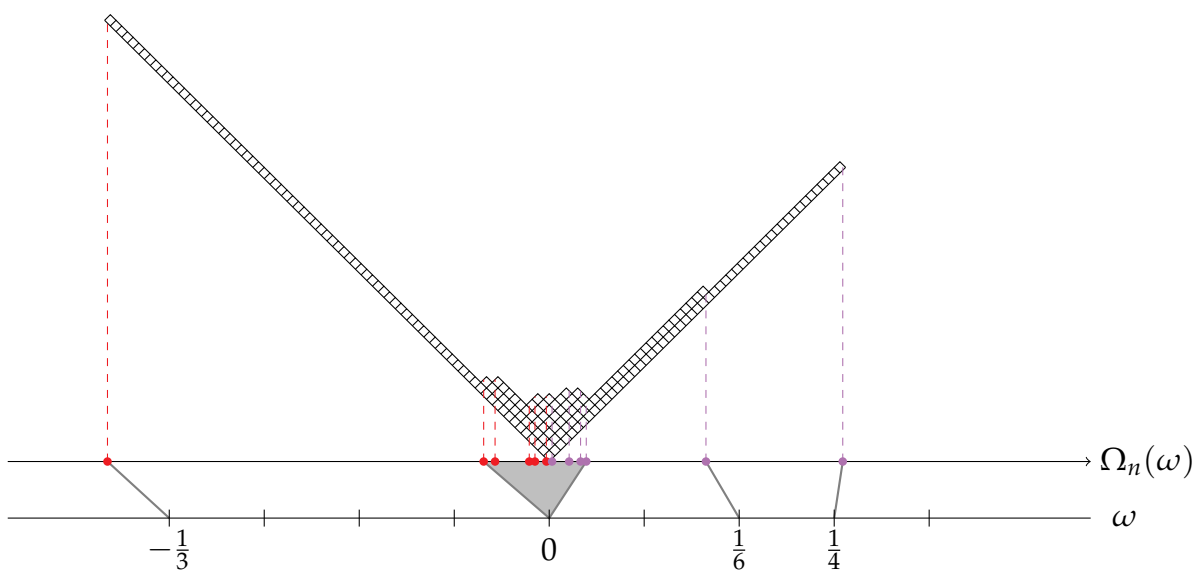 \medskip
 
 The law of the random integer partition $\lambda_n(\omega)$ can be computed by using the algebra of symmetric functions $\mathrm{Sym}$ (\emph{cf.}~\cite{Mac95} for the theory of this Hopf algebra, which we shall consider here over the field of real numbers). More precisely, denote $s_\lambda$ the Schur function with label $\lambda$, and if $k \geq 1$, denote $p_k$ the $k$-th power sum function; we recall that $\mathrm{Sym}=\R[p_1,p_2,\ldots]$, and that the family of Schur functions $(s_\lambda)_{\lambda \in \bigsqcup_{n \in \N} \pym(n)}$ form a linear basis of $\mathrm{Sym}$. We define for any $\omega \in \Pcal$ a morphism of algebras $\mathrm{Sym} \to \R$ by setting
 $ p_{k}(\omega) = t(k,\omega)$. Since $(p_k)_{k \geq 1}$ is a transcendence basis of $\mathrm{Sym}$ over $\R$, this rule allows one to define $s_\lambda(\omega)$ for any $\lambda$ integer partition. By using the Frobenius--Schur formula relatinf power sums to Schur functions, one can then show that
 $$\proba_{n,\omega}[\lambda] = (\dim \lambda)\,s_\lambda(\omega),$$
 where $\dim \lambda$ is the number of standard tableaux with shape $\lambda$.
These measures are called central measures in \cite{KV81}, and one has the following limiting result, which is quite clear in Figure \ref{fig:randompartition}.
 \begin{proposition}[Section 5 in \cite{KV81}] \label{prop:concentrationpartition}
 For any parameter $\omega \in \Pcal$ in the Thoma simplex, the sequence of random parameters $(\Omega_n(\omega))_{n \in \N}$ converges coordinate-wise and in probability towards $\omega$.
 \end{proposition}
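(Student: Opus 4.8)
The plan is to reduce the statement to a second-moment estimate for each Frobenius moment, and then to obtain that estimate from the explicit decomposition of $\chi^\omega$ into normalised irreducible characters, together with two classical facts from the asymptotic character theory of $\sym(\infty)$: the multiplicativity of $\chi^\omega$ over disjoint cycles, and the fact that the normalised character on a $k$-cycle is, to leading order in $n$, the $k$-th Frobenius moment.

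First I would use the topological description of the Thoma simplex. By Proposition~\ref{prop:topology_obs_partitions}, the pointwise-convergence topology on the compact metrisable set $\Pcal$ is generated by the continuous observables $\omega\mapsto t(k,\omega)$, $k\ge2$, and these separate points of $\Pcal$ (if $t(k,\omega)=t(k,\omega')$ for all $k\ge2$, apply Proposition~\ref{prop:topology_obs_partitions} to the constant sequence equal to $\omega'$). A standard metrisation argument on a compact metric space then shows that the convergence in probability $\Omega_n(\omega)\to\omega$ --- which in particular entails the coordinate-wise convergence in probability of each $\alpha_i$ and $\beta_i$ --- is equivalent to having $t(k,\lambda_n(\omega))\to t(k,\omega)$ in probability for every fixed $k\ge2$. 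So it suffices to prove that $t(k,\lambda_n(\omega))\to t(k,\omega)$ in $L^2(\proba_{n,\omega})$ for each $k\ge2$.

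Next I would compute the first two moments by evaluating the identity $(\chi^\omega)_{|\sym(n)}=\sum_{\lambda\in\pym(n)}\proba_{n,\omega}[\lambda]\,\chi^\lambda$ on suitable conjugacy classes. On a permutation with one $k$-cycle $c_k$ and $n-k$ fixed points it gives $\esper_{n,\omega}[\chi^{\lambda_n(\omega)}(c_k)]=\chi^\omega(c_k)=t(k,\omega)$, using the product formula for $\chi^\omega$ and the convention $t(1,\omega)=1$. It is classical (Vershik--Kerov; see \cite{KV81} and \cite[Ch.~12]{Mel17}) that $\tfrac{n!}{(n-k)!}\chi^\lambda(c_k)$ is a shifted-symmetric function of $\lambda$ of weight $k$ whose top homogeneous component, in the Frobenius coordinates, is $\sum_i a_i^k+(-1)^{k-1}\sum_i b_i^k$; hence $\chi^\lambda(c_k)=t(k,\lambda)+O_k(1/n)$ uniformly over $\lambda\in\pym(n)$ (the Frobenius moments being bounded by $2$), and therefore $\esper_{n,\omega}[t(k,\lambda_n(\omega))]=t(k,\omega)+O_k(1/n)$. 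For the second moment I would use the same two ingredients: evaluating the decomposition on a permutation formed of two disjoint $k$-cycles $c_k\sqcup c_k'$ and $n-2k$ fixed points gives $\esper_{n,\omega}[\chi^{\lambda_n(\omega)}(c_k\sqcup c_k')]=\chi^\omega(c_k\sqcup c_k')=t(k,\omega)^2$, where one crucially uses the multiplicativity of $\chi^\omega$ over disjoint cycles; on the other hand, in the algebra of shifted-symmetric functions one has $p^\#_{(k,k)}=(p^\#_k)^2+(\text{weight}\le 2k-1)$, with $p^\#_\mu(\lambda)=\tfrac{n!}{(n-|\mu|)!}\chi^\lambda(\sigma_\mu)$, so dividing by $\big(\tfrac{n!}{(n-k)!}\big)^2$ and using $\tfrac{n!/(n-2k)!}{(n!/(n-k)!)^2}\to1$, $|\chi^\lambda(\cdot)|\le1$, and the fact that a weight-$d$ shifted-symmetric function is $O(n^d)$ on $\pym(n)$, one gets $\esper_{n,\omega}[\chi^{\lambda_n(\omega)}(c_k)^2]=t(k,\omega)^2+O_k(1/n)$, hence $\esper_{n,\omega}[t(k,\lambda_n(\omega))^2]=t(k,\omega)^2+O_k(1/n)$. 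Combining the two computations, $\var_{n,\omega}(t(k,\lambda_n(\omega)))=O_k(1/n)\to0$, and Chebyshev's inequality together with the reduction of the first step proves the proposition.

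The only non-elementary inputs are the two cited facts about normalised characters of $\sym(\infty)$, and granting them the argument is routine; the main obstacle, were one to reprove everything from scratch, is precisely this asymptotic control of the normalised characters of $\sym(n)$ under the central measures --- the Kerov--Vershik character theory --- which is also what singles out the Frobenius moments as the natural observables in this setting. I note finally that, having established the elementary identity $\esper_{n,\omega}[\chi^{\lambda_n(\omega)}(c_k)]=t(k,\omega)$ and the asymptotics $\chi^\lambda(c_k)=t(k,\lambda)+O_k(1/n)$, one can skip the second-moment step altogether: Theorem~\ref{thm:concentrationpartitions} gives $\proba_{n,\omega}[\,|t(k,\lambda_n(\omega))-t(k,\omega)|\ge x\,]\le 4\exp(-nx^2/(9k^2))$, which already forces $t(k,\lambda_n(\omega))\to t(k,\omega)$ in probability, and since that inequality rests on uniform cumulant bounds coming from a dependency graph in a noncommutative probability space --- not on Proposition~\ref{prop:concentrationpartition} --- there is no circularity.
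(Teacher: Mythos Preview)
The paper does not supply its own proof of this proposition; it is simply quoted from \cite{KV81}. Your argument is correct and is essentially the original Kerov--Vershik one: reduce coordinate-wise convergence in $\Pcal$ to convergence of each Frobenius moment via the topological equivalence of Proposition~\ref{prop:topology_obs_partitions}, then control the first two moments of $t(k,\lambda_n(\omega))$ by evaluating the identity $(\chi^\omega)_{|\sym(n)}=\sum_\lambda \proba_{n,\omega}[\lambda]\,\chi^\lambda$ on one and on two disjoint $k$-cycles, using the multiplicativity of $\chi^\omega$ together with the relation $\varSigma_k = p_k + (\text{lower weight})$ in the Kerov--Olshanski algebra. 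The uniform $O_k(1/n)$ comparison between $\chi^\lambda(c_k)$ and $t(k,\lambda)$ that you invoke is exactly the change-of-basis used later in the paper in the proof of Proposition~\ref{prop:boundcumulantprho}.

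Your closing remark is also on point: within the paper, the shortest route is to quote Theorem~\ref{thm:concentrationpartitions}, whose proof (via Proposition~\ref{prop:boundcumulantprho} and Proposition~\ref{prop:concentration1}) does not rely on Proposition~\ref{prop:concentrationpartition}, so there is no circularity. That route replaces the ad hoc second-moment computation by the uniform cumulant bound, and is precisely how the paper upgrades the Kerov--Vershik law of large numbers to an exponential concentration inequality.
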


\noindent This result implies that for any $\omega \in \Pcal$ and any $i \geq 1$,
 $$\frac{\lambda_{n,i}(\omega)}{n} \to_{\proba} \alpha_i\qquad;\qquad \frac{\lambda_{n,i}'(\omega)}{n} \to_{\proba} \beta_i.$$
The convergence coordinate by coordinate in $\Pcal$ can be reinterpreted as the convergence of moments of certain probability measures on $[-1,1]$ associated with the parameters of the Thoma simplex, and we shall prove in Section \ref{sec:dependency} that these moments are generically mod-Gaussian convergent after an appropriate renormalisation.

 \begin{remark}
 The reader might wonder how one can draw at random an integer partition $\lambda_n(\omega)$ as in Figure \ref{fig:randompartition}. The easiest way to do it is to use the Robinson--Schensted--Knuth algorithm in order the relate the central measures on integer partitions to certain models of random permutations obtained by shuffles; see \cite{Ful02} and  \cite[Section 12.2]{Mel17}.
 \end{remark}
\bigskip

\section{The algebras of observables}\label{sec:observables}
In the previous section, we introduced three compact spaces of parameters $\Gcal$, $\Scal$ and $\Pcal$ which parametrise models of random graphs, of random permutations and of random integer partitions; and all these models have an asymptotic concentration property (Propositions \ref{prop:concentrationgraph}, \ref{prop:concentrationpermutation} and \ref{prop:concentrationpartition}). The purpose of this section is to reinterpret the convergence in terms of observables of the combinatorial objects. The interest of these observables is that they will allow us to speak of fluctuations of models in Section \ref{sec:dependency}, and to prove an underlying mod-Gaussian convergence. We shall thus complete the following table:
\medskip

\renewcommand{\arraystretch}{1.2}
\begin{center}
 \begin{tabular}{|c|c|c|c|c|c|}
\hline \multirow{2}{*}{space} & \multirow{2}{*}{parameters} & random combina- & random & \multirow{2}{*}{observables} & algebra of \\
& & torial object & parameter & & observables\\
\hline \hline \multirow{2}{*}{$\Gcal$} & \multirow{2}{*}{$\gamma$} & \multirow{2}{*}{graph $G_n(\gamma)$} & \multirow{2}{*}{$\Gamma_n(\gamma)$} & subgraph & algebra of\\
& & & & densities  & graphs $\obsG$ \\
 \hline \multirow{2}{*}{$\Scal$} & \multirow{2}{*}{$\pi$} & \multirow{2}{*}{permutation $\sigma_n(\pi)$} & \multirow{2}{*}{$\Pi_n(\pi)$} & pattern & algebra of  \\
 & & & & densities & permutations $\obsS$\\
 \hline \multirow{2}{*}{$\Pcal$} & \multirow{2}{*}{$\omega=(\alpha,\beta)$} & \multirow{2}{*}{partition $\lambda_n(\omega)$} & \multirow{2}{*}{$\Omega_n(\omega)$} & Frobenius & algebra of \\
 & & & & moments &partitions $\obsP$\\
 \hline
\end{tabular}
\end{center}
\bigskip

For each space of parameters $\Mcal$, we shall exhibit a combinatorial algebra $\obs_{\mathfrak{M}}$,
endowed with a morphism of algebras $\Psi$ from $\obs_{\mathfrak{M}}$ to 
$\Ccal(\Mcal)$, the algebra of continuous functions on $\Mcal$.
These morphisms have the property that the convergence $m_n \to m$ 
in the compact metrisable space $\Mcal$ is equivalent to the convergence
of all the observables $\Psi(f)(m_n) \to \Psi(f)(m)$ for any $f \in \obs_{\mathfrak{M}}$. 
Equivalently (through the Stone--Weierstrass theorem), the image of $\Psi$ is a dense subalgebra of $\Ccal(\Mcal)$.
The combinatorics of the observables are easier to understand inside an abstract algebra $\obs_{\mathfrak{M}}$, instead of directly inside the algebra of continuous functions $\Ccal(\Mcal)$; this is one of the reasons why we use this point of view.
\medskip

\subsection{Subgraph counts}\label{subsec:graphdensity}
Let $\GG(n)$ denote the set of isomorphism classes of simple graphs with $n$ vertices. For instance, $\GG(4)$ is the set that consists in the $11$ following graphs:
\bigskip

\begin{center}
\begin{tikzpicture}[scale=0.8]
\foreach \x in {(0,0),(2.5,0),(5,0),(7.5,0),(10,0),(12.5,0),(1.25,-2.5),(3.75,-2.5),(6.25,-2.5),(8.75,-2.5),(11.25,-2.5)}
{\fill[shift={\x}] (0,0) circle (1.5pt);
\fill[shift={\x}] (0,1) circle (1.5pt);
\fill[shift={\x}] (1,1) circle (1.5pt);
\fill[shift={\x}] (1,0) circle (1.5pt);
\draw[shift={\x}] (-0.2,-0.2) node {\tiny $1$};
\draw[shift={\x}] (1.2,-0.2) node {\tiny $2$};
\draw[shift={\x}] (1.2,1.2) node {\tiny $3$};
\draw[shift={\x}] (-0.2,1.2) node {\tiny $4$};}
\draw[shift={(2.5,0)}] (1,0) -- (0,0);
\draw[shift={(5,0)}] (1,0) -- (0,0);
\draw[shift={(5,0)}] (1,1) -- (0,1);
\draw[shift={(7.5,0)}] (1,1) -- (0,0) -- (1,0);
\draw[shift={(10,0)}] (1,0) -- (0,0) -- (1,1) -- (1,0);
\draw[shift={(12.5,0)}] (0,1) -- (0,0) -- (1,0) -- (1,1) ;
\draw[shift={(1.25,-2.5)}] (1,0) -- (0,0) -- (0,1);
\draw[shift={(1.25,-2.5)}] (0,0) -- (1,1);
\draw[shift={(3.75,-2.5)}] (1,1) -- (0,1) -- (0,0) -- (1,0) -- (1,1) ;
\draw[shift={(6.25,-2.5)}] (1,1) -- (1,0) -- (0,0) -- (0,1);
\draw[shift={(6.25,-2.5)}] (0,0) -- (1,1);
\draw[shift={(8.75,-2.5)}] (1,0) -- (1,1) -- (0,0) -- (1,0);
 \draw[shift={(8.75,-2.5)}] (1,1) -- (0,1) -- (0,0);
\draw[shift={(11.25,-2.5)}] (0,0) -- (0,1) -- (1,0) -- (1,1) -- (0,1);
\draw[shift={(11.25,-2.5)}] (1,1) -- (0,0) -- (1,0);
\end{tikzpicture}
\end{center}

\noindent The \emph{algebra of graphs} $\obsG$ is the (commutative) algebra over the set of real numbers whose combinatorial basis consists in the elements of $\GG = \bigsqcup_{n \in \N} \GG(n)$, and whose product is defined by 
$$F_1 \times F_2 = F_1 \sqcup F_2,$$
where the right-hand side of the formula stands for the isomorphism class of the disjoint union of two graphs in the classes $F_1$ and $F_2$. The algebra $\obsG$ is graded by $\deg F = |V_F|=n$ if $F \in \GG(n)$.
\medskip

One can evaluate an element of $\obsG$ on a graph $G$ or on a graphon $\gamma$ by using the notion of subgraph count and of subgraph density. Let $F=(V_F,E_F)$ and $G=(V_G,E_G)$ be two finite graphs. A \emph{morphism} from $F$ to $G$ is a map $\phi : V_F \to V_G$ such that, if $\{v_1,v_2\} \in E_F$, then $\{\phi(v_1),\phi(v_2)\} \in E_G$. We denote $\hom(F,G)$ the set of morphisms from $F$ to $G$. The \emph{subgraph count} of $F$ in $G$ is the cardinality $|\hom(F,G)|$, and the \emph{subgraph density} of $F$ in $G$ is defined by
$$t(F,G)=\frac{|\hom(F,G)|}{|V_G|^{|V_F|}}.$$
This density is a real number between $0$ and $1$. 
\medskip

Given a finite graph $F$ and a graph function $g$, we can also define a density of $F$ in $g$ by the following formula:
$$t(F,g) = \int_{[0,1]^k} \left(\prod_{e=\{i,j\} \in E_F} g(x_i,x_j) \right)\DD{x_1}\DD{x_2}\cdots \DD{x_k},$$
where $V_F$ is identified with $\lle 1,k\rre$ if $k=|V_F|$. For instance, if $F$ is the graph of Figure \ref{fig:graphfunction}, then
$$t(F,g) = \int_{[0,1]^6} g(x_1,x_5)g(x_2,x_3)g(x_2,x_4) g(x_2,x_6) g(x_3,x_6)\DD{x_1}\DD{x_2}\cdots \DD{x_6}.$$
Let us describe some easy properties of this functions.
First, it is easily seen that $t(F,g)$ only depends on the equivalence class $\gamma$ of $g$ in $\Gcal$,
so $t(F,\gamma)=t(F,g)$ is well defined for any graphon $\gamma \in \Gcal$. 
Moreover, if $\gamma$ is the graphon associated with a finite graph $G$, then 
$$t(F,\gamma) = t(F,G)$$
for any finite graph $F$, see \cite[Equation (3.2)]{BCLSV08}.
Lastly, if $F$ and $F'$ are two finite graphs, then for any graphon $\gamma$,
we have $t(F \sqcup F', \gamma)=t(F,\gamma) \, t(F',\gamma)$.
In other words, the map $\Psi: \obsG \to \R^\Gcal$ defined by $\Psi(F) = t(F,\cdot)$
is a morphism of algebras.
The next statement connects this morphism to the topology of $\Gcal$ defined by the cut-metric.
\begin{proposition}[Theorem 5.1 in \cite{LS07} and Theorems 2.6 and 3.8 in \cite{BCLSV08}]
A sequence of graphons $(\gamma_n)_{n \in \N}$ converges in $(\Gcal,\delta_\oblong)$ to a graphon $\gamma$ if and only if, for any finite graph $F$, $t(F,\gamma_n) \to t(F,\gamma)$.
\end{proposition}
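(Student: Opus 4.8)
The plan is to prove both directions of the equivalence, the forward direction being essentially immediate from the definitions and the reverse direction being the substantive one. For the forward direction, suppose $\gamma_n \to \gamma$ in $(\Gcal,\delta_\oblong)$. Fix a finite graph $F$ with $k$ vertices. I would first establish a \emph{counting lemma}: for graph functions $g,g'$ one has $|t(F,g)-t(F,g')| \leq |E_F|\,\|g-g'\|_\oblong$. This follows by a telescoping argument, replacing one factor $g(x_i,x_j)$ by $g'(x_i,x_j)$ at a time in the integral defining $t(F,g)$; each single replacement, after integrating out the variables not attached to the edge $\{i,j\}$, is bounded by $\|g-g'\|_\oblong$ because the remaining integrand, as a function of $(x_i,x_j)$, can be approximated by step functions and the cut-norm controls exactly such integrals (alternatively one invokes the equivalence of $\|\cdot\|_\oblong$ with the $\leb^\infty \to \leb^1$ operator norm, already noted in the excerpt). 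Since $t(F,\cdot)$ is invariant under measure-preserving bijections, this bound passes to the quotient and gives $|t(F,\gamma_n)-t(F,\gamma)| \leq |E_F|\,\delta_\oblong(\gamma_n,\gamma) \to 0$.

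For the reverse direction, assume $t(F,\gamma_n) \to t(F,\gamma)$ for every finite graph $F$. Since $(\Gcal,\delta_\oblong)$ is a compact metric space, it suffices to show that every subsequential limit of $(\gamma_n)$ equals $\gamma$. So let $\gamma_{n_j} \to \gamma'$ in $\delta_\oblong$; by the forward direction just proved, $t(F,\gamma_{n_j}) \to t(F,\gamma')$ for every $F$, and by hypothesis the same sequence converges to $t(F,\gamma)$, hence $t(F,\gamma') = t(F,\gamma)$ for all finite graphs $F$. It then remains to show that equality of all subgraph densities forces $\gamma' = \gamma$ in $\Gcal$, i.e.\ that the family of functionals $\{t(F,\cdot)\}_F$ separates points of $\Gcal$. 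This is the uniqueness/moment-problem content of the theory: two graph functions with identical homomorphism densities for every finite graph are equal up to a measure-preserving rearrangement (plus a null modification), which is precisely the statement that $\delta_\oblong(\gamma,\gamma')=0$. I would cite this from \cite[Section 2]{BCLSV08} (the relevant identification result for the cut-distance) rather than reprove it.

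The main obstacle is this last separation-of-points step, which is genuinely the deep part of graphon theory and is not something one proves from scratch in a few lines; everything else is a routine telescoping estimate plus a compactness argument. In the write-up I would therefore be explicit that the counting lemma and the compactness of $(\Gcal,\delta_\oblong)$ are elementary, and attribute the injectivity of $\gamma \mapsto (t(F,\gamma))_F$ to the cited literature. A secondary technical point to handle cleanly is the passage of the counting-lemma bound to the quotient: one must observe that $t(F,g^\sigma)=t(F,g)$ for every measure-preserving bijection $\sigma$ (obvious from the change of variables in the defining integral), so that $|t(F,g)-t(F,g')| \le |E_F| \inf_\sigma \|g^\sigma - g'\|_\oblong = |E_F|\, d_\oblong(g,g')$, which then descends to $\Gcal$ verbatim.
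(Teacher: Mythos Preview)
The paper does not give its own proof of this proposition: it is stated with explicit attribution to \cite[Theorem 5.1]{LS07} and \cite[Theorems 2.6 and 3.8]{BCLSV08}, and no proof environment follows it in the text. So there is nothing to compare against in the paper itself.

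That said, your outline is correct and is precisely the standard route taken in the cited references. The forward direction via the counting lemma $|t(F,g)-t(F,g')|\le |E_F|\,\|g-g'\|_\oblong$ (telescoping over edges, then using invariance under measure-preserving bijections to descend to $\delta_\oblong$) is exactly how Lov\'asz--Szegedy and Borgs \emph{et al.} establish continuity of $t(F,\cdot)$. For the reverse direction, your use of compactness of $(\Gcal,\delta_\oblong)$ to reduce to a uniqueness statement is again the standard manoeuvre, and you are right to flag the separation-of-points step (that $t(F,\gamma)=t(F,\gamma')$ for all $F$ forces $\delta_\oblong(\gamma,\gamma')=0$) as the genuinely nontrivial ingredient, to be cited rather than reproved. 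There is no gap in your plan; it simply reproduces the argument that the paper is content to quote.
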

Equivalently, the range of the morphism of algebras $\Psi: \obsG \to \R^\Gcal$ defined by $\Psi(F) = t(F,\cdot)$ is included in $\Ccal(\Gcal)$, and it is a dense subalgebra of this algebra of continuous functions. 
This result can be used to prove Proposition \ref{prop:concentrationgraph}: 
indeed, one shows easily that for any finite graph $F$ and any graphon $\gamma \in \Gcal$,
$$|\esper[t(F,\Gamma_n(\gamma))] - t(F,\gamma)| \leq \frac{|V_F|^2}{2n}\qquad;\qquad \var(t(F,\Gamma_n(\gamma))) \leq \frac{3\,|V_F|^2}{n}$$
see \cite[Lemma 2.4]{LS06}; whence the asymptotic concentration by using the Bienaymé--Cheby\-shev inequality. In Section \ref{subsec:graphmodgauss}, we shall prove that $t(F,\Gamma_n(\gamma))$ is actually mod-Gaussian after an appropriate scaling.

\begin{remark}[Kernel of the morphism $\obsG \to \Ccal(\Gcal)$]
The morphism of algebras $\Psi: \obsG \to \Ccal(\Gcal)$ is not injective, since the one-point graph $\bullet$ has density $t(\bullet,\gamma)=1$ for any graphon $\gamma$. In \cite{Whit32,ELS79}, it is shown that the graph densities $t(F,\cdot)$ are algebraically independent over $\R$ when $F$ runs over the set of isomorphism classes of \emph{connected} finite graphs. Therefore, the kernel of $\Psi$ is actually the ideal of $\obsG$ generated by the difference of graphs $\bullet - \emptyset$. We refer to \cite{BCLSV06} for a general survey of the properties of enumeration of graph homomorphisms.
\end{remark}

\begin{remark}
  One could also work with \emph{embeddings} of $F$ into $G$, that is morphisms that are injective maps $V_F \to V_G$. Set
$$t_0(F,G) = \frac{|\mathrm{emb}(F,G)|}{|V_G|^{\downarrow |V_F|}},$$
where $\mathrm{emb}(F,G)$ is the set of embeddings of $F$ into $G$, and $n^{\downarrow k}$ denotes the falling factorial $n(n-1)(n-2)\cdots (n-k+1)$ --- thus, $|V_G|^{\downarrow |V_F|}$ is the number of injective maps from $V_F$ to $V_G$. The two quantities $t(F,G)$ and $t_0(F,G)$ are close when $G$ is sufficiently large:
$$|t(F,G)-t_0(F,G)| \leq \frac{1}{|V_G|}\,\binom{|V_F|}{2},$$
see \cite[Lemma 2.1]{LS06}. 
However, $t_0$ cannot be extended an algebra morphism from $\obsG$ to $\Ccal(\Gcal)$.
Therefore we prefer to work with the first definition of the density $t(F,G)$. 
\end{remark}
\medskip

\subsection{Permutation patterns}\label{subsec:permutationpattern}
As before, $\sym(n)$ is the symmetric group of order $n$, and we shall denote $\R\sym(n)$ the real vector space that is spanned by the permutations of size $n$.
We introduce a product $\R\sym(m) \times \R\sym(n) \to \R\sym(m+n)$ which we call the \emph{graphical shuffle product};
this operation was already considered, also in connection with pattern occurrences,
in \cite{vargas2014hopf} and \cite[Section 4]{BBFGP16}. 
\medskip

If $\sigma \in \sym(m)$ and $\tau \in \sym(n)$, 
consider two parts $A$ and $B$ of $\lle 1,m+n\rre$ with cardinality $|A|=|B|=m$,
The $(A,B)$-shuffle product of $\sigma$ and $\tau$, denoted $\sigma \,\,{}_A^B\!\!\times \tau$, is the unique permutation $\rho \in \sym(m+n)$ such that
\begin{itemize}
  \item $\rho$ maps the subset $A$ onto $B$ (and hence its complement $\overline{A}$ onto $\overline{B}$);
  \item the patterns induced by $\rho$ on the set $A$ and $\overline{A}$ are $\sigma$ and $\tau$ respectively.
\end{itemize}
Explicitly, if we denote $\psi_{m,A}$ and $\psi_{m,B}$ (respectively, $\psi_{n,\overline{A}}$ and $\psi_{n,\overline{B}}$) the two increasing bijections from $\lle 1,m\rre$ to $A$ and to $B$ (respectively, from $\lle 1,n\rre$ to the complement subsets $\overline{A}$ and $\overline{B}$), then $\rho$ is given by
$$\rho(k) = \begin{cases}
	\psi_{m,B} \circ \sigma \circ \psi_{m,A}^{-1}(k) &\text{if }k \in A,\\
	\psi_{n,\overline{B}} \circ \tau \circ \psi_{n,\overline{A}}^{-1}(k) &\text{if }k \in \overline{A}.
\end{cases}$$
An example is given on Figure~\ref{fig:ABshuffle}.
Informally we take the diagrams of the permutations $\sigma$ of $\tau$ and 
merge them by shuffling independently the $x$-coordinates and the $y$-coordinates of the dots,
which explain the name of graphical shuffle product.
\begin{figure}[ht]
\begin{tikzpicture}[scale=0.7,font=\scriptsize]
  \begin{scope}[color=red]
    \foreach \x in {(1,2),(2,3),(3,1)}
\fill \x circle (1mm);
\draw [<->] (0,4) -- (0,0) -- (4,0);
\foreach \x in {1,2,3}
{\draw (\x,-0.1) -- (\x,0.1) ; \draw (-0.1,\x) -- (0.1,\x) ;};
\draw (2,-0.6) node {diagram of $\sigma$};
  \end{scope}
  \begin{scope}[xshift=5.2cm, yshift=-5mm,color=blue]
        \foreach \x in {(1,3),(2,1),(3,4),(4,2)}
    \fill \x circle (1mm);
    \draw [<->] (0,5) -- (0,0) -- (5,0);
    \foreach \x in {1,2,3,4}
    {\draw (\x,-0.1) -- (\x,0.1) ; \draw (-0.1,\x) -- (0.1,\x) ;};
    \draw (2.5,-0.6) node {diagram of $\tau$};
  \end{scope}
  \begin{scope}[xshift=13cm, yshift=-3mm,scale=.6]
        \foreach \x in {(1,5),(4,7),(5,3)}
          \fill[red] \x circle (1.66mm);
        \foreach \x in {(2,4),(3,1),(6,6),(7,2)}
            \fill[blue] \x circle (1.66mm);
    \draw [<->] (0,8) -- (0,0) -- (8,0);
    \foreach \x in {1,2,3,4,5,6,7}
    {\draw (\x,-0.1) -- (\x,0.1) ; \draw (-0.1,\x) -- (0.1,\x) ;};
    \draw (4,-1.4) node {
    \begin{tabular}{c}
      diagram of $\rho=\sigma \,\,{_A^B\!\!\times \tau}$\\
      for $A=\{1,4,5\}$ and $B=\{3,5,6\}$.
    \end{tabular}};
  \end{scope}
\end{tikzpicture}\vspace{-4mm}
  \caption{Example of $(A,B)$-shuffle of permutations.}
  \label{fig:ABshuffle}
\end{figure}
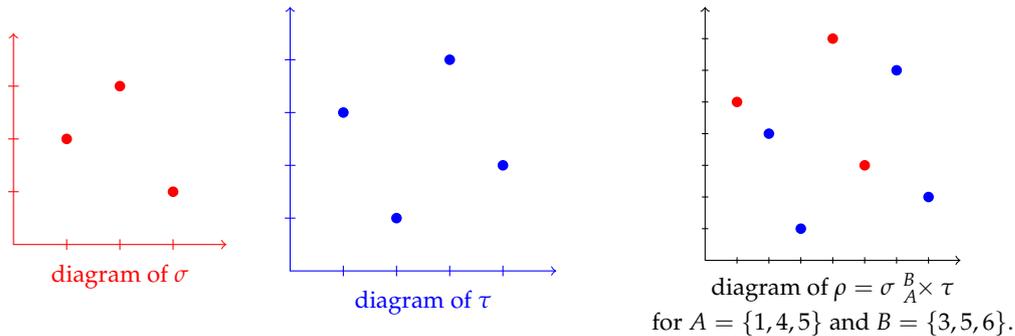

Now, the graphical shuffle product $\sigma \times \tau$ in $\R\sym(m+n)$ is defined as a linear combination of all the possible $(A,B)$-shuffle products, $A$ and $B$ being arbitrary subsets with cardinality $m$ in $\lle 1,m+n\rre$:
$$\sigma \times \tau = \frac{m!\,n!}{(m+n)!}\sum_{|A|=|B|=m} \sigma \,\,{}_A^B\!\!\times \tau.$$
Notice that the number of terms in the sum of the right-hand side is $(\frac{(m+n)!}{m!\,n!})^2$, and that some permutations $\rho \in \sym(m+n)$ may appear with multiplicity larger than $1$ in this sum. For instance, if $\sigma=[12]$ and $\tau=[21]$, then their graphical shuffle product is the linear combination
\begin{align*}
[12] \times [21] &= \frac{1}{6}([1243] +[1324] + [2134] + [2413] + [3142] + [3421] +[4231]+[4312]) \\
&\quad +\frac{1}{3} ([1342]+[1423]+[2314] + [2431]+ [3124]+ [3241]+[4132] +[4213])\\
&\quad + \frac{1}{2} ([1432] + [2341]  + [3214] + [4123] ).
\end{align*}
The \emph{algebra of permutations} is the real algebra $\obsS$ which as a vector space is equal to the direct sum $\bigoplus_{n=0}^\infty \R\sym(n)$, and which is endowed with the graphical shuffle product. Since the graphical shuffle product $\sigma \times \tau$ encodes all the ways of mixing graphically the two patterns $\sigma$ and $\tau$, this operation is clearly commutative, and one sees readily that it is also associative. 
Thus, $\obsS$ is a commutative algebra whose combinatorial basis consists in
all finite permutations $\sigma \in \bigsqcup_{n \in \N} \sym(n)$. 
It is graded by $\deg \sigma = |\sigma|=n$ if $\sigma \in \sym(n)$.
\bigskip

Recall from the introduction that if $\tau \in \sym(k)$ and $\sigma \in \sym(n)$, then $\tau$ is a pattern of $\sigma$ if there exists a subset $\{a_1<a_2<\cdots <a_k\}$ of $\lle 1,n\rre$ such that $\sigma(a_i)<\sigma(a_j)$ if and only if $\tau(i)<\tau(j)$. As for graphs, we can then define the \emph{pattern density} of $\tau$ in $\sigma$ by the ratio
$$t(\tau,\sigma) = \frac{\mathrm{occ}(\tau,\sigma)}{\binom{n}{k}},$$
where the numerator of this fraction is the number of \emph{occurrences} of $\tau$ in $\sigma$, that is the number of subsets $\{a_1<a_2<\cdots<a_k\} \subset \lle 1,n\rre$ that make appear $\tau$ as a pattern of $\sigma$. On the other hand, if $\pi$ is a permuton, we can also define the density of $\tau \in \sym(k)$ in $\pi$ by the following formula:
\begin{align*}
t(\tau,\pi) &= \int_{([0,1]^2)^k} 1_{\conf((x_1,y_1),\ldots,(x_k,y_k)) = \tau} \,\pi(\!\DD{x_1}\DD{y_1})\cdots\pi(\!\DD{x_k}\DD{y_k}) \\
&= \proba_{\pi^{\otimes k}}[\conf((X_1,Y_1),\ldots,(X_k,Y_k))=\tau].
\end{align*}
If $\sigma$ is a permutation and if $\pi(\sigma)$ is the associated permuton,
then we may have $t(\tau,\sigma) \ne t(\tau,\pi(\sigma))$.
The difference is however small for $\sigma$ sufficiently large.
More precisely, if $\tau \in \sym(k)$ and $\sigma \in \sym(n)$, then
$$|t(\tau,\sigma) - t(\tau,\pi(\sigma))| \leq \frac{1}{n}\binom{k}{2} = \frac{1}{|\sigma|}\binom{|\tau|}{2};$$
see \cite[Lemma 3.5]{HKMRS13}.
Therefore, in order to evaluate a pattern density in a permutation $\sigma$, we can use either $t(\tau,\sigma)$ or $t(\tau,\pi(\sigma))$. This is an important difference between the permutons and the two other classes of models, and it will lead us to state two distinct sets of limiting results for the models of random permutations. The second choice of observables $t(\tau,\pi(\sigma))$ is the better one when looking at random permutations and fluctuations thereof, for the following reason:
\begin{proposition}[Graphical shuffle products of permutations]
Consider the linear map $\Psi : \obsS \to \R^\Scal$ which is defined by $\Psi(\tau) = t(\tau,\cdot)$, the right-hand of this formula being a function on permutons. This map is a morphism of algebras, and its range is a dense subalgebra of the algebra of continuous functions $\Ccal(\Scal)$.
\end{proposition}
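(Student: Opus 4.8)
Three things must be shown: that $\Psi$ is multiplicative (and sends the unit of $\obsS$, the empty permutation, to the constant $1$), that each $t(\tau,\cdot)$ lies in $\Ccal(\Scal)$, and that the image is dense. Since $\Scal$ is compact, once $\Psi$ is known to be a unital algebra morphism into $\Ccal(\Scal)$ its image is a unital subalgebra, and by the Stone--Weierstrass theorem density becomes equivalent to separation of points: for $\pi\neq\pi'$ in $\Scal$ one must produce a permutation $\tau$ with $t(\tau,\pi)\neq t(\tau,\pi')$. The plan is thus to prove multiplicativity, then continuity, then point separation, and conclude with Stone--Weierstrass.

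\textbf{Multiplicativity.} The right tool is the probabilistic description $t(\tau,\pi)=\proba_{\pi^{\otimes k}}[\conf((X_1,Y_1),\dots,(X_k,Y_k))=\tau]$ for $\tau\in\sym(k)$. Fix $\sigma\in\sym(m)$, $\tau\in\sym(n)$, draw $m+n$ i.i.d.\ points under $\pi$ (a.s.\ in general configuration), and let $\rho\in\sym(m+n)$ be their configuration. The configuration of the first $m$ points is the pattern induced by $\rho$ on the set $A\subseteq\lle 1,m+n\rre$ of the $x$-ranks of those points, the configuration of the last $n$ points is the pattern induced by $\rho$ on $\overline A$, and $\rho(A)$ is the set of their $y$-ranks. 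By exchangeability of the points, conditionally on the unordered point set --- hence conditionally on $\rho$ --- the set $A$ is uniform among the $m$-element subsets of $\lle 1,m+n\rre$. Hence, writing $\rho|_A=\sigma$ for ``the pattern of $\rho$ on $A$ is $\sigma$'',
\[
t(\sigma,\pi)\,t(\tau,\pi)=\sum_{\rho\in\sym(m+n)}t(\rho,\pi)\;\frac{\#\{A\,:\,|A|=m,\ \rho|_A=\sigma,\ \rho|_{\overline A}=\tau\}}{\binom{m+n}{m}},
\]
and the coefficient of $t(\rho,\pi)$ here is exactly the coefficient of $\rho$ in $\sigma\times\tau$, because in $\sigma\times\tau=\frac{m!\,n!}{(m+n)!}\sum_{|A|=|B|=m}\sigma\,{}_A^B\!\!\times\tau$ the permutation $\rho$ is the output of the pair $(A,B)$ precisely when $B=\rho(A)$, $\rho|_A=\sigma$ and $\rho|_{\overline A}=\tau$. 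Therefore $\Psi(\sigma)\Psi(\tau)=\Psi(\sigma\times\tau)$, and by bilinearity $\Psi$ is an algebra morphism; it is unital since $t(\emptyset,\pi)=1$.

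\textbf{Continuity.} The function $((x_i,y_i))_{1\le i\le k}\mapsto 1_{\conf((x_i,y_i))=\tau}$ is bounded, and its set of discontinuities is contained in $\bigcup_{i<j}(\{x_i=x_j\}\cup\{y_i=y_j\})$, which is $\pi^{\otimes k}$-negligible for every permuton $\pi$, the marginals of $\pi$ being the (non-atomic) Lebesgue measure. If $\pi_r\to\pi$ weakly in $\Scal$ then $\pi_r^{\otimes k}\to\pi^{\otimes k}$ weakly, so the form of the portmanteau/continuous-mapping theorem valid for functions whose discontinuity set is null for the limit measure (see e.g.\ \cite{Bil69}) gives $t(\tau,\pi_r)=\int 1_{\conf=\tau}\,d\pi_r^{\otimes k}\to\int 1_{\conf=\tau}\,d\pi^{\otimes k}=t(\tau,\pi)$. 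Thus $\Psi(\tau)\in\Ccal(\Scal)$.

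\textbf{Separation of points, and the main difficulty.} This is the substantive step: it is the statement that a permuton is determined by its pattern densities, proved in \cite{HKMRS13}, and I would recall the mechanism. For $s,t\in[0,1]$ and $k\geq1$ the number $\frac1k\sum_{\rho\in\sym(k)}t(\rho,\pi)\,\#\{\,j\le\lfloor sk\rfloor:\rho(j)\le\lfloor tk\rfloor\,\}$ depends on $\pi$ only through its size-$k$ pattern densities; by exchangeability it equals the probability that a $\pi$-distributed point has $x$-rank $\le\lfloor sk\rfloor$ and $y$-rank $\le\lfloor tk\rfloor$ among $k$ i.i.d.\ $\pi$-points, and conditioning on that point being $(x,y)$, the numbers of the remaining points with first coordinate $<x$ (resp.\ second coordinate $<y$) are binomial and concentrate, so this conditional probability tends to $1_{x<s,\,y<t}$ for $\pi$-almost every $(x,y)$. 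Dominated convergence then gives convergence to $\pi([0,s]\times[0,t])$ (this copula being continuous, since the marginals are uniform). Hence the pattern densities determine $\pi$ on all boxes $[0,s]\times[0,t]$, therefore determine $\pi$, so distinct permutons are separated by some $t(\tau,\cdot)$, and Stone--Weierstrass yields that $\Psi(\obsS)$ is dense in $\Ccal(\Scal)$. In summary: the multiplicativity identity requires only careful matching of the shuffle coefficients with the conditional law of $A$, continuity is routine, and the genuine content lies in the separation property, for which one may simply invoke \cite{HKMRS13}.
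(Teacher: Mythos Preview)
Your proof is correct, and for the multiplicativity part it takes a genuinely different route from the paper. The paper does not prove $\Psi(\sigma)\Psi(\tau)=\Psi(\sigma\times\tau)$ directly on permutons: instead it invokes \cite[Proposition 4.5]{BBFGP16}, which gives the identity \eqref{eq:alg_morph_perm2} for \emph{permutations} with an $O(1/n)$ error, applies that identity to the random permutation $\sigma_n(\pi)$, and then lets $n\to\infty$ using both $t(\tau,\sigma_n(\pi))=t(\tau,\Pi_n(\pi))+O(1/n)$ and the concentration result $\Pi_n(\pi)\to\pi$ (Proposition~\ref{prop:concentrationpermutation}). Your argument bypasses all of this: by sampling $m+n$ i.i.d.\ points under $\pi$ and observing that, conditionally on the unordered point set (hence on $\rho$), the set $A$ of $x$-ranks of the first $m$ points is uniform over $m$-subsets and independent of $\rho$, you match the coefficient of $\rho$ in $\sigma\times\tau$ directly with the relevant conditional probability. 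This is shorter and self-contained; the paper's detour, on the other hand, fits its overall narrative of deducing permuton identities from permutation identities via the random models. For continuity and separation of points both approaches ultimately rest on \cite{HKMRS13}; your write-up is slightly more explicit, giving the portmanteau argument for continuity and a sketch of the copula-recovery mechanism for separation, whereas the paper simply cites Theorem~1.8 of that reference.
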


\begin{proof}
The second part of the proposition is contained in \cite[Theorem 1.8]{HKMRS13}, which proves that a sequence of permutons $(\pi_n)_{n \in \N}$ converges with respect to the weak topology on probability measures if and only if the sequences of observables $(t(\tau,\pi_n))_{n \in \N}$ converge for any finite permutation $\tau$. Therefore, each observable $t(\tau,\cdot)$ is continuous on $\Scal$, and these observables separate the permutons. Hence, by Stone--Weierstrass, $\Psi(\obsS)$ is a dense subalgebra of $\Ccal(\Scal)$. \medskip

Let us now prove that for any permutations $\tau_1$ and $\tau_2$ with sizes $k_1$ and $k_2$, and for any permuton $\pi$, 
\begin{equation}
t(\tau_1,\pi)\,t(\tau_2,\pi) = \frac{k_1!\,k_2!}{(k_1+k_2)!}\sum_{|A|=|B|=k_1} t(\tau_1\,\,{}_A^B\!\!\times \tau_2,\pi),\label{eq:alg_morph_perm}
\end{equation}
where the sum runs over pairs of subsets of $\lle 1,k_1+k_2\rre$ with cardinality $k_1$. This amounts to the property of morphism of algebras that remains to be proved. In \cite[Proposition 4.5]{BBFGP16}, an analogue result is given for the observables of permutations instead of permutons: for any $n \ge 0$ and any $\sigma \in \sym(n)$, 
\begin{equation}
t(\tau_1,\sigma)\,t(\tau_2,\sigma) = \frac{k_1!\,k_2!}{(k_1+k_2)!}\sum_{|A|=|B|=k_1} t(\tau_1\,\,{}_A^B\!\!\times \tau_2,\sigma) + O\!\left(\frac{1}{n}\right),\label{eq:alg_morph_perm2}
\end{equation}
where the constant in the $O(\cdot)$ depends on $\tau_1$ and $\tau_2$ but does not depend on $n$ and $\sigma$. Since this formula is true for any $\sigma$, it is also true for the random permutations $\sigma_n(\pi)$ attached to a permuton $\pi \in \Scal$. Since $t(\tau,\sigma_n(\pi)) = t(\tau,\Pi_n(\pi)) + O(\frac{1}{n})$ and since $\lim_{n \to \infty} \Pi_n(\pi) = \pi$ by Proposition \ref{prop:concentrationpermutation}, Equation \eqref{eq:alg_morph_perm} follows by applying Equation \eqref{eq:alg_morph_perm2} to $\sigma_n(\pi)$, and then by taking the limit of both sides when $n$ goes to infinity.
\end{proof}
\medskip
  
In Section \ref{subsec:permmodgauss}, we shall prove that the sequences of random variables $(t(\tau,\sigma_n(\pi)))_{n \in \N}$ and $(t(\tau,\Pi_n(\pi)))_{n \in \N}$ are mod-Gaussian convergent after an appropriate scaling.

\begin{remark}[Kernel of the morphism $\obsS \to \Ccal(\Scal)$]
As for graphons, the morphism of algebras $\Psi: \obsS \to \Ccal(\Scal)$ is not injective, since 
the image of the permutation $\tau=[1]$ of degree $1$ is the constant function $1$ on $\Scal$. This implies identities such as
\begin{align*}
t([21],\cdot) &= t([21],\cdot)\times t([1],\cdot) \\
&= t([321],\cdot)+\frac{2}{3}\,(t([231],\cdot)+t([312],\cdot))+\frac{1}{3}\,(t([132],\cdot)+t([213],\cdot)).
\end{align*}
Thus, the kernel of the morphism $\Psi$ contains the ideal of $\obsS$ generated by $[1]-[\hspace*{2mm}]$, where $[\hspace*{2mm}]$ denotes the empty permutation of size $0$. We do not know whether this ideal is the full kernel of $\Psi$.
\end{remark}
\medskip

\subsection{Observables of partitions}
We finally construct an algebra $\obsP$ of observables of partitions and of parameters of the Thoma simplex, which will play the same role as the role of $\obsG$ and $\obsS$ for graphons and permutons. This algebra is closely related to the Kerov--Olshanski algebra of polynomial functions on partitions which was introduced in \cite{KO94}, and whose combinatorics are detailed in \cite{IO02} and in \cite[Part III]{Mel17}. If $\lambda$ and $\mu$ are two integer partitions of size $m$ and $n$, we denote $\lambda \times \mu$ their disjoint union, that is the integer partition with size $m+n$ and whose parts are those of $\lambda$ and those of $\mu$, reordered in order to get a non-increasing sequence. For instance,
$$(4,4,1) \times (5,3,1) = (5,4,4,3,1,1).$$
The \emph{algebra of partitions} $\obsP$ is the real algebra spanned by the partitions in $\pym = \bigsqcup_{n \in \N} \pym(n)$, endowed with the product defined above. It is commutative and graded by $\deg \lambda = |\lambda| = \lambda_1+\lambda_2+\cdots + \lambda_r$.\medskip

Given an element $\omega \in \Pcal$ and an integer $k\geq 1$, we recall the definition
$$t(k,\omega) = p_k(\omega) = \begin{cases}
	1 &\text{if }k=1\\
	\sum_{i=1}^\infty (\alpha_i)^k + (-1)^{k-1} \sum_{i=1}^\infty (\beta_i)^k &\text{if }k\geq 2.
\end{cases}$$
We extend this definition to integer partitions $\rho$ by setting $$t(\rho,\omega) = t(\rho_1,\omega)\,t(\rho_2,\omega)\cdots t(\rho_r,\omega)$$ if $\rho = (\rho_1,\rho_2,\ldots,\rho_r)$. We thus obtain a map $\Psi : \obsP \to \R^{\Pcal}$ which is clearly a morphism of algebras. The following result is a bit less trivial. 
We endow $\Pcal$ with the topology of convergence of all coordinates:
a sequence $(\omega_n)_{n \in \N} = ((\alpha_{n,1},\alpha_{n,2},\ldots),(\beta_{n,1},\beta_{n,2},\ldots))_{n \in \N}$ converges towards a parameter $\omega = (\alpha,\beta)$ if and only if, for any $i \geq 1$, $\alpha_{n,i} \to \alpha_i$ and $\beta_{n,i} \to \beta_i$. This makes $\Pcal$ into a metrisable compact space.
\begin{proposition}[Convergence in the Thoma simplex]
A sequence of Thoma parameters $(\omega_n)_{n \in \N}$ converges towards a parameter $\omega \in \Pcal$ if and only if $t(\rho,\omega_n) \to t(\rho,\omega)$ for any integer partition $\rho \in \pym$. Equivalenty, the range of the morphism of algebras $\Psi : \obsP \to \R^{\Pcal}$ is a dense subalgebra of $\Ccal(\Pcal)$.
\label{prop:topology_obs_partitions}
\end{proposition}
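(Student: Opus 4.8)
The plan is to deduce both assertions from the Stone--Weierstrass theorem, the two nontrivial ingredients being the continuity of the observables on $\Pcal$ and the fact that they separate points. Note first that, since the disjoint union makes $\obsP$ the algebra generated by the one-row partitions (as $(\rho_1,\dots,\rho_r)=(\rho_1)\cdot(\rho_2)\cdots(\rho_r)$) and $\Psi$ is an algebra morphism, the image $\Psi(\obsP)$ is precisely the unital subalgebra of $\R^{\Pcal}$ generated by the functions $\omega\mapsto t(k,\omega)$, $k\geq 1$; since $t(1,\cdot)\equiv 1$, it is generated by $\{t(k,\cdot)\}_{k\geq 2}$ together with the constants.

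\emph{Step 1 (continuity).} For $\omega=(\alpha,\beta)\in\Pcal$ one has $\alpha_i\leq 1/i$ and $\beta_i\leq 1/i$, since the coordinates are nonincreasing with sum at most $1$. Hence for $k\geq 2$,
\[ \sum_{i>M}\alpha_i^k+\sum_{i>M}\beta_i^k \;\leq\; 2\sum_{i>M}i^{-k}, \]
a bound independent of $\omega$ that tends to $0$ as $M\to\infty$. The truncations $\sum_{i\leq M}\alpha_i^k+(-1)^{k-1}\sum_{i\leq M}\beta_i^k$ being continuous on $\Pcal$ (finite sums of coordinate projections), they converge uniformly to $t(k,\cdot)$, which is therefore continuous; then each $t(\rho,\cdot)=\prod_j t(\rho_j,\cdot)$ is continuous and $\Psi(\obsP)\subseteq\Ccal(\Pcal)$.

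\emph{Step 2 (separation of points).} This is the crux. I would attach to $\omega=(\alpha,\beta)$, with $\gamma=1-\sum_i(\alpha_i+\beta_i)$, the generating function
\[ \Phi_\omega(z) \;=\; \E^{\gamma z}\prod_{i\geq 1}\frac{1+\beta_i z}{1-\alpha_i z}, \]
which is meromorphic on $\C$ (the product converges locally uniformly away from the points $1/\alpha_i$, again by $\alpha_i,\beta_i\leq 1/i$). Expanding $\log(1\pm x)$ in power series and summing over $i$ gives, for $|z|$ small,
\[ \log\Phi_\omega(z) \;=\; \gamma z+\sum_{k\geq 1}\frac{z^k}{k}\Bigl(\sum_i\alpha_i^k+(-1)^{k-1}\sum_i\beta_i^k\Bigr) \;=\; z+\sum_{k\geq 2}\frac{t(k,\omega)}{k}\,z^k, \]
the $k=1$ term simplifying via $\sum_i(\alpha_i+\beta_i)=1-\gamma$. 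Thus the numbers $t(k,\omega)$, $k\geq 2$, determine the Taylor germ of $\log\Phi_\omega$ at $0$, hence the meromorphic function $\Phi_\omega$ on all of $\C$. Finally $\Phi_\omega$ determines $\omega$: its poles are exactly the points $1/\alpha_i$ and its zeros exactly the points $-1/\beta_i$ (there is no cancellation, the former being positive and the latter negative), so taking reciprocals and reordering recovers the sequences $\alpha$ and $\beta$. Hence $\omega\mapsto(t(k,\omega))_{k\geq 2}$ is injective. (This is the reconstruction underlying Thoma's theorem; one may alternatively invoke \cite{Tho64,KV81}.)

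\emph{Step 3 (conclusion).} By Steps 1--2, $\Psi(\obsP)$ is a subalgebra of $\Ccal(\Pcal)$ containing the constants and separating the points of the compact metrisable space $\Pcal$, so it is dense by Stone--Weierstrass. For the equivalence of convergences: if $\omega_n\to\omega$ coordinatewise, then $t(\rho,\omega_n)\to t(\rho,\omega)$ by Step 1. Conversely, assume $t(\rho,\omega_n)\to t(\rho,\omega)$ for all $\rho\in\pym$; by compactness it suffices to check that every subsequential limit $\omega'$ of $(\omega_n)$ equals $\omega$, and along such a subsequence Step 1 gives $t(\rho,\omega_n)\to t(\rho,\omega')$, so $t(k,\omega')=t(k,\omega)$ for all $k\geq 2$, whence $\omega'=\omega$ by Step 2. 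The main obstacle is Step 2 — the analytic reconstruction of $(\alpha,\beta)$ from the sequence $(t(k,\omega))_{k\geq 2}$ through the meromorphic function $\Phi_\omega$; Steps 1 and 3 are routine.
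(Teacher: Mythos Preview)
Your proof is correct and takes a genuinely different route from the paper's. The paper associates to each $\omega$ the probability measure
\[
\theta_\omega=\sum_i \alpha_i\,\delta_{\alpha_i}+\sum_i \beta_i\,\delta_{-\beta_i}+\gamma\,\delta_0
\]
on $[-1,1]$, observes that $t(k,\omega)$ is its $(k-1)$-th moment, and then reduces the statement to the equivalence between weak convergence of $\theta_{\omega_n}$ and coordinate convergence of $\omega_n$, for which it cites \cite[Theorem~12.19]{Mel17}. Your argument instead encodes $\omega$ by the meromorphic function $\Phi_\omega(z)=\E^{\gamma z}\prod_i\frac{1+\beta_i z}{1-\alpha_i z}$ and recovers $(\alpha,\beta)$ from its poles and zeros. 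This is more self-contained: the crucial injectivity step is actually carried out rather than delegated to a reference. The paper's measure-theoretic picture, on the other hand, makes the connection with moment problems transparent and is the natural viewpoint when one later wants to speak of fluctuations of these moments. One small imprecision: when you justify convergence of the infinite product ``again by $\alpha_i,\beta_i\leq 1/i$'', that bound alone would only give a divergent harmonic series; the correct reason is $\sum_i(\alpha_i+\beta_i)\leq 1<\infty$, which you implicitly use elsewhere.
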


\begin{proof}
To any parameter $\omega \in \Pcal$, we associate a probability measure $\theta_\omega$ on $[-1,1]$:
$$\theta_\omega = \sum_{i=1}^\infty \alpha_i\,\delta_{\alpha_i} + \sum_{i=1}^\infty \beta_i\,\delta_{-\beta_i} + \gamma\,\delta_0.$$
Notice then that $t(k,\omega)$ is the $(k-1)$-th moment of this measure:
$$t(k,\omega) = \int_{-1}^1 s^{k-1}\,\theta_\omega(\!\DD{s}).$$
Therefore, the convergence of all the observables $t(\rho,\omega_n)$ is equivalent to the convergence of all the moments of the measures $\theta_{\omega_n}$.
Since these measures are compactly supported, we have:
$$\big(\forall \rho\in \pym,\,\,\,t(\rho,\omega_n) \to t(\rho, \omega)\big) \quad \iff \quad \big(\theta_{\omega_n} \rightharpoonup \theta_\omega\big),$$
where the convergence on the right-hand side is with respect to the weak topology on probability measures on $[-1,1]$. It is then shown in \cite[proof of Theorem 12.19]{Mel17} that the restriction of the weak topology of $\Mcal^1([-1,1])$ to the set of measures $\{\theta_\omega,\,\,\omega \in \Pcal\}$ is the topology of convergence of all the coordinates of the sequences.
\end{proof}
\medskip

In Section~\ref{subsec:partmodgauss},
we establish the mod-Gaussian convergence of the sequences of observables $(t(\rho,\Omega_n(\omega)))_{n \in \N}$, for any $\rho \in \pym$. 

\begin{remark}[Kernel of the morphism $\obsP \to \Ccal(\Pcal)$]
As before, the morphism of algebras $\Psi : \obsP \to \Ccal(\Pcal)$ is not injective, since $t(1,\omega)=1$ for any parameter $\omega \in \Pcal$. The kernel of $\Psi$ is actually the ideal spanned by $1 - \emptyset$, which is equivalent to saying that the observables $t(k\geq 2,\cdot)$ are algebraically independent in $\R^{\Pcal}$. To prove this result, we set for $\mu \in \pym(n)$ and $k \geq 1$:
$$p_k(\mu) = \sum_{i=1}^d (a_i(\mu))^k + (-1)^{k-1} \sum_{i=1}^d (b_i(\mu))^k = n^k\,t(k,\omega(\mu)).$$
We extend this definition to partitions $\rho$ by setting $p_\rho(\mu) = \prod_{i=1}^r p_{\rho_i}(\mu) = n^{|\rho|}\,t(\rho,\omega(\mu))$. Suppose then that one has a relation
$$\sum_{\rho} c_\rho\, t(\rho,\cdot)=0$$
where the integer partitions $\rho$ appearing in the sum have all their parts larger than $2$ (we also allow the empty partition). Let $d$ be the maximal size of a partition $\rho$ appearing in the sum, and $\lambda$ be an arbitrary integer partition of size $n$. By evaluating the vanishing linear combination on $\lambda$ and by multiplying by $n^{d}$, we obtain
$$\sum_{\rho} c_\rho\,p_{\rho \times 1^{d-|\rho|}}(\lambda) = 0$$
since $p_1(\lambda) = n$. However, by \cite[Proposition 1.5]{IO02}, the functions $p_{k\geq 1}$ on $\pym$ are algebraically independent. Therefore, all the coefficients $c_\rho$ vanish, whence the result.
\end{remark}
\bigskip

\section{Dependency structures and mod-Gaussian convergence}\label{sec:dependency}
We can finally study the fluctuations of the random models $(\Gamma_n(\gamma))_{n \in \N}$, $(\Pi_n(\pi))_{n \in \N}$ and $(\Omega_n(\omega))_{n \in \N}$. The easiest case to deal with is by far the one of graphs, because of the identity of observables $t(F,G) = t(F,\gamma(G))$, and because of the clear dependency structure in the random variable $n^{|F|}\,t(F,G_n(\gamma))$. The two other models will require a bit more work, but the proofs all follow the same lines. On the other hand, we shall see that the computation of the limiting variances $\sigma^2$ and of the limiting third cumulants $L$ amounts to making certain operations in the algebras of observables $\obsG$, $\obsS$ and $\obsP$; this is the reason why we discussed these algebraic structures.
\medskip

\subsection{Fluctuations of random graphs and graphons}\label{subsec:graphmodgauss}
In this section, we fix a graphon $\gamma \in \Gcal$, a graph function $g$ in the equivalence class $\gamma$, and a graph $F$ with $k$ vertices.
We set $S_n(F,\gamma)=n^{|F|}\,t(F,G_n(\gamma))$, and we are going to prove that $(S_n(F,\gamma))_{n \in \N}$ satisfies the hypotheses \eqref{eq:cumulant1}, \eqref{eq:cumulant2} and \eqref{eq:cumulant3}. 
Given two graphs $F$ and $G$ with vertex-set  $\lle 1,k\rre$ and $\lle 1,n \rre$,
we can decompose the function $n^{k}\, t(F,G)=|\hom(F,G)|$ as a sum of $n^k$ functions:
\begin{equation}
  |\hom(F,G)| = \sum_{\psi : \lle 1,k\rre \to \lle 1,n\rre} A_{F,\psi}(G),\quad\text{with }A_{F,\psi}(G) = \begin{cases}
	1 &\text{if }\psi \text{ is a morphism from $F$ to $G$},\\
	0 &\text{otherwise}.
\end{cases}
\label{eq:tG_Sum}
\end{equation}
The functions $A_{F,\psi}$ in turns factorize as
\begin{equation}
  A_{F,\psi} = \prod_{\{i,j\} \in E_{F}} B_{\psi(i),\psi(j)},
  \label{eq:A_Prod_B}
\end{equation}
where $B_{i,j}(G)$ is $1$ if there is an edge between $i$ and $j$ in $G$, and $0$ otherwise.
\medskip

We are now interested to specialize this to the random graph $G=G_n(\gamma)$.
By construction, the law of the random variables $B_{i,j}=B_{i,j}(G_n(\gamma))$ is then characterized by
$$\proba[B_{i,j}=1|(X_1,\ldots,X_n)]=g(X_i,X_j),$$
where the $X_i$ are independent uniform random variables in $[0,1]$.
A way to construct these random variables $B_{i,j}$ so that they are independent conditionally to $(X_1,\ldots,X_n)$ is as follows: we introduce other independent uniform random variables $U_{i,j} \in [0,1]$ for any pair $\{i,j\}$, and then set
$$B_{i,j} = 1_{(U_{i,j} \leq g(X_i,X_j))}.$$
In this setting, $B_{i,j}$ is measurable with respect to the $\sigma$-field generated by $(U_{i,j},X_i,X_j)$, so if $\{i,j\} \cap \{i',j'\} = \emptyset$, then $B_{i,j}$ and $B_{i',j'}$ are independent. 
Summing up and using the short notation $A_{F,\psi}=A_{F,\psi}(G_n(\gamma))$, we have
\[S_n(F,\gamma)=\sum_{\psi : \lle 1,k\rre \to \lle 1,n\rre} A_{F,\psi},\]
with the following dependency structure.
\begin{lemma}\label{lem:graphondependencygraph}
The graph $G_n$
\begin{itemize}
	\item with vertex set $V_n=\{\psi : \lle 1,k\rre \to \lle 1,n\rre\}$,
	\item and with an edge between $\psi$ and $\phi$ if $\psi(i)=\phi(j)$ for some indices $i,j \in \lle 1,k\rre$
\end{itemize}
is a dependency graph for the family of random variables $(A_{F,\psi})_{\psi \in V_n}$ involved in the expansion of $S_n(F,\gamma)$.
\end{lemma}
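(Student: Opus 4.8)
The plan is to verify directly the defining property of a dependency graph: given disjoint subsets $V_1,V_2\subset V_n$ with no edge of $G_n$ joining a vertex of $V_1$ to a vertex of $V_2$, one must show that $(A_{F,\psi})_{\psi\in V_1}$ and $(A_{F,\phi})_{\phi\in V_2}$ are independent vectors. The whole argument rests on the explicit coupling introduced just before the lemma, namely $B_{i,j}=1_{(U_{i,j}\le g(X_i,X_j))}$ where the $X_i$ ($i\in\lle 1,n\rre$) and the $U_{i,j}$ (over pairs $\{i,j\}\subset\lle 1,n\rre$) are mutually independent, together with the factorisation \eqref{eq:A_Prod_B}.

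First I would record the elementary measurability fact: for a fixed $\psi$, the variable $A_{F,\psi}=\prod_{\{i,j\}\in E_F}B_{\psi(i),\psi(j)}$ is measurable with respect to the $\sigma$-field generated by the $X_v$ for $v$ in the image $\psi(\lle 1,k\rre)$ and the $U_{v,w}$ for $\{v,w\}\subset\psi(\lle 1,k\rre)$. Next I would translate the combinatorial hypothesis: by definition of the edge set of $G_n$, the absence of an edge between $\psi\in V_1$ and $\phi\in V_2$ says exactly that $\psi(\lle 1,k\rre)\cap\phi(\lle 1,k\rre)=\emptyset$. Hence, putting $W_1=\bigcup_{\psi\in V_1}\psi(\lle 1,k\rre)$ and $W_2=\bigcup_{\phi\in V_2}\phi(\lle 1,k\rre)$, the assumption ``no edge between $V_1$ and $V_2$'' forces $W_1\cap W_2=\emptyset$.

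Then the conclusion is immediate: the vector $(A_{F,\psi})_{\psi\in V_1}$ is measurable with respect to $\mathcal{F}_1\coloneqq\sigma(X_v,\,U_{v,w}:v,w\in W_1)$, and $(A_{F,\phi})_{\phi\in V_2}$ with respect to $\mathcal{F}_2\coloneqq\sigma(X_v,\,U_{v,w}:v,w\in W_2)$. Since $W_1\cap W_2=\emptyset$, the index sets $W_1$ and $W_2$ are disjoint, and so are the sets of pairs $\binom{W_1}{2}$ and $\binom{W_2}{2}$; as all the $X$'s and $U$'s are independent, $\mathcal{F}_1$ and $\mathcal{F}_2$ are independent $\sigma$-fields, whence the two vectors of $A$'s are independent, which is precisely what the definition of a dependency graph requires.

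As for difficulties, the argument is essentially bookkeeping; the one point deserving a line of care is the passage from the combinatorial edge condition ``$\psi(i)=\phi(j)$ for some $i,j$'' to the probabilistic statement that the two generating $\sigma$-fields share no common source of randomness. Once the explicit coupling $B_{i,j}=1_{(U_{i,j}\le g(X_i,X_j))}$ is in place, this translation is transparent, so I do not anticipate a genuine obstacle here.
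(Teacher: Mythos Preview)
Your proposal is correct and follows essentially the same argument as the paper: you form the unions $W_1,W_2$ of the images of the maps in $V_1,V_2$, observe that the no-edge condition forces $W_1\cap W_2=\emptyset$, and conclude independence because each vector of $A$'s is measurable with respect to a $\sigma$-field generated by disjoint families of the independent variables $X_i$ and $U_{i,j}$. The paper's proof is slightly more terse but identical in substance.
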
 
\begin{proof}
Consider two families $\{\psi_c\}_{c \in C}$ and $\{\psi_d\}_{d\in D}$ without any edge between $C$ and $D$. This means that the sets $\mathbb{C}=\bigcup_{c \in C} \psi_c(\lle 1,k\rre)$ and $\mathbb{D}=\bigcup_{d \in D} \psi_d(\lle 1,k\rre)$ do not intersect. However, the random vector $(A_{F,\psi_c})_{c \in C}$ (respectively, $(A_{F,\psi_d})_{d\in D}$) is measurable with respect to the $\sigma$-algebra generated by the variables 
$$\{X_i,\,U_{i,j}\}_{i,j \in \mathbb{C}}\quad(\text{respectively, }\{X_i,\,U_{i,j}\}_{i,j \in \mathbb{D}}).$$
Therefore, these two random vectors are independent.
\end{proof}\medskip

The parameters of the dependency graph $G_n$ can be chosen as follows. Obviously, $|A_{F,\psi}| \leq 1$ almost surely, so one can take $A=1$. The total number of vertices is $N_n=N_{n,k} = n^k$, and the number of maps $\phi$ connected or equal to another map $\psi$ is bounded by $k^2\,n^{k-1}$, so we can take $D_n=D_{n,k} = k^2\,n^{k-1}$. Hence, the previous lemma and Theorem \ref{thm:cumulantestimates} imply the hypothesis \eqref{eq:cumulant1} for $S_n(F,\gamma)$ with parameters $(D_{n,k},N_{n,k},A) = (k^2\,n^{k-1},n^k,1)$. To complete this analysis, we need to compute
$$\sigma^2(F,\gamma) = \lim_{n \to \infty} \frac{\var(S_n(F,\gamma))}{k^2\,n^{2k-1}} \qquad;\qquad L(F,\gamma) = \lim_{n \to \infty} \frac{\kappa^{(3)}(S_n(F,\gamma))}{k^4\,n^{3k-2}}.$$ 
It is a bit clearer to work with distinct graphs $F,G,H$ with $k$ vertices, and to evaluate the rescaled \emph{joint cumulants}
$$ \frac{\kappa(S_n(F,\gamma),S_n(G,\gamma))}{k^2\,n^{2k-1}}  \quad\text{and}\quad  \frac{\kappa(S_n(F,\gamma),S_n(G,\gamma),S_n(H,\gamma))}{k^4\,n^{3k-2}}.$$
We refer to \cite[Section 9.2]{FMN16} for details on the notion of joint cumulants, which generalise the cumulants introduced in our Definition \ref{def:cumulantmethod}. For our purpose, it is sufficient to know that 
$$\kappa^{(r)}(X)=\kappa(\underbrace{X,X,\ldots,X}_{r \text{ occurrences}})$$
and that 
\begin{align*}
\kappa(X,Y) &= \cov(X,Y) = \esper[XY] - \esper[X]\esper[Y];\\
\kappa(X,Y,Z) &= \esper[XYZ] - \esper[XY]\esper[Z]- \esper[XZ]\esper[Y]- \esper[YZ]\esper[X] + 2\,\esper[X]\esper[Y]\esper[Z].
\end{align*}

\begin{proposition}[Limiting first cumulants of subgraph counts]
Denote $\obs_{\GG,k}$ the vector space spanned by the graphs with $k$ vertices in $\obsG$. There exists two linear maps
\begin{align*}
\kappa_2 &: \obs_{\GG,k} \otimes \obs_{\GG,k} \to \obsG ;\\
\kappa_3 &: \obs_{\GG,k} \otimes \obs_{\GG,k} \otimes \obs_{\GG,k} \to \obsG 
\end{align*}
such that, for any finite graphs $F$, $G$ and $H$ with $k\geq 1$ vertices, and for any graphon $\gamma$,
\begin{align*}
\frac{\kappa(S_n(F,\gamma),S_n(G,\gamma))}{k^2\,n^{2k-1}} &= \kappa_2(F,G)(\gamma) +O(n^{-1});\\
\frac{\kappa(S_n(F,\gamma),S_n(G,\gamma),S_n(H,\gamma))}{k^4\,n^{3k-2}} &=\kappa_3(F,G,H)(\gamma) + O(n^{-1}),
\end{align*}
with constants in the $O(\cdot)$'s which depend only on $k$.
\end{proposition}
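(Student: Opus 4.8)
The plan is to expand the joint cumulants along the decomposition $S_n(F,\gamma)=\sum_{\psi\colon\lle 1,k\rre\to\lle 1,n\rre}A_{F,\psi}$ from \eqref{eq:tG_Sum}, and then to regroup the resulting terms according to the way the source vertices get identified inside $\lle 1,n\rre$. By multilinearity of joint cumulants,
\[
\kappa(S_n(F,\gamma),S_n(G,\gamma))=\sum_{\psi,\phi}\kappa(A_{F,\psi},A_{G,\phi}),\qquad
\kappa(S_n(F,\gamma),S_n(G,\gamma),S_n(H,\gamma))=\sum_{\psi,\phi,\chi}\kappa(A_{F,\psi},A_{G,\phi},A_{H,\chi}).
\]
By \eqref{eq:A_Prod_B} and the coupling $B_{i,j}=1_{(U_{i,j}\leq g(X_i,X_j))}$, the variable $A_{F,\psi}$ is measurable with respect to the $X_i$ and $U_{i,j}$ with $i,j\in\mathrm{Im}(\psi)$. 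Since a joint cumulant vanishes as soon as its family of arguments splits into two mutually independent subfamilies (see \cite[Section 9.2]{FMN16}), $\kappa(A_{F,\psi},A_{G,\phi})=0$ unless $\mathrm{Im}(\psi)\cap\mathrm{Im}(\phi)\neq\emptyset$, and $\kappa(A_{F,\psi},A_{G,\phi},A_{H,\chi})=0$ unless the intersection graph on the three images $\mathrm{Im}(\psi),\mathrm{Im}(\phi),\mathrm{Im}(\chi)$ (with an edge whenever two of them meet) is connected.

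Next I would classify the surviving terms by their \emph{merging pattern}. To a pair $(\psi,\phi)$ one associates the set partition $P$ of $\lle 1,k\rre\sqcup\lle 1,k\rre$ in which two source indices lie in the same block whenever $\psi$ or $\phi$ sends them to the same point of $\lle 1,n\rre$ (identifications between the two copies being allowed); write $\ell(P)$ for its number of blocks. Conditioning on the variables $X_\bullet$ and integrating out the $U_\bullet$ shows that $\esper[A_{F,\psi}A_{G,\phi}]=t(F\cup_P G,\gamma)$, where $F\cup_P G$ is the simple graph obtained by gluing $F$ and $G$ along $P$ and collapsing repeated edges, and that $\esper[A_{F,\psi}]$, $\esper[A_{G,\phi}]$ are the densities of the corresponding quotients of $F$ and of $G$. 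Hence $\kappa(A_{F,\psi},A_{G,\phi})=\Psi(u_P)(\gamma)$ for an element $u_P\in\obsG$ (a difference of graphs) depending only on $P$, and $|\Psi(u_P)(\gamma)|\leq C_k$ with $C_k$ depending only on $k$. Moreover the number of pairs $(\psi,\phi)$ realising a fixed $P$ is the falling factorial $n^{\downarrow\ell(P)}=n^{\ell(P)}(1+O(n^{-1}))$. The same discussion applies verbatim to triples $(\psi,\phi,\chi)$, with $P$ now a set partition of three copies of $\lle 1,k\rre$.

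The main step is then to isolate the dominant patterns. For a surviving pair one has $\ell(P)\leq 2k-1$, with equality precisely when $\psi$ and $\phi$ are injective and $|\mathrm{Im}(\psi)\cap\mathrm{Im}(\phi)|=1$; there are exactly $k^2$ such patterns, indexed by the pair $(a,b)\in\lle 1,k\rre^2$ of identified vertices, and for each of them $u_P=(F\join G)(a,b)-F\sqcup G$ (the first term coming from $\esper[A_{F,\psi}A_{G,\phi}]$, the second from $-\esper[A_{F,\psi}]\esper[A_{G,\phi}]$, both source maps being injective). Since there is a number of other surviving patterns that is bounded in terms of $k$, each with $\ell(P)\leq 2k-2$, they jointly contribute $O(n^{2k-2})$, so that
\[
\kappa(S_n(F,\gamma),S_n(G,\gamma))=n^{2k-1}\sum_{a,b=1}^{k}\bigl((F\join G)(a,b)-F\sqcup G\bigr)(\gamma)+O(n^{2k-2});
\]
dividing by $k^2 n^{2k-1}$ gives the first claim with $\kappa_2(F,G)=\frac{1}{k^2}\sum_{a,b=1}^{k}\bigl((F\join G)(a,b)-F\sqcup G\bigr)$, which is bilinear on the combinatorial basis and hence induces the announced map $\obs_{\GG,k}\otimes\obs_{\GG,k}\to\obsG$. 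For triples, the surviving patterns satisfy $\ell(P)\leq 3k-2$, with equality exactly for the \emph{path-type} patterns ($\psi,\phi,\chi$ injective, with two overlaps of size one arranged along a path, in one of the three possible orders of the images) and the \emph{star-type} patterns ($\psi,\phi,\chi$ injective sharing a single common value); summing the corresponding elements of $\obsG$ and dividing by $k^4 n^{3k-2}$ defines $\kappa_3(F,G,H)$, again multilinear on the basis, all remaining patterns contributing $O(n^{3k-3})$. Taking $F=G$, respectively $F=G=H$, then recovers the limiting values of $\var(S_n(F,\gamma))/(k^2 n^{2k-1})$ and $\kappa^{(3)}(S_n(F,\gamma))/(k^4 n^{3k-2})$ needed for the method of cumulants.

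The hard part is the combinatorial bookkeeping of this last step, chiefly for the third cumulant: one must check that the only merging patterns with $\ell(P)=3k-2$ and connected intersection graph are the path-type and star-type ones above, and pin down the glued graph attached to each, verifying in particular that all these glued graphs are simple, so that the densities $t(\cdot,\gamma)$ and the corresponding elements of $\obsG$ are well defined. Everything else is routine: the uniform boundedness of the finitely many cumulant values $\Psi(u_P)(\gamma)$, which are fixed polynomials in densities lying in $[0,1]$, and the count $n^{\downarrow\ell(P)}$ of configurations realising each pattern.
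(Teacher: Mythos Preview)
Your proof is correct and follows essentially the same approach as the paper: expand the joint cumulants by multilinearity, discard the terms where the images are disconnected (using the vanishing of joint cumulants on independent families), observe that the value of each surviving elementary cumulant depends only on the pattern of identifications, count the configurations realising each pattern, and identify the dominant patterns. Your set-partition bookkeeping (the merging pattern $P$ with $\ell(P)$ blocks) is a slightly cleaner packaging of what the paper does by hand; the paper, in exchange, writes out the explicit formula for $\kappa_3$ in terms of the joined graphs $(F\join G\join H)(a,b,c)$ and $(F\join G\join H)(a,b;c,d)$, whereas you only describe it qualitatively (star-type and path-type) --- which is enough for the proposition as stated, since only existence of $\kappa_3$ is claimed.
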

\noindent Because of this description, in order to compute the limiting first cumulants of the random variables $S_n(F,\gamma)$, it will suffice to evaluate two observables $\kappa_2(F,F)$ and $\kappa_3(F,F,F)$ on the graphon $\gamma$ (here we make a slight abuse of notation by writing $f(\gamma)$ instead of $\Psi(f)(\gamma)$ for $f \in \obsG$). This is the main reason why we introduced the graded algebra of observables $\obsG$, and moreover, we shall provide hereafter a combinatorial description of the maps $\kappa_2$ and $\kappa_3$.
\begin{proof}
We start with the covariances and we expand $\cov(S_n(F,\gamma),S_n(G,\gamma))$ by bilinearity:
$$\cov(S_n(F,\gamma),S_n(G,\gamma)) = \sum_{\psi,\phi} \cov(A_{F,\psi},A_{G,\phi}),$$
where the sum runs over pairs of maps $(\psi,\phi)$ from $\lle 1,k\rre$ to $\lle 1,n\rre$.
In this sum, if $\psi(\lle 1,k\rre) \cap \phi(\lle 1,k\rre)=\emptyset$, then the corresponding covariance vanishes, because of the dependency graph identified in Lemma \ref{lem:graphondependencygraph}. Thus, we can restrict the sum to the cases where $|\psi(\lle 1,k\rre) \cap \phi(\lle 1,k\rre) |\geq 1$, which implies that $|\psi(\lle 1,k\rre) \cup \phi(\lle 1,k\rre )|\leq 2k-1 $. If $|\psi(\lle 1,k\rre) \cup \phi(\lle 1,k\rre )|\leq 2k-2 $, then the maps $\psi$ and $\phi$ can be described:
\begin{itemize}
	\item by specifying all the identities $\psi(a)=\phi(b)$ or $\psi(a)=\psi(b)$ or $\phi(a)=\phi(b)$; there is a finite number of configurations of such identities, which depends only on $k$.
	\item and then by specifying less than $2k-2$ values in $\lle 1,n\rre$; there are less than $n^{2k-2}$ possible choices.
\end{itemize}
As a consequence, since $\cov(A_{F,\psi},A_{G,\phi}) \leq 1$ for any maps $\psi$ and $\phi$, the maps $\psi$ and $\phi$ such that
$$|\psi(\lle 1,k\rre) \cap \phi(\lle 1,k\rre) |\geq 1\qquad;\qquad |\psi(\lle 1,k\rre) \cup \phi(\lle 1,k\rre )|\leq 2k-2 $$
will have a contribution to $\cov(S_n(F,\gamma),S_n(G,\gamma)) $ that is a $O(n^{2k-2})$, with a constant in the $O(\cdot)$ that depends only on $k$. This contribution becomes a $O(n^{-1})$ when divided by $k^2\,n^{2k-1}$, so
$$ \frac{\kappa(S_n(F,\gamma),S_n(G,\gamma))}{k^2\,n^{2k-1}} =  \frac{1}{k^2\,n^{2k-1}}\sum_{\substack{|\psi(\lle 1,k\rre) \,\cap\, \phi(\lle 1,k\rre) |= 1 \\ |\psi(\lle 1,k\rre) \,\cup\, \phi(\lle 1,k\rre) |=2k-1}} \cov(A_{F,\psi},A_{G,\phi}) + O(n^{-1}).$$
The two identities $|\psi(\lle 1,k\rre) \cap \phi(\lle 1,k\rre) |= 1 $ and $ |\psi(\lle 1,k\rre) \cup \phi(\lle 1,k\rre) |=2k-1$ imply that $\psi$ and $\phi$ have images of size $k$, so they are injective maps. Therefore, there are exactly $k^2\,n^{\downarrow 2k-1}$ pairs of maps $(\psi,\phi)$ that satisfy these two conditions: 
indeed, to construct such a pair,
\begin{itemize}
 	\item one chooses two indices $a,b\in \lle 1,k\rre$ such that $\psi(a)=\phi(b)$ ($k^2$ possibilities);
 	\item then, there are $2k-1$ distinct values in $\lle 1,n\rre$ to choose, hence $n^{\downarrow 2k-1}$ choices.
 \end{itemize} 
If $a$ and $b$ are two indices in $\lle 1,k\rre$, denote $(F \join G)(a,b)$ the graph on $2k-1$ vertices obtained by identifying in $F \sqcup G$ the vertex $a$ in $V_F$ with the vertex $b$ in $V_G$. This is better understood with an example, see Figure \ref{fig:junctiongraph}.
\begin{center}		
\begin{figure}[ht]
\begin{tikzpicture}[scale=1]
\draw (0,0) -- (0,2) -- (-1.5,1) -- (0,0);
\draw (1,1) node {\Large $\join$};
\draw (4,0) -- (4,2);
\foreach \x in {(0,0),(0,2),(-1.5,1),(2,1),(3.5,0),(3.5,2),(8,1),(6.5,0),(6.5,2),(8,-1),(8,3)}
\fill \x circle (1mm);
\draw (-1.8,1) node {$1$};
\draw (0.25,2.25) node {$2$};
\draw (0.25,-0.25) node {$3$};
\draw (1.7,1) node {$1$};
\draw (3.75,2.25) node {$2$};
\draw (3.75,-0.25) node {$3$};
\draw [NavyBlue,thin] (-2,-0.5) rectangle (0.5,2.5);
\draw [Red,thin] (1.5,-0.5) rectangle (4,2.5);
\draw (4.85,1) node {(2,3)\ \ \large $=$};
\draw (8,1) -- (6.5,0) -- (8,-1) -- (8,3);
\draw [violet,thin] (6,-1.5) rectangle (8.5,3.5);
\end{tikzpicture}
\caption{The junction $(F\join G)(a,b)$ of two graphs along a pair of points $(a\in V_F,b \in V_G)$.\label{fig:junctiongraph}}
\end{figure}
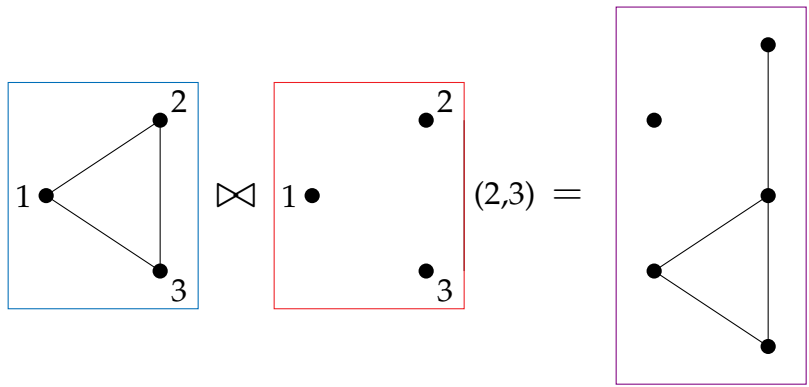
\end{center}
The graph $(F \join G)(a,b)$ is involved in the computation of the covariance of $A_{F,\psi}$ and $A_{G,\phi}$ if $\psi(a)=\phi(b)$ and $|\psi(\lle 1,k\rre)\cup\phi(\lle 1,k\rre)|=2k-1$. Indeed, if $\psi$ is an injective map, then 
\begin{align*}
\esper[A_{F,\psi}] &= \int_{[0,1]^n} \left(\prod_{\{i,j\} \in E_F} g(x_{\psi(i)},x_{\psi(j)})\right) \DD{x_1}\cdots \DD{x_n} \\
&= \int_{[0,1]^k} \left(\prod_{\{i,j\} \in E_F} g(x_i,x_j)\right) \DD{x_1}\cdots \DD{x_k} = t(F,g) = t(F,\gamma).
\end{align*}
When $\psi$ and $\psi$ are two injective maps whose images intersect at exactly one point $\psi(a)=\phi(b)$, a similar calculation yields
\begin{align*}
\esper[A_{F,\psi}\,A_{G,\phi}] &= \int_{[0,1]^n} \left(\prod_{\{i,j\} \in E_F} g(x_{\psi(i)},x_{\psi(j)})\right) \left(\prod_{\{i,j\} \in E_G} g(x_{\phi(i)},x_{\phi(j)}) \right)\DD{x_1}\cdots \DD{x_n} \\
&= \int_{[0,1]^n} \left(\prod_{\{i,j\} \in E_{(F\, \join \,G)(a,b)}} g(x_{\theta(i)},x_{\theta(j)})\right) \DD{x_1}\cdots \DD{x_n}  = t((F \join G)(a,b),\gamma)
\end{align*}
where $\theta$ is the unique injective map $\theta : V_{(F \,\join \,G)(a,b)} \to \lle 1,n\rre$ whose restriction to $V_F$ is $\psi$, and whose restriction to $V_G$ is $\phi$.
\bigskip

Let $F \join G$ be the multiset of all the $k^2$ graphs $(F \join G)(a,b)$, with $a,b \in \lle 1,k\rre$. We can finally compute the limiting covariance:
\begin{align*}
 \frac{\kappa(S_n(F,\gamma),S_n(G,\gamma))}{k^2\,n^{2k-1}} &=  \frac{n^{\downarrow 2k-1}}{k^2\,n^{2k-1}}\,\sum_{a,b=1}^k t((F\join G)(a,b),\gamma) - t(F,\gamma)\,t(G,\gamma)  + O(n^{-1})\\
&=\psi\left( \frac{1}{k^2}\,\sum_{H \in F \,\join\, G} (H -F\times G)\right)(\gamma) + O(n^{-1}).
\end{align*}
This proves the first part of our proposition, with the map $\kappa_2$ given by
$$\kappa_2(F,G) = \frac{1}{k^2}\,\sum_{H \in F \,\join\, G} (H -F\times G)=\frac{1}{k^2}\,\sum_{1\leq a,b\leq k} ((F\join F)(a,b) -F\times G).$$
A similar discussion allows one to compute the asymptotics of the third cumulants. Note first that the joint cumulants have the property that if one can split a family $(X_1,\ldots,X_k)$ into two non-empty families that are independent, then $\kappa(X_1,\ldots,X_k)=0$. Now, in order to compute $\kappa(S_n(F,\gamma),S_n(G,\gamma),S_n(H,\gamma))$, we expand this joint cumulant by multilinearity, and we are reduced to the computation of the third cumulant $\kappa(A_{F,\psi},A_{G,\phi},A_{H,\theta})$, where $\psi,\phi,\theta$ are maps $\lle 1,k\rre \to \lle 1,n\rre$. By using the vanishing property of joint cumulants on independent vectors, one can get rid of all the triples $(\psi,\phi,\theta)$ such that one of the three maps has an image disjoint from the other two.
On the other hand, if the union
$\psi(\lle 1,k\rre) \cup \phi(\lle 1,k\rre)\cup \theta(\lle 1,k\rre)$ has less than $3k-3$ elements, then the corresponding joint cumulant is a $O(1)$, and all these contributions form a $O(n^{3k-3})$. Hence,
$$\kappa(S_n(F,\gamma),S_n(G,\gamma),S_n(H,\gamma)) = \sum_{\psi,\phi,\theta} \kappa(A_{F,\psi},A_{G,\phi},A_{H,\theta})+O(n^{3k-3}),$$
where the constant hidden in the $O(\cdot)$ only depends on $k$, and where the sum is on triples of functions $\psi,\phi,\theta : \lle 1,k\rre \to \lle 1,n\rre$ which are injective, such that 
$$|\psi(\lle 1,k\rre) \cup \phi(\lle 1,k\rre)\cup \theta(\lle 1,k\rre)|=3k-2.$$
This implies that one of the following situations occur:
\begin{enumerate}[label=(\alph*)]
	\item there are indices $a,b,c \in \lle 1,k\rre$ such that $\psi(a)=\phi(b)=\theta(c)$, and there are no other identities between the images of $\psi,\phi,\theta$; 
	\item or, there are indices $a,b \neq c,d \in \lle 1,k\rre$ such that $\psi(a)=\phi(b)$ and $\phi(c)=\theta(d)$, and there are no other identities between the images of $\psi,\phi,\theta$; 
	\item or, one has the same configuration as (b), up to a cyclic permutation of the roles played by $\psi$, $\phi$ and $\theta$.
\end{enumerate}
In the first case, $a,b,c$ being fixed, there are exactly $n^{\downarrow 3k-2}$ possibilities for $\psi,\phi,\theta$, and the joint cumulant of $A_{F,\psi}$, $A_{G,\phi}$ and $A_{H,\theta}$ is in this case
\begin{align*}
&t((F\join G \join H)(a,b,c),\gamma ) - t((F\join G)(a,b)\sqcup H,\gamma) -t((F\join H)(a,c)\sqcup G,\gamma) \\
&- t((G\join H)(b,c)\sqcup F,\gamma)+ 2\,t(F \sqcup G \sqcup H,\gamma),
\end{align*}
where $(F\join G \join H)(a,b,c)$ denotes the graph on $3k-2$ vertices obtained by identifying in $F \sqcup G \sqcup H$ the vertex $a$ in $V_F$, the vertex $b$ in $V_G$ and the vertex $c$ in $V_H$. In the second case, $a,b\neq c,d$ being fixed, there are again $n^{\downarrow 3k-2}$ possibilities for $\psi,\phi,\theta$, and the joint cumulant of $A_{F,\psi}$, $A_{G,\phi}$ and $A_{H,\theta}$ is then
\begin{align*}
&t((F\join G \join H)(a,b;c,d),\gamma ) + t(F \sqcup G \sqcup H,\gamma) \\ 
&- t((F\join G)(a,b)\sqcup H,\gamma) -t((G\join H)(c,d)\sqcup F,\gamma) ,
\end{align*}
where $(F\join G \join H)(a,b;c,d)$ denotes the graph on $3k-2$ vertices obtained by identifying in $F \sqcup G \sqcup H$ the vertex $a$ in $V_F$ with the vertex $b$ in $V_G$, and the vertex $d$ in $V_H$ with the vertex $c$ in $V_G$.  These computations of "elementary" joint cumulants follow from the same argument as for the computation of the elementary covariance $\cov(A_{F,\psi},A_{G,\phi})$. We conclude that
\begin{align*}
&\lim_{n \to \infty} \frac{\kappa(S_n(F,\gamma),S_n(G,\gamma),S_n(H,\gamma))}{n^{3k-2}}  \\
&= \sum_{1\leq a,b,c \leq k} t((F\join G \join H)(a,b,c),\gamma ) + 2\,t(F \sqcup G \sqcup H,\gamma)  - t((F\join G)(a,b)\sqcup H,\gamma) \end{align*}
\vspace*{-0.7cm} $$- \,t((G\join H)(b,c)\sqcup F,\gamma) -t((F\join H)(a,c)\sqcup G,\gamma) $$
\vspace*{-0.3cm}
$$+\sum_{\Z/3\Z}\sum_{1\leq a,b\neq c,d\leq k} t((F\join G \join H)(a,b;c,d),\gamma ) + t(F \sqcup G \sqcup H,\gamma) \qquad\qquad\qquad~$$
\vspace*{-0.6cm}
$$\qquad\qquad- \,t((F\join G)(a,b)\sqcup H,\gamma) -t((G\join H)(c,d)\sqcup F,\gamma)$$
where the symbol $\Z/3\Z$ means that one permutes cyclically the roles played by $F$, $G$ and $H$. Moreover, this limit is attained at speed $O(n^{-1})$. This proves the second part of the proposition, with
\begin{align*}
\kappa_3(F,G,H) &= \frac{1}{k^4}\sum_{1\leq a,b,c \leq k} \left(\substack{(F\join G \join H)(a,b,c)+2\,F \times G \times H  - F \times (G\join H)(b,c)  \\ - G \times (F\join H)(a,c) - H \times (F\join G)(a,b) }\right) \\
&\quad +\frac{1}{k^4}\sum_{\Z/3\Z}\sum_{1\leq a,b\neq c,d\leq k} \left(\substack{(F\join G \join H)(a,b;c,d) + F \times G \times H \\ - H\times (F\join G)(a,b)- F \times (G\join H)(c,d)}\right).\qedhere
\end{align*}
\end{proof}
\bigskip

\begin{example}
Let us illustrate the computation of the maps $\kappa_2$ and $\kappa_3$ on an example. If $F=G=K_3=\tikz[baseline=1.5mm]{
\draw[scale=0.3] (-1.5,1) -- (0,0) -- (0,2) -- (-1.5,1);
\foreach \x in {(-1.5,1),(0,0),(0,2)}
\fill[scale=0.3] \x circle (1mm);}$\, is the triangle, then $H=\tikz[baseline=1.5mm]{
\draw[scale=0.3] (-3,0) -- (-3,2) -- (0,0) -- (0,2) -- (-3,0);
\foreach \x in {(-1.5,1),(0,0),(0,2),(-3,0),(-3,2)}
\fill[scale=0.3] \x circle (1mm);}$\, is the only isomorphism class in $F \join G$, so 
$$\kappa_2(K_3,K_3) = H - K_3\times K_3.$$
For the computation of the limiting third cumulant, the only possible junction of three triangles at a common point is
$$I = \tikz[baseline=0mm]{
	\draw (0:1) -- (180:1) -- (120:1) -- (300:1) -- (240:1) -- (60:1) -- (0:1);
	\foreach \x in {(0:1),(60:1),(120:1),(180:1),(240:1),(300:1),(0,0)}
	\fill \x circle (0.4mm);
}\,,$$
whereas a junction of three triangles $(K_3 \join K_3 \join K_3)(a,b;c,d)$ with $b \neq c$ is always isomorphic to
$$J = \tikz[baseline=0mm]{
	\draw (0:2) -- (180:1) -- (120:1) -- (300:1) -- (0:1);
	\foreach \x in {(0:1),(0:2),(120:1),(180:1),(300:1),(0,0)}
	\fill \x circle (0.4mm);
	\begin{scope}[shift={(1,0)}]
	\draw (0:1) -- (60:1) -- (0,0);
	\fill (60:1) circle (0.4mm);
	\end{scope}
}\,.$$
Therefore,
$$ \kappa_3(K_3,K_3,K_3) = \frac{1}{3} \,I + 2\,J +\frac{8}{3}\,K_3 \times K_3 \times K_3  -5\, K_3 \times H.$$
\end{example}
\medskip

\begin{theorem}[Mod-Gaussian convergence of graphons]\label{thm:modgraphon}
Let $\gamma \in \Gcal$ be a graphon, and $F$ be a finite graph with $k$ vertices. 
\begin{enumerate}
	\item The random variable $S_n(F,\gamma) = n^k\,t(F,\Gamma_n(\gamma))$ satisfies the hypotheses of the method of cumulants with parameters $D_{n}=k^2\,n^{k-1}$, $N_{n} = n^k$ and $A=1$, and with limits $\sigma^2 = \kappa_2(F,F)(\gamma)$ and $L = \kappa_3(F,F,F)(\gamma)$.
	\item If $\kappa_2(F,F)(\gamma) > 0$, then the random variables
$$Y_n(F,\gamma) = \frac{t(F,\Gamma_n(\gamma))-t(F,\gamma)}{\sqrt{\var(t(F,G_n(\gamma)))}}$$
satisfy all the limiting results from Theorem \ref{thm:cumulantestimates}.
\item Besides, we have the uniform concentration inequality 
$$\proba[|t(F,\Gamma_n(\gamma))-\esper[t(F,\Gamma_n(\gamma))]| \geq x] \leq 2\,\exp\left(-\frac{nx^2}{9k^2}\right).$$
\end{enumerate} 
\end{theorem}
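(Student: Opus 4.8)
The plan is to recognise $S_n(F,\gamma)=n^{k}\,t(F,\Gamma_n(\gamma))=|\hom(F,G_n(\gamma))|$ as a sum of bounded random variables carrying a sparse dependency graph, to deduce from this the three hypotheses of the method of cumulants, and then to quote Theorem~\ref{thm:cumulantestimates} and Proposition~\ref{prop:concentration1}. For the first hypothesis: by \eqref{eq:tG_Sum} we have $S_n(F,\gamma)=\sum_{\psi:\lle 1,k\rre\to\lle 1,n\rre}A_{F,\psi}$, a sum of $N_n=n^{k}$ random variables with $|A_{F,\psi}|\le 1$ almost surely, and Lemma~\ref{lem:graphondependencygraph} provides a dependency graph in which the number of maps $\phi$ joined to or equal to a fixed $\psi$ is at most $k^{2}n^{k-1}$ (choose $i,j\in\lle 1,k\rre$ with $\phi(i)=\psi(j)$, then the remaining $k-1$ values of $\phi$), so we may take $D_n=k^{2}n^{k-1}$. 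The theorem on uniform bounds on cumulants from a dependency graph then yields \eqref{eq:cumulant1} with parameters $(D_n,N_n,A)=(k^{2}n^{k-1},n^{k},1)$, and $D_n/N_n=k^{2}/n\to 0$.

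For \eqref{eq:cumulant2} and \eqref{eq:cumulant3} I would simply read off the proposition on the limiting first cumulants of subgraph counts applied with $F=G=H$: it gives $\kappa^{(2)}(S_n(F,\gamma))=k^{2}n^{2k-1}\bigl(\kappa_2(F,F)(\gamma)+O(n^{-1})\bigr)$ and $\kappa^{(3)}(S_n(F,\gamma))=k^{4}n^{3k-2}\bigl(\kappa_3(F,F,F)(\gamma)+O(n^{-1})\bigr)$. Since $N_nD_n=k^{2}n^{2k-1}$ and $N_nD_n^{2}=k^{4}n^{3k-2}$, this reads $\sigma_n^{2}=\kappa_2(F,F)(\gamma)+O(n^{-1})$ and $L_n=\kappa_3(F,F,F)(\gamma)+O(n^{-1})$; as $(D_n/N_n)^{1/3}\asymp n^{-1/3}$ and $O(n^{-1})=o(n^{-1/3})$, these are exactly \eqref{eq:cumulant2} and \eqref{eq:cumulant3} with $\sigma^{2}=\kappa_2(F,F)(\gamma)$ and $L=\kappa_3(F,F,F)(\gamma)$ (if $\sigma^{2}=0$ one still has $\sigma_n\to 0$, which by the remark following Theorem~\ref{thm:cumulantestimates} is enough for every assertion not involving moderate deviations). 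This establishes item~(1).

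Item~(2) is then immediate: under the extra assumption $\sigma^{2}=\kappa_2(F,F)(\gamma)>0$, Theorem~\ref{thm:cumulantestimates} applies to $(S_n(F,\gamma))_{n\in\N}$, and its renormalised variable $Y_n=(S_n-\esper[S_n])/\sqrt{\var(S_n)}$ coincides with $(t(F,\Gamma_n(\gamma))-\esper[t(F,\Gamma_n(\gamma))])/\sqrt{\var(t(F,G_n(\gamma)))}$ by homogeneity together with the identity $t(F,\Gamma_n(\gamma))=t(F,G_n(\gamma))$; replacing $\esper[t(F,\Gamma_n(\gamma))]$ by $t(F,\gamma)$ only shifts $Y_n$ by $O(n^{-1/2})$ (the two centerings differ by at most $|V_F|^{2}/(2n)$ while $\sqrt{\var}=\Theta(n^{-1/2})$), a perturbation absorbed in the error terms of all the statements of Theorem~\ref{thm:cumulantestimates}. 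For item~(3), note $|S_n(F,\gamma)|=|\hom(F,G_n(\gamma))|\le n^{k}=N_nA$ almost surely, so Proposition~\ref{prop:concentration1} gives $\proba[\,|S_n-\esper[S_n]|\ge u\,]\le 2\exp(-u^{2}/(9k^{2}n^{2k-1}))$; substituting $u=n^{k}x$ turns this into $\proba[\,|t(F,\Gamma_n(\gamma))-\esper[t(F,\Gamma_n(\gamma))]|\ge x\,]\le 2\exp(-nx^{2}/(9k^{2}))$, which is item~(3).

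\textbf{Main obstacle.} In fact there is no serious obstacle left here, since the two genuinely difficult inputs — the dependency graph of Lemma~\ref{lem:graphondependencygraph} and the asymptotics of the first three cumulants of $S_n(F,\gamma)$ — have already been proved; the argument is essentially bookkeeping. The only points calling for a little care are matching the $O(n^{-1})$ convergence rate of the limiting cumulants against the $o((D_n/N_n)^{1/3})$ tolerance demanded in \eqref{eq:cumulant2}, and checking that the (harmless, $O(n^{-1/2})$) discrepancy between centering at $\esper[t(F,\Gamma_n(\gamma))]$ and at $t(F,\gamma)$ does not affect any of the estimates imported from Theorem~\ref{thm:cumulantestimates}.
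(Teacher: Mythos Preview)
Your proposal is correct and follows essentially the same route as the paper: the paper's own proof is a one-line remark that the three hypotheses of the method of cumulants have already been verified (via Lemma~\ref{lem:graphondependencygraph} and the proposition on limiting first cumulants), after which Theorem~\ref{thm:cumulantestimates} and Proposition~\ref{prop:concentration1} are invoked exactly as you do. You are in fact slightly more careful than the paper in addressing the $O(n^{-1/2})$ shift induced by centering at $t(F,\gamma)$ rather than at $\esper[t(F,\Gamma_n(\gamma))]$.
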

Indeed, we have checked the three hypotheses of the method of cumulants. For the concentration inequality, a slightly better result was proven in \cite{BCLSV08}, namely,
$$\proba[|t(F,\Gamma_n(\gamma))-\esper[t(F,\Gamma_n(\gamma))]| \geq x] \leq 2\,\exp\left(-\frac{nx^2}{4k^2}\right).$$
The original proof of this result relied on martingale techniques; here we have shown that up to a constant, one gets similar inequalities from the mod-Gaussian structure. On the other hand, the mod-Gaussian convergence implies the speed of convergence estimate stated in Theorem \ref{thm:speedofconvergencegraphon}, noticing that
$$\lim_{n \to \infty} n\,\var(t(F,\Gamma_n(\gamma))) = k^2\,\kappa_2(F,F)(\gamma).$$
Similarly, we get immediately the moderate deviation estimates from Theorem \ref{thm:moderatedeviationgraphon}, with
$$l(F,\gamma) = \frac{k}{6}\,\frac{\kappa_3(F,F,F)(\gamma)}{(\kappa_2(F,F)(\gamma))^{3/2}}.$$
\begin{example}
Fix a graphon $\gamma$, and suppose that $t(H,\gamma) \neq (t(K_3,\gamma))^2$, where $H$ is a before the "bowtie"
graph. The densities of triangles satisfy the central limit theorem
$$ Y_n= \frac{t(K_3,G_n(\gamma))-t(K_3,\gamma) }{\sqrt{\var(t(K_3,G_n(\gamma)))}} \rightharpoonup \gauss,$$ 
and the left-hand side of this formula can be replaced by the more explicit random variable
$$ \widetilde{Y}_n = \sqrt{n}\, \frac{t(K_3,G_n(\gamma))-t(K_3,\gamma) }{3\,\sqrt{t(H,\gamma) -t(K_3\times K_3,\gamma) }} $$
since \begin{align*}
\esper[t(K_3,G_n(\gamma))] &= t(K_3,\gamma) + O\!\left(\frac{1}{n}\right);\\
\var(t(K_3,G_n(\gamma))) &= \frac{9}{n}(t(H,\gamma)-t(K_3 \times K_3,\gamma)) + O\!\left(\frac{1}{n^2}\right).
\end{align*}
For instance, if $\gamma$ is the graphon of the graph function $g(x,y)=xy$, then one can compute $t(K_3,\gamma)=\frac{1}{27}$, and one has the central limit theorem
$$\widetilde{Y}_n=\sqrt{\frac{5n}{4}}\left(\frac{27\,t(K_3,G_n((x,y)\mapsto xy))-1}{3}\right) \rightharpoonup \gauss.$$
The zone of normality of $(Y_n)_{n \in \N}$ or of $(\widetilde{Y}_n)_{n \in \N}$ is a $o(n^{1/6})$. At the edge of this zone, one enters the regime of moderate deviations, and
\begin{align*}
&\proba[Y_n \geq n^{1/6}\,x] = \proba\!\left[\widetilde{Y}_n \geq n^{1/6}\,x\right]\,(1+o(1)) \\
&=\frac{\E^{-\frac{n^{1/3}\,x^2}{2}}}{n^{1/6}\,x\,\sqrt{2\pi}}\,\exp\left(\frac{t(I,\gamma)+6\,t(J,\gamma)+8\,(t(K_3,\gamma))^3 - 15\,t(H\times K_3,\gamma) }{6\,(t(H,\gamma)-t(K_3\times K_3,\gamma))^{3/2}}\,x^3\right)\,(1+o(1))
\end{align*}
for $x > 0$ fixed. Last, for $n$ large enough,
$$\dkol(Y_n,\gauss) \leq \frac{230}{((t(H,\gamma)-t(K_3\times K_3,\gamma))^{3/2}}\,\frac{1}{\sqrt{n}}.$$
As far as we know, all these results for the fluctuations of densities of subgraphs in graphon models are new.
\end{example}
\medskip

\subsection{Fluctuations of random permutations and permutons}\label{subsec:permmodgauss}
We have the same kind of results for the fluctuations of the models $(\sigma_n(\pi))_{n \in \N}$ of random permutations, and the main difference between this theory and the theory for graphon models is that one can consider the observables $t(\tau,\sigma_n(\pi))$ or the observables $t(\tau,\Pi_n(\pi))$. The difference between these observables is bounded by $\frac{|\tau|^2}{2n}$, so a central limit theorem for one of these quantities will imply a central limit theorem for the other quantity. Unfortunately, if one wants to get more precise results (\emph{e.g.}~a speed of convergence estimate), then one needs to be more careful and to identify a dependency structure for each random variable $t(\tau,\sigma_n(\pi))$ or $t(\tau,\Pi_n(\pi))$. The first case is almost identical to the case of graphs, whereas the second case requires a bit more work (see Lemma \ref{lem:expansionthat}).\medskip

In the sequel, we fix a permuton $\pi \in \Scal$ and a permutation $\tau$ of size $k$. We set $S_n(\tau,\pi) = \binom{n}{k}\,t(\tau,\sigma_n(\pi))$ and $\widetilde{S}_n(\tau,\pi) = n^k\,t(\tau,\Pi_n(\pi))$; beware of the slightly different scalings involved in these formulas. We shall expand the variables $S_n(\tau,\pi)$ and $\widetilde{S}_n(\tau,\pi)$ as sums of bounded random variables with sparse dependency graphs. For $S_n(\tau,\pi)$, if we introduce independent random points $(X_1,Y_1),\ldots,(X_n,Y_n)$ with law $\pi$ and such that $\sigma_n(\pi)=\conf((X_1,Y_1),\ldots,(X_n,Y_n))$, then we have:
$$S_n(\tau,\pi) = \mathrm{occ}(\tau,\sigma_n(\pi)) = \sum_{\substack{L \subset \lle 1,n\rre\\|L|=k}} A_{\tau,L},$$
with 
$$ A_{\tau,L} = \begin{cases}
	1 &\text{if }\conf(\{(X_l,Y_l),\,\,l\in L\})=\tau,\\
	0 &\text{otherwise}.
\end{cases}$$
\begin{lemma}\label{lem:permutondependencygraph}
The graph $G_n$
\begin{itemize}
	\item with vertex set $V_n=\{L \subset \lle 1,n\rre,\,\,\,|L|=k\}$,
	\item and with an edge between $L$ and $M$ if $L \cap M \neq \emptyset$
\end{itemize}
is a dependency graph for the family of random variables $(A_{\tau,L})_{L \in V_n}$ involved in the expansion of $S_n(\tau,\pi)$.
\end{lemma}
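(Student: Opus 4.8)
The plan is to mimic almost verbatim the proof of Lemma~\ref{lem:graphondependencygraph}, the key structural fact being that each indicator $A_{\tau,L}$ is a function of the random points indexed by $L$, and of those points only.

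First I would record that $A_{\tau,L}=1_{\conf(\{(X_l,Y_l),\,l\in L\})=\tau}$ is measurable with respect to the $\sigma$-algebra $\mathscr{F}_L$ generated by the family $\{(X_l,Y_l)\}_{l\in L}$. The one subtlety worth spelling out is that the global definition $\sigma_n(\pi)=\conf((X_1,Y_1),\ldots,(X_n,Y_n))$ relabels \emph{all} $n$ points through the order statistics $\psi_1,\psi_2$ of Section~\ref{subsec:permutonspace}, so a priori it couples every coordinate; but the event $\{\conf(\{(X_l,Y_l),\,l\in L\})=\tau\}$ depends only on the relative order of the $X_l$'s among themselves and of the $Y_l$'s among themselves for $l\in L$, hence it genuinely factors through $\mathscr{F}_L$, and the global reparametrisation is immaterial. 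Then, given disjoint sets $C,D\subset V_n$ with no edge of $G_n$ between them, the definition of the edges of $G_n$ forces $L\cap M=\emptyset$ for every $L\in C$ and $M\in D$, so the subsets $\mathbb{C}=\bigcup_{L\in C}L$ and $\mathbb{D}=\bigcup_{M\in D}M$ of $\lle 1,n\rre$ are disjoint.

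To conclude, $(A_{\tau,L})_{L\in C}$ is measurable with respect to $\mathscr{F}_{\mathbb{C}}=\sigma(\{(X_i,Y_i)\}_{i\in\mathbb{C}})$ and $(A_{\tau,M})_{M\in D}$ with respect to $\mathscr{F}_{\mathbb{D}}$; since the points $(X_1,Y_1),\ldots,(X_n,Y_n)$ are i.i.d.\ under $\pi$ and $\mathbb{C}\cap\mathbb{D}=\emptyset$, these two $\sigma$-algebras are independent, hence the two random vectors are independent, which is exactly the defining property of a dependency graph. There is essentially no obstacle here; the only point requiring minimal care is the measurability remark above, i.e.\ checking that passing from the points to the permutation $\sigma_n(\pi)$ introduces no spurious dependency, after which the argument is a one-line transcription of the graphon case.
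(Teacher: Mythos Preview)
Your proof is correct and follows essentially the same approach as the paper: form the unions $\mathbb{C}=\bigcup_{L\in C}L$ and $\mathbb{D}=\bigcup_{M\in D}M$, observe they are disjoint, note that each vector is measurable with respect to the $\sigma$-algebra generated by the i.i.d.\ points $(X_i,Y_i)$ with indices in the corresponding union, and conclude independence. The measurability ``subtlety'' you take care to address is in fact already absorbed by the paper's definition $A_{\tau,L}=1_{\conf(\{(X_l,Y_l),\,l\in L\})=\tau}$, which is manifestly a function of the points in $L$ alone; your remark is a welcome sanity check but not an extra ingredient.
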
 

\begin{proof}
Denote $Z_i = (X_i,Y_i)$. Given two families $(A_{\tau,L})_{L \in W_1}$ and $(A_{\tau,M})_{M \in W_2}$ that are not connected in this graph, if $U_1 = \bigcup_{L \in W_1}L$ and $U_2 = \bigcup_{M \in W_2}M$, then we have
$$\left(\bigcup_{L \in W_1}L\right) \cap \left(\bigcup_{M \in W_2} M\right) = U_1 \cap U_2 = \emptyset.$$
Then, $(A_{\tau,L})_{L \in W_1}$ is measurable with respect $\sigma(\{Z_i,\,i \in U_1\})$, and $(A_{\tau,M})_{M \in W_2}$ is measurable with respect to $\sigma(\{Z_i,\,i \in U_2\})$. As the random points $Z_i$ are independent, we conclude that $(A_{\tau,L})_{L \in W_1}$ and $(A_{\tau,M})_{M \in W_2}$ are independent families
\end{proof}
\medskip

The parameters of the dependency graph $G_n$ in Lemma \ref{lem:permutondependencygraph} are $D_n=D_{n,k} = k\binom{n-1}{k-1}$, $N_n = N_{n,k} = \binom{n}{k}$ and $A=1$. The existence of this dependency graph implies the hypothesis \eqref{eq:cumulant1} for $S_n(\tau,\pi)$. For $\widetilde{S}_n(\tau,\pi)$, we have a similar decomposition
$$\widetilde{S}_n(\tau,\pi) = \sum_{1\leq l_1,\ldots,l_k\leq n} \widetilde{A}_{\tau,(l_1,\ldots,l_k)},$$
but the definition of the elementary variables $\widetilde{A}_{\tau,(l_1,\ldots,l_k)}$ is more complicated than before. It relies on the following:

\begin{lemma}\label{lem:expansionthat}
Let $\tau$ be a permutation of size $k\leq n$. Given a family of points $(z_1,\ldots,z_n)$ in $[0,1]^2$ in a general configuration, we set  
$$\widehat{t}(\tau,(z_1,\ldots,z_n)) = t(\tau,\pi(\conf(z_1,\ldots,z_n))).$$
There exists a measurable function $F_\tau : ([0,1]^2)^k \to [0,1]$ such that, for any sequence $(z_1,\ldots,z_n)$ in a general configuration, 
$$\widehat{t}(\tau,(z_1,\ldots,z_n)) = \sum_{1\leq l_1,\ldots,l_k \leq n} \frac{1}{n^k}\,F_\tau(z_{l_1},\ldots,z_{l_k}).$$
\end{lemma}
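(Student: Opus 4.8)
The plan is to unfold the definition of $t(\tau,\pi(\sigma))$ for $\sigma:=\conf(z_1,\dots,z_n)$ (so that $\widehat t(\tau,(z_1,\dots,z_n))=t(\tau,\pi(\sigma))$), to expand it as a sum over $k$-tuples of indices, and then to reindex that sum so that each summand becomes a function of the points $z_l$ themselves. First I would write $\pi(\sigma)=\frac1n\sum_{i=1}^n\nu_i$, where $\nu_i$ is the uniform probability measure on the square $(\frac{i-1}{n},\frac in]\times(\frac{\sigma(i)-1}{n},\frac{\sigma(i)}{n}]$; substituting this into the permuton formula for pattern densities and expanding by multilinearity gives
\[
t(\tau,\pi(\sigma))=\frac1{n^k}\sum_{i_1,\dots,i_k=1}^n G_\tau(i_1,\dots,i_k;\sigma),\qquad
G_\tau(i_1,\dots,i_k;\sigma):=\proba\!\left[\conf(U_1,\dots,U_k)=\tau\right],
\]
where $U_1,\dots,U_k$ are independent with $U_j\sim\nu_{i_j}$.

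Next I would show that $G_\tau(i_1,\dots,i_k;\sigma)$ depends on its arguments only through the pair of total preorders $(\preceq_x,\preceq_y)$ on $\lle1,k\rre$ defined by $j\preceq_x j'\iff i_j\le i_{j'}$ and $j\preceq_y j'\iff\sigma(i_j)\le\sigma(i_{j'})$; these two preorders have the same ties, namely the sets of indices sharing a common value $i_j$. Indeed, $\conf(U_1,\dots,U_k)$ is determined by the relative order of the abscissae of the $U_j$ and by that of their ordinates. Between indices lying in distinct ties these orders are forced, because the corresponding squares are disjoint and separated in the right directions by $\preceq_x$ and $\preceq_y$; within a tie the $U_j$ are i.i.d.\ uniform on one common square, so the abscissae and the ordinates induce two independent uniformly random linear orders on that tie, independently across ties. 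Hence the joint law of the two orders, and therefore $G_\tau$, equals a function $g_\tau(\preceq_x,\preceq_y)\in[0,1]$ of this combinatorial type alone; for instance, when all $i_j$ are distinct it is $1$ if $\sigma$ induces the pattern $\tau$ on $\{i_1,\dots,i_k\}$ and $0$ otherwise.

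Then comes the reindexing. Writing $z_l=(x_l,y_l)$ and letting $\psi_1:\{x_1,\dots,x_n\}\to\lle1,n\rre$ and $\psi_2:\{y_1,\dots,y_n\}\to\lle1,n\rre$ be the increasing bijections from the definition of $\conf$, the map $\phi(l)=\psi_1(x_l)$ is a bijection of $\lle1,n\rre$, so I may put $i_j=\phi(l_j)$ and sum over $(l_1,\dots,l_k)$ instead. From $\sigma(\psi_1(x_l))=\psi_2(y_l)$ and the monotonicity of $\psi_1,\psi_2$ one gets $\phi(l_j)\le\phi(l_{j'})\iff x_{l_j}\le x_{l_{j'}}$ and $\sigma(\phi(l_j))\le\sigma(\phi(l_{j'}))\iff y_{l_j}\le y_{l_{j'}}$, so the combinatorial type of $(\phi(l_1),\dots,\phi(l_k);\sigma)$ is exactly the pair of preorders $(\preceq_x^w,\preceq_y^w)$ read off from the abscissae and ordinates of $w=(z_{l_1},\dots,z_{l_k})$. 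I then define $F_\tau:([0,1]^2)^k\to[0,1]$ by $F_\tau(w_1,\dots,w_k)=g_\tau(\preceq_x^w,\preceq_y^w)$ when $\preceq_x^w$ and $\preceq_y^w$ have the same ties, and $F_\tau(w_1,\dots,w_k)=0$ otherwise; this function is Borel (it is constant on each of the finitely many strata of $([0,1]^2)^k$ cut out by the relations $x_j=x_{j'}$, $y_j=y_{j'}$ and the signs of $x_j-x_{j'}$, $y_j-y_{j'}$), takes values in $[0,1]$, and depends on neither $n$ nor $\sigma$. Since a general configuration $(z_1,\dots,z_n)$ has pairwise distinct abscissae and pairwise distinct ordinates, for any subfamily the ties induced by abscissae and by ordinates coincide, so $F_\tau(z_{l_1},\dots,z_{l_k})=G_\tau(\phi(l_1),\dots,\phi(l_k);\sigma)$, and combining the three observations yields
\[
\widehat t(\tau,(z_1,\dots,z_n))=t(\tau,\pi(\sigma))=\frac1{n^k}\sum_{l_1,\dots,l_k=1}^n G_\tau(\phi(l_1),\dots,\phi(l_k);\sigma)=\frac1{n^k}\sum_{l_1,\dots,l_k=1}^n F_\tau(z_{l_1},\dots,z_{l_k}).
\]

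The step I expect to be the main obstacle is the second one: one must check with care that the within-tie relative orders genuinely are independent uniform linear orders whose joint law sees neither the actual indices $i_j$ nor $\sigma$ beyond the recorded preorders, and that the Lebesgue-null events where two abscissae or two ordinates among the $U_j$ coincide contribute nothing to the probability. The first and third steps, by contrast, are bookkeeping with the definitions of $\pi(\sigma)$ and of $\conf$.
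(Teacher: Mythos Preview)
Your proof is correct and takes a genuinely different route from the paper's. The paper defines $F_\tau$ via a limiting construction: it writes $\widehat{t}(\tau,(z_1,\dots,z_n))=\proba[\conf(W_1,\dots,W_k)=\tau]$ with $W_i$ i.i.d.\ under $\pi(\sigma)$, then introduces auxiliary variables $V_i=z_{L_i}-\eps S_i$ living in small $\eps$-squares around the actual points $z_{L_i}$, argues geometrically (via a diagram) that $\conf(V_1,\dots,V_k)=\conf(W_1,\dots,W_k)$ almost surely, and finally sets $F_\tau(t_1,\dots,t_k)=\lim_{\eps\to 0}\proba[\conf(t_1-\eps S_1,\dots,t_k-\eps S_k)=\tau]$. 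Your approach instead extracts the combinatorial invariant directly: you observe that the elementary probability $G_\tau(i_1,\dots,i_k;\sigma)$ depends only on the pair of total preorders (with common ties) that the indices induce, and you define $F_\tau$ explicitly as the piecewise-constant function $g_\tau(\preceq_x^w,\preceq_y^w)$ on the finitely many Borel strata of $([0,1]^2)^k$. This is more elementary---no limits, no ``rescaling of small squares'' argument---and makes measurability and the range $[0,1]$ transparent; it also gives a closed form for $F_\tau$, whereas the paper's definition leaves it implicit. The paper's geometric route, in return, defines $F_\tau$ uniformly even on tuples where the $x$-ties and $y$-ties disagree (which you harmlessly set to $0$), and its $\eps$-perturbation picture is perhaps more suggestive for extensions to other permuton-like objects. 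The two $F_\tau$'s agree on every tuple $(z_{l_1},\dots,z_{l_k})$ arising from a general configuration, so both yield the claimed identity.
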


\begin{remark}[Values of $F_\tau$ on general configurations]
It will follow from the proof of the lemma that when $(t_1,\ldots,t_k)$ is a family in general configuration, one has simply $F_\tau(t_1,\ldots,t_k)=1_{\mathrm{conf}(t_1,\ldots,t_k)=\tau}$. However, in the expansion above, the families $(z_{l_1},\ldots,z_{l_k})$ are usually not in general configuration, since they might contain points $z_l$ with multiplicity larger than $2$ (for instance if $l_1=l_2$). Thus, our lemma extends in a measurable way the domain of the function $(t_1,\ldots,t_k) \mapsto 1_{\mathrm{conf}(t_1,\ldots,t_k)=\tau}$ to any $k$-tuple of points.
\end{remark}

\begin{proof}
By definition, if $\sigma = \conf(z_1,\ldots,z_n)$ and $W_1,\ldots,W_k$ are independent points with law $\pi(\sigma)$, then
$$\widehat{t}(\tau,(z_1,\ldots,z_n)) = \proba[\conf(W_1,\ldots,W_k)=\tau].$$ 
We construct the random variables $W_1,\ldots,W_k$ as follows. If $z_l=(x_l,y_l)$, we denote $\psi_1 : \{x_1,\ldots,x_n\} \to \lle 1,n\rre$ and $\psi_2: \{y_1,\ldots,y_n\} \to \lle 1,n\rre$ the two increasing bijections; then, the configuration $\sigma$ is defined by the identity $\sigma(\psi_1(x_l))=\psi_2(y_l)$. Let us introduce independent random variables $(S_{i})_{i \in \lle 1,k\rre}$ uniformly distributed over the square $[0,1]^2$, and also independent discrete random variables $L_1,\ldots,L_k$ uniformly distributed in $\lle 1,n\rre$. We then set:
$$W_i = \left(\frac{\psi_1(x_{L_i})}{n},\frac{\psi_2(y_{L_i})}{n}\right)-\frac{S_{i}}{n}.$$
Each $W_i$ has law $\pi(\sigma)$, and on the other hand, since the $L_i$'s and the $S_{i}$'s are all independent, the random variables $W_1,\ldots,W_k$ are independent. Now, let us construct other random variables $V_1,\ldots,V_k$ such that
$$\conf(V_1,\ldots,V_k)=\conf(W_1,\ldots,W_k)\quad\text{almost surely}.$$
The idea is that instead of choosing random points in squares of size $\frac{1}{n}$ attached to the points $(\frac{\psi_1(x_l)}{n},\frac{\psi_2(y_l)}{n})$, one can choose random points in small squares of size $\eps$ attached to the points $z_l$, and this without changing the configuration.  Since $(z_1,\ldots,z_n)$ is in general configuration, we can find an $\eps>0$ such that $|x_l-x_m|>\eps$ if $l\neq m$, and $|y_l-y_m|>\eps$ if $l \neq m$. We then set:
$$V_i = z_{L_i} - \eps \,S_{i}.$$
It is easy to convince oneself on a diagram that the points $V_1,\ldots,V_k$ can be obtained from the points $W_1,\ldots,W_k$ by a bijection that is increasing in both coordinates; see Figure \ref{fig:sameconfig}. Therefore, in particular, the configuration is unchanged.
\begin{center}		
\begin{figure}[ht]
\begin{tikzpicture}[scale=0.8]
\foreach \x in {(1,2),(2,4),(3,5),(4,3),(5,6),(6,1)}
\fill [shift={\x},black!15!white] (0,0) rectangle (-1,-1);
\draw [<->] (0,7) -- (0,0) -- (7,0);
\foreach \x in {0,1,2,3,4,5,6}
\draw [dotted] (\x,0) -- (\x,6);
\foreach \x in {0,1,2,3,4,5,6}
\draw [dotted] (0,\x) -- (6,\x);
\foreach \x in {1,2,3,4,5,6}
{\draw (\x,-0.1) -- (\x,0.1) ; \draw (-0.1,\x) -- (0.1,\x) ;};
\foreach \x in {(0.5,1.7),(2.2,4.8),(2.7,4.1),(5.5,0.6)}
\fill [Red] \x circle (3pt);
\draw [Red] (2.2,5.1) node {$W_1$};
\draw [Red] (1,1.7) node {$W_4$};
\draw [Red] (2.7,3.7) node {$W_3$};
\draw [Red] (5.5,0.95) node {$W_2$};
\begin{scope}[shift={(9,0)}]
\draw [<->] (0,7) -- (0,0) -- (7,0);
\foreach \x in {(1.3,2),(2.1,4.7),(2.8,5.4),(4.2,2.9),(5.1,6),(5.9,0.7)}
{\fill [shift={\x},black!15!white] (0,0) rectangle (-0.6,-0.6); \draw \x node {$\times$};}
\draw [dotted] (0,6) -- (6,6) -- (6,0);
\foreach \x in {(1,1.82),(2.32,5.28),(2.62,4.86),(5.6,0.42)}
\fill [NavyBlue] \x circle (3pt);
\draw [NavyBlue] (2.32,5.65) node {$V_1$};
\draw [NavyBlue] (0.55,1.82) node {$V_4$};
\draw [NavyBlue] (2.62,4.45) node {$V_3$};
\draw [NavyBlue] (5.5,0.85) node {$V_2$};
\end{scope}
\end{tikzpicture}
\caption{The random points $W_1,\ldots,W_k$ and $V_1,\ldots,V_k$ have the same configuration.\label{fig:sameconfig}}
\end{figure}
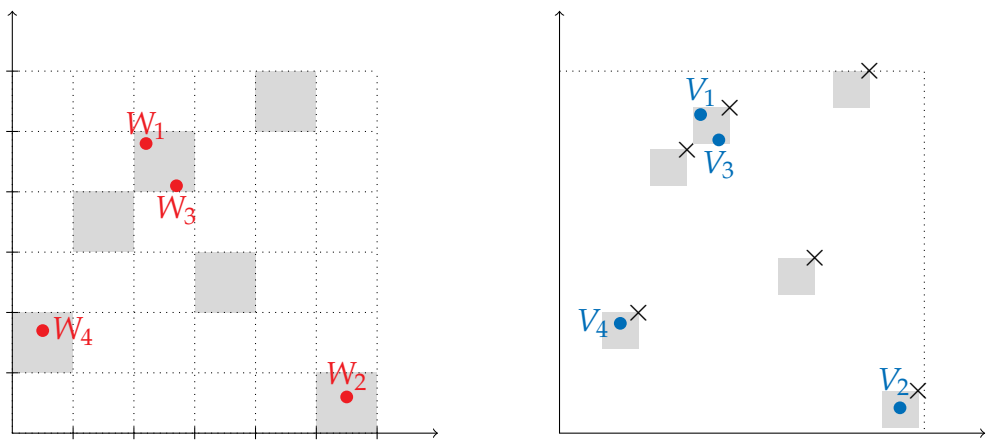
\end{center}
We now have:
\begin{align*}
\widehat{t}(\tau,(z_1,\ldots,z_n)) &= \proba[\conf(W_1,\ldots,W_k)=\tau] = \proba[\conf(V_1,\ldots,V_k)=\tau] \\
&=\sum_{1\leq l_1,\ldots,l_k \leq n}\frac{1}{n^k} \,\,\proba[\conf(z_{l_1}-\eps\,S_{1},\ldots,z_{l_k}-\eps \,S_{k}) =\tau]
\end{align*}
Given points $t_1,\ldots,t_k$ in the square $[0,1]^2$, we define 
$$F_\tau(t_1,\ldots,t_k) = \lim_{\eps \to 0} \left(\proba[\conf(t_1-\eps\,S_{1},\ldots,t_k-\eps \,S_{k}) =\tau]\right),$$
where $(S_1,\ldots,S_k)$ is a set of independent uniform random variables on $[0,1]^2$. This defines a measurable function of $(t_1,\ldots,t_k) \in ([0,1]^2)^k$, and the limit when $\eps$ goes to zero is stationnary for any $k$-tuple of points (this is the same argument of "rescaling of small squares" as before). Since $\widehat{t}(\tau,(z_1,\ldots,z_n)) = \sum_{1\leq l_1,\ldots,l_k \leq n}\frac{1}{n^k} \,\proba[\conf(z_{l_1}-\eps\,S_{1},\ldots,z_{l_k}-\eps \,S_{k}) =\tau]$ for $\eps$ small enough, by taking the limit, one obtains the identity claimed.
\end{proof}
\medskip

Given a family of independent random points $Z_1,\ldots,Z_n$ with law $\pi$, we can now write:
\begin{align*}
\widetilde{S}_n(\tau,\pi) &= n^k\,t(\tau,\pi(\conf(Z_1,\ldots,Z_n))) = n^k\,\widehat{t}(\tau,(Z_1,\ldots,Z_n)) \\
&= \sum_{1\leq l_1,\ldots,l_k\leq n} F_\tau(Z_{l_1},\ldots,Z_{l_n}),
\end{align*}
so if we set $\widetilde{A}_{\tau,(l_1,\ldots,l_k)} = F_\tau(Z_{l_1},\ldots,Z_{l_n})$, then we have an expansion of $\widetilde{S}_n(\tau,\pi)$ with the following property:
\begin{lemma}\label{lem:permutondependencygraph2}
The graph $\widetilde{G}_n$
\begin{itemize}
	\item with vertex set $\widetilde{V}_n=(\lle 1,n\rre)^k$,
	\item and with an edge between $(l_1,\ldots,l_k)$ and $(m_1,\ldots,m_k)$ if $l_a=m_b$ for some indices $a$ and $b$
\end{itemize}
is a dependency graph for the family of random variables $(\widetilde{A}_{\tau,(l_1,\ldots,l_k)})_{(l_1,\ldots,l_k) \in \widetilde{V}_n}$ involved in the expansion of $\widetilde{S}_n(\tau,\pi)$.
\end{lemma}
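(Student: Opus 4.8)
The plan is to mimic verbatim the proofs of Lemmas~\ref{lem:graphondependencygraph} and~\ref{lem:permutondependencygraph}: the only genuinely new ingredient needed is the measurability of the function $F_\tau$, which is already provided by Lemma~\ref{lem:expansionthat}, and which guarantees that each summand $\widetilde{A}_{\tau,(l_1,\ldots,l_k)} = F_\tau(Z_{l_1},\ldots,Z_{l_k})$ depends only on the finitely many random points $Z_i$ whose indices $i$ lie in the multiset $\{l_1,\ldots,l_k\}$.

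First I would fix two disjoint subsets $W_1, W_2 \subset \widetilde{V}_n = (\lle 1,n\rre)^k$ with no edge of $\widetilde{G}_n$ between them, and set
$$U_1 = \bigcup_{(l_1,\ldots,l_k)\in W_1} \{l_1,\ldots,l_k\}, \qquad U_2 = \bigcup_{(m_1,\ldots,m_k)\in W_2} \{m_1,\ldots,m_k\}.$$
By the very definition of the edges of $\widetilde{G}_n$ — an edge between $(l_1,\ldots,l_k)$ and $(m_1,\ldots,m_k)$ whenever $l_a = m_b$ for some $a,b$ — the absence of an edge joining $W_1$ to $W_2$ is exactly the statement $U_1 \cap U_2 = \emptyset$. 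Next, since $F_\tau : ([0,1]^2)^k \to [0,1]$ is measurable (Lemma~\ref{lem:expansionthat}), the vector $(\widetilde{A}_{\tau,(l_1,\ldots,l_k)})_{(l_1,\ldots,l_k)\in W_1}$ is measurable with respect to $\sigma(Z_i : i \in U_1)$, and likewise $(\widetilde{A}_{\tau,(m_1,\ldots,m_k)})_{(m_1,\ldots,m_k)\in W_2}$ is measurable with respect to $\sigma(Z_i : i \in U_2)$. As the random points $Z_1,\ldots,Z_n$ are independent and $U_1 \cap U_2 = \emptyset$, the $\sigma$-algebras $\sigma(Z_i : i \in U_1)$ and $\sigma(Z_i : i \in U_2)$ are independent, hence so are the two random vectors; this is precisely the dependency-graph property.

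I do not expect any real obstacle: the argument is routine once Lemma~\ref{lem:expansionthat} supplies a bona fide measurable function $F_\tau$ of $k$ point-arguments, so that the localisation of $\widetilde{A}_{\tau,(l_1,\ldots,l_k)}$ to the points indexed by $\{l_1,\ldots,l_k\}$ is unambiguous. The only point worth a word of caution is that a tuple $(l_1,\ldots,l_k)$ may have repeated entries, so $\{l_1,\ldots,l_k\}$ is a set of size at most $k$ rather than exactly $k$; this does not affect anything, since measurability with respect to $\sigma(Z_i : i \in U_1)$ only requires that the set of indices actually appearing be contained in $U_1$, which holds by construction. After this lemma I would, as for the graph and permutation-occurrence cases, read off the parameters of $\widetilde{G}_n$ — namely $\widetilde{N}_n = n^k$, $\widetilde{D}_n \le k^2 n^{k-1}$, and $A = 1$ since $0 \le F_\tau \le 1$ — so that the uniform bound on cumulants~\eqref{eq:cumulant1} applies to $\widetilde{S}_n(\tau,\pi)$.
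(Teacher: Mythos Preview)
Your proposal is correct and follows exactly the approach the paper intends: the paper in fact gives no separate proof of this lemma, treating it as immediate after Lemmas~\ref{lem:graphondependencygraph} and~\ref{lem:permutondependencygraph}, and your write-up spells out precisely that implied argument. Your remarks on the possible repetition of indices and on the parameters $\widetilde{N}_n=n^k$, $\widetilde{D}_n\le k^2 n^{k-1}$, $A=1$ also match the paper's one-sentence follow-up to the lemma.
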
 
\noindent One sees at once that the parameters of the dependency graph $\widetilde{G}_n$ in Lemma \ref{lem:permutondependencygraph2} can be taken equal to $\widetilde{D}_{n,k}=k^2\,n^{k-1}$, $\widetilde{N}_{n,k}=n^k$ and $A=1$. Thus, the sequence of random variables $(\widetilde{S}_n(\tau,\pi))_{n\in \N}$ satisfies the hypothesis \eqref{eq:cumulant1} with respect to these parameters.
\bigskip

We now turn to the computation of the limiting variances and third cumulants. It turns out that one obtains the same result in both cases:
\begin{proposition}[Limiting first cumulants of pattern occurrences]
Denote $\obs_{\sym,k}$ the vector space spanned by the permutations of size $k$ in $\obsS$. There exists two linear maps 
\begin{align*}
\kappa_2 &: \obs_{\sym,k} \otimes \obs_{\sym,k} \to \obsS ;\\
\kappa_3 &: \obs_{\sym,k} \otimes \obs_{\sym,k} \otimes \obs_{\sym,k} \to \obsS
\end{align*}
such that, for any permutations $\tau$, $\rho$ and $\mu$ in $\sym(k)$, and for any permuton $\pi$,
\begin{align*}
\lim_{n \to \infty}\frac{\kappa(S_n(\tau,\pi),S_n(\rho,\pi))}{k\binom{n}{k}\binom{n-1}{k-1}} &= \lim_{n \to \infty}\frac{\kappa(\widetilde{S}_n(\tau,\pi),\widetilde{S}_n(\rho,\pi))}{k^2\,n^{2k-1}} =  \kappa_2(\tau,\rho)(\pi);\\
\lim_{n \to \infty} \frac{\kappa(S_n(\tau,\pi),S_n(\rho,\pi),S_n(\mu,\pi))}{k^2\,\binom{n}{k}\binom{n-1}{k-1}^{\!2}} &= \lim_{n \to \infty} \frac{\kappa(\widetilde{S}_n(\tau,\pi),\widetilde{S}_n(\rho,\pi),\widetilde{S}_n(\mu,\pi))}{k^4\,n^{3k-2}} = \kappa_3(\tau,\rho,\mu)(\pi).
\end{align*}
Moreover, in each case, the limit is attained at speed $O(n^{-1})$, with a constant in the $O(\cdot)$ that depends only on $k$.
\end{proposition}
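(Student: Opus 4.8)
The plan is to treat both families $\big(S_n(\tau,\pi)\big)_n$ and $\big(\widetilde S_n(\tau,\pi)\big)_n$ by the exact strategy already used for graphons in Section~\ref{subsec:graphmodgauss}: expand the joint cumulant by multilinearity, use the dependency graph to kill non‑interacting terms, count the surviving terms, and recognise the dominant contribution as the evaluation of an explicit element of $\obsS$. First I would handle $S_n(\tau,\pi)=\sum_{|L|=k}A_{\tau,L}$. Writing $\kappa\big(S_n(\tau,\pi),S_n(\rho,\pi)\big)=\sum_{L,M}\cov(A_{\tau,L},A_{\rho,M})$ and invoking Lemma~\ref{lem:permutondependencygraph}, only pairs with $L\cap M\neq\varnothing$ survive; among these, the pairs with $|L\cup M|\le 2k-2$ number $O(n^{2k-2})$ and contribute $O(n^{2k-2})$, hence are negligible against $k\binom n k\binom{n-1}{k-1}\sim n^{2k-1}/((k-1)!)^{2}$. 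The dominant pairs are therefore those with $|L\cap M|=1$, $|L\cup M|=2k-1$, of which there are $n\binom{n-1}{k-1}\binom{n-k}{k-1}=\big(n^{2k-1}/((k-1)!)^{2}\big)(1+O(n^{-1}))$. For the third cumulant the same pruning leaves the three overlap patterns (a)/(b)/(c) of Section~\ref{subsec:graphmodgauss} (a common point; two distinct double points, up to cyclic symmetry), each a $\Theta(n^{3k-2})$ family, while everything else is $O(n^{3k-3})$, negligible against $k^{2}\binom n k\binom{n-1}{k-1}^{2}\sim k\,n^{3k-2}/((k-1)!)^{3}$.

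Next I would compute the elementary (co)cumulants of the dominant terms. For a pair with $L\cap M=\{j\}$, conditioning on the shared point $Z_j$ makes $A_{\tau,L}$ and $A_{\rho,M}$ conditionally independent (they depend only on the point sets $\{Z_\ell:\ell\in L\}$ and $\{Z_m:m\in M\}$, which overlap only in $Z_j$); moreover $A_{\tau,L}$ depends only on the \emph{set} $\{Z_\ell:\ell\in L\}$ and all the $Z$'s are i.i.d.\ with law $\pi$, so $\esper[A_{\tau,L}A_{\rho,M}]$ is a probability over $2k-1$ i.i.d.\ $\pi$‑points and can be written as a finite sum of pattern densities $t(\nu,\pi)$ of permutations $\nu$ of size $2k-1$. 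Organising this sum according to the horizontal (and vertical) rank of the shared point inside each of the two diagrams is exactly the graphical‑shuffle ``junction'' operation, so that $\esper[A_{\tau,L}A_{\rho,M}]=\Psi\big(\tfrac1{k^{2}}\sum_{1\le a,b\le k}(\tau\join\rho)(a,b)\big)(\pi)$, where $(\tau\join\rho)(a,b)\in\obsS$ glues the $a$‑th point of $\tau$'s diagram to the $b$‑th of $\rho$'s and shuffles the remaining points with graphical‑shuffle weights. Together with $\esper[A_{\tau,L}]\,\esper[A_{\rho,M}]=t(\tau,\pi)t(\rho,\pi)=\Psi(\tau\times\rho)(\pi)$ (Equation~\eqref{eq:alg_morph_perm}), this gives
\[
\cov(A_{\tau,L},A_{\rho,M})=\Psi\Big(\tfrac1{k^{2}}\textstyle\sum_{1\le a,b\le k}\big((\tau\join\rho)(a,b)-\tau\times\rho\big)\Big)(\pi),
\]
and multiplying by the number of dominant pairs and dividing by $k\binom n k\binom{n-1}{k-1}$ yields $\lim_n=\kappa_2(\tau,\rho)(\pi)$ with $\kappa_2(\tau,\rho):=\tfrac1{k^{2}}\sum_{a,b}\big((\tau\join\rho)(a,b)-\tau\times\rho\big)$, a bilinear expression in $\tau,\rho$. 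The third cumulant is handled identically, conditioning on the shared point(s) of each overlap pattern; the outcome is a formula for $\kappa_3(\tau,\rho,\mu)$ of exactly the same shape as the graphon formula, with junctions $(\tau\join\rho\join\mu)(a,b,c)$, $(\tau\join\rho\join\mu)(a,b;c,d)$ and graphical shuffles $\times$ in place of the graph junctions and disjoint unions.

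For the tilded family $\widetilde S_n(\tau,\pi)=\sum_{(\ell_1,\dots,\ell_k)}\widetilde A_{\tau,(\ell_1,\dots,\ell_k)}$ I would run the identical argument with the dependency graph of Lemma~\ref{lem:permutondependencygraph2}, noting two things. First, a $k$‑tuple with a repeated entry takes at most $k-1$ distinct values, so the terms involving such a tuple number $O(n^{2k-2})$ (resp.\ $O(n^{3k-3})$) and are negligible. Second, for a tuple with distinct entries $\widetilde A_{\tau,(\ell_1,\dots,\ell_k)}=\mathbf 1_{\conf(\{Z_{\ell_i}\})=\tau}$, i.e.\ exactly the set‑function appearing in $S_n$; hence the dominant elementary covariances and third cumulants are the \emph{same numbers} as for $S_n$, and since the dominant‑term counts divided by the normalisations $k^{2}n^{2k-1}$ and $k^{4}n^{3k-2}$ both tend to $1$ (a one‑line Stirling check using $(k!)^{2}/((k-1)!)^{2}=k^{2}$ and $(k!)^{3}/((k-1)!)^{3}=k^{3}$), the tilded normalised cumulants converge to the \emph{same} limits $\kappa_2(\tau,\rho)(\pi)$ and $\kappa_3(\tau,\rho,\mu)(\pi)$. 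Every approximation made above (pruned terms, Stirling expansions of the binomial counts) carries a relative error $O(n^{-1})$ with a constant depending only on $k$, which gives the announced speed.

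The step I expect to be the main obstacle is the identification of the elementary (co)cumulants with evaluations of $\obsS$‑elements, i.e.\ making the junction operations $(\tau\join\rho)(a,b)$, $(\tau\join\rho\join\mu)(a,b,c)$, $(\tau\join\rho\join\mu)(a,b;c,d)$ precise and checking that the conditioning computation reproduces exactly the graphical‑shuffle coefficients. Unlike the graphon case, identifying one point of $\tau$'s diagram with one point of $\rho$'s does \emph{not} determine a single permutation: the relative horizontal orders of the non‑pivot points of $\tau$ versus those of $\rho$, and likewise their relative vertical orders, are unconstrained, so the junction is a genuine linear combination in $\obsS$ weighted by shuffle coefficients, and one must verify that the weights coming from conditioning on the shared point(s) — and from the fact that the pivot carries \emph{both} a prescribed horizontal and a prescribed vertical rank in each diagram — match those of the graphical shuffle product. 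This is precisely the bookkeeping for which the algebra $\obsS$ was introduced, so once it is set up the computation proceeds exactly as in the covariance case; the remaining steps (pruning, counting, the $O(n^{-1})$ rate, and the identification of the two limits) are routine and parallel to Section~\ref{subsec:graphmodgauss}.
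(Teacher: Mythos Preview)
Your proposal is correct and follows essentially the same route as the paper. The paper carries out exactly the pruning you describe, introduces explicitly the multiset $(\tau\boxtimes\rho)(a,b)$ of permutations of size $2k-1$ obtained by gluing the $a$-th point of $\tau$ to the $b$-th of $\rho$, and sets $(\tau\join\rho)(a,b)=\tfrac{(k!)^2}{(2k-1)!}\sum_{\sigma\in(\tau\boxtimes\rho)(a,b)}\sigma$; the one technical point you flag as the main obstacle is handled in the paper not by conditioning on $Z_j$ but by observing that for i.i.d.\ $\pi$-points $Z_1,\dots,Z_{2k-1}$, the relabelling map $\psi$ (which sends the horizontal ranks of the $Z$'s to $\lle 1,2k-1\rre$) is independent of the configuration $\conf(Z_1,\dots,Z_{2k-1})$ --- this immediately produces the factor $\tfrac{((k-1)!)^2}{(2k-1)!}$ and identifies the weights with those of the graphical shuffle, exactly the bookkeeping you anticipated.
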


\begin{proof}
As before, $(Z_n)_{n \in \N}$ will be a sequence of independent random points in $[0,1]^2$ with law $\pi$. As in the case of graphs, we can expand the covariance $\cov(S_n(\tau,\pi),S_n(\rho,\pi))$ by bilinearity:
$$\cov(S_n(\tau,\pi),S_n(\rho,\pi))=\sum_{|L|=|M|=k} \cov(A_{\tau,L},A_{\rho,M}),$$
the sum running over pairs of subsets with size $k$ in $\lle 1,n\rre$. Each variable $A_{\tau,L}$ is equal to $1_{\conf(\{Z_l,\,\,l \in L\})=\tau}$; and similarly for the variables $A_{\rho,M}$. We can restrict the sum to pairs of subsets that intersect, since otherwise the covariance vanishes. On the other hand, the pairs $(L,M)$ where $|L\cap M| \geq 2$ yield a total contribution which is a $O(n^{2k-2})$, and which becomes a $O(n^{-1})$ when divided by $k\binom{n}{k}\binom{n-1}{k-1}$. Hence, we can restrict our attention to pairs $(L,M)$ where $|L \cap M| = 1$:
$$\frac{\kappa(S_n(\tau,\pi),S_n(\rho,\pi))}{k\binom{n}{k}\binom{n-1}{k-1}}  = \frac{1}{k\binom{n}{k}\binom{n-1}{k-1}} \sum_{\substack{|L|=|M|=k \\ |L \cap M|=1}} \cov(A_{\tau,L},A_{\rho,M})  + O(n^{-1}).$$
The remaining covariances can be computed by using the \emph{amalgamated graphical shuffle products} $(\tau \join \rho)(a,b)$ of the permutations $\tau$ and $\rho$. Fix two integers $a,b \in \lle 1,k\rre$. The set $(\tau \boxtimes \rho)(a,b)$ is the set of permutations $\sigma$ of size $2k-1$, such that there exists a partition of $\lle 1,2k-1\rre$ in three parts
\begin{align*}
I_- &= \{i_1<i_2<\cdots <i_k\} \setminus\{i_a\};\\
J_- &=\{j_1<j_2<\cdots <j_k\}\setminus\{j_b\};\\
K &=\{i_a=j_b\}
\end{align*}
with $I = I_- \sqcup K$ that makes appear $\tau$ as a pattern of $\sigma$, and $J = J_- \sqcup K$ that makes appear $\rho$ as a pattern of $\sigma$. Notice that there can exist several partitions $\lle 1,2k-1\rre = I_- \sqcup J_- \sqcup K$ with this property for $\sigma$; in this case, we count $\sigma$ several times. In other words, $(\tau \boxtimes \rho)(a,b)$ is defined as a multiset. For instance, if $k=3$, $\tau=312$, $\rho=132$, $a=2$ and $b=3$, then there are $9$ permutations in $(\tau \boxtimes \rho)(a,b)$, given by the diagrams of Figure \ref{fig:amalgamatedproduct}.

\begin{center}		
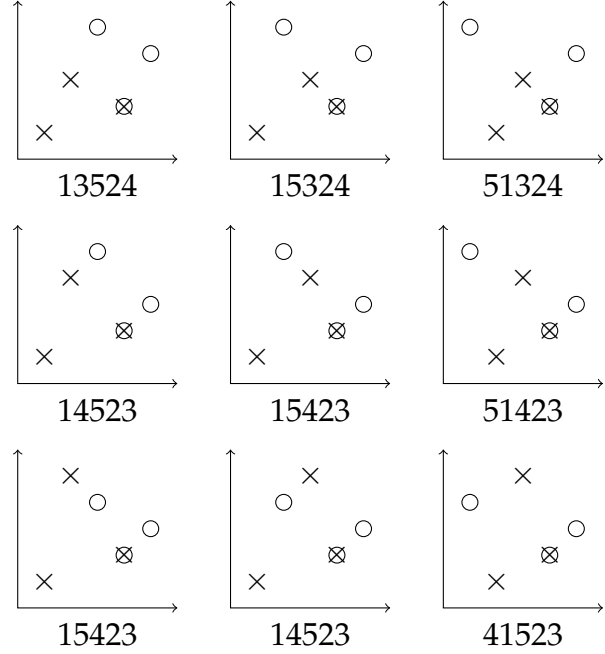
\begin{figure}[ht]
\begin{tikzpicture}[scale=0.35]
\draw [<->] (6,0) -- (0,0) -- (0,6);
\foreach \x in {(3,5),(5,4),(4,2)}
\draw \x circle (3mm);
\foreach \x in {(1,1),(2,3),(4,2)}
\draw \x node {$\times$};
\draw (3,-1) node {$13524$};
\begin{scope}[shift={(8,0)}]
\draw [<->] (6,0) -- (0,0) -- (0,6);
\foreach \x in {(2,5),(5,4),(4,2)}
\draw \x circle (3mm);
\foreach \x in {(1,1),(3,3),(4,2)}
\draw \x node {$\times$};
\draw (3,-1) node {$15324$};
\end{scope}
\begin{scope}[shift={(16,0)}]
\draw [<->] (6,0) -- (0,0) -- (0,6);
\foreach \x in {(1,5),(5,4),(4,2)}
\draw \x circle (3mm);
\foreach \x in {(2,1),(3,3),(4,2)}
\draw \x node {$\times$};
\draw (3,-1) node {$51324$};
\end{scope}
\begin{scope}[shift={(0,-8.5)}]
\draw [<->] (6,0) -- (0,0) -- (0,6);
\foreach \x in {(3,5),(5,3),(4,2)}
\draw \x circle (3mm);
\foreach \x in {(1,1),(2,4),(4,2)}
\draw \x node {$\times$};
\draw (3,-1) node {$14523$};
\end{scope}
\begin{scope}[shift={(8,-8.5)}]
\draw [<->] (6,0) -- (0,0) -- (0,6);
\foreach \x in {(2,5),(5,3),(4,2)}
\draw \x circle (3mm);
\foreach \x in {(1,1),(3,4),(4,2)}
\draw \x node {$\times$};
\draw (3,-1) node {$15423$};
\end{scope}
\begin{scope}[shift={(16,-8.5)}]
\draw [<->] (6,0) -- (0,0) -- (0,6);
\foreach \x in {(1,5),(5,3),(4,2)}
\draw \x circle (3mm);
\foreach \x in {(2,1),(3,4),(4,2)}
\draw \x node {$\times$};
\draw (3,-1) node {$51423$};
\end{scope}
\begin{scope}[shift={(0,-17)}]
\draw [<->] (6,0) -- (0,0) -- (0,6);
\foreach \x in {(3,4),(5,3),(4,2)}
\draw \x circle (3mm);
\foreach \x in {(1,1),(2,5),(4,2)}
\draw \x node {$\times$};
\draw (3,-1) node {$15423$};
\end{scope}
\begin{scope}[shift={(8,-17)}]
\draw [<->] (6,0) -- (0,0) -- (0,6);
\foreach \x in {(2,4),(5,3),(4,2)}
\draw \x circle (3mm);
\foreach \x in {(1,1),(3,5),(4,2)}
\draw \x node {$\times$};
\draw (3,-1) node {$14523$};
\end{scope}
\begin{scope}[shift={(16,-17)}]
\draw [<->] (6,0) -- (0,0) -- (0,6);
\foreach \x in {(1,4),(5,3),(4,2)}
\draw \x circle (3mm);
\foreach \x in {(2,1),(3,5),(4,2)}
\draw \x node {$\times$};
\draw (3,-1) node {$41523$};
\end{scope}
\end{tikzpicture}
\caption{The permutations involved in the amalgamated graphical shuffle product of $\tau=312$ and $\rho=132$ at $(a,b)=(2,3)$.}\label{fig:amalgamatedproduct}
\end{figure}
\end{center}
We now define the amalgamated graphical shuffle product of $\tau$ and $\rho$ at the points $a$ and $b$ by the following operation in the algebra of permutations $\obs$:
$$(\tau \join \rho)(a,b) = \frac{(k!)^2}{(2k-1)!} \sum_{\sigma \in (\rho \boxtimes \tau)(a,b)} \sigma.$$
The combinatorial factor $\frac{(k!)^2}{(2k-1)!}$ is akin to the combinatorial factor $\frac{k_1!\, k_2!}{k!}$ involved in the usual graphical shuffle product. \bigskip

Let $L$ and $M$ be two subsets of $\lle 1,n\rre$ which meet at one point and which have cardinality $k$. We have
\begin{align*}
\cov(A_{\tau,L},A_{\rho,M}) &= \proba[\conf(\{Z_l,\,\,l\in L\})=\tau \text{ and }\conf(\{Z_m,\,\,m\in M\})=\rho] \\
&\quad- \proba[\conf(\{Z_l,\,\,l\in L\})=\tau]\,\,\proba[\conf(\{Z_m,\,\,m\in M\})=\rho],
\end{align*}
and since the $Z_n$'s are independent and with same law, this quantity does not depend on the subsets $L$ and $M$. It is for instance equal to 
$$\proba[\conf(Z_1,Z_2,\ldots,Z_k)=\tau \text{ and }\conf(Z_k,Z_{k+1},\ldots,Z_{2k-1})=\rho] - t(\tau,\pi)\,t(\rho,\pi).$$
Let us then analyse the event $E=\{\conf(Z_{1},\ldots,Z_{k})=\tau \text{ and }\conf(Z_{k},\ldots,Z_{2k-1})=\rho\}$.
If one writes the multiset $(\tau\boxtimes \rho)(a,b)$ as a set of pairs $(\sigma,I_-\sqcup J_- \sqcup K)$, and if $\psi$ is the unique increasing bijection $\{X_1,\ldots,X_{2k-1}\} \to \lle 1,2k-1\rre$, then
$$E = \bigsqcup_{\substack{1\leq a,b\leq k \\ (\sigma,I_-\sqcup J_- \sqcup K) \in (\tau\boxtimes \rho)(a,b)}} \left\{\substack{\conf(Z_1,\ldots,Z_{2k-1})=\sigma \\ \psi(\{X_1,\ldots,X_{k-1}\}\sqcup \{X_k\} \sqcup \{X_{k+1},\ldots,X_{2k-1}\})=I_-\sqcup K \sqcup J_-}\right\}.$$
However, for any sequence $Z_1,\ldots,Z_{2k-1}$ of independent points with law $\pi$, 
the relabeling map $\psi$ is independent from the configuration $\sigma$ of the points $Z_1,\ldots,Z_{2k-1}$, therefore,
\begin{align*}
\proba[E]&= \sum_{\substack{1\leq a,b\leq k \\ (\sigma,I_-\sqcup J_- \sqcup K)}}  t(\sigma,\pi)\,\proba[\psi(\{X_1,\ldots,X_{k-1}\}\sqcup \{X_k\} \sqcup \{X_{k+1},\ldots,X_{2k-1}\}) = I_-\sqcup K \sqcup J_-] \\
&= \frac{((k-1)!)^2}{(2k-1)!}\sum_{\substack{1\leq a,b\leq k \\ (\sigma, I_-\sqcup J_- \sqcup K) }} t(\sigma,\pi) = \frac{1}{k^2} \sum_{1\leq a,b\leq k} ((\tau\join \rho)(a,b))(\pi).
\end{align*}
We conclude that
$$\frac{\kappa(S_n(\tau,\pi),S_n(\rho,\pi))}{k\binom{n}{k}\binom{n-1}{k-1}}  = \frac{1}{k^2} \sum_{1\leq a,b\leq k} (((\tau\join \rho)(a,b))(\pi) - t(\tau,\pi)\,t(\rho,\pi))+ O(n^{-1}).$$
This proves the existence of a map $\kappa_2$, as in the Lemma, where $\kappa_2$ is defined by
$$\kappa_2(\tau,\rho) = \frac{1}{k^2} \sum_{1\leq a,b\leq b} ((\tau \join \rho)(a,b) - \tau \times \rho).$$
For the estimation of the covariances of the quantities $\widetilde{S}_n$, we can write:
\begin{align*}
\cov(\widetilde{S}_n(\tau,\pi),\widetilde{S}_n(\rho,\pi)) &= \sum_{\substack{(l_1,\ldots,l_k) \in \lle 1,n\rre^k \\ (m_1,\ldots,m_k) \in \lle 1,n\rre^k}} \cov(\widetilde{A}_{\tau,(l_1,\ldots,l_k)},\widetilde{A}_{\rho,(m_1,\ldots,m_k)}) \\
&= \sum_{1\leq a,b\leq k^2}\sum_{\substack{(l_1\neq \cdots \neq l_k) \in \lle 1,n\rre^k \\ (m_1 \neq \cdots \neq m_k) \in \lle 1,n\rre^k \\ l_a=m_b}} \cov(\widetilde{A}_{\tau,(l_1,\ldots,l_k)},\widetilde{A}_{\rho,(m_1,\ldots,m_k)}) +O(n^{2k-2})
\end{align*}
where on the second line the sum is over pairs of $k$-arrangements with only one equality of indices $l_a=m_b$. However, by a previous remark, if $(l_1\neq \cdots \neq l_k)$ is an arrangement, then
$$\widetilde{A}_{\tau,(l_1,\ldots,l_k)} = F_{\tau}(Z_{l_1},\ldots,Z_{l_k}) = 1_{\mathrm{conf}(Z_{l_1},\ldots,Z_{l_k})=\tau} = A_{\tau,\{l_1,\ldots,l_k\}},$$
because in this case $(Z_{l_1},\ldots,Z_{l_k})$ is in a general configuration with probability $1$. This implies immediately that $\frac{\cov(\widetilde{S}_n(\tau,\pi),\widetilde{S}_n(\rho,\pi))}{k^2\,n^{2k-1}}$ has the same asymptotics as $\frac{\kappa(S_n(\tau,\pi),S_n(\rho,\pi))}{k\binom{n}{k}\binom{n-1}{k-1}}$.\bigskip

The calculation of the asymptotics of the third cumulants follows the same lines. In order to evaluate
$$ \frac{1}{k^2\,\binom{n}{k}\,\binom{n}{k-1}^2}\, \kappa\!\left(\widetilde{S}_n(\tau,\pi),\widetilde{S}_n(\rho,\pi),\widetilde{S}_n(\mu,\pi)\right) ,$$ we need to compute $\kappa(A_{\tau,L},A_{\rho,M},A_{\mu,N})$
when $L=\{l_1<l_2<\cdots<l_k\}$, $M=\{m_1<\cdots<m_k\}$ and $N=\{n_1<\cdots<n_k\}$ are three $k$-subsets of $\lle 1,n\rre$ that meet one of the following conditions:
\begin{enumerate}[label=(\alph*)]
	\item either $L \cap M = M \cap N = L \cap N = \{l_a=m_b=n_c\}$ and there are no other equality of indices;
	\item or, $L \cap M = \{l_a=m_b\}$ and $M\cap N = \{m_c=n_d\}$ with $b \neq c$, and there are no other equality of indices;
	\item or, one has the same configuration as (b), up to a cyclic permutation of the roles played by $L,M,N$.
\end{enumerate}
In the first case, the cumulant $\kappa(A_{\tau,L},A_{\rho,M},A_{\mu,N})$ does not depend on $L,M,N$, and it is equal to
\begin{align*}
&\proba[\conf(Z_0,Z_1,\ldots,Z_{k-1}) = \tau,\,\,\conf(Z_0,Z_k,\ldots,Z_{2k-2}) = \rho,\,\,
\conf(Z_0,Z_{2k-1},\ldots,Z_{3k-3}) = \mu]\\
&-\frac{1}{k^3}\,\sum_{1\leq a,b,c \leq k} \left((\tau \join \rho)(a,b)(\pi)\,\mu(\pi) + (\rho\join \mu)(b,c)(\pi)\,\tau(\pi) + (\tau \join \mu)(a,c)(\pi)\,\rho(\pi)\right)\\
&+ 2\,\tau(\pi)\,\rho(\pi)\,\mu(\pi).
\end{align*}
In this formula, we have written $\sigma(\pi)$ for the evaluation of a pattern density, instead of $t(\sigma,\pi)$. To compute the first term $\proba[E]$, we introduce the multiset $(\tau \boxtimes \rho \boxtimes \mu)(a,b,c)$, which is the set of permutations $\sigma \in \sym(3k-2)$ such that there exists a partition of $\lle 1,3k-2\rre$ in four parts
\begin{align*}
H_- &= \{h_1<h_2<\cdots <h_k\}\setminus\{h_a\};\\
I_- &=\{i_1<i_2<\cdots <i_k\}\setminus\{i_b\};\\
J_- &=\{j_1<j_2<\cdots <j_k\}\setminus\{j_c\};\\
K &=\{h_a=i_b=j_c\}
\end{align*}
with $H=H_- \sqcup K$ that makes appear $\tau$ as a pattern of $\sigma$, and similarly for $(I_-\sqcup K,\rho)$ and $(J_-\sqcup K,\mu)$. Then, if
$$(\tau \join \rho \join \mu)(a,b,c) = \frac{(k!)^3}{(3k-2)!}\sum_{\sigma \in (\tau \boxtimes \rho \boxtimes \mu)(a,b,c)} \sigma, $$
the same arguments as for the computation of the limiting covariance yields
$$\proba[E] = \frac{1}{k^3}\sum_{1\leq a,b,c\leq k} ((\tau \join \rho \join \mu)(a,b,c))(\pi).$$
On the other hand, there are $$\binom{n}{3k-2}\,\frac{(3k-2)!}{((k-1)!)^3}$$ parts $A$, $B$, $C$ that meet the condition (a). Hence, the contribution to the limit of $\widetilde{d}_n$ of the parts $A,B,C$ satisfying (a) is given by evaluating on the permuton $\pi$ the observable
$$\frac{1}{k^4} \sum_{1\leq a,b,c\leq k} \left(\substack{(\tau \join \rho \join \mu)(a,b,c) + 2\,\tau\times\rho\times\mu - (\tau \join \rho)(a,b)\times\mu \\ - (\rho\join \mu)(b,c)\times \tau - (\tau \join \mu)(a,c)\times \rho}\right) .$$
To treat the other cases (b) or (c), we introduce the multiset $(\tau \boxtimes \rho \boxtimes \mu)(a,b;c,d)$, which is the set of permutations $\sigma \in \sym(3k-2)$  such that there exists a partition of $\lle 1,3k-2\rre$ in five parts
\begin{align*}
H_- &= \{h_1<h_2<\cdots <h_k\}\setminus \{h_a\};\\
I_- &=\{i_1<i_2<\cdots <i_k\}\setminus \{i_b,i_c\};\\
J_- &=\{j_1<j_2<\cdots <j_k\} \setminus \{j_d\};\\
K &=\{h_a=i_b\};\\
L &= \{i_c=j_d\}
\end{align*}
with $H=H_- \sqcup K$ that makes appear $\tau$ as a pattern of $\sigma$, and similarly for the two pairs $(I_-\sqcup K \sqcup L,\rho)$ and $(J_-\sqcup L,\mu)$. As usual, we count a permutation $\sigma$ several times if several partitions satisfy these conditions. If we define 
$$(\tau \join \rho \join \mu)(a,b;c,d) = \frac{(k!)^3}{(3k-2)!}\sum_{\sigma \in (\tau \boxtimes \rho \boxtimes \mu)(a,b;c,d)} \sigma, $$
then for any subsets $L,M,N$ that satisfy (b), the cumulant $\kappa(A_{\tau,L},A_{\rho,M},E_{\mu,N})$ is obtained by evaluating the observable
$$
\frac{1}{k^3(k-1)}\sum_{1\leq a,b\neq c,d\leq k}  \!\!\!(\tau \join \rho \join \mu)(a,b;c,d) + \tau\times\rho\times\mu - (\tau \join \rho)(a,b)\times\mu - (\rho \join \mu)(c,d)\times \tau
$$
on $\pi$. Since the number of parts $L,M,N$ which satisfy (b) is $$\binom{n}{3k-2}\,\frac{(3k-2)!}{((k-1)!)^2\,(k-2)!},$$
 we conclude that the contribution of the case (b) to the limit of the rescaled joint cumulant of $S_n(\tau,\pi)$, $S_n(\rho,\pi)$ and $S_n(\mu,\pi)$ is given by the observable
$$
\frac{1}{k^4}\sum_{1\leq a,b\neq c,d\leq k}  \!\!\!(\tau \join \rho \join \mu)(a,b;c,d) + \tau\times\rho\times\mu - (\tau \join \rho)(a,b)\times \mu - (\rho \join \mu)(c,d)\times\tau.
$$
Hence, we conclude that if $\kappa_3$ is the linear map $(\obs_{\sym,k})^{\otimes 3} \to \obs_{\sym,3k-2}$ defined by
\begin{align*}
\kappa_3(\tau,\rho,\mu) &= \frac{1}{k^4}\sum_{1\leq a,b,c\leq k} \left(\substack{(\tau \join \rho\join \mu)(a,b,c) + 2\,\tau \times \rho \times \mu - (\tau \join \rho)(a,b) \times \mu \\ - (\rho \join \mu)(b,c) \times \tau - (\tau \join \mu)(a,c) \times \rho}\right) \\
&\quad +\frac{1}{k^4}\sum_{\Z/3\Z}\sum_{1 \leq a,b\neq c,d\leq k} \left(\substack{(\tau \join \rho\join \mu)(a,b;c,d) + \tau \times \rho \times \mu \\ - (\tau \join \rho)(a,b) \times \mu - (\rho \join \mu)(c,d) \times \tau}\right)
\end{align*}
with the sum over $\Z/3\Z$ meaning that we permute cyclically the roles played by $\tau$, $\rho$ and $\mu$, then
$$
\frac{\kappa(S_n(\tau,\pi),S_n(\rho,\pi),S_n(\mu,\pi))}{k^2\binom{n}{k}\binom{n-1}{k-1}^2} = \kappa_3(\tau,\rho,\mu)(\gamma)+ O(n^{-1}).
$$
Finally, the asymptotics of the rescaled joint cumulant $\frac{\kappa(\widetilde{S}_n(\tau,\pi),\widetilde{S}_n(\rho,\pi),\widetilde{S}_n(\mu,\pi))}{k^4\,n^{3k-2}}$ are the same, for the same reason as in the case of covariances: when summing over $k$-tuples $(l_1,\ldots,l_k)$, a random variable $\widetilde{A}_{\tau,(l_1,\ldots,l_k)}$ is equal to $A_{\tau,\{l_1,\ldots,l_k\}}$ as soon as $(l_1,\ldots,l_k)$ is an arrangement, and this case is the main contribution when computing the joint cumulant of order $3$.
\end{proof}

\begin{remark}[Cardinality of an amalgamated graphical shuffle product]
Let us give a general formula for the cardinality of the multiset $(\tau \boxtimes \rho)(a,b)$ involved in the definition of $(\tau\join \rho)(a,b)$ and in the computation of the limiting covariances. Knowing $a$ and $b$, in order to construct $\sigma$ in $(\tau \boxtimes \rho)(a,b)$, we first need to decide which indices among those smaller than $i_a=j_b$ will fall in $I_-$, and which indices among those larger than $i_a=j_b$ will fall in $I_-$. If one draws the diagram of $\tau$ with circles and the diagram of $\rho$ with crosses as in Figure \ref{fig:amalgamatedproduct}, then this amounts to choose the \emph{horizontal} positions of the circles and crosses. These choices determine the partition $I_- \sqcup J_- \sqcup K$ of $\lle 1,2k-1\rre$, and there are
$$\binom{a+b-2}{a-1}\binom{2k-a-b}{k-a}$$
possibilities. We then also have to choose the \emph{vertical} positions of the circles and crosses, and this enumeration is the same as before, but with $a$ and $b$ replaced by $\tau(a)$ and $\rho(b)$. Hence,
$$\big|(\tau \boxtimes \rho)(a,b)\big| = \binom{a+b-2}{a-1}\binom{2k-a-b}{k-a}\binom{\tau(a)+\rho(b)-2}{\tau(a)-1}\binom{2k-\tau(a)-\rho(b)}{k-\tau(a)}.$$
\end{remark}
\bigskip

We have thus checked all the hypotheses for the following theorem:
\begin{theorem}[Mod-Gaussian convergence of permutons]\label{thm:modpermuton}
 Let $\pi \in \Scal$ be a permuton, and $\tau$ be a finite permutation in $\sym(k)$.
\begin{enumerate}
	\item The random variable $S_n(\tau,\pi) = \mathrm{occ}(\tau,\sigma_n(\pi))=\binom{n}{k}\,t(\tau,\sigma_n(\pi))$ satisfies the hypotheses of the method of cumulants with parameters $D_n=k\,\binom{n-1}{k-1}$, $N_n=\binom{n}{k}$ and $A=1$, and with limits $\sigma^2 = \kappa_2(\tau,\tau)(\pi)$ and $L = \kappa_3(\tau,\tau,\tau)(\pi)$. The random variable $\widetilde{S}_n(\tau,\pi)=n^k\,t(\tau,\Pi_n(\pi))$ satisfies the hypotheses of the method of cumulants with parameters $\widetilde{D}_n=n^2\,n^{k-1}$, $\widetilde{N}_n=n^k$ and $A=1$, and with the same limits as for $S_n(\tau,\pi)$.

	\item If $\kappa_2(\tau,\tau)(\pi)>0$, then the random variables
	\begin{align*}
	Y_n(\tau,\pi) &= \frac{t(\tau,\sigma_n(\pi))-t(\tau,\pi)}{\sqrt{\var(t(\tau,\sigma_n(\pi)))}};\\
	\widetilde{Y}_n(\tau,\pi) &= \frac{t(\tau,\Pi_n(\pi))-\esper[t(\tau,\Pi_n(\pi))]}{\sqrt{\var(t(\tau,\Pi_n(\pi)))}}
	\end{align*}
	satisfy all the limiting results from Theorem \ref{thm:cumulantestimates}.

	\item We have the concentration inequalities
	\begin{align*}
	\proba[|t(\tau,\sigma_n(\pi))-t(\tau,\pi)| \geq x] &\leq 2\,\exp\left(-\frac{nx^2}{9k^2}\right);\\
	\proba[|t(\tau,\Pi_n(\pi))-\esper[t(\tau,\Pi_n(\pi))]| \geq x] &\leq 2\,\exp\left(-\frac{nx^2}{9k^2}\right).
	\end{align*}
\end{enumerate}
\end{theorem}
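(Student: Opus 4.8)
The plan is to assemble the ingredients already assembled above. For part (1), I would first invoke the theorem on uniform bounds on cumulants from a dependency graph: Lemma~\ref{lem:permutondependencygraph} (respectively Lemma~\ref{lem:permutondependencygraph2}) exhibits a dependency graph for the expansion of $S_n(\tau,\pi)$ (resp. $\widetilde S_n(\tau,\pi)$) into summands bounded by $1$, with $|V|$ and $1+\max_v\deg v$ equal to $(N_n,D_n)=(\binom{n}{k},\,k\binom{n-1}{k-1})$ (resp. $(\widetilde N_n,\widetilde D_n)=(n^k,\,k^2 n^{k-1})$) and $A=1$; plugging these parameters into that theorem yields \eqref{eq:cumulant1}. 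The sparsity requirement $D_n/N_n\to 0$ of Definition~\ref{def:cumulantmethod} holds because $D_n/N_n=k^2/n$ in both cases. Then I would read \eqref{eq:cumulant2} and \eqref{eq:cumulant3} off the Proposition on limiting first cumulants of pattern occurrences: it gives $\kappa^{(2)}(S_n(\tau,\pi))/(N_nD_n)=\kappa_2(\tau,\tau)(\pi)+O(n^{-1})$ and $\kappa^{(3)}(S_n(\tau,\pi))/(N_nD_n^2)=\kappa_3(\tau,\tau,\tau)(\pi)+O(n^{-1})$ — the denominators $k\binom{n}{k}\binom{n-1}{k-1}$ and $k^2\binom{n}{k}\binom{n-1}{k-1}^2$ appearing there being exactly $N_nD_n$ and $N_nD_n^2$ — and the analogous statements with tildes. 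Since $D_n/N_n=k^2/n$, the error $O(n^{-1})$ is in particular an $o((D_n/N_n)^{1/3})$, so \eqref{eq:cumulant2} holds with $\sigma^2=\kappa_2(\tau,\tau)(\pi)$ and \eqref{eq:cumulant3} with $L=\kappa_3(\tau,\tau,\tau)(\pi)$; and $S_n$ and $\widetilde S_n$ share the same limits $(\sigma^2,L)$ because the proposition shows the two rescaled cumulants have the same limit.

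Given part (1), part (2) is immediate: when $\sigma^2=\kappa_2(\tau,\tau)(\pi)>0$, Theorem~\ref{thm:cumulantestimates} applies verbatim to both $S_n(\tau,\pi)$ and $\widetilde S_n(\tau,\pi)$. Here $Y_n(\tau,\pi)$ and $\widetilde Y_n(\tau,\pi)$ are precisely the variables $Y_n$ of that theorem, noting that $\esper[S_n(\tau,\pi)]=\binom{n}{k}\,t(\tau,\pi)$ since each indicator $A_{\tau,L}$ has mean $t(\tau,\pi)$, so $Y_n(\tau,\pi)=(t(\tau,\sigma_n(\pi))-t(\tau,\pi))/\sqrt{\var(t(\tau,\sigma_n(\pi)))}$ is indeed centered; all the listed estimates then follow, and this is also what yields the constants $l(\tau,\pi)=\tfrac{k}{6}\,\kappa_3(\tau,\tau,\tau)(\pi)/(\kappa_2(\tau,\tau)(\pi))^{3/2}$ and the universal $O(\cdot)$ of Theorems~\ref{thm:speedofconvergencepermuton} and~\ref{thm:moderatedeviationpermuton}, via $\lim_n n\,\var(t(\tau,\sigma_n(\pi)))=k^2\kappa_2(\tau,\tau)(\pi)$. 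For part (3), I would apply Proposition~\ref{prop:concentration1} to $S_n=S_n(\tau,\pi)$ with the parameters $(D_n,N_n,A)$ of part (1); the hypothesis $|S_n|\leq N_nA$ holds because $S_n$ is a sum of $\binom{n}{k}$ indicators, whence
$$\proba[|S_n-\esper[S_n]|\geq u]\leq 2\exp\left(-\frac{u^2}{9\,D_nN_nA^2}\right).$$
Taking $u=\binom{n}{k}\,x$ and using $\binom{n}{k}^2/(D_nN_n)=\binom{n}{k}/(k\binom{n-1}{k-1})=n/k^2$ gives $\proba[|t(\tau,\sigma_n(\pi))-t(\tau,\pi)|\geq x]\leq 2\exp(-nx^2/(9k^2))$; the identical computation with $\widetilde S_n$, the bound $|\widetilde S_n|\leq n^k$, and $n^{2k}/(\widetilde D_n\widetilde N_n)=n/k^2$ produces the corresponding inequality for $t(\tau,\Pi_n(\pi))$.

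The only genuine points of care are bookkeeping ones. First one must match the normalisations, recognising the rescalings $k\binom{n}{k}\binom{n-1}{k-1}$ and $k^2\binom{n}{k}\binom{n-1}{k-1}^2$ of the limiting-cumulant proposition as the $N_nD_n$ and $N_nD_n^2$ demanded by Definition~\ref{def:cumulantmethod}, and checking that the refined error $o((D_n/N_n)^{1/3})$ in \eqref{eq:cumulant2} follows from the $O(n^{-1})$ actually available (this is where $D_n/N_n=k^2/n$ is used). Second, one must carry the two distinct scalings of $S_n(\tau,\pi)$ and $\widetilde S_n(\tau,\pi)$ through and verify that they still land on the same pair $(\sigma^2,L)$; this is precisely what the limiting-cumulant proposition records, its proof reducing $\widetilde A_{\tau,(l_1,\ldots,l_k)}$ to $A_{\tau,\{l_1,\ldots,l_k\}}$ on the dominant (arrangement) terms. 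Beyond these, there is no real obstacle: the substance is entirely contained in Lemmas~\ref{lem:permutondependencygraph} and~\ref{lem:permutondependencygraph2} and the Proposition on limiting first cumulants of pattern occurrences, all proved above, so this theorem is a matter of citing Theorems~\ref{thm:cumulantestimates} and the dependency-graph cumulant bound together with Proposition~\ref{prop:concentration1}.
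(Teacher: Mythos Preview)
Your proposal is correct and matches the paper's own approach: the paper states the theorem after writing ``We have thus checked all the hypotheses for the following theorem,'' so the proof is precisely the assembly of Lemmas~\ref{lem:permutondependencygraph} and~\ref{lem:permutondependencygraph2}, the dependency-graph cumulant bound, the Proposition on limiting first cumulants of pattern occurrences, Theorem~\ref{thm:cumulantestimates}, and Proposition~\ref{prop:concentration1} that you spell out. Your bookkeeping checks (the identification $N_nD_n=k\binom{n}{k}\binom{n-1}{k-1}$, the ratio $D_n/N_n=k^2/n$, the centering $\esper[S_n(\tau,\pi)]=\binom{n}{k}t(\tau,\pi)$, and the concentration arithmetic $\binom{n}{k}^2/(D_nN_n)=n/k^2$) are all accurate.
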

\noindent Regarding the concentration inequalities, the first one appears in \cite[Theorem 4.2]{HKMS11} with a constant $\frac{1}{2}$ instead of $\frac{1}{9}$; the second one seems new. On the other hand, the second part of the theorem implies immediately the estimates of Theorems \ref{thm:speedofconvergencepermuton} and \ref{thm:moderatedeviationpermuton}, with
\begin{align*}
\lim_{n \to \infty} n\,\var(t(\tau,\sigma_n(\pi)))&= \lim_{n \to \infty} n\,\var(t(\tau,\Pi_n(\pi))) = k^2\,\kappa_2(\tau,\tau)(\pi) ;\\
l(\tau,\pi) &= \frac{k}{6}\,\frac{\kappa_3(\tau,\tau,\tau)(\pi)}{(\kappa_2(\tau,\tau)(\pi))^{3/2}}.
\end{align*}
Besides, we have similar estimates for the random variables $\widetilde{Y}_n(\tau,\pi)$ instead of the variables $Y_n(\tau,\pi)$.

\begin{example}
Let $\tau=21$. The quantity $S_n(\tau,\pi)$ is the number of inversions of the random permutation $\sigma_n(\pi)$. One computes
\begin{align*}
\kappa_2(\tau,\tau) &= 321 + \frac{1}{3}(312+231) - 4321 - \frac{2}{3}(3412+3421+4231+4312) \\
&\quad-\frac{1}{3}(2143+2413+2431+3142+3241+4132+4213).
\end{align*}
To fix the ideas, let us consider the case when $\pi$ is the uniform measure on $[0,1]^2$. Then, $t(\tau,\pi)=\frac{1}{k!}$ for any permutation $\tau$ of size $k$, therefore, $\kappa_2(\tau,\tau)(\pi) = \frac{1}{36}$. Theorem \ref{thm:speedofconvergencepermuton} yields then
$$\dkol\!\left(3\sqrt{n}\left(\frac{\mathrm{inv}(\sigma_n(\pi))}{\binom{n}{2}}-\frac{1}{2}\right),\,\gauss\right)  \leq \frac{33000}{\sqrt{n}}$$
for $n$ large enough, where $\mathrm{inv}(\sigma)$ is the number of inversions of a permutation $\sigma$.
This recovers, up to the value of the constant,
a result of Fulman \cite{Ful04}.
Similar estimates of the speed of convergence
in the central limit theorem of the number of occurences of other patterns
(and even vincular patterns) in uniform random permutations
have recently been given in \cite{hofer2017central}.
In the general case of a random permutation $\sigma_n(\pi)$, 
such estimates seem however to be new.
\end{example}
\medskip

\begin{remark}
  [on hypothesis $\kappa_2(\tau,\tau) >0$]
  The most studied case is of course the case of the uniform permuton: {\em i.e.}
  $\pi$ is the Lebesgue measure on $[0,1]^2$,
  which implies that $\sigma_n(\pi)$ is a uniform random permutation of size $n$.
  Then one can prove that the hypothesis $\kappa_2(\tau,\tau) >0$
  is satisfied for all patterns $\tau$:
  see \cite{janson2015asymptotic,hofer2017central}.
\end{remark}

\subsection{Fluctuations of random integer partitions}\label{subsec:partmodgauss}
The case of the models of random partitions $(\lambda_n(\omega))_{n \in \N}$ is very similar to the two other cases, and most of the arguments that we shall use come from \cite[Chapter 11]{FMN16}. The main new ingredient will be a change of basis argument in the Kerov--Olshanski algebra of polynomial functions on Young diagrams, which is used in the proof of:
\begin{proposition}[Uniform bounds for the cumulants of the Frobenius moments]\label{prop:boundcumulantprho}
Let $\omega \in \Pcal$ be a parameter of the Thoma simplex, and $\rho$ an integer partition with size $k$. The random variable $S_n(\rho,\omega)=p_\rho(\lambda_n(\omega)) = n^k\,t(\rho,\Omega_n(\omega))$ satisfies the uniform bound on cumulants \eqref{eq:cumulant1} with parameters $D_n=k^2\,n^{k-1}$, $N_n = n^k$ and $A=1$.
\end{proposition}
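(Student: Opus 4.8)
The plan is to deduce the statement, by a change of basis in the Kerov--Olshanski algebra $\Lambda^{\star}$ of polynomial functions on Young diagrams, from uniform cumulant bounds for the renormalised character observables, which were obtained in \cite[Chapter~11]{FMN16} by exhibiting a dependency graph in a noncommutative probability space. Recall that $\Lambda^{\star}=\R[\Sigma_{1},\Sigma_{2},\dots]$, where $\Sigma_{j}(\lambda)=n^{\downarrow j}\,\widehat{\chi}^{\lambda}(c_{j})$ for $\lambda$ a diagram of size $n$ and $\widehat{\chi}^{\lambda}$ the normalised Specht character; that each Frobenius moment $p_{k}\colon\lambda\mapsto\sum_{i}a_{i}(\lambda)^{k}+(-1)^{k-1}\sum_{i}b_{i}(\lambda)^{k}$ lies in $\Lambda^{\star}$; and that, for the gradation $\deg\Sigma_{j}=j$, the difference $p_{k}-\Sigma_{k}$ has degree $<k$ (one has $p_{1}=\Sigma_{1}$, $p_{2}=\Sigma_{2}$, $p_{3}=\Sigma_{3}-\tfrac{5}{4}\Sigma_{1}+\tfrac{3}{2}\Sigma_{1}^{2}$, and so on --- see \cite{IO02} and \cite[Part~III]{Mel17}). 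Multiplying these expansions, one gets for $\rho=(\rho_{1},\dots,\rho_{r})$ of size $k$ a finite decomposition $p_{\rho}=\prod_{i}p_{\rho_{i}}=\Sigma_{\rho}+\sum_{|\mu|<k}c_{\rho,\mu}\,\Sigma_{\mu}$ in $\Lambda^{\star}$, with $\Sigma_{\mu}:=\prod_{i}\Sigma_{\mu_{i}}$, leading term $\Sigma_{\rho}$, and coefficients $c_{\rho,\mu}$ depending only on $\rho$.

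The next step is the noncommutative realisation. By \cite[Chapter~11]{FMN16}, the joint law of $(\Sigma_{j}(\lambda_{n}(\omega)))_{j}$ under the central measure coincides with that of the partial‑permutation elements $\widehat{\sigma}_{j}=\sum_{i_{1}\neq\cdots\neq i_{j}}(i_{1}\,i_{2}\,\cdots\,i_{j})$ of the group algebra $\C[\sym(n)]$, equipped with the state afforded by the extremal character $\chi^{\omega}$; this rests on the fact that the structure constants of the algebra of partial permutations do not depend on $\lambda$, together with $\sum_{\lambda}\proba_{n,\omega}[\lambda]\,\widehat{\chi}^{\lambda}(\sigma)=\chi^{\omega}(\sigma)$. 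The crucial structural point is that $\chi^{\omega}$ is multiplicative on permutations with disjoint supports (product formula $\chi^{\omega}(\sigma)=\prod_{c}t(\ell(c),\omega)$), so that families of partial cycles supported on disjoint subsets of $\lle 1,n\rre$ are independent in the noncommutative sense of \cite[\S5]{FMN17}. Expanding the product, $\Sigma_{\rho}(\lambda_{n}(\omega))=\sum_{\mathbf{v}}A_{\mathbf{v}}$, indexed by $r$‑tuples $\mathbf{v}=(v_{1},\dots,v_{r})$ of tuples of distinct elements of $\lle 1,n\rre$ with $\bigl|\bigcup_{i}v_{i}\bigr|\le k$, with $|A_{\mathbf{v}}|\le1$, and with a dependency graph joining $\mathbf{v},\mathbf{w}$ whenever $\bigl(\bigcup_{i}v_{i}\bigr)\cap\bigl(\bigcup_{i}w_{i}\bigr)\neq\emptyset$. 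This expansion has at most $n^{k}$ terms and a dependency graph of maximal degree at most $k^{2}n^{k-1}$ (one of the $\le k$ entries of an index must coincide with one of the $\le k$ elements of a fixed support), so the noncommutative version of the theorem on uniform bounds from a dependency graph gives that $\Sigma_{\rho}(\lambda_{n}(\omega))$ satisfies \eqref{eq:cumulant1} with exactly $D_{n}=k^{2}n^{k-1}$, $N_{n}=n^{k}$, $A=1$.

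It then remains to absorb the lower-order corrections. Adjoining to the previous expansion the (boundedly many) expansions of $c_{\rho,\mu}\,\Sigma_{\mu}(\lambda_{n}(\omega))$ with $|\mu|<k$ --- each with $\le n^{k-1}$ terms, dependency-graph degree $\le k^{2}n^{k-2}$, and summands of modulus $\le\max_{\mu}|c_{\rho,\mu}|$ --- yields a single expansion of $S_{n}(\rho,\omega)$ in the same noncommutative probability space, whose dependency graph has $(1+o(1))\,n^{k}$ vertices and maximal degree $(1+o(1))\,k^{2}n^{k-1}$, so that \eqref{eq:cumulant1} holds with the announced parameters. (Since the corrections are of strictly smaller order, one may alternatively note that $S_{n}(\rho,\omega)$ differs from $\Sigma_{\rho}(\lambda_{n}(\omega))$ by $o(n^{k})$ almost surely, hence has the same renormalised fluctuations and the same mod-Gaussian limit; and Theorems \ref{thm:cumulantestimates} and \ref{thm:concentrationpartitions} use only $D_{n}/N_{n}\to0$ together with the orders of magnitude $D_{n}\asymp k^{2}n^{k-1}$, $N_{n}\asymp n^{k}$, $A\asymp1$.)

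The only genuinely new ingredient is the filtration-triangular change of basis $p_{\rho}=\Sigma_{\rho}+(\text{lower order})$ in $\Lambda^{\star}$. The main obstacle --- or rather the bulk of the work --- is the faithful transcription of the dependency-graph method to the noncommutative probability space $(\C[\sym(n)],\chi^{\omega})$ and the verification that passing to the products $\Sigma_{\mu}$ and then to the linear combination $p_{\rho}$ does not spoil the sparsity parameters; this is essentially the bookkeeping already carried out in \cite[Chapter~11]{FMN16}, which I would import rather than redo.
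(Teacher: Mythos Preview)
Your overall strategy matches the paper's: import the cumulant bounds for the $\Sigma$-observables from \cite[Chapter~11]{FMN16} and transfer them to $p_\rho$ via the triangular change of basis in the Kerov--Olshanski algebra. The execution differs in a way worth noting. The paper works in the basis $(\varSigma_\mu)_\mu$ defined by $\varSigma_\mu(\lambda)=|\lambda|^{\downarrow|\mu|}\chi^\lambda(\sigma_\mu)$ (not the products $\prod_i\Sigma_{\mu_i}$ you use); in that basis the coefficients of $p_\rho=\sum_{|\mu|\leq k}c_{\rho,\mu}\,\varSigma_\mu$ are \emph{nonnegative}, and evaluating both sides at $\lambda=1^n$ gives $\sum_\mu c_{\rho,\mu}\,n^{\downarrow|\mu|}=p_\rho(1^n)\leq n^k$. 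The paper then expands $\kappa^{(r)}(p_\rho)$ by multilinearity into joint cumulants, applies the bound $|\kappa(\varSigma_{\mu^{(1)}},\ldots,\varSigma_{\mu^{(r)}})|\leq(2k^2/n)^{r-1}r^{r-2}\prod_i n^{\downarrow|\mu^{(i)}|}$ from \cite[Remark~11.4.2]{FMN16}, and sums using positivity together with the evaluation inequality --- this yields exactly $A=1$. Your alternative of merging the lower-order expansions into a single dependency graph is sound, but as you yourself observe the summands coming from $c_{\rho,\mu}\Sigma_\mu$ are only bounded by $\max_\mu|c_{\rho,\mu}|$ (and in your product basis those coefficients need not be positive, cf.\ your $p_3$ example), so you end up with $A=A(\rho)$ rather than $A=1$. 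This is harmless for all downstream applications, but the paper's positivity-plus-evaluation trick is the cleaner bookkeeping device and is worth knowing.
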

\begin{proof}
Given two integer partitions $\lambda$ and $\rho$ with arbitrary sizes, we define the renormalised character value $\varSigma_\rho(\mu)$ by the following formula:
$$\varSigma_\rho(\mu) = \begin{cases}
	|\mu|(|\mu|-1)\cdots (|\mu|-|\rho|+1)\,\chi^\mu(\sigma_\rho)&\text{if }|\mu|\geq |\rho|,\\
	0 &\text{if }|\mu|<|\rho|,
\end{cases}$$
where $\sigma_\rho$ is a permutation with cycle-type $\rho$, and $\chi^\mu(\sigma)= \frac{\mathrm{tr}\,\rho^\mu(\sigma)}{\mathrm{tr}\,\rho^\mu(1)}$ is the normalised irreducible character of the symmetric group $\sym(|\mu|)$ 
that is associated with the integer partition $\mu$. We refer to \cite{IO02} and \cite[Section 7.3]{Mel17} for a presentation of these functions, and to \emph{loc.~cit.}~or to \cite{Sag01} for generalities on the representations of the symmetric groups. The functions $\varSigma_\rho$ with $\rho \in \pym$ span an algebra called the Kerov--Olshanski algebra of polynomial functions on Young diagrams \cite{KO94}. The use of the renormalised character values in the study of the central measures is natural since these measures $\proba_{n,\omega}$ were given in Section \ref{subsec:thoma} a representation theoretic definition.\medskip

In \cite[Section 11]{FMN16}, we proved the mod-Gaussian convergence of the random character values
$$\varSigma_\rho(\lambda_n(\omega)) = n^{\downarrow k}\,\chi^{\lambda_n(\omega)}(\sigma_\rho)\quad \text{for }n\geq k=|\rho|$$
after an appropriate scaling. Remark 11.4.2 in \emph{loc.~cit.}~ensures that for any integer partitions $\rho^{(1)},\ldots,\rho^{(r)}$, 
$$
\left|\kappa\!\left(\varSigma_{\rho^{(1)}}(\lambda_n(\omega)),\varSigma_{\rho^{(2)}}(\lambda_n(\omega)),\ldots,\varSigma_{\rho^{(r)}}(\lambda_n(\omega))\right)\right|\leq \left(\frac{2k^2}{n}\right)^{r-1}\,r^{r-2}\,n^{\downarrow k_1} n^{\downarrow k_2}\cdots n^{\downarrow k_r}$$
where $k_i = |\rho^{(i)}|$, and $k = \max(k_1,\ldots,k_r)$. 
On the other hand, the functions $\varSigma_\rho$ are related to the observables of integer partitions and Thoma parameters by a change of basis formula
$$p_\rho = \sum_{|\mu|\leq |\rho|} c_{\rho,\mu}\,\varSigma_\mu, $$
where the coefficients $c_{\rho,\mu}$ are positive rational numbers, and where the only non-zero coefficient $c_{\rho,\mu}$ with $|\mu|=|\rho|$ is $c_{\rho,\rho}=1$. For instance,
$$p_6 = \varSigma_6 + 6\,\varSigma_{(3,2)} + 6\,\varSigma_{(4,1)} + \frac{95}{4}\,\varSigma_4 + 15\,\varSigma_{(2,1,1)} + 35\,\varSigma_{(2,1)} + \frac{91}{16}\,\varSigma_2.$$
We refer to \cite[\S3.6]{Wass81} and \cite[Section 3]{IO02} for a description of the relations between the observables $p_\rho$ and the observables $\varSigma_\rho$; see also \cite[Section 7.3]{Mel17}. It is not needed to know exactly what are the coefficients $c_{\rho,\mu}$; we shall only use the fact that 
$$n^{|\rho|} \geq \prod_{i=1}^r \left(\left(n-\frac{1}{2}\right)^{\rho_i} - \left(-\frac{1}{2}\right)^{\rho_i}\right) = p_\rho(1^n) =  \sum_{|\mu|\leq |\rho|} c_{\rho,\mu}\,\varSigma_\mu(1^n) = \sum_{|\mu| \leq |\rho|} c_{\rho,\mu}\,n^{\downarrow |\mu|}.$$
We can expand the $r$-th cumulant of $p_\rho(\lambda_n(\omega))$ by multilinearity:
\begin{align*}
|\kappa^{(r)}(p_\rho(\lambda_n(\omega)))| &\leq \sum_{\mu^{(1)},\ldots,\mu^{(r)}} c_{\rho,\mu^{(1)}}\cdots c_{\rho,\mu^{(r)}} \left|\kappa\!\left(\varSigma_{\mu^{(1)}}(\lambda_n(\omega)),\ldots,\varSigma_{\mu^{(r)}}(\lambda_n(\omega))\right)\right| \\
&\leq \left(\frac{2k^2}{n}\right)^{r-1}\,r^{r-2}\,\sum_{\mu^{(1)},\ldots,\mu^{(r)}} c_{\rho,\mu^{(1)}}\cdots c_{\rho,\mu^{(r)}}\,n^{\downarrow |\mu^{(1)}|}\cdots n^{\downarrow |\mu^{(r)}|}\\
&\leq \left(\frac{2k^2}{n}\right)^{r-1}\,r^{r-2}\,n^{kr} = n^{k}\,(2k^2n^{k-1})^{r-1}\,r^{r-2}.\qedhere
\end{align*}
\end{proof}
\begin{remark}[Dependency graphs for Frobenius moments (absence of)]
Proposition \ref{prop:boundcumulantprho} is the only case in this paper where we obtain a uniform bound on cumulants without specifying a dependency graph for the underlying random variables. 
On the other hand, the bound on the joint cumulants of the variables $\varSigma_\rho(\lambda_n(\omega))$ follows indeed from the theory of dependency graphs, but in a setting of non-commutative probability theory: see \cite[Section 11.3]{FMN16}.
\end{remark}
\medskip

We now introduce the maps $\kappa_2$ and $\kappa_3$ on the algebra of partitions. Given two integer partitions $\rho=(\rho_1,\ldots,\rho_{\ell(\rho)})$ and $\mu=(\mu_1,\ldots,\mu_{\ell(\mu)})$ with size $k$, for $a \in \lle 1,\ell(\rho)\rre$ and $b \in \lle 1,\ell(\mu)\rre$, we set 
$$(\rho \join \mu)(a,b) = (\rho \setminus \{\rho_a\})\sqcup (\mu \setminus \{\mu_b\}) \sqcup \{\rho_a+\mu_b-1\},$$
the right-hand side being reordered in order to obtain an integer partition. For instance, $((3,2,2) \join (4,1))(2,2) = (4,3,2,2)$. The map $\kappa_2 : \obs_{\pym,k} \otimes \obs_{\pym,k} \to \obsP$ is then defined by
$$\kappa_2(\rho,\mu) = \frac{1}{k^2}\sum_{a=1}^{\ell(\rho)} \sum_{b=1}^{\ell(\mu)} \rho_a\mu_b\, ((\rho\join \mu)(a,b) -\rho \times \mu).$$
Similarly, given three integer partitions $\rho$, $\mu$ and $\nu$ in $\pym(k)$, we define
\begin{align*}
(\rho\join \mu\join \nu)(a,b,c) &= (\rho \setminus \{\rho_a\})\sqcup (\mu \setminus \{\mu_b\}) \sqcup (\nu \setminus \{\nu_c\}) \sqcup \{\rho_a+\mu_b+\nu_c-2\} ;\\
(\rho \join \mu \join \nu)(a,b;c,d) &=(\rho \setminus \{\rho_a\})\sqcup (\mu \setminus \{\mu_b,\mu_c\}) \sqcup (\nu \setminus \{\nu_d\}) \sqcup \{\rho_a+\mu_b-1,\mu_c+\nu_d-1\}
\end{align*}
where in the first case we have $a \in \lle 1,\ell(\rho)\rre$, $b \in \lle 1,\ell(\mu)\rre$ and $c \in \lle 1,\ell(\nu)\rre$; and in the second case we have $a \in \lle 1,\ell(\rho)\rre$, $b\neq c \in \lle 1,\ell(\mu)\rre$ and $d \in \lle 1,\ell(\nu)\rre$. We then set
\begin{align*}
\kappa_3(\rho,\mu,\nu) &= \frac{1}{k^4}\sum_{a=1}^{\ell(\rho)} \sum_{b=1}^{\ell(\mu)} \sum_{c=1}^{\ell(\nu)} \rho_a\mu_b\nu_c \left(\substack{(\rho \join \mu\join \nu)(a,b,c) + 2\,\rho \times \mu \times \nu - (\rho \join \mu)(a,b) \times \nu \\ - (\mu \join \nu)(b,c) \times \rho - (\rho \join \nu)(a,c) \times \mu}\right)\\
&\quad + \frac{1}{k^4}\sum_{\Z/3\Z} \sum_{a=1}^{\ell(\rho)} \sum_{b=1}^{\ell(\mu)} \sum_{c=1}^{\ell(\nu)} \rho_a\mu_b(\mu_b-1)\nu_c \left(\substack{\rho\times \mu\times \nu + (\rho\join \mu \join \nu)(a,b,c) \\ - (\rho \join \mu)(a,b) \times \nu - (\mu \join \nu)(b,c)\times \rho}\right) \\
&\quad + \frac{1}{k^4}\sum_{\Z/3\Z} \sum_{a=1}^{\ell(\rho)} \sum_{(b\neq c)=1}^{\ell(\mu)} \sum_{d=1}^{\ell(\nu)} \rho_a\mu_b\mu_c\nu_d \left(\substack{\rho \times \mu \times \nu + (\rho \join \mu \join \mu)(a,b;c,d) \\ -(\rho \join \mu)(a,b) \times \nu - (\mu \join \nu)(c,d)\times \rho}\right)
\end{align*}
with the sums over $\Z/3\Z$ meaning that we permute cyclically the roles played by $\rho$, $\mu$ and $\nu$. We then have:
\begin{proposition}[Limiting first cumulants of Frobenius moments]
For any partitions $\rho$, $\mu$ and $\nu$ with size $k$ and any Thoma parameter $\omega \in \Pcal$,
\begin{align*}
\lim_{n \to \infty} \frac{\kappa(\varSigma_\rho(\lambda_n(\omega)), \varSigma_\mu(\lambda_n(\omega)))}{k^2\,n^{2k-1}} &= \lim_{n \to \infty} \frac{\kappa(S_n(\rho,\omega)), S_n(\mu,\omega))}{k^2\,n^{2k-1}} = \kappa_2(\rho,\mu)(\omega);
\end{align*}
and 
\begin{align*}
\lim_{n \to \infty} \frac{\kappa(\varSigma_\rho(\lambda_n(\omega)), \varSigma_\mu(\lambda_n(\omega)),\varSigma_\nu(\lambda_n(\omega)))}{k^4\,n^{3k-2}} &= \lim_{n \to \infty} \frac{\kappa(S_n(\rho,\omega)), S_n(\mu,\omega),S_n(\nu,\omega))}{k^4\,n^{3k-2}}\\
& = \kappa_3(\rho,\mu,\nu)(\omega),
\end{align*}
In each case, the limit is attained at speed $O(n^{-1})$, with a constant in the $O(\cdot)$ that depends only on $k$.
\end{proposition}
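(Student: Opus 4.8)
The plan is to prove the statement first for the renormalised character values $\varSigma_\rho(\lambda_n(\omega))$, and then to transfer it to $S_n(\rho,\omega)=p_\rho(\lambda_n(\omega))$ by the same change of basis as in the proof of Proposition~\ref{prop:boundcumulantprho}. As the remark following that proposition explains, there is no standard dependency graph here, so instead of imitating the elementary-variable expansions used for graphs and permutations we rely on two \emph{exact} algebraic inputs. First, the reproducing property of the central measures, $\esper[\chi^{\lambda_n(\omega)}(\sigma)]=\chi^\omega(\sigma)$ for every $\sigma\in\sym(n)$, combined with $\chi^\omega(\sigma_\nu)=\prod_a t(\nu_a,\omega)=t(\nu,\omega)$ (using $t(1,\omega)=1$ for the fixed points), yields the closed form
$$\esper[\varSigma_\nu(\lambda_n(\omega))]=n^{\downarrow|\nu|}\,t(\nu,\omega)\qquad\text{for }n\geq|\nu|.$$
Second, the Kerov--Olshanski algebra carries an explicit product; grouping the terms by the size $|\nu|$ of the index one has
$$\varSigma_\rho\cdot\varSigma_\mu=\varSigma_{\rho\sqcup\mu}+\sum_{a,b}\rho_a\mu_b\,\varSigma_{(\rho\join\mu)(a,b)}+\bigl(\text{terms of size}\leq 2k-2\bigr),$$
and similarly for $\varSigma_\rho\cdot\varSigma_\mu\cdot\varSigma_\nu$, whose terms of size $3k-2$ are precisely those of the shape $\varSigma_{(\rho\join\mu\join\nu)(a,b,c)}$ (all three merged at one point) and $\varSigma_{(\rho\join\mu\join\nu)(a,b;c,d)}$ (iterated or disjoint pairwise merges), the combinatorial weights being read off from the classical structure constants (cf.\ \cite[Section 3]{IO02}). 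Since these structure constants do not depend on $n$, the moments $\esper[\varSigma_\rho(\lambda_n(\omega))\,\varSigma_\mu(\lambda_n(\omega))]$ and $\esper[\varSigma_\rho(\lambda_n(\omega))\,\varSigma_\mu(\lambda_n(\omega))\,\varSigma_\nu(\lambda_n(\omega))]$ become \emph{exact polynomials in $n$}, obtained by substituting $\varSigma_\nu\mapsto n^{\downarrow|\nu|}t(\nu,\omega)$ in the product expansions.

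For the covariance one combines $\kappa(\varSigma_\rho,\varSigma_\mu)=\esper[\varSigma_\rho\varSigma_\mu]-\esper[\varSigma_\rho]\esper[\varSigma_\mu]$ with these inputs: the size-$2k$ contribution $\varSigma_{\rho\sqcup\mu}$ gives $n^{\downarrow 2k}\,t(\rho,\omega)t(\mu,\omega)$, whose leading $n^{2k}$ cancels that of $(n^{\downarrow k})^2\,t(\rho,\omega)t(\mu,\omega)$, and the elementary identity $n^{\downarrow 2k}-(n^{\downarrow k})^2=-k^2\,n^{2k-1}+O(n^{2k-2})$, together with $\sum_{a,b}\rho_a\mu_b=|\rho|\,|\mu|=k^2$ and $t(\rho\times\mu,\omega)=t(\rho,\omega)t(\mu,\omega)$, leaves
$$\frac{\kappa(\varSigma_\rho(\lambda_n(\omega)),\varSigma_\mu(\lambda_n(\omega)))}{k^2\,n^{2k-1}}=\frac{1}{k^2}\sum_{a,b}\rho_a\mu_b\bigl(t((\rho\join\mu)(a,b),\omega)-t(\rho\times\mu,\omega)\bigr)+O(n^{-1}),$$
which is $\kappa_2(\rho,\mu)(\omega)+O(n^{-1})$, the $O(n^{-1})$ being the genuine speed since the next term of the exact polynomial is $O(n^{2k-2})$. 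The third cumulant is handled by the same mechanism applied to $\kappa(X,Y,Z)=\esper[XYZ]-\esper[XY]\esper[Z]-\esper[XZ]\esper[Y]-\esper[YZ]\esper[X]+2\,\esper[X]\esper[Y]\esper[Z]$: a short computation shows that the coefficients of $n^{3k}$ and of $n^{3k-1}$ both vanish after the falling-factorial arithmetic, while the size-$3k-2$ contributions split into the ``all-merged'' family (weight $\rho_a\mu_b\nu_c$, producing $(\rho\join\mu\join\nu)(a,b,c)$ and the correction terms $(\rho\join\mu)(a,b)\times\nu$, etc.) and the ``iterated pairwise'' families (weights $\rho_a\mu_b(\mu_b-1)\nu_c$ and $\rho_a\mu_b\mu_c\nu_d$, producing $(\rho\join\mu\join\nu)(a,b;c,d)$), cyclically symmetrised over $\rho,\mu,\nu$; collecting everything reproduces the stated formula for $\kappa_3(\rho,\mu,\nu)(\omega)$, again at speed $O(n^{-1})$.

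It remains to pass from $\varSigma_\rho$ to $S_n(\rho,\omega)=p_\rho(\lambda_n(\omega))$. Using $p_\rho=\varSigma_\rho+\sum_{|\nu|\leq k-1}c_{\rho,\nu}\varSigma_\nu$ (the top coefficient being $1$) and expanding the joint cumulants by multilinearity, one obtains $\kappa(\varSigma_\rho,\varSigma_\mu)$, resp.\ $\kappa(\varSigma_\rho,\varSigma_\mu,\varSigma_\nu)$, plus cross-terms each involving at least one $\varSigma_{\nu'}$ with $|\nu'|\leq k-1$; by the cumulant estimate recalled in the proof of Proposition~\ref{prop:boundcumulantprho} (Remark~11.4.2 of \cite{FMN16}) every such cross-term is $O(n^{2k-2})$, resp.\ $O(n^{3k-3})$, hence $O(n^{-1})$ after division by $k^2n^{2k-1}$, resp.\ $k^4n^{3k-2}$, so the two limits in each display coincide. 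The main obstacle is the third-cumulant bookkeeping: one must pin down the Kerov--Olshanski structure constants down to the terms of size $3k-2$, correctly separate the ``all-merged'' and the two ``iterated pairwise'' merging patterns (the factor $\mu_b-1$ arising when two junction slots are chosen inside the same partition), and verify that, once the falling factorials cancel, the alternating sum of moments collapses to exactly the stated $\kappa_3$. Everything else is routine polynomial arithmetic.
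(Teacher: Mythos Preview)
Your proposal is correct and follows essentially the same route as the paper. The paper's proof simply cites \cite[Propositions 11.4.3 and 11.4.4]{FMN16} for the $\varSigma_\rho$ case and then invokes the change of basis $p_\rho=\varSigma_\rho+(\text{lower order})$ for the $S_n(\rho,\omega)$ case; you are unpacking that citation by sketching the underlying Kerov--Olshanski product computation (the formula $\esper[\varSigma_\nu(\lambda_n(\omega))]=n^{\downarrow|\nu|}t(\nu,\omega)$ combined with the graded product $\varSigma_\rho\varSigma_\mu=\varSigma_{\rho\sqcup\mu}+\sum_{a,b}\rho_a\mu_b\varSigma_{(\rho\join\mu)(a,b)}+\cdots$), and then doing the identical transfer to $p_\rho$ via multilinearity and the joint-cumulant bound.
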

\begin{proof}
The case of the random character values is the content of \cite[Propositions 11.4.3 and 11.4.4]{FMN16}, and the case of the observables $S_n(\rho,\omega)=p_\rho(\lambda_n(\omega))$ follows by using the expansion $p_\rho = \varSigma_\rho + \text{terms with lower degree}$ in the algebra of polynomial functions on Young diagrams.
\end{proof}
\medskip

\begin{theorem}[Mod-Gaussian convergence of random integer partitions] \label{thm:modpartition}
Fix $\omega \in \Pcal$ and $\rho \in \pym(k)$.
\begin{enumerate}
	\item The random variable $S_n(\rho,\omega) = n^k\,t(\rho,\Omega_n(\omega))$ satisfies the hypotheses of the method of cumulants with parameters $D_n=k^2\,n^{k-1}$, $N_n=n^k$ and $A=1$, and with limits $\sigma^2 = \kappa_2(\rho,\rho)(\omega)$ and $L = \kappa_3(\rho,\rho,\rho)(\omega)$.

	\item If $\kappa_2(\rho,\rho)(\omega)>0$, then the random variables
	\begin{align*}
	Y_n(\rho,\omega) &= \frac{t(\rho,\Omega_n(\omega))-\esper[t(\rho,\Omega_n(\omega))]}{\sqrt{\var(t(\rho,\Omega_n(\omega)))}}
	\end{align*}
	satisfy all the limiting results from Theorem \ref{thm:cumulantestimates}. Moreover, $|\esper[t(\rho,\Omega_n(\omega))] - t(\rho,\omega)| \leq \frac{2k^2}{n}$.

	\item We have the concentration inequalities
	\begin{align*}
	\proba[|t(\rho,\Omega_n(\omega))-\esper[t(\rho,\Omega_n(\omega))]| \geq x] &\leq 2\,\exp\left(-\frac{nx^2}{9k^2}\right);\\ 
	\proba[|t(\rho,\Omega_n(\omega))-t(\rho,\omega)| \geq x] &\leq 4\,\exp\left(-\frac{nx^2}{9k^2}\right).
	\end{align*}
\end{enumerate}
\end{theorem}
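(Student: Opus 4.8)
The plan is to treat the theorem as an assembly of the two preceding propositions: verify the three hypotheses \eqref{eq:cumulant1}, \eqref{eq:cumulant2}, \eqref{eq:cumulant3} of the method of cumulants for $S_n(\rho,\omega)=p_\rho(\lambda_n(\omega))=n^k\,t(\rho,\Omega_n(\omega))$, then invoke Theorem \ref{thm:cumulantestimates} and Proposition \ref{prop:concentration1}, and finally carry out two short estimates by hand. For \eqref{eq:cumulant1} I would simply quote Proposition \ref{prop:boundcumulantprho}, which gives it with parameters $(D_n,N_n,A)=(k^2n^{k-1},n^k,1)$. For \eqref{eq:cumulant2} and \eqref{eq:cumulant3}, note that $N_nD_n=k^2n^{2k-1}$ and $N_n(D_n)^2=k^4n^{3k-2}$, so the proposition on limiting first cumulants of Frobenius moments yields $\kappa^{(2)}(S_n(\rho,\omega))/(N_nD_n)=\kappa_2(\rho,\rho)(\omega)+O(n^{-1})$ and $\kappa^{(3)}(S_n(\rho,\omega))/(N_n(D_n)^2)=\kappa_3(\rho,\rho,\rho)(\omega)+O(n^{-1})$; since $D_n/N_n=k^2/n$ one has $(D_n/N_n)^{1/3}\asymp n^{-1/3}$, so the $O(n^{-1})$ errors are $o((D_n/N_n)^{1/3})$ and $o(1)$ respectively, which is exactly what \eqref{eq:cumulant2} and \eqref{eq:cumulant3} require, and $D_n/N_n\to 0$. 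This establishes part (1) with $\sigma^2=\kappa_2(\rho,\rho)(\omega)$ and $L=\kappa_3(\rho,\rho,\rho)(\omega)$.

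For part (2), once $\kappa_2(\rho,\rho)(\omega)>0$ we are under the hypotheses of Theorem \ref{thm:cumulantestimates}; and since $t(\rho,\Omega_n(\omega))=S_n(\rho,\omega)/n^k$, centering by the mean and dividing by the standard deviation cancels the factor $n^{-k}$, so $Y_n(\rho,\omega)$ is literally the variable $Y_n$ of that theorem and inherits all of its conclusions. The mean estimate I would obtain from the change of basis $p_\rho=\sum_{|\mu|\le k}c_{\rho,\mu}\,\varSigma_\mu$ used in the proof of Proposition \ref{prop:boundcumulantprho} (with $c_{\rho,\mu}\ge 0$, $c_{\rho,\rho}=1$) together with $\esper[\varSigma_\mu(\lambda_n(\omega))]=n^{\downarrow|\mu|}\,t(\mu,\omega)$, which follows from $(\chi^\omega)_{|\sym(n)}=\sum_\lambda\proba_{n,\omega}[\lambda]\,\chi^\lambda$: the term $\mu=\rho$ contributes $n^{\downarrow k}n^{-k}\,t(\rho,\omega)$, within $\binom{k}{2}/n$ of $t(\rho,\omega)$, and the remaining terms contribute at most $n^{-k}(p_\rho(1^n)-n^{\downarrow k})\le n^{-k}(n^k-n^{\downarrow k})\le\binom{k}{2}/n$, using $|t(\mu,\omega)|\le 1$ (moments of the measure $\theta_\omega$ on $[-1,1]$) and $p_\rho(1^n)\le n^k$; this gives $|\esper[t(\rho,\Omega_n(\omega))]-t(\rho,\omega)|\le k(k-1)/n\le 2k^2/n$.

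For part (3): since $|t(\rho,\Omega_n(\omega))|\le 1$ pointwise, we have $|S_n(\rho,\omega)|\le n^k=N_nA$ almost surely, so Proposition \ref{prop:concentration1} applies and gives $\proba[|S_n-\esper[S_n]|\ge y]\le 2\exp(-y^2/(9k^2n^{2k-1}))$; taking $y=n^kx$ yields the first inequality. For the second, write $U=t(\rho,\Omega_n(\omega))-\esper[t(\rho,\Omega_n(\omega))]$ and $D=\esper[t(\rho,\Omega_n(\omega))]-t(\rho,\omega)$ with $|D|\le k^2/n$ by part (2); for $x\ge k^2/n$ the event $\{|U+D|\ge x\}$ forces $|U|\ge x-k^2/n$, so by the first inequality its probability is at most $2\exp(-n(x-k^2/n)^2/(9k^2))\le 2\,\E^{2x/9}\exp(-nx^2/(9k^2))\le 4\exp(-nx^2/(9k^2))$, the last step using $x\le 2$; for $x<k^2/n$ the right-hand side already exceeds $1$ in the relevant range of $n$, so the bound is trivial.

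The genuinely substantial inputs — the uniform cumulant bound (Proposition \ref{prop:boundcumulantprho}, obtained via the change of basis in the Kerov--Olshanski algebra and the joint-cumulant bounds of \cite[Chapter 11]{FMN16}) and the limiting values of the second and third cumulants — are already in place, so the residual work is mostly bookkeeping. The only points I expect to require a little care are matching the normalisations so that the $O(n^{-1})$ error in the rescaled variance meets the $o((D_n/N_n)^{1/3})$ requirement of \eqref{eq:cumulant2}, and squeezing out the precise constants ($2k^2/n$, the factor $4$, and the exponent $nx^2/(9k^2)$) in the mean estimate and the second concentration inequality.
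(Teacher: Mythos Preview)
Your proposal is correct and follows essentially the same route as the paper: quote Proposition~\ref{prop:boundcumulantprho} for \eqref{eq:cumulant1}, the limiting-cumulant proposition for \eqref{eq:cumulant2}--\eqref{eq:cumulant3}, then Theorem~\ref{thm:cumulantestimates} and Proposition~\ref{prop:concentration1}; the mean estimate via the expansion $p_\rho=\sum c_{\rho,\mu}\varSigma_\mu$, the identity $\esper[\varSigma_\mu(\lambda_n(\omega))]=n^{\downarrow|\mu|}t(\mu,\omega)$, and the bound $\sum_{|\mu|\le k}c_{\rho,\mu}n^{\downarrow|\mu|}=p_\rho(1^n)\le n^k$ is exactly the paper's argument. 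The only cosmetic difference is in the second concentration inequality: the paper handles all $x$ uniformly via the inequality $\max(0,x-\tfrac{2k^2}{n})^2\ge x^2-\tfrac{4k^2x}{n}$, whereas you split into cases; your case $x<k^2/n$ is indeed trivial once you note that (with $x\le 2$) one has $\tfrac{nx^2}{9k^2}<\tfrac{x}{9}\le\tfrac{2}{9}$, so the right-hand side exceeds $4\E^{-2/9}>1$ --- you should spell this out rather than appeal to ``the relevant range of $n$''.
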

\begin{proof}
The previous discussion proves the first part of the theorem, and the asymptotic results on $Y_n(\rho,\omega)$ follow immediately by Theorem \ref{thm:cumulantestimates}. For the bound on $\esper[t(\rho,\Omega_n(\omega))] - t(\rho,\omega)$, we use the identity $\esper[\varSigma_{\rho}(\lambda_n(\omega))]  = n^{\downarrow k}\,t(\rho,\omega)$, and the expansion $$p_\rho = \sum_{|\mu|\leq |\rho|} c_{\rho,\mu}\,\varSigma_\mu = \varSigma_\rho + \sum_{|\mu|< |\rho|} c_{\rho,\mu}\,\varSigma_\mu.$$
Thus,
\begin{align*}
|\esper[t(\rho,\Omega_n(\omega))] - t(\rho,\omega)| &\leq \frac{1}{n^k} \left( \sum_{|\mu|<|\rho|} c_{\rho,\mu}\,|t(\mu,\omega)| + (n^k-n^{\downarrow k}) |t(\rho,\omega)| \right) \\
&\leq \frac{2}{n^k} \left( n^k-n^{\downarrow k}\right) \leq \frac{2k^2}{n}.
\end{align*}
The first concentration inequality follows then from Proposition \ref{prop:concentration1}, and the second concentration inequality is obtained by the following computation. One can assume without loss of generality $x \leq 1$, and then
\begin{align*}
\proba[|t(\rho,\Omega_n(\omega))-t(\rho,\omega)| \geq x] &\leq \proba\!\left[|t(\rho,\Omega_n(\omega))-\esper[t(\rho,\Omega_n(\omega))]| \geq \max\!\left(0,x-\frac{2k^2}{n}\right)\right] \\
&\leq 2\,\exp\left(-\frac{n}{9k^2}\,\max\!\left(0,x-\frac{2k^2}{n}\right)^{\!2}\right)\\
&\leq 2\,\exp\left(-\frac{n}{9k^2}\left(x^2-\frac{4k^2x}{n}\right)\right)\\
&\leq 2\,\E^{\frac{4}{9}}\,\exp\left(-\frac{nx^2}{9k^2}\right) \leq 4\,\exp\left(-\frac{nx^2}{9k^2}\right).\qedhere
\end{align*}
\end{proof}

\noindent We recover in particular the exponential concentration of the central measures stated in Theorem \ref{thm:concentrationpartitions}; this result completes the law of large numbers obtained by Kerov and Vershik in \cite{KV81}, and the central limit theorem obtained independently by Bufetov and Méliot in \cite{Buf12,Mel12} (see also \cite[Section 12.3]{Mel17}).
\bigskip

\section{Mod-Gaussian moduli spaces}\label{sec:moduli}
The similarities that we encountered when studying subgraphs in random graphs,
patterns in random permutations and Frobenius moments of random partitions makes it tempting to try to formalise this common structure by the following definition.

\begin{definition}[Mod-Gaussian moduli space]\label{def:modgaussianMS}
 Let $\Mcal$ be a compact metrisable space, and $\obs_{\mathfrak{M}}=\bigoplus_{k \in \N} \obs_{\mathfrak{M},k}$ be a graded algebra. We say that the pair $(\Mcal,\obs_{\mathfrak{M}})$ can be endowed with a structure of \emph{mod-Gaussian moduli space} if the following conditions are satisfied:
\begin{enumerate}[label=(MS\arabic*)]
\item\label{item:MS1} The pair is endowed with a morphism of algebras $\Psi : \obs_{\mathfrak{M}} \to \Ccal(\Mcal)$ whose image is a dense subalgebra of $\Ccal(\Mcal)$. In particular, by Stone--Weierstrass theorem, 
$$ \big(m_n \to_{n \to \infty} m \in \Mcal\big) \iff \big(\forall f \in \obs,\,\,\,\Psi(f)(m_n) \to_{n\to \infty} \Psi(f)(m)\big).$$ 
In the sequel we simply denote $\Psi(f)(m) = f(m)$.
\item\label{item:MS2} The space $\Mcal$ is also endowed with a construction of random objects which have a property of asymptotic concentration. 
  Namely, for each parameter $m \in \Mcal$, there is a family of random variables $(M_n(m))_{n \in \N}$ in $\Mcal$, such that $M_n(m) \to_{n \to \infty} m$ in probability.
\item\label{item:MS3} For any $k \geq 0$, there exist two linear maps
\begin{align*}
\kappa_2 &: \obs_{\mathfrak{M},k} \otimes \obs_{\mathfrak{M},k} \to \obs_{\mathfrak{M}}, \\
\kappa_3 &: \obs_{\mathfrak{M},k} \otimes \obs_{\mathfrak{M},k} \otimes \obs_{\mathfrak{M},k} \to \obs_{\mathfrak{M}};
\end{align*}
two sequences $N_{n,k} \to \infty$ and $D_{n,k} = o(N_{n,k})$; and a distinguished linear basis $\mathfrak{M}(k)$ of $\obs_k$ such that for any $f \in \mathfrak{M}(k)$, the sequence of random variables 
$$S_n(f,m)=N_{n,k}\,f(M_n(m))$$
satisfies the hypotheses of the method of cumulants with parameters $(D_{n,k},N_{n,k},1)$, and with limiting variance $\sigma^2(m)=\kappa_2(f,f)(m)$ and third cumulant $L(m)=\kappa_3(f,f,f)(m)$.
 \end{enumerate} 
\end{definition}
\noindent Rigorously, a structure of mod-Gaussian moduli space is given by a family 
$$(\Mcal,\obs_{\mathfrak{M}};\Psi;(M_n(m))_{m\in \Mcal,\,\,n\in \N};\kappa_2,\kappa_3;(\mathfrak{M}(k))_{k \in \N};(N_{n,k})_{n,k\in \N},(D_{n,k})_{n,k\in \N}).$$
However, in the sequel, we shall just speak of the pair $(\Mcal,\obs_{\mathfrak{M}})$, being understood that this means that there is a canonical (and interesting!) associated family 
$$(\Psi;(M_n(m))_{m\in \Mcal,\,\,n\in \N};\kappa_2,\kappa_3;(\mathfrak{M}(k))_{k \in \N};(N_{n,k})_{n,k\in \N},(D_{n,k})_{n,k\in \N}).$$
All the results of this article can be summarised by the following statement: the pairs $(\Gcal,\obsG)$, $(\Scal,\obsS)$ and $(\Pcal,\obsP)$ are three mod-Gaussian moduli spaces, each time with $N_{n,k}=n^k$ and $D_{n,k}=k^2n^{k-1}$. In addition to the concision of this statement, the interest of the notion of mod-Gaussian moduli spaces lies in the following remarks, by which we conclude this paper.

\begin{remark}[Moduli spaces and convergence in probability]
Given a mod-Gaussian moduli space $(\Mcal,\obs_{\mathfrak{M}})$, since $\Mcal$ is metrisable, there is a distance $d$ on $\Mcal$ which corresponds to the topology:
$$ \big(m_n \to_{n \to \infty} m \in \Mcal\big) \iff \big(d(m_n,m) \to 0\big).$$ 
However, in many cases, $\Mcal$ is a part of an infinite-dimensional vector space, and the distances corresponding to its topology are not very practical to deal with. For instance, in the space of graphons, the cut-metric indeed corresponds to the topology of $\Gcal$, but given two graphs $G$ and $H$ with arbitrary sizes, their distance $\delta_\oblong(G,H)$ is not easy to manipulate or even compute, in particular if $|G|\neq |H|$ (see for instance \cite[Section 2.3]{BCLSV08}). The same remark applies to the two other cases of moduli spaces studied in the article, or to the space of probability measures evoked hereafter. This is the reason why it is better to control the topology by an algebra of observables $\obs_{\mathfrak{M}}$, in particular for the study of random models that are convergent in probability. Indeed, the convergence in probability of a sequence of random variables $(M_n)_{n \in \N}$ towards $m$, which is defined by
$$\forall \eps>0,\,\,\,\proba[d(M_n,m)\geq \eps]   \to_{n \to \infty} 0,$$
can then be characterised by
$$\forall \eps>0,\,\,\,\forall f \in \obs_{\mathfrak{M}},\,\,\,\proba[|f(M_n)-f(m)|\geq \eps]   \to_{n \to \infty} 0,$$
and this second condition is usually much easier to check (assuming that one has chosen an adequate algebra of observables). In the mod-Gaussian framework, we saw that these probabilities $\proba[|f(M_n)-f(m)|\geq \eps]$ are exponentially small.
\end{remark}
\medskip

\begin{remark}[Classical theorems of probability theory]
The framework of mod-Gaussian and mod-$\phi$ convergent sequences that was developed in particular in \cite{JKN11,DKN15,FMN16,FMN17} can be seen as a far reaching generalisation of the classical theorems of probability theory: law of large numbers, central limit theorem, Cramér's large deviation estimates, Berry--Esseen estimates of the speed of convergence, \emph{etc.} for the scaled sums of i.i.d. random variables. Let us explain how to summarise all these results by constructing an adequate mod-Gaussian moduli space. Let $X$ be a compact metrisable space, and $\Mcal^1(X)$ be the space of Borel probability measures on $X$, endowed with the topology of convergence in law. It is well known that $\Mcal^1(X)$ is again compact metrisable, see \cite[Chapter 1, Section 5]{Bil69}. Let $\mathscr{A}$ be a dense subalgebra of the separable algebra $\Ccal(X)$, endowed with a distinguished countable basis $\mathfrak{A}$ (thus, $\mathscr{A} = \mathrm{Span}_{\R}(\mathfrak{A})$). We can assume without loss of generality that $\|f\|_{\infty}=1$ for any $f \in \mathfrak{A}$. A graded algebra of observables for $\Mcal^1(X)$ is the symmetric algebra 
$$\obs_{\mathfrak{A}} = S(\mathscr{A}) = \bigoplus_{k=0}^\infty S^k(\mathscr{A}),$$
where $S^k(\mathscr{A})$ is the quotient of $\mathscr{A}^{\otimes k}$ by the relations $t^\sigma=t$ for any $k$-tensor $t$ and any permutation $\sigma \in \sym(k)$. We evaluate a tensor $t=f_1\otimes f_2\otimes \cdots \otimes f_k$ on a probability measure $\pi \in \Mcal^1(X)$ by
$$t(\pi) = f_1(\pi)\,f_2(\pi)\,\cdots\,f_k(\pi) = \int_{X^k} f_1(x_1)\cdots f_k(x_k)\,\pi^{\otimes k}(\!\DD{x_1}\cdots \!\DD{x_k}).$$
It is easy to see that the convergence in law in $\Mcal^1(X)$ amounts to the convergence of all the observables $t \in \obs_{\mathfrak{A}}$. On the other hand, for any $\pi \in \Mcal^1(X)$, a way to approximate $\pi$ by a discrete "combinatorial" object is by using the empirical measures
$$\Pi_n(\pi) = \frac{1}{n}\sum_{i=1}^n \delta_{x_i},$$
where the $x_i$'s are independent identically distributed variables with law $\pi$. By the classical law of large numbers, $\Pi_n(\pi) \to \pi$ in probability and for any probability measure $\pi \in \Mcal^1(X)$. It is then easy to prove by using the theory of dependency graphs that one has in fact a structure of mod-Gaussian moduli space on $(\Mcal^1(X),\obs_{\mathfrak{A}})$. This structure is associated with the parameters $N_{n,k}=n^k$ and $D_{n,k}=k^2\,n^{k-1}$ and to the maps
\begin{align*}
&\kappa_2(f_1\otimes \cdots \otimes f_k,\,g_1 \otimes \cdots \otimes g_k) \\
&=\frac{1}{k^2}\sum_{1\leq a,b \leq k} (f_ag_b - f_a \otimes g_b)\otimes \left(\bigotimes_{a' \neq a}f_{a'}\right) \otimes \left(\bigotimes_{b' \neq b} g_{b'}\right) ;\\
&\kappa_3(f_1 \otimes \cdots \otimes f_k,\,g_1 \otimes \cdots \otimes g_k,h_1 \otimes \cdots \otimes h_k) \\
&= \frac{1}{k^4} \sum_{1 \leq a,b,c \leq k} \kappa_3(f_a,g_b,h_c) \otimes \left(\bigotimes_{a' \neq a}f_{a'}\right) \otimes \left(\bigotimes_{b' \neq b} g_{b'}\right) \otimes \left(\bigotimes_{c' \neq c} h_{c'}\right),
\end{align*}
where for $\kappa_3$, 
$$\kappa_3(f_a,g_b,h_c) = f_ag_bh_c-(f_ag_b)\otimes h_c - (f_ah_c)\otimes g_b - (g_bh_c)\otimes f_a + 2\,f_a\otimes g_b \otimes h_c.$$
Thus, this basic setting in classical probability theory
can be integrated in our Definition \ref{def:modgaussianMS}. 
We think that many other classes of random models give rise to a mod-Gaussian moduli space, in particular when one tries to approximate an object $m$ by a random sequence $(M_n(m))_{n \in \N}$ of "discrete" objects. For instance,  De Catelan has proven recently that the space of metric measured spaces introduced in \cite[Section 3$\tfrac{1}{2}$]{Gro07} and in \cite{GPW09} gives rise to such a structure, and that the singular points of this mod-Gaussian moduli space (see the next remarks) are exactly the compact homogeneous spaces $X=G/K$. 
\end{remark}
\medskip

\begin{remark}[Moduli spaces as fields of Gaussian fluctuations]
Let us explain the intuition behind our definition of mod-Gaussian moduli space. It is convenient to represent a mod-Gaussian moduli space by a surface as in Figure \ref{fig:field}.
\begin{center}
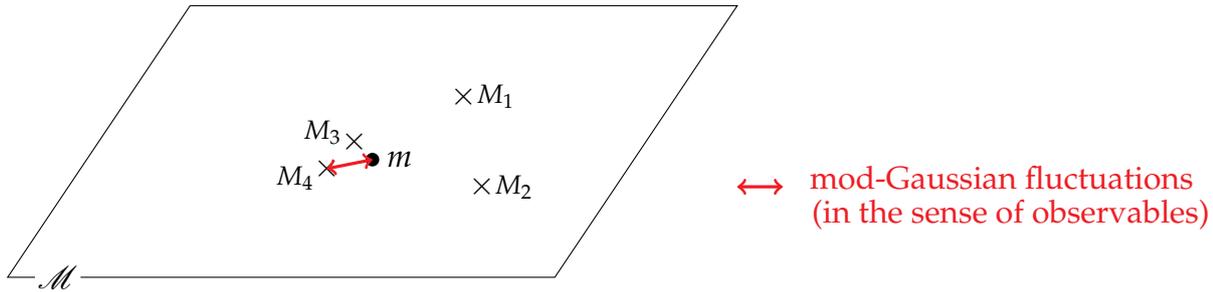
\begin{figure}[ht]
\begin{tikzpicture}[scale=1.2]
\draw (0.3,0) -- (0,0) -- (2,3) -- (8,3) -- (6,0) -- (0.8,0);
\draw (0.55,0) node {$\Mcal$};
\fill (4,1.3) circle (0.75mm);
\draw (4.3,1.3) node {$m$};
\foreach \x in {(5,2),(3.8,1.5),(5.2,1),(3.5,1.2)}
\draw \x node {$\times$};
\draw (5.35,2) node {\footnotesize $M_1$};
\draw (5.55,1) node {\footnotesize $M_2$};
\draw (3.15,1.1) node {\footnotesize $M_4$};
\draw (3.45,1.6) node {\footnotesize $M_3$};
\draw [Red,very thick,<->] (4,1.3) -- (3.5,1.2);
\draw [Red,very thick,<->] (8.5,1) -- (8,1);
\draw [Red] (10.9,1.1) node {mod-Gaussian fluctuations};
\draw [Red] (11,0.7) node {(in the sense of observables)};
\end{tikzpicture}
\caption{Mod-Gaussian moduli spaces as fields of fluctuations.\label{fig:field}}
\end{figure}
\end{center}

\noindent For any parameter $m \in \Mcal$, the random model $M_n(m)$ can be seen as a canonical way to construct random perturbations $M_n(m)$ of the parameter $m$. These random objects are perturbations, because by Hypothesis \ref{item:MS2}, as $n$ goes to infinity, $M_n(m)$ converges back to the parameter $m$. In all the examples that we looked at, each random variable $M_n(m)$ corresponds to a random combinatorial object of size $n$: graph with $n$ vertices, permutation on $n$ points, integer partition with size $n$. The parameter $m$ drives then the asymptotics of the random model $(M_n(m))_{n\in \N}$. 
The \emph{mod-Gaussian} part of the definition amounts to ask that the fluctuations $f(M_n(m))-\esper[f(M_n(m))]$ are mod-Gaussian convergent after an appropriate renormalisation, for any function $f$ in a dense subalgebra $\Psi(\obs_{\mathfrak{M}})$ of $\Ccal(\Mcal)$. The exact renormalisation required may depend on the observable $f$, but only through a gradation of the algebra of observables $\obs_{\mathfrak{M}}$. On the other hand, the asymptotics of the second and third cumulants of the observables can be encoded by simple linear functions $(\obs_{\mathfrak{M},k})^{\otimes 2} \to \obs$ and $(\obs_{\mathfrak{M},k})^{\otimes 3} \to \obs$. \medskip

So, we can think of a mod-Gaussian moduli space as a space whose elements $m$ have a way to generate random perturbations of themselves, which correspond to random combinatorial models, and which are asymptotically Gaussian, and even mod-Gaussian. The interest of the definition is that if one can show that some space $\Mcal$ is a mod-Gaussian moduli space, then one gets at once numerous precise probabilistic estimates (moderate deviations, bounds of Berry--Esseen type, \emph{etc.}), and this for an extremely large family of random variables. 
\end{remark}
\medskip

\begin{remark}[Singularities of a mod-Gaussian moduli space]\label{rem:singularity}
Given a mod-Gaussian moduli space $(\Mcal,\obs_{\mathfrak{M}})$, if $f$ is an element of $\mathfrak{M}(k)$ and if $m \in \Mcal$, assuming that $\kappa_2(f,f)(m) > 0$, all the results from Theorem \ref{thm:cumulantestimates} apply to the random variable
$$Y_n(f,m) = \frac{f(M_n(m))-\esper[f(M_n(m))]}{\sqrt{\var(f(M_n(m)))}}.$$
We did not discuss yet the case when the limiting variance $\kappa_2(f,f)(m)$ vanishes. In this situation, we cannot apply Theorem \ref{thm:cumulantestimates}, and the generic renormalisation of $f(M_n(m))$ does not yield Gaussian fluctuations (or to be more precise, it yields random variables that converge in probability to $0$). We then say that $m$ is a \emph{singular point} or \emph{singularity} of the space $\Mcal$ (with respect to the observable $f$). The idea is that certain parameters $m \in \Mcal$ correspond to models with smaller variances than usual (again, with respect to some observables $f$). A typical example is when $\Mcal=\Gcal$ and when $\gamma$ is the graphon corresponding to a constant graph function $g=p$, with $p \in [0,1]$. The corresponding random graphs $G_n(p)$ are the Erdös--Rényi random graphs with parameter $p$. It is easy to see that for any finite graph $F$,
$$t(F,p) = p^{|E_F|}.$$
Since the number of edges in a product $F \times G$ and in a joined graph $(F \join G)(a,b)$ is the same, we have $\kappa_2(F,F)(p)=0$ for any graph $F$. Thus, the graphon $p$ is a \emph{global singularity} of the moduli space of graphons $\Gcal$ (\emph{i.e.}, it is a singularity with respect to any graph observable $F$). In this particular case, one can show that the variance of $t(F,G_n(p))$ is generically of order $\frac{1}{n^2}$ instead of $\frac{1}{n}$; and one can even show the mod-Gaussian convergence of an appropriate (non-generic) renormalisation of $t(F,G_n(p))$, see \cite[Chapter 10]{FMN16}.\medskip

Another example of a globally singular point is with $\Mcal = \Pcal$ and $\omega = \omega_0=(0,0)$. This point corresponds to the celebrated Plancherel measures on integer partitions, and for any integer partition $\rho$, one has 
$$t(\rho,\omega_0) = \begin{cases}
	1 &\text{if }\rho=1^k,\\
	0 &\text{otherwise}.
\end{cases}$$
This implies the vanishing of the limiting variance $\kappa_2(\rho,\rho)(\omega_0)$ for any integer partition $\rho$. Again, in this particular case, the random variables $\varSigma_\rho(\lambda_n(\omega_0))$ can be shown to satisfy a central limit theorem after an appropriate non-generic renormalisation; see \emph{e.g.}~\cite{IO02} for the details. It is also conjectured that this singular point corresponds to mod-Gaussian fluctuations, but with different parameters $N_{n,k}$ and $D_{n,k}$ than in the generic case.
\end{remark}
\medskip

\begin{remark}[Link with the geometric notion of moduli space]
In algebraic geometry, the \emph{moduli spaces} are geometric spaces whose points parametrise (isomorphism classes of) geometric objects of some fixed kind. For instance, the smooth projective complex curves of genus $g$ and with $n$ marked points are parametrised by the moduli space $\mathcal{M}_{g,n}$, which is itself a geometric space of dimension $3g-3+n$ (to be precise, $\mathcal{M}_{g,n}$ is an algebraic stack). In particular, the elliptic curves (smooth projective curves of genus $1$ with $1$ marked point) are classified by the moduli space $\mathcal{M}_{1,1}$, which is also the one-dimensional complex orbifold $\mathrm{SL}(2,\Z)\backslash\!\!\backslash \mathbb{H}$ (see \emph{e.g.} \cite{Hain08}). This space allows one to consider smooth families of elliptic curves (it is the universal solution of a classification problem), and on the other hand, the singularities of $\mathcal{M}_{1,1}$ correspond to elliptic curves with special symmetries, namely, those associated with the complex lattices with fundamental domains \vspace{1mm}
\begin{center}
\begin{tikzpicture}[scale=1.2]
\draw (0,0) rectangle (1,1);
\fill (0,0) circle (1pt);
\fill (3,0) circle (1pt);
\draw (3,0) -- (4,0) -- (4.5,0.866) -- (3.5,0.866) -- (3,0);
\draw [->] (0.3,0) arc (0:90:0.3);
\draw [->] (3.3,0) arc (0:60:0.3);
\draw (0.35,0.35) node {$\frac{\pi}{2}$};
\draw (3.4,0.3) node {$\frac{\pi}{3}$};
\end{tikzpicture}
\end{center}\vspace{1mm}

\noindent On the other hand, our \emph{mod-Gaussian moduli spaces} are compact topological spaces whose points $m$ parametrise certain random models $(M_n(m))_{n \in \N}$: random graphs, random permutations, \emph{etc.} All these models have asymptotic properties (central limit theorem, local limit theorem, \emph{etc.}) that vary continuously with the parameter $m$. Thus, the introduction of the mod-Gaussian moduli space $(\Mcal,\obs_{\mathfrak{M}})$ allows one to understand generic properties of these models, and to consider smooth families of such models. Moreover, the singularities of $\Mcal$ correspond to random models with special symmetries that force the variances to be smaller than for the other random models. As evoked in the previous remark, the study of these singularities can be extremely interesting: indeed, when $m$ is a singular parameter, one can try to establish the mod-Gaussian behavior of an adequate non-generic renormalisation of the observables of the models $M_n(m)$. The fact that a parameter $m \in \Mcal$ with additional symmetries can correspond to vanishing variances and to singularities in the fluctuations of the model $(M_n(m))_{n \in \N}$ is to be compared with the fact that a singular point of a moduli space in algebraic geometry usually parametrises a manifold with additional (non-generic) symmetries. This analogy explains why we borrowed the terminology of moduli space from algebraic geometry.
\end{remark}

\bigskip

\bibliographystyle{alpha}
\bibliography{graphon.bib}


\end{document}